\numberwithin{equation}{section}
\newtheorem{theorem}{Theorem}[section]
\newtheorem{lemma}[theorem]{Lemma}
\newtheorem{corollary}[theorem]{Corollary}
\newtheorem{claim}[theorem]{Claim}
\newtheorem{proposition}[theorem]{Proposition}
\newtheorem{remark}[theorem]{Remark}
\title[
The Calder\'on problem for an anisotropic Schr\"odinger equation]{
	The Calder\'on problem for the Schr\"odinger equation in transversally anisotropic geometries with partial data}
\author[Y.-H. Lin]{Yi-Hsuan Lin}
\address{Department of Applied Mathematics, National Yang Ming Chiao Tung University, Hsinchu, Taiwan}
\email{yihsuanlin3@gmail.com}
\author[G. Nakamura]{Gen Nakamura}
\address{Department of Mathematics, Hokkaido University, Japan \& Research Center of Mathematics for Social Creativity, Research Institute for Electronic
	Science, Hokkaido University, Japan}
\email{nakamuragenn@gmail.com}
\author[P. Zimmermann]{Philipp Zimmermann}
\address{Departament de Matem\`atiques i Inform\`atica, Universitat de Barcelona, Barcelona,Spain}
\email{philipp.zimmermann@ub.edu}
\newcommand{\C}{{\mathbb C}}
\newcommand{\R}{{\mathbb R}}
\newcommand{\Z}{{\mathbb Z}}
\newcommand{\N}{{\mathbb N}}
\newcommand{\eps}{\epsilon}
\newcommand{\id}{\mathrm{Id}}
\newcommand {\p} {\partial}
\newcommand {\im}{\mathsf{i}}
\newcommand{\LC}{\left(}
\newcommand{\RC}{\right)}
\newcommand{\wt}{\widetilde}
\newcommand{\abs}[1]{\left\lvert #1 \right\rvert}
\DeclareMathOperator{\Div}{div} 
\DeclareMathOperator{\supp}{supp} 
\DeclareMathOperator{\dist}{dist} 
\begin{document}

	\maketitle
	\begin{abstract}
		In this work, we study the partial data Calder\'on problem for the anisotropic Schr\"{o}dinger equation
		\begin{equation}
			\label{eq: a1}
			(-\Delta_{\widetilde{g}}+V)u=0\text{ in }\Omega\times (0,\infty),
		\end{equation}
		where $\Omega\subset\R^n$ is a bounded smooth domain, $\widetilde{g}=g_{ij}(x)dx^{i}\otimes dx^j+dy\otimes dy$ and $V$ is translationally invariant in the $y$ direction. Our final goal is to show that both the metric $g$ and the potential $V$ can be recovered from the (partial) Neumann-to-Dirichlet (ND) map on $\Gamma\times \{0\}$ with $\Gamma\Subset \Omega$. Our approach can be divided into the following steps:
		
		\textit{Step 1. Boundary determination.} We establish a novel boundary determination to identify $(g,V)$ on $\Gamma$ with help of suitable approximate solutions for \eqref{eq: a1} with inhomogeneous Neumann boundary condition.
		
		\textit{Step 2. Relation to a nonlocal elliptic inverse problem.} We relate inverse problems for the Schr\"odinger equation with the nonlocal elliptic equation
		\begin{equation}
			\label{eq: a2}
			(-\Delta_g+V)^{1/2}v=f\text{ in }\Omega,
		\end{equation}
		via the Caffarelli--Silvestre type extension, where the measurements are encoded in the source-to-solution map. The nonlocality of this inverse problem allows us to recover the associated heat kernel.
		
		\textit{Step 3. Reduction to an inverse problem for a wave equation.} Combining the knowledge of the heat kernel with the Kannai type transmutation formula, we transfer the inverse problem for \eqref{eq: a2} to an inverse problem for the wave equation
		\begin{equation}
			\label{eq: a3}
			(\partial_t^2-\Delta_g+V)w=F\text{ in }\Omega\times (0,\infty),
		\end{equation}
		where the measurement operator is also the source-to-solution map. We can finally determine $(g,V)$ on $\Omega\setminus\Gamma$ by solving the inverse problem for \eqref{eq: a3}.

		\medskip
		
		\noindent{\bf Keywords.} Calderón problem, partial data, transversally anisotropic, simultaneous recovery, boundary determination, Caffarelli-Silvestre extension, heat kernel, Kannai transmutation, wave equation.
		
		\noindent{\bf Mathematics Subject Classification (2020)}: Primary 35R30; secondary 26A33, 42B37

	\end{abstract}

	\tableofcontents

	\section{Introduction}\label{sec: introduction}
	
	In this paper, we investigate an inverse boundary value problem for a certain class of elliptic partial differential equations (PDEs) on the transversal domain $\Omega\times (0,\infty)$. The nowadays prototypical example of an inverse problem for an elliptic PDE was introduced by Calder\'on \cite{calderon2006inverse}, in which the objective is to recover the conductivity $\gamma$ in the \emph{conductivity equation}
	\begin{equation}
		\label{eq: conductivity}
		\Div(\gamma\nabla u)=0\text{ in }\Omega
	\end{equation}
	from the (full) \emph{Dirichlet-to-Neumann (DN) map} $\Lambda_{\gamma}$. From a physical point of view this corresponds to inducing a voltage $f$ on the boundary and measuring the resulting (normal) current $\mathsf{j}=\left.\gamma\partial_{\nu}u_f\right|_{\partial\Omega}$ across it, where $u_f$ is the solution of \eqref{eq: conductivity} with $u_f=f$ on $\partial \Omega$ and $\nu$ denotes the outward pointing normal vector field along $\partial \Omega$. A closely related problem is the determination of an unknown potential $q$ in the \emph{Schr\"odinger equation}
	\begin{equation}
		\label{eq: schroedinger eq}
		\LC -\Delta +q\RC v=0 \text{ in }\Omega
	\end{equation}
	from the DN map $\Lambda_q g= \left. \partial_{\nu} v_g\right|_{\partial\Omega}$, which was resolved in \cite{SU87} for $n\geq 3$ and \cite{bukhgeim2008recovering} for $n=2$. The solutions of \eqref{eq: conductivity} and \eqref{eq: schroedinger eq} are connected via the Liouville reduction $v=\gamma^{1/2}u$, which also gives a precise relation between $\Lambda_{\gamma}$ and $\Lambda_q$ only involving $\gamma|_{\partial\Omega}$ and $\partial_{\nu}\gamma|_{\partial\Omega}$, and by the boundary determination result of Kohn and Vogelius \cite{KohnVogelius}, the solution of the inverse problem for the Schr\"odinger equation directly resolves the Calder\'on problem under suitable regularity assumptions on $\gamma$. Let us note that the result of Kohn and Vogelius is a local boundary determination result, that is to recover $\gamma$ and $\partial_{\nu}\gamma$ at a boundary point $x_0\in \partial\Omega$, one only needs to know $\Lambda_{\gamma}$ in a small neighborhood of $x_0$. For a more comprehensive account of these results, we refer the readers to the survey article \cite{Uhl09}. Inverse problems in transversally anisotropic geometries with full data or partial data have also been considered in various models, such as \cite{DSFKSU09,DKLS16,DSFKLLS20,KSU07,LLLS2019nonlinear,FO19,feizmohammadi2023inverse,KU18}.

	Recently, the above type of inverse problems has been extended to nonlocal models like
	\begin{equation}
		\label{eq: nonlocal model}
		\LC L+q\RC u=0\text{ in }\Omega,
	\end{equation}
	where $L$ is, for example, an elliptic nonlocal operator, and one again aims to recover the potential $q$ and possibly some coefficients on which $L$ may depend from the related DN map $\Lambda_{L,q}$. The first model studied in the literature \cite{GSU20} is the case of the fractional Laplacian $L=(-\Delta)^s$, having Fourier symbol $|\xi|^{2s}$, and the resulting equation \eqref{eq: nonlocal model} is by now usually called \emph{fractional Schr\"odinger equation}. If one assumes that the nonlocal operator $L$ in \eqref{eq: nonlocal model} satisfies the \emph{unique continuation property (UCP)}, implying the Runge approximation, then one can show that the inverse problem related to \eqref{eq: nonlocal model} is uniquely solvable (see, for example, \cite{RZ-unbounded,LZ23unique}). 	Two classical examples of nonlocal operators having the UCP are the fractional Laplacian \cite{GSU20} and the Bessel potential operator $\langle D\rangle^s$ with Fourier symbol $\LC 1+|\xi|^2\RC^{s/2}$ \cite{kenig2020unique}. Let us note that in both cases proving the UCP for these operators rests on the existence of a nice extension problem related to these operators, but unfortunately up to now there is no characterization of nonlocal operators having this property.

	More precisely, Caffarelli and Silvestre \cite{CS07} characterized the fractional Laplacian $(-\Delta)^s$ as the Dirichlet-to-Neumann operator for the associated extension problem. This point of view of the fractional Laplacian is commonly referred as the \emph{Caffarelli-Silvestre extension} in the literature. The UCP for this extension problem was showed in \cite{ruland2015unique}. For general variable coefficients nonlocal elliptic operators of order $s\in (0,1)$, Stinga and Torrea \cite{stinga2010extension} demonstrated analogous results such that this type of nonlocal operators can be also characterized via the related extension problem. Based on this, the authors of \cite{GLX} solved the Calder\'on problem for variable coefficients nonlocal operators, whereas the analogous result for their local counterpart remains open in dimensions $n\geq 3$. Indeed, there are several uniqueness results for nonlocal inverse problems, which are still open for their local counterparts and maybe even not true, such as the drift problem \cite{cekic2020calderon}, the obstacle problem \cite{CLL2017simultaneously}, the inverse source problem \cite{LL2023inverse_minimal}, and the characterization via monotonicity relations \cite{harrach2017nonlocal-monotonicity,harrach2020monotonicity}. Hence, one can regard the nonlocality as a tool that helps solving inverse problems. Recently, in the works \cite{CGRU23reduction,LLU23calder,ruland2023revisiting,LZ2024NDOT} provide interesting connections between the nonlocal and local Calder\'on type inverse problems for both elliptic and parabolic equations. We also refer readers to several related articles for nonlocal operators, such as \cite{CRZ2022global,KRZ2022Biharm,KLZ24,CRTZ24,LZ23unique,LTZ24} and the references therein. Very recently, the recovery of the geometrical information $(M,g)$ and potential $V$ simultaneously has been investigated by \cite{FKU24} on closed Riemannian manifolds, and we also refer readers to \cite{feiz24,feizmohammadi2021fractional} as the potential $V=0$.

	Based on this observation, we study in this article a class of inverse problem for (local) elliptic PDEs having a similar form as the ones emerging in the related extension problems for the aforementioned operators. In the next section, we introduce the considered model in more detail. 
	
	\subsection{Mathematical formulation} 
	
	Let $\Omega$ be a bounded smooth domain\footnote{Throughout this work we say $D\subset\R^n$ is a domain if it is an open connected set.} in $\R^n$ with $n\geq 2$. Suppose that we have given on $\Omega$ a (smooth) Riemannian metric $g =\LC g_{ij}\RC_{1\leq i,j\leq n}$ satisfying the uniform ellipticity condition 
	\begin{equation}\label{ellipticity}
		\lambda\abs{\xi}^2 \leq g_{ij}(x)\xi^{i}  \xi^{j} \leq \lambda^{-1}\abs{\xi}^2 \text{ in } \Omega, 
	\end{equation}
	for some constant $\lambda\in (0,1)$ and for any vector $\xi=\LC \xi^1,\ldots, \xi^n\RC \in \R^n$. Throughout the whole article we impose the Einstein summation convention. Let $\Delta_g$ be the Laplace-Beltrami operator given by 
	\begin{equation}
		\Delta_{g} u:=|g|^{-1/2}\partial_i\big(|g|^{1/2}g^{ij}\partial_j u\big),
	\end{equation}
	where $|g|=\det g$, $g^{ij}$ denotes the components of the inverse matrix $g^{-1}$ and $\partial_j=\partial_{x^j}$. 
	
	To formulate the PDE problem, let us extend $g$ to a Riemannian metric $\widetilde{g}$ on $\Omega\times \R_+$, where $\R_+=(0,\infty)$, by setting
	\begin{equation}\label{metric g ext}
		\widetilde{g}=g_{ij}dx^{i}\otimes dx^j+dy\otimes dy
	\end{equation}
	or equivalently in matrix form
	\begin{align}\label{tilde g(x)}
		\wt g(x)=\left( \begin{matrix}
			g(x)& 0\\
			0 & 1 \end{matrix} \right).
	\end{align}
	In equation \eqref{metric g ext} and below, we denote the coordinates in $\Omega\times\R_+$ by $(x,y)$ or $(x^1,\ldots,x^n,x^{n+1})$ and the range of the indices about we sum will always be clear from the context. Then the induced Laplace--Beltrami operator on $\Omega\times\R_+$ becomes
	\begin{equation}
		\label{eq: extended Laplace Beltrami}
		\Delta_{\widetilde{g}}=\Delta_g+\partial_y^2.
	\end{equation}
	Next, let us consider the following mixed boundary value problem for \emph{anisotropic Schr\"odinger equation}
	\begin{align}\label{eq: main}
		\begin{cases}
			\LC -\Delta_{\widetilde{ g}} +V\RC u=0 &\text{ in }\Omega \times \R_+,\\
			-\p_{y}u=f&\text{ on }\Omega \times \{0\},\\
			u=0 & \text{ on }\p \Omega \times \R_+,
		\end{cases}
	\end{align}
	where $V=V(x)$ is a given bounded nonnegative potential, which is translation invariant in the $y$-direction.
	
	With the well-posedness result of equation \eqref{eq: main} (see Section \ref{sec: preliminary}) at hand, we can define for any domain $\Gamma \subsetneq  \Omega$ the related \emph{(partial) Neumann-to-Dirichlet (ND) map}
	\begin{align}\label{ND map}
		\begin{split}
			\Lambda_{g,V}^{\Gamma} : H^{-1/2}(\overline{\Gamma})\to H^{1/2}(\Gamma), \quad  f& \mapsto  \left.  u_f\right|_{\Gamma},
		\end{split}
	\end{align}
	where we identify $\Gamma$ with $\Gamma\times \{0\}$ and $u_f \in H^1_0(\Omega \times [0,\infty))$ is the unique solution to \eqref{eq: main}. The involved function spaces will be introduced in Section~\ref{sec: preliminary}. Now, we can formulate the considered inverse problem.
	
	\begin{enumerate}[\textbf{(IP1)}]
		\item \label{IP1}\textbf{Inverse problem for the elliptic equation.}  Can one uniquely determine the metric $g$ and potential $V$ in $\Omega$ by using the knowledge of the partial ND map $\Lambda_{g,V}^\Gamma$?
	\end{enumerate}

	\medskip

	If $\Gamma=\Omega$, this inverse problem \ref{IP1} can be viewed as the \emph{boundary determination} problem for both $g$ and $V$, since both $g$ and $V$ depend only on the $x$-variable. This can be proved by introducing suitable \emph{approximate solutions} (see Section~\ref{sec: boundary determination}) so that both $g$ and $V$ can be recovered. Because of this, we assumed that $\Gamma \neq \Omega$. 
	

	\begin{theorem}[Global recovery]\label{Thm: Main}
		Let $\Omega \subset \R^n$, $n\geq 2$, be a bounded smooth domain, and $\Gamma \Subset \Omega$ be a domain with smooth boundary $\p \Gamma$, so that $\Gamma$ and $\Omega\setminus \overline{\Gamma}$ are connected. Let $g_1,g_2\in C^{\infty}(\overline{\Omega};\R^{n\times n})$ be two Riemannian metrics satisfying the uniform ellipticity condition \eqref{ellipticity} (extended to $\Omega\times\R_+$ via \eqref{metric g ext}).
		Assume that the two potentials $0\leq V_1,V_2\in C^{\infty}(\overline{\Omega})$ are translation invariant in the $y$-direction.
		Let $\Lambda_{g_j,V_j}^{\Gamma}$ be the partial ND map of 
		\begin{align}\label{eq: conductivity in thm}
			\begin{cases}
				\LC -\Delta_{\widetilde{g}_j} +V_j\RC u_j=0 &\text{ in }\Omega \times \R_+,\\
				-\p_y u_j=f&\text{ on }  \Omega \times \{0\},\\
				u_j =0 &\text{ on }\p \Omega \times \R_+,
			\end{cases}
		\end{align}
		for $j=1,2$. Suppose that 
		\begin{align}\label{ND map agree}
			\Lambda_{g_1,V_1}^\Gamma f=\Lambda_{ g_2,V_2}^\Gamma f \text{ on }\Gamma \text{ for any }f\in C^\infty_c(\Gamma),
		\end{align}  
		then there exists a diffeomorphism $\Psi: \overline{\Omega}\to \overline{\Omega}$ with $\Psi|_{\Gamma}=\mathrm{Id}_{\overline{\Gamma}}$ such that 
		\[
		g_1=\Psi ^\ast g_2  \text{ in }\overline{\Omega} \quad \text{ and }\quad  V_1 = V_2 \circ \Psi   \text{ in }\overline{\Omega},
		\]
		where $\mathrm{Id}_{\overline{\Gamma}}$ denotes the identity map on $\overline{\Gamma}$. 
	\end{theorem}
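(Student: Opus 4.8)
\para{Setup: the measurement is a nonlocal source-to-solution map}
The plan is to carry out the three-step program announced in the introduction, after first rewriting the data spectrally. Separating variables in the $y$-direction solves \eqref{eq: conductivity in thm}: expanding $f=\sum_k f_k\phi^{(j)}_k$ in the Dirichlet eigenbasis $\{\phi_k^{(j)}\}$ of $-\Delta_{g_j}+V_j$ on $\Omega$, with eigenvalues $0<\lambda^{(j)}_1\leq\lambda^{(j)}_2\leq\cdots$ (strictly positive since $V_j\geq 0$ and $\Omega$ is bounded), the $H^1_0$-solution with Neumann datum $f$ is $u_{f,j}(x,y)=\sum_k f_k\,(\lambda^{(j)}_k)^{-1/2}e^{-\sqrt{\lambda_k^{(j)}}\,y}\,\phi^{(j)}_k(x)$, and hence
\[
\Lambda^{\Gamma}_{g_j,V_j}f=\left.\LC-\Delta_{g_j}+V_j\RC^{-1/2}f\right|_{\Gamma},\qquad f\in C^{\infty}_c(\Gamma).
\]
Thus \eqref{ND map agree} says exactly that the two nonlocal operators $\LC-\Delta_{g_j}+V_j\RC^{-1/2}$ --- the Neumann-to-Dirichlet operators of the Caffarelli--Silvestre type extension \eqref{eq: main}, cf.\ \cite{CS07,stinga2010extension} --- have the same compression to $\Gamma$, i.e.\ equation \eqref{eq: a2} has the same source-to-solution map for both coefficient pairs. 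Since any diffeomorphism $\Psi$ of $\overline\Omega$ with $\Psi|_{\overline\Gamma}=\mathrm{Id}_{\overline\Gamma}$ sends $(g,V)$ to $(\Psi^*g,V\circ\Psi)$ without altering this compressed operator, such a gauge is unavoidable, and producing it is precisely the goal.

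\para{Step 1: boundary determination}
I would first recover $g_j$ and $V_j$, together with all their $x$-derivatives, on $\overline\Gamma$. Fix $x_0\in\overline\Gamma$; by a semiclassical WKB-type construction one produces families $u_h$ of approximate solutions of the mixed problem \eqref{eq: main} whose Neumann traces $f_h=-\p_y u_h|_{y=0}$ concentrate near $x_0$ at scale $h\to 0$. Testing $\Lambda^\Gamma_{g_j,V_j}f_h$ against functions concentrated near $x_0$ yields an asymptotic expansion in $h$ whose leading coefficient is determined by $g_j(x_0)$ and whose successive lower-order coefficients determine the $x$-jets of $g_j$ and of $V_j$ at $x_0$. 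Matching these expansions for $j=1,2$ gives $g_1=g_2$ and $V_1=V_2$ on $\overline\Gamma$; in particular the \emph{local} operators $-\Delta_{g_1}+V_1$ and $-\Delta_{g_2}+V_2$ coincide on the open set $\Gamma$.

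\para{Step 2: recovery of the heat kernel on $\Gamma\times\Gamma$}
Let $\mathcal K(x,x')$ be the Schwartz kernel, now known on $\Gamma\times\Gamma$, of the compressed operator $\LC-\Delta_{g_1}+V_1\RC^{-1/2}=\LC-\Delta_{g_2}+V_2\RC^{-1/2}$; by Bochner subordination $\mathcal K(x,x')=\tfrac{1}{\sqrt\pi}\int_0^\infty t^{-1/2}p_j(t,x,x')\,\der t$, where $p_j(t,x,x')=\sum_k e^{-t\lambda_k^{(j)}}\phi_k^{(j)}(x)\phi_k^{(j)}(x')$ is the Dirichlet heat kernel of $-\Delta_{g_j}+V_j$. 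The crucial point --- and this is where the nonlocality is used --- is that although $\mathcal K$ is known only on $\Gamma\times\Gamma$, the operator $-\Delta_{g_j}+V_j$ is local and, by Step 1, known on $\Gamma$; hence for every $m\in\N$ we may apply $\LC-\Delta_{g_j,x}+V_j(x)\RC^m$ in the $x$-variable to $\mathcal K$ on $\Gamma\times\Gamma$, and the result
\[
\tfrac{1}{\sqrt\pi}\int_0^\infty t^{-1/2}(-\p_t)^m p_j(t,x,x')\,\der t=\sum_k(\lambda_k^{(j)})^{\,m-1/2}\phi_k^{(j)}(x)\phi_k^{(j)}(x')
\]
is both computable from the data and independent of $j$. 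So all of these ``spectral moments'' of order $m-\tfrac12$, $m\geq 0$, restricted to $\Gamma\times\Gamma$ agree for $j=1,2$. Grouping this Dirichlet series by distinct eigenvalues $\mu$ (with spectral projections $\Pi_\mu^{(j)}$), using the unique continuation property for $-\Delta_{g_j}+V_j$ so that no $\Pi_\mu^{(j)}$ vanishes identically on $\Gamma$ (cf.\ \cite{ruland2015unique}), and inverting the moment relation --- controlled by Gaussian heat-kernel bounds --- one recovers the distinct Dirichlet eigenvalues and the restricted projections $\Pi_\mu^{(1)}|_{\Gamma\times\Gamma}=\Pi_\mu^{(2)}|_{\Gamma\times\Gamma}$, and therefore $p_1(t,x,x')=p_2(t,x,x')$ for all $t>0$ and $x,x'\in\Gamma$. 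I expect this step to be the technical heart of the proof: converting a single \emph{compressed} fractional operator, under partial data, into the full heat kernel, the delicate point being the convergence and the eigenvalue separation in the moment inversion for merely $C^\infty$ coefficients.

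\para{Step 3: Kannai transmutation and boundary control}
The Kannai transmutation formula expresses $p_j(t,x,x')$ as a one-sided Gaussian average in $r>0$ of the kernel of $\cos\!\LC r\sqrt{-\Delta_{g_j}+V_j}\,\RC$; inverting this Weierstrass/Laplace-type transform converts the knowledge of $p_j$ on $\Gamma\times\Gamma$ into knowledge, for all $r>0$ and $x,x'\in\Gamma$, of the Dirichlet wave kernel --- equivalently, of the source-to-solution map of the wave equation \eqref{eq: a3} with sources and measurements in $\Gamma\times\R_+$. One then invokes the interior-source boundary control method --- finite speed of propagation, Tataru's sharp unique continuation theorem, and the Blagoveshchenskii identity --- together with the connectedness of $\Gamma$ and of $\Omega\setminus\overline\Gamma$, to reconstruct the Riemannian manifold with boundary $(\overline\Omega,g_j)$ and the potential $V_j$ up to a diffeomorphism fixing $\Gamma$ pointwise (in the spirit of the closed-manifold result \cite{FKU24}). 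Combining this with Step 1 produces a single diffeomorphism $\Psi:\overline\Omega\to\overline\Omega$ with $\Psi|_{\overline\Gamma}=\mathrm{Id}_{\overline\Gamma}$, $g_1=\Psi^*g_2$ and $V_1=V_2\circ\Psi$, which is the assertion of the theorem. A secondary obstacle to handle is adapting this interior-source boundary control machinery, and the sharp unique continuation it relies on, to the present transversally anisotropic geometry with Dirichlet conditions on $\p\Omega\times\R_+$.
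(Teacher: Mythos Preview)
Your three–step program is exactly the paper's, and your spectral identification $\Lambda^{\Gamma}_{g,V}f=(-\Delta_g+V)^{-1/2}f|_{\Gamma}$ is what Lemma~\ref{Lem: ND to source-to-solution} establishes via the Caffarelli--Silvestre extension. Step~1 is Section~\ref{sec: boundary determination} (the paper only needs the values of $g,V$ on $\Gamma$, not full jets). Step~3 is Section~\ref{sec: wave}: after Kannai transmutation the paper does not redevelop boundary control but reduces the wave source-to-solution map on $\Gamma$ to a restricted DN map on $\partial\Gamma$ and invokes \cite{KOP18} directly; the relevant geometry at that stage is $(\overline{\Omega'},g)$ with $\Omega'=\Omega\setminus\overline{\Gamma}$, not the transversal domain $\Omega\times\R_+$.

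The one genuine discrepancy is in Step~2. Your plan is to recover eigenvalues and restricted spectral projections by ``inverting'' the family $\sum_k\lambda_k^{m-1/2}\phi_k(x)\phi_k(x')$; but for $m\geq 1$ this series does not converge pointwise (Weyl's law), so the moment inversion as written is formal. The paper avoids this entirely (Lemma~\ref{Lemma: same heat kernel}): after applying $(-\Delta_g+V)^k$ to $f\in C^\infty_c(\mathcal{O}_1)$ and writing $\mathsf{P}^{-1/2}=\pi^{-1/2}\int_0^\infty e^{-t\mathsf{P}}t^{-1/2}\,dt$, one integrates by parts $k$ times in $t$. The Gaussian off-diagonal bound on the heat kernel kills the $t\to 0$ boundary terms provided one evaluates at $x\in\mathcal{O}_2$ with $\overline{\mathcal{O}_1}\cap\overline{\mathcal{O}_2}=\emptyset$, yielding $\int_0^\infty[p_1-p_2](t,x,\cdot)\,t^{-k-1/2}\,dt=0$ for all $k\in\N_0$. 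A change of variable $\zeta=1/t$ and an analytic-continuation argument for the Laplace transform then force $p_1=p_2$ on $\mathcal{O}_2\times\mathcal{O}_1$; finally \emph{parabolic} unique continuation for $\partial_t+\mathsf{P}_{g,V}$ on the connected set $\Gamma$ (not the elliptic UCP you cite) propagates this to all of $\Gamma\times\Gamma$. No eigenvalue separation is needed.
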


	Note that in the case $V\equiv 0$, we have the following uniqueness result.
	
	\begin{corollary}\label{Cor: recover}
		Suppose all assumptions of Theorem \ref{Thm: Main} hold and let $\Lambda_{ g_j,0}^{\Gamma}$ be the local ND map of 
		\begin{align}
			\label{eq: conductivity in cor}
			\begin{cases}
				\Delta_{\widetilde{g}_j} u_j=0 &\text{ in }\Omega\times \R_+,\\
				-\p_y u_j=f&\text{ on }\Omega\times \{0\}, \\
				u_j=0 &\text{ on }\p \Omega \times \R_+,
			\end{cases}
		\end{align}
		for $j=1,2$. Suppose that 
		\begin{align}\label{ND map agree in cor}
			\Lambda_{g_1,0}^\Gamma f=\Lambda_{g_2,0}^\Gamma f \text{ for any }f\in H^{-1/2}(\overline{\Gamma}),
		\end{align}  
		holds, then there exists a diffeomorphism $\Psi \colon \overline{\Omega}\to \overline{\Omega}$ with $\Psi|_{\overline{\Gamma}}=\id_{\overline{\Gamma}}$ in $\overline{\Gamma}$ such that $g_1 =\Psi^\ast  g_2 $ in $\Omega$.
	\end{corollary}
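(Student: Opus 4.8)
The plan is to obtain Corollary~\ref{Cor: recover} as an immediate specialization of Theorem~\ref{Thm: Main} to the case $V_1\equiv V_2\equiv 0$, so essentially no new analysis is required. First I would check that the zero potential satisfies all of the standing hypotheses imposed on the $V_j$ in Theorem~\ref{Thm: Main}: it lies in $C^\infty(\overline{\Omega})$, it is nonnegative, and, being constant, it is translation invariant in the $y$-direction. The hypotheses on the metrics $g_1,g_2\in C^\infty(\overline{\Omega};\R^{n\times n})$ and on the domains $\Gamma\Subset\Omega$ (with $\Gamma$ and $\Omega\setminus\overline{\Gamma}$ connected, $\partial\Gamma$ smooth) are word for word those of the corollary, so nothing has to be adjusted there.

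Next I would reconcile the two formulations of the data hypothesis. Corollary~\ref{Cor: recover} assumes $\Lambda^\Gamma_{g_1,0}f=\Lambda^\Gamma_{g_2,0}f$ for every $f\in H^{-1/2}(\overline{\Gamma})$, whereas Theorem~\ref{Thm: Main} only requires the equality for $f\in C^\infty_c(\Gamma)$. Since $C^\infty_c(\Gamma)\subset H^{-1/2}(\overline{\Gamma})$, the hypothesis of the corollary is a fortiori strong enough to invoke the theorem; by density of $C^\infty_c(\Gamma)$ in $H^{-1/2}(\overline{\Gamma})$ together with the continuity of the ND map recorded in Section~\ref{sec: preliminary}, the two conditions are in fact equivalent. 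Applying Theorem~\ref{Thm: Main} then yields a diffeomorphism $\Psi\colon\overline{\Omega}\to\overline{\Omega}$ with $\Psi|_{\overline{\Gamma}}=\id_{\overline{\Gamma}}$, $g_1=\Psi^\ast g_2$ in $\overline{\Omega}$, and $V_1=V_2\circ\Psi$; in the present setting the last identity is the tautology $0=0$ and carries no content, leaving exactly the assertion of the corollary.

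I do not expect any genuine obstacle here: the entire analytic machinery — the boundary determination of Step~1, the passage to the nonlocal equation~\eqref{eq: a2} via the Caffarelli--Silvestre extension and the resulting recovery of the heat kernel in Step~2, and the reduction to the wave equation~\eqref{eq: a3} through the Kannai transmutation in Step~3 — is carried out in the proof of Theorem~\ref{Thm: Main} for an arbitrary admissible potential $0\le V\in C^\infty(\overline{\Omega})$, and $V=0$ is one such potential. The only point deserving a line of care is to note that none of those steps degenerates when $V\equiv 0$ (well-posedness of~\eqref{eq: conductivity in cor}, positivity and functional calculus of $-\Delta_g$, the transmutation identity), which is the case since the zero potential is admissible throughout the argument.
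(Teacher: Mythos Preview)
Your proposal is correct and matches the paper's treatment: the corollary is stated immediately after Theorem~\ref{Thm: Main} with the remark ``in the case $V\equiv 0$'' and no separate proof is given, so the intended argument is precisely the specialization $V_1=V_2=0$ that you describe. Your additional observations (that $C_c^\infty(\Gamma)\subset H^{-1/2}(\overline{\Gamma})$ so the data hypothesis is a fortiori sufficient, and that the potential identity becomes vacuous) are the only points worth noting and you have handled them.
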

	
	The preceding results are related to the Calder\'on problem on transversally anisotropic geometries. In the work \cite{DKLS16}, the authors investigated similar problems by using lateral boundary Cauchy data, under appropriate geometrical condition for the manifold. However, in this work, we utilize the measurement from the bottom of the domain, which make the problems treated in these two papers essentially different.	
	In addition, Corollary \ref{Cor: recover} can be viewed as a special case of the anisotropic Calder\'on problem \eqref{eq: conductivity}, where the scalar conductivity $\gamma$ is replaced by a conductivity matrix $(\gamma_{ij})$ and incorporates the physical situation in which the medium has a directional dependent resistivity $\rho=\gamma^{-1}$. This implies that the current $\mathsf{j}$ does not necessarily flow into the direction of the electrical field $E$, as they satisfy the relation $\mathsf{j}=\rho E$, and such a behaviour is actually met in various materials. On the one hand, both the metric and the potential are $y$-independent, which means that $g$ and $V$ depend on $n$ variables. On the other hand, the (localized) ND map $\Lambda_{g,V}^\Gamma$ is $2n$-dimensional, which is different to the classical Calder\'on type inverse problems that $n$-variables with $(2n-2)$ boundary measurements. Hence, we have $2$-dimensional more boundary measurements that can be used in our study.

	Let us point out that even if $g$ is isotropic (i.e.~$g_{ij}=\sigma\delta_{ij}$ for some scalar function $\sigma$), it is impossible to determine both $g$ and $V$ in general, due to the natural obstruction from the Liouville reduction. More concretely, let us use the forthcoming classical example to demonstrate why the result fails in general. Consider a positive scalar function $\sigma\in C^\infty(\overline{\Omega})$ with $\sigma=1$ near $\p \Omega$, and $q\in L^\infty(\Omega)$. Then the DN data of 
	\begin{equation}
		\begin{split}
			-\nabla \cdot (\sigma\nabla u)+qu=0 \text{ and } \underbrace{-\Delta w +\LC \frac{\Delta \sqrt{\sigma}}{\sqrt{\sigma}}+\frac{q}{\sigma}\RC w=0 }_{\text{Liouville's reduction: }v=\sqrt{\sigma}w}
		\end{split}
	\end{equation}
	are the same, that is, $\LC  u|_{\p \Omega}, \, \left. \p_\nu u \right|_{\p \Omega} \RC=\LC  w|_{\p \Omega}, \, \left. \p_\nu w \right|_{\p \Omega} \RC$, where we used $\sigma=1$ near $\p \Omega$ and $\nu$ is the unit outer normal on $\p \Omega$. However, it is easy to see that their coefficients could be different.
	This type of inverse problem is usually referred as the \emph{diffuse optical tomography problem} in the literature, which was investigated in \cite{arridge1999optical,AL1998nonuniqueness,Har09ODT}.   Therefore, one would not expect that the injectivity for the previously described Calder\'on problem \eqref{eq: main} can be achieved.

	As we mentioned before, in our model \eqref{eq: main}, $g$ and $V$ are transversally dependent, but independent of the vertical variable.  A typical example is graphite, which is composed of multiple layers of graphene possessing, microscopically, a honeycomb lattice. The directionally different conducting properties of graphite rests on the fact that the layers are hold together via the relatively weak Van der Waals forces, whereas one observes delocalized $\pi$-systems in each graphene layer. Based on this, the conductivity is much smaller in the transversal direction and the $\pi-$system is mostly responsible for the planar conduction. As a first approximation one may regard the conductivity as being constant, as we do it in the problem \eqref{eq: main}, but the planar part of the conductivity matrix still depends on the $y$-coordinate as there are different forms of stacking of the graphene layers and the layers have a nonzero distance to each other. For a more detailed account of the physical properties of such materials we refer to the specialized literature (see e.g. \cite{ashcroft1976solid,cao2018unconventional}). Models having a non-trivial $y$-dependence will not be studied in this work.
	
	Finally, let us mention that in the course of proving Theorem~\ref{Thm: Main}, we also establish the following unique determination result for an elliptic nonlocal inverse problem, which is a generalization of \cite[Theorem 1.1]{feizmohammadi2021fractional}.
	
	\begin{theorem}
		\label{Thm: nonlocal}
		Assume that $\Omega$, $\Gamma$, $\LC g_j,V_j\RC$ for $j=1,2$ are given as in Theorem \ref{Thm: Main} and let $\LC g, V \RC\in C^{\infty}(\overline{\Omega};\R^{n\times n})\times C^{\infty}(\overline{\Omega})$ be any pair of a uniformly elliptic Riemannian metric $g$ and nonnegative potential $V$ such that 
		\begin{align}\label{same g,V on Gamma}
			\LC \left. g_1 \right|_{\Gamma}, \left. V_1 \right|_{\Gamma} \RC =\LC \left. g_2 \right|_{\Gamma}, \left. V_2 \right|_{\Gamma} \RC =\LC g|_{\Gamma}, V|_{\Gamma} \RC.
		\end{align}
		Let $\mathcal{S}^{\Gamma}_{g_j,V_j}\colon C^\infty_c(\Gamma) \ni f \mapsto v_j^f|_{\Gamma} \in L^2(\Gamma)$ be the local source-to-solution map of 
		\begin{equation}
			\begin{cases}
				(-\Delta_{g_j}+V_j)^{1/2} v=f &\text{ in }\Omega, \\
				v=0 &\text{ on }\p \Omega
			\end{cases}
		\end{equation}
		for $j=1,2$.
		Suppose that 
		\begin{align}\label{same l-S-t-S}
			\mathcal{S}_{g_1,V_1}^{\Gamma}f=	\mathcal{S}_{ g_2,V_2}^{\Gamma}f \text{ for any }f\in C^\infty_c (\Gamma),
		\end{align}
		then there exists a diffeomorphism $\Psi\colon\overline{\Omega}\to \overline{\Omega}$ with  $\Psi|_{\overline{\Gamma}}=\id_{\overline{\Gamma}}$ such that 
		\[
		g_1 =\Psi^\ast g_2  \quad \text{ and }\quad  V_1 =V_2 \circ \Psi   \text{ in }\Omega.
		\]
	\end{theorem}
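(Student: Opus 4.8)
\emph{Strategy.} The proof will follow the roadmap in the abstract in three stages: (i) from the equality of the two nonlocal source-to-solution maps on $\Gamma$, recover the \emph{interior spectral data} of the Schrödinger operators restricted to $\Gamma$ — equivalently, the heat kernels on $\Gamma\times\Gamma$; (ii) via a Kannai-type transmutation, transfer this to the equality of the source-to-solution maps of the wave equation \eqref{eq: a3} observed on $\Gamma$; (iii) invoke the boundary control method for the wave equation to produce the gauge equivalence. Throughout, write $H_j:=-\Delta_{g_j}+V_j$ for the Dirichlet realization on $\Omega$; since $V_j\ge 0$ and $\partial\Omega\ne\emptyset$, each $H_j$ is positive, self-adjoint with compact resolvent, hence has eigenvalues $0<\lambda_1^{(j)}\le\lambda_2^{(j)}\le\cdots\to\infty$, an orthonormal eigenbasis $\{\phi_k^{(j)}\}$, eigenprojections $\Pi_k^{(j)}$, and $v_j^f=H_j^{-1/2}f$.

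\emph{Step 1: recovering the $\Gamma$-spectral data.} The essential point is that the measurement is nonlocal because $v_j^f$ is the trace at $y=0$ of the Caffarelli--Silvestre extension $U_j$, i.e.\ the (well-posed, see Section~\ref{sec: preliminary}) solution of \eqref{eq: conductivity in thm} with Neumann datum $f$: separating variables gives $U_j(\cdot,y)=e^{-y\sqrt{H_j}}v_j^f$, so that $U_j(\cdot,0)=v_j^f$, $\partial_yU_j(\cdot,0)=-f$, and the ND map of \eqref{eq: conductivity in thm} coincides with $\mathcal S^\Gamma_{g_j,V_j}$. For $f\in C^\infty_c(\Gamma)$ the function $U_j$ is smooth up to $\Gamma\times\{0\}$. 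Assuming \eqref{same l-S-t-S}, $W:=U_1-U_2$ has vanishing Cauchy data on $\Gamma\times\{0\}$; moreover, by \eqref{same g,V on Gamma} both $U_j$ solve the \emph{same} smooth elliptic equation $(\partial_y^2+\Delta_g-V)U=0$ on the connected set $\Gamma\times\R_+$. Unique continuation from the non-characteristic boundary portion $\Gamma\times\{0\}$, followed by the weak unique continuation property, forces $W\equiv 0$ on $\Gamma\times[0,\infty)$, so for every $\tau\ge 0$ and $f\in C^\infty_c(\Gamma)$,
\[
\bigl(e^{-\tau\sqrt{H_1}}H_1^{-1/2}f\bigr)\big|_\Gamma=\bigl(e^{-\tau\sqrt{H_2}}H_2^{-1/2}f\bigr)\big|_\Gamma .
\]
Testing against $h\in C^\infty_c(\Gamma)$ turns both sides into absolutely convergent generalized Dirichlet series in $\tau$ with exponents $\sqrt{\lambda^{(j)}_k}$ and coefficients $(\lambda^{(j)}_k)^{-1/2}\langle f,\phi^{(j)}_k\rangle\langle h,\phi^{(j)}_k\rangle$ (absolute convergence uses $\sqrt{\lambda_k^{(j)}}\to\infty$ and the super-polynomial decay of $\langle f,\phi_k^{(j)}\rangle$). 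Uniqueness of Dirichlet series matches exponents and coefficients; since no eigenfunction of $H_j$ can vanish on the open set $\Gamma$ (unique continuation for $H_j-\lambda$), every eigenspace is seen by a suitable $f$, and letting $f,h$ range over $C^\infty_c(\Gamma)$ we conclude that $H_1$ and $H_2$ have the same spectrum $\{\lambda_k\}$ and the same eigenprojections restricted to $\Gamma$, i.e.\ $\Pi^{(1)}_k(x,y)=\Pi^{(2)}_k(x,y)$ for $x,y\in\Gamma$. In particular the heat kernels agree on the observation region: $p_1(t,x,y)=\sum_k e^{-t\lambda_k}\Pi^{(1)}_k(x,y)=p_2(t,x,y)$ for $x,y\in\Gamma$, $t>0$.

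\emph{Steps 2--3: transmutation and boundary control.} By Kannai's transmutation identity $e^{-tH_j}=\tfrac{1}{\sqrt{\pi t}}\int_0^\infty e^{-s^2/4t}\cos(s\sqrt{H_j})\,ds$, the heat kernel $p_j(t,\cdot,\cdot)$ is the Gauss--Weierstrass transform in $s$ of the wave fundamental solution $\cos(s\sqrt{H_j})(\cdot,\cdot)$ of $(\partial_s^2-\Delta_{g_j}+V_j)w=0$. The Gauss--Weierstrass transform is injective on tempered distributions (on the Fourier side this is injectivity of the Laplace transform), so $p_1=p_2$ on $\Gamma\times\Gamma\times\R_+$ upgrades to $\cos(s\sqrt{H_1})(x,y)=\cos(s\sqrt{H_2})(x,y)$ for $x,y\in\Gamma$, $s\in\R$; integrating in $s$, the local source-to-solution maps of the wave equation \eqref{eq: a3} (Dirichlet data on $\partial\Omega$, sources supported in $\Gamma\times\R_+$, observation on $\Gamma$) agree. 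Finally, this is exactly the input for the boundary control method in its version for interior source-to-solution data: using finite speed of propagation (unit speed for $g_j$), Tataru's sharp unique continuation theorem for $\partial_t^2-\Delta_{g_j}+V_j$ (applicable since the coefficients are $t$-independent, hence analytic in $t$), and the Belishev--Kurylev reconstruction, the wave source-to-solution map on $\Gamma$ determines the Riemannian manifold with boundary $(\overline\Omega,g_j)$ together with the potential $V_j$, up to a diffeomorphism fixing $\overline\Gamma$ pointwise. Since $\Gamma$ and $\Omega\setminus\overline\Gamma$ are connected, waves from $\Gamma$ reach all of $\Omega$ (and $\partial\Omega$), so the reconstruction is global; comparing the two reconstructions yields a diffeomorphism $\Psi\colon\overline\Omega\to\overline\Omega$ with $\Psi|_{\overline\Gamma}=\id_{\overline\Gamma}$, $g_1=\Psi^\ast g_2$ and $V_1=V_2\circ\Psi$.

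\emph{Main obstacle.} The crux is Step 1: one must genuinely recover the spectral data of $H_j$ — not merely its compression to $L^2(\Gamma)$ — from the single nonlocal measurement $\mathcal S^\Gamma_{g_j,V_j}$, and this is precisely where the nonlocality (the extra $y$-variable of the extension, exploited through unique continuation across $\Gamma\times\{0\}$, which crucially uses $g_1=g_2$ and $V_1=V_2$ on $\Gamma$) is indispensable; carrying a nonzero potential $V$ through this step is the new feature compared with \cite{feizmohammadi2021fractional}. Step 3, although essentially an off-the-shelf package, is still delicate because the boundary $\partial\Omega$ and the geometry near it are a priori unknown, so one must reconstruct a manifold with boundary rather than a metric on a fixed set.
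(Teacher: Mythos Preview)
Your proof is correct and arrives at the same conclusion as the paper, but your Step~1 follows a genuinely different route to the heat-kernel identity on $\Gamma\times\Gamma$. The paper (Lemma~\ref{Lemma: same heat kernel}) never passes through the extended function $U_j$ or elliptic unique continuation in $\Gamma\times\R_+$; instead it writes $\mathsf P_{g_j,V_j}^{-1/2}f=\frac{1}{\Gamma(1/2)}\int_0^\infty e^{-t\mathsf P_{g_j,V_j}}f\,\frac{dt}{t^{1/2}}$, applies this with $f$ replaced by $(-\Delta_g+V)^kf$, commutes the power through the semigroup to produce $\partial_t^k$, integrates by parts $k$ times (carefully justifying the vanishing boundary terms via Gaussian bounds and the separation of the support of $f$ from the observation point), and finally inverts a Laplace transform followed by parabolic unique continuation for $\partial_t+\mathsf P_{g,V}$ on $\Gamma$. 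Your argument is shorter and conceptually cleaner: it exploits the very structure of the Caffarelli--Silvestre extension, needs only elliptic unique continuation from Cauchy data on the non-characteristic piece $\Gamma\times\{0\}$ (this is exactly where the hypothesis $g_1|_\Gamma=g_2|_\Gamma$, $V_1|_\Gamma=V_2|_\Gamma$ enters), and then reads off the interior spectral data by uniqueness of Dirichlet series. The paper's route, on the other hand, is closer to \cite{feizmohammadi2021fractional} and generalizes verbatim to powers $s\in(0,1)$ via the same semigroup formula, whereas your route would require UCP from Cauchy data for the degenerate extension in that case. Steps~2 and~3 of your argument (Kannai transmutation and reduction to the wave inverse problem) coincide with the paper's Lemmas~\ref{Lemma: S-t-S heat wave} and Theorem~\ref{Thm: Wave}, which ultimately invoke \cite{KOP18}; citing that reference would make your Step~3 precise.
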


	\subsection{Strategy of proof} 
	Next, let us explain our approach to prove Theorem \ref{Thm: Main} (cf.~\ref{IP1}). \\
	
	\textit{Step 1. Boundary determination.} In the first step, we establish a novel boundary determination result, which shows that ND map on $\Gamma$, denoted by $\Lambda_{g,V}^\Gamma$, determines the metric $g$ and the potential $V$ on $\Gamma$. In order to achieve this goal, we will construct suitable approximate solutions for the anisotropic Schr\"odinger equation \eqref{eq: main} with inhomogeneous Neumann boundary condition on the bottom $\Omega\times\{0\}$ and homogeneous Dirichlet boundary condition on the lateral boundary $\partial\Omega\times (0,\infty)$.
	
	\textit{Step 2. Relation to a nonlocal elliptic inverse problem.} In the next step, we relate via the Caffarelli--Silvestre type extension technique \cite{CS07,stinga2010extension} (see Section \ref{sec: C-S extension}) the inverse problem for the Schr\"odinger equation with an inverse problem for the nonlocal elliptic equation 
	\begin{equation}
		\label{eq: intro nonlocal}
		\left(-\Delta_g+V\right)^{1/2}v=f\text{ in }\Omega,
	\end{equation}
	where the measurements are encoded in the \emph{source-to-solution map}. The nonlocality of this inverse problem allows us to recover the associated heat kernel of the heat operator $\p_t -\Delta_g +V$ on $\Gamma\times (0,\infty)$. This is partially inspired by the work \cite{feizmohammadi2021fractional} and will be utilized in the proof of our main result (cf.~\ref{IP2}).
	
	\textit{Step 3. Reduction to an inverse problem for a wave equation.} In the third step, by combining the knowledge of the heat kernel with the Kannai type transmutation formula, we relate the nonlocal inverse problem for \eqref{eq: intro nonlocal} to an inverse problem for the wave equation 
	\begin{equation}
		\label{eq: intro wave eq}
		\left(\partial_t^2-\Delta_g+V\right)w=F\text{ in }\Omega\times (0,\infty),
	\end{equation}
	where the measurement operator is again the source-to-solution map and the wave $w$ vanishes on the lateral boundary $\partial\Omega$ and has zero initial conditions (cf.~\ref{IP3}). By relating this measurement map with a restricted Dirichlet-to-Neumann (DN) map for the wave equation \eqref{eq: intro wave eq} and using existing uniqueness results for wave equations (cf.~\cite{KOP18}) we can finally determine $(g,V)$ on $\Omega\setminus\Gamma$.

	Finally, let us remark that for Calder\'on type inverse problems, many research articles establish unique determination results by using \emph{complex geometrical optics} (CGO) solutions. For example in the classical Calder\'on problem for the Schr\"odinger equation $-\Delta+q$, they can be used together with a suitable integral identity to show that the Fourier transform of the difference of the potentials vanishes. The above outlined approach does not require these special solutions, but let us emphasize that the boundary determination result also relies on oscillating approximate solutions (Lemma~\ref{Lemma: approx sol}) and appropriate integral identities (Theorem~\ref{Thm: BD}).

	\subsection{Organization of the paper} The paper is organized as follows. In Section \ref{sec: preliminary}, we define the function spaces used throughout this work and prove the well-posedness of \eqref{eq: main}, so that the corresponding localized ND map can be defined rigorously.  In Section \ref{sec: boundary determination}, we show that the localized ND map $\Lambda_{g,V}^{\Gamma}$ determines both $g$ and $V$ on the open set $\Gamma$, which can be viewed as a boundary determination result. We give a characterization of the anisotropic Schr\"odinger equation and the associated nonlocal elliptic equation in Section \ref{sec: local to nonlocal}. We also  transfer our local inverse problem to a nonlocal inverse problem in this section and show that the corresponding heat kernel is determined. In Section \ref{sec: wave}, we use a Kannai type transmutation formula together with the known heat kernels to transfer the information from the elliptic nonlocal inverse problem to an inverse problem for a wave equation. This inverse problem is eventually solved by using existing unique determination results for wave equations.
	Furthermore, in the Appendices \ref{sec: appendix_elliptic}, \ref{sec: appendix_Fractional powerts of elliptic operators} and \ref{sec: appendix_wave} we collect some proofs of necessary background material, which we used throughout the article.

	\section{Preliminaries}\label{sec: preliminary}
	
	In this section we collect some fundamental material which will be utilized throughout our work.
	
	\subsection{Function spaces}
	\label{subsec: function spaces}
	
	If $U$ is an open subset of some Euclidean space $\R^m$, we denote by $L^2(U)$ and $H^1(U)$ the usual Lebesgue and Sobolev spaces with respect to the Lebesgue measure. These are Hilbert spaces, carry the norms
	\begin{equation}	\label{eq: classical Lebesgue, Sobolev}
		\begin{split}
			\|u\|_{L^2(U)}&\vcentcolon =\left(\int_{U}|u|^2\,dx\right)^{1/2}, \\
			\|u\|_{H^1(U)}&\vcentcolon =\left(\|u\|^2_{L^2(U)}+\|\nabla u\|^2_{L^2(U)}\right)^{1/2},
		\end{split}
	\end{equation}
	and the related inner products are defined via the polarization identity. Here $\nabla$ denotes the usual gradient with respect to the Euclidean metric $h_{ij}=\delta_{ij}$. If $U$ has a Lipschitz boundary, then clearly we have a well-defined (bounded) trace operator $H^1(U)\ni u\mapsto u|_{\partial U}\in L^2(\partial U,d\mathcal{H}^{m-1})$, where $d\mathcal{H}^{m-1}$ is the $(m-1)$-dimensional Hausdorff measure, and its image coincides with the Slobodeckij space $H^{1/2}(\partial U)$, that is the space of functions $v$ on $\partial U$ such that
	\begin{equation}
		\label{eq: Slobodeckij space}
		\|v\|_{H^{1/2}(\partial U)}\vcentcolon = \LC \|v\|^2_{L^2(\partial U)}+[v]^2_{H^{1/2}(\partial U)}\RC ^{1/2}<\infty,
	\end{equation}
	where $[\cdot]_{H^{1/2}(\partial U)}$ is the Gagliardo seminorm given by
	\begin{equation}
		\label{eq: Gagliardo seminorm}
		[v]_{H^{1/2}(\partial U)}\vcentcolon = \left(\int_{\partial U\times \partial U}\frac{|v(x)-v(y)|^2}{|x-y|^{m}}\, d\mathcal{H}^{m-1}(x)d\mathcal{H}^{m-1}(y)\right)^{1/2}.
	\end{equation}
	The dual space of $H^{1/2}(\partial U)$ is denoted by $H^{-1/2}(\partial U)$. For any open set $\Gamma\subset\partial U$, the spaces $H^{1/2}(\Gamma)$ are defined exactly as in \eqref{eq: Slobodeckij space} and \eqref{eq: Gagliardo seminorm} up to replacing $\partial U$ by $\Gamma$. If $u\in H^{-1/2}(\partial U)$ is supported in $\overline{\Gamma}$, where $\Gamma\subset\partial U$ is a given open set, then we say $u$ belongs to the space $H^{-1/2}(\overline{\Gamma})$.
	Next, let us observe that the trace operator is bounded as a map from $H^1(U)$ to $H^{1/2}(\partial U)$.
	Furthermore, for any open set $U\subset\R^m$ we define 
	\begin{equation}
		H^1_0(U)\vcentcolon =\text{closure of } C_c^\infty(U) \text{ in }H^1(U),
	\end{equation}
	and if $U$ is a Lipschitz domain, then $H^1_0(U)$ coincides with the kernel of the trace operator.
	
	Next, we introduce some relevant notation for the Riemannian setting. If $U\subset\R^m$ is a given open set with coordinates $\LC x^1,\ldots,x^m\RC$, Riemannian metric $h=\LC h_{ij}\RC$ and inverse $h^{-1}=\LC h^{ij}\RC $, then we denote the induced Riemannian measure by 
	\begin{equation}
		\label{eq: Riemannian measure}
		dV_h\vcentcolon =|h|^{1/2}dx^1\ldots dx^m
	\end{equation}
	with $|h|=\det (h)$ and the inner products of vector fields and 1-forms by
	\begin{equation}
		\label{eq: g products}
		X\cdot Y\vcentcolon =h_{ij}X^i Y^j,\quad \omega\cdot\eta\vcentcolon =h^{ij}\omega_i\eta_j,
	\end{equation}
	where $X=X^i\partial_i$, $Y=Y^j\partial_j$, $\omega=\omega_i dx^{i}$ and $\eta=\eta_j dx^j$. The latter definition is consistent with the musical isomorphism between the tangent and cotangent space, which reads in coordinates $X_i=g_{ij}X^j$. As usual we set $|X|=\sqrt{X\cdot X}$ and $|\omega|=\sqrt{\omega\cdot\omega}$, when $X$ is a vector field and $\omega$ a 1-form. We believe that these notations will not lead to any confusion as it is always clear from the context to which we are referring to. In particular, if $u,v$ are functions on $U$ and $d$ denotes the exterior derivative, we have
	\[
	du\cdot dv=h^{ij}\partial_i u \partial_j v \quad \text{and}\quad 	du\cdot \xi=h^{ij}\LC\partial_i u\RC \xi_j ,
	\]
	for any $\xi =(\xi_1,\ldots, \xi_n)\in \R^n$.
	Furthermore, we set 
	\begin{equation}
		\label{eq: L2 riemannian}
		\|u\|_{L^2(U;dV_h)}\vcentcolon= \left(\int_U |u|^2 dV_h\right)^{1/2}
	\end{equation}
	and 
	\begin{equation}
		\label{eq: H1 riemannian}
		\|u\|_{H^1(U;dV_h)}\vcentcolon =\left(\|u\|^2_{L^2(U;dV_h)}+\|du\|^2_{L^2(U;dV_h)}\right)^{1/2}
	\end{equation}
	for functions $u$ on $U$. Note that if the (smooth) Riemannian metric $h=(h_{ij})$ is uniformly elliptic (fulfilling the condition \eqref{ellipticity}), then one clearly has
	\begin{equation}
		\label{eq: equivalence}
		\|u\|_{L^2(U)}\sim \|u\|_{L^2(U;dV_h)}\quad \text{and}\quad \|\nabla v\|_{L^2(U)}\sim \|dv\|_{L^2(U;dV_h)}
	\end{equation}
	for all $u\in L^2(U)$ and $v\in H^1(U)$, where $\sim$ indicates equivalence of norms. In other words, there are positive constants $c,C$ independent of $u$ such that  
	\[
	c\|u\|_{L^2(U;dV_h)} \leq \|u\|_{L^2(U)} \leq C\|u\|_{L^2(U;dV_h)}.
	\]
	Clearly, similar statements hold for the higher order spaces $H^k(U)$ and $H^k(U,dV_h)$ for $k\in \N$.
	
	Finally, we introduce a function space consisting of functions with vanishing trace on part of the boundary, which is adapted to our problem \eqref{eq: main}. For this assume that $\Omega\subset\R^n$ is a Lipschitz domain carrying a uniformly elliptic Riemannian metric $g=(g_{ij})$ with canonical extension $\widetilde{g}$ to $\Omega\times \R_+$ (see \eqref{metric g ext}). Moreover, let $dV_g$, $dV_{\widetilde{g}}$ be the Riemannian measures on $\Omega$ and $\Omega\times \R_+$, respectively. Then we define
	\begin{equation}
		\label{eq: vanishing on lateral boundary}
		H^1_0(\Omega\times [0,\infty))\vcentcolon = \text{closure of }C_c^1(\Omega\times [0,\infty)) \text{ in }H^1(\Omega\times [0,\infty)).
	\end{equation}
	This function space will play on the one hand the role of the solution space and on the other hand the space of test functions in the weak formulations for our mixed boundary value problems.
	\subsection{Well-posedness for the elliptic equation}
	
	Let us start by defining the bilinear form related to the PDE 
	\begin{align}
		\label{eq: main well-posedness}
		\begin{cases}
			\LC -\Delta_{\widetilde{ g}} +V\RC u=0 &\text{ in }\Omega \times \R_+,\\
			-\p_{y}u=f&\text{ on }\Omega \times \{0\},\\
			u=0 & \text{ on }\p \Omega \times \R_+.
		\end{cases}
	\end{align}
	
	\begin{proposition}[Bilinear form]
		\label{def: bilinear form}
		Let $\Omega\subset\R^n$ be a Lipschitz domain endowed with a uniformly elliptic Riemannian metric $g=(g_{ij})$ and extension $\widetilde{g}$ to $\Omega\times \R_+$. Suppose that $V\geq 0$ is a bounded potential. Then the map $B_{g,V}\colon H^1_0(\Omega\times [0,\infty))\times H^1_0(\Omega\times [0,\infty))\to\R$ given by
		\begin{equation}
			\label{bilinear form}
			\begin{split}
				B_{g, V}(u,\varphi):=\int_{\Omega \times \R_+} \LC du\cdot d\varphi +Vu\varphi\RC dV_{\widetilde{g}}
			\end{split}
		\end{equation}
		is bounded, coercive bilinear form. 
	\end{proposition}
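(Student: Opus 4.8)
The plan is to verify the three required properties — bilinearity, boundedness, and coercivity — in turn, with the bulk of the work going into coercivity because the domain $\Omega\times\R_+$ is unbounded in the vertical direction.

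Bilinearity of $B_{g,V}$ is immediate from the linearity of the exterior derivative and of the integral. For boundedness, I would apply the Cauchy--Schwarz inequality (pointwise in the $\widetilde{g}$-inner product, then in $L^2(\Omega\times\R_+; dV_{\widetilde{g}})$) to the two terms separately, using $0\le V\le \|V\|_{L^\infty}$ to control the zeroth-order term, which gives
\[
|B_{g,V}(u,\varphi)|\le \|du\|_{L^2(\Omega\times\R_+;dV_{\widetilde{g}})}\|d\varphi\|_{L^2(\Omega\times\R_+;dV_{\widetilde{g}})}+\|V\|_{L^\infty}\|u\|_{L^2(\Omega\times\R_+;dV_{\widetilde{g}})}\|\varphi\|_{L^2(\Omega\times\R_+;dV_{\widetilde{g}})}.
\]
Since $\widetilde{g}=\mathrm{diag}(g,1)$ is uniformly elliptic on $\Omega\times\R_+$ with the same ellipticity constant as $g$, the norm equivalences \eqref{eq: equivalence} (applied with $U=\Omega\times\R_+$ and $h=\widetilde{g}$) turn the right-hand side into a bound by a constant times $\|u\|_{H^1(\Omega\times[0,\infty))}\|\varphi\|_{H^1(\Omega\times[0,\infty))}$.

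For coercivity the \emph{key point} is a Poincaré inequality on $\Omega\times\R_+$ that exploits the vanishing of the trace on the lateral boundary $\partial\Omega\times\R_+$ (but not on the bottom $\Omega\times\{0\}$). I would first prove it for $u\in C_c^1(\Omega\times[0,\infty))$: for each fixed $y\ge 0$ the function $x\mapsto u(x,y)$ lies in $H^1_0(\Omega)$, so the usual Poincaré inequality on the bounded domain $\Omega$ yields $\int_\Omega |u(x,y)|^2\,dx\le C_\Omega \int_\Omega |\nabla_x u(x,y)|^2\,dx \le C_\Omega\int_\Omega |\nabla u(x,y)|^2\,dx$ with $C_\Omega$ depending only on $\Omega$; integrating in $y\in\R_+$ and using Fubini gives $\|u\|_{L^2(\Omega\times\R_+)}\le C_\Omega^{1/2}\|\nabla u\|_{L^2(\Omega\times\R_+)}$, which extends to all of $H^1_0(\Omega\times[0,\infty))$ by density. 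Hence $\|u\|_{H^1(\Omega\times[0,\infty))}^2\le (1+C_\Omega)\|\nabla u\|_{L^2(\Omega\times\R_+)}^2$. Using $V\ge 0$ to drop the nonnegative potential term, and then the ellipticity bound $\|\nabla u\|_{L^2}^2\lesssim \|du\|_{L^2(dV_{\widetilde{g}})}^2$ (again \eqref{eq: equivalence} for $\widetilde{g}$), we get
\[
B_{g,V}(u,u)=\int_{\Omega\times\R_+}\bigl(|du|^2+Vu^2\bigr)\,dV_{\widetilde{g}}\ge \int_{\Omega\times\R_+}|du|^2\,dV_{\widetilde{g}}\ge c\,\|\nabla u\|_{L^2(\Omega\times\R_+)}^2\ge \frac{c}{1+C_\Omega}\,\|u\|_{H^1(\Omega\times[0,\infty))}^2,
\]
which is the desired coercivity estimate.

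The only genuinely delicate point is the slice-wise Poincaré argument and the density step behind it: one must check that restriction to a horizontal slice $\{y=\text{const}\}$ maps $C_c^1(\Omega\times[0,\infty))$ into $H^1_0(\Omega)$ with the $x$-gradient behaving correctly, and that the Poincaré constant can be taken uniform in $y$ (which it can, since it depends only on the diameter of $\Omega$). Everything else is a routine application of Cauchy--Schwarz, the boundedness of $V$, and the norm equivalences already recorded in \eqref{eq: equivalence}.
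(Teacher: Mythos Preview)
Your proof is correct and follows the same overall structure as the paper: bilinearity is immediate, boundedness comes from Cauchy--Schwarz together with the norm equivalences \eqref{eq: equivalence}, and coercivity from $V\ge 0$, a Poincar\'e inequality on $H^1_0(\Omega\times[0,\infty))$, and again \eqref{eq: equivalence}.

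The one genuine difference is how the Poincar\'e inequality is obtained. You argue slice-wise: for each fixed $y\ge 0$ the restriction lies in $H^1_0(\Omega)$, apply the standard Poincar\'e inequality on the bounded domain $\Omega$, and integrate in $y$. The paper instead (Theorem~\ref{thm: poincare}) uses a first-order reflection across $\{y=0\}$ to pass to $H^1_0(\Omega\times\R)$, invokes the classical Poincar\'e inequality there (valid since $\Omega$ is bounded in the $x$-directions), and transfers back via the reflection bounds. Your route is more elementary and transparent, avoiding the reflection lemma entirely; the paper's reflection machinery, while heavier here, is set up because it is reused elsewhere (e.g.\ in the regularity argument of Claim~\ref{claim regularity}). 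Either argument yields the needed estimate.
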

	
	\begin{proof}
		The bilinearity is obvious and the boundedness is an immediate consequence of the uniform ellipticity of $g$, the equivalence \eqref{eq: equivalence} and H\"older's inequality. The coercivity on the other hand follows by $V\geq 0$, the Poincar\'e inequality (Theorem~\ref{thm: poincare}) and again the uniform ellipticity of $g$ as well as the equivalence \eqref{eq: equivalence}.
	\end{proof}
	
	Now, by the Lax--Milgram theorem we can easily establish the following well-posedness result.
	
	\begin{lemma}[Well-posedness]
		\label{lemma: well-posedness}
		Let $\Omega\subset\R^n$ be a Lipschitz domain endowed with a uniformly elliptic Riemannian metric $g=\LC g_{ij}\RC$ and extension $\widetilde{g}$ to $\Omega\times \R_+$ given by \eqref{tilde g(x)}. Suppose that $V\geq 0$ is a bounded potential. Then for any $f\in H^{-1/2}(\overline{\Omega}\times \{0\})$, there exists a unique solution $u=u_f\in H^1_0(\Omega\times [0,\infty))$ of \eqref{eq: main well-posedness}, that is there holds
		\begin{equation}
			\label{eq: weak sols}
			B_{g,V}(u,\varphi)=\big\langle f, |g|^{1/2}\varphi|_{\Omega\times\{0\}}\big\rangle
		\end{equation}
		for all $\varphi\in H^1_0(\Omega\times [0,\infty))$, where $\langle \cdot,\cdot\rangle$ denotes the duality pairing between $H^{1/2}(\Omega\times \{0\})$ and $H^{-1/2}(\overline{\Omega}\times \{0\})$. Moreover, the unique solution $u$ satisfies the estimate
		\begin{equation}
			\label{eq: continuity of sols}
			\|u\|_{H^1(\Omega\times\R_+)}\leq C\|f\|_{H^{-1/2}(\overline{\Omega}\times\{0\})}
		\end{equation}
		for some $C>0$ independent of $u$ and $f$.
	\end{lemma}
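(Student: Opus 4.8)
The plan is to deduce existence and uniqueness from the Lax--Milgram theorem applied to the bilinear form $B_{g,V}$ of Proposition~\ref{def: bilinear form}, which has already been shown to be bounded and coercive on the Hilbert space $H^1_0(\Omega\times[0,\infty))$, and then to read off the a priori bound \eqref{eq: continuity of sols} directly from coercivity. Beyond invoking Proposition~\ref{def: bilinear form}, the only substantive point is to verify that the right-hand side of \eqref{eq: weak sols} defines a bounded linear functional on the solution space.

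First I would fix $f\in H^{-1/2}(\overline{\Omega}\times\{0\})$ and set $\ell(\varphi)\vcentcolon=\langle f,|g|^{1/2}\varphi|_{\Omega\times\{0\}}\rangle$ for $\varphi\in H^1_0(\Omega\times[0,\infty))$. To see that $\ell$ is bounded, note that the trace operator on $\Omega\times\R_+$ maps $H^1(\Omega\times\R_+)$ boundedly into $H^{1/2}$ of the boundary; restricting to the bottom and using that elements of $H^1_0(\Omega\times[0,\infty))$ vanish on the lateral boundary $\partial\Omega\times[0,\infty)$, the trace $\varphi|_{\Omega\times\{0\}}$ lies in $H^{1/2}(\Omega\times\{0\})$ with $\|\varphi|_{\Omega\times\{0\}}\|_{H^{1/2}(\Omega\times\{0\})}\leq C\|\varphi\|_{H^1(\Omega\times\R_+)}$; since $|g|^{1/2}$ is smooth, bounded and bounded away from zero on $\overline{\Omega}$, multiplication by it is a bounded operator on $H^{1/2}(\Omega\times\{0\})$; and since $f$ is supported in $\overline{\Omega}\times\{0\}$, the duality pairing is well-defined and controlled by $\|f\|_{H^{-1/2}(\overline{\Omega}\times\{0\})}$. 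Combining these estimates gives $|\ell(\varphi)|\leq C\|f\|_{H^{-1/2}(\overline{\Omega}\times\{0\})}\|\varphi\|_{H^1(\Omega\times\R_+)}$.

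Lax--Milgram then yields a unique $u=u_f\in H^1_0(\Omega\times[0,\infty))$ with $B_{g,V}(u,\varphi)=\ell(\varphi)$ for all $\varphi\in H^1_0(\Omega\times[0,\infty))$, which is exactly \eqref{eq: weak sols}. To confirm that this weak solution solves the mixed boundary value problem \eqref{eq: main well-posedness} I would: test against $\varphi\in C_c^\infty(\Omega\times\R_+)$ to obtain $(-\Delta_{\widetilde{g}}+V)u=0$ in $\Omega\times\R_+$ in the distributional sense; observe that the homogeneous Dirichlet condition on $\partial\Omega\times\R_+$ is encoded in the solution space $H^1_0(\Omega\times[0,\infty))$; and finally integrate by parts against a general $\varphi$, using the interior equation, to identify the conormal derivative on the bottom as $-\partial_y u=f$ on $\Omega\times\{0\}$ (the factor $|g|^{1/2}$ arising as the density of the measure induced by $dV_{\widetilde{g}}$ on the bottom). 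For the estimate, take $\varphi=u$: coercivity gives $c\|u\|_{H^1(\Omega\times\R_+)}^2\leq B_{g,V}(u,u)=\ell(u)\leq C\|f\|_{H^{-1/2}(\overline{\Omega}\times\{0\})}\|u\|_{H^1(\Omega\times\R_+)}$, and dividing by $\|u\|_{H^1(\Omega\times\R_+)}$ yields \eqref{eq: continuity of sols}.

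I do not expect a genuine obstacle here: this is the standard Lax--Milgram scheme, and all the real work has been front-loaded into Proposition~\ref{def: bilinear form} and Theorem~\ref{thm: poincare}. The only points that deserve a moment's care are (i) the description of the trace space on the \emph{open} bottom $\Omega\times\{0\}$ and its duality with $H^{-1/2}(\overline{\Omega}\times\{0\})$, which I would handle in line with the conventions of Section~\ref{subsec: function spaces}, and (ii) the fact that coercivity holds on the \emph{unbounded} cylinder $\Omega\times\R_+$ -- this is precisely where the Poincar\'e inequality for functions vanishing on the lateral boundary enters, and it is already supplied through Proposition~\ref{def: bilinear form}.
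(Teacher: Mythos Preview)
Your proposal is correct and follows essentially the same route as the paper: verify that $\ell_f(\varphi)=\langle f,|g|^{1/2}\varphi|_{\Omega\times\{0\}}\rangle$ is a bounded functional via the trace theorem, invoke Lax--Milgram using the boundedness and coercivity already established in Proposition~\ref{def: bilinear form}, and read off the continuity estimate. The paper's proof is slightly terser---it omits your verification that the weak solution matches the classical mixed boundary value problem, treating \eqref{eq: weak sols} as the definition---but the argument is the same.
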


	\begin{proof}
		First of all let us observe that the map $\ell_f\colon H^1_0(\Omega\times [0,\infty))\to \R$ defined via
		\[
		\ell_f(\varphi)=\big\langle f,|g|^{1/2}\varphi|_{\Omega\times \{0\}}\big\rangle
		\]
		for $\varphi\in H^1_0(\Omega\times [0,\infty))$ is a bounded linear map. In fact, there holds
		\[
		\begin{split}
			|\ell_f(\varphi)|&\leq C\|f\|_{H^{-1/2}(\overline{\Omega}\times \{0\})}\left\|\varphi|_{\Omega\times \{0\}}\right\|_{H^{1/2}(\Omega\times \{0\})}\\
			&\leq C\|f\|_{H^{-1/2}(\overline{\Omega}\times \{0\})}\|\varphi\|_{H^1(\Omega\times\R_+)}
		\end{split}
		\]
		for all $\varphi\in H^1_0(\Omega\times [0,\infty))$, where we used the trace theorem. By Proposition~\ref{def: bilinear form} we can apply the Lax--Milgram theorem and can conclude that there exists a unique $u\in H^1_0(\Omega\times [0,\infty))$ satisfying \eqref{eq: weak sols} and 
		\[
		\|u\|_{H^1(\Omega\times\R_+)}\leq C\left\|\ell_f \right\|_{(H^1_0(\Omega\times[0,\infty))^*}\leq C\|f\|_{H^{-1/2}(\overline{\Omega}\times\{0\})}.
		\]
		This proves the assertion.
	\end{proof}
	
	One has the following elliptic estimate:
	\begin{proposition}[Elliptic estimate]
		\label{Prop: ellitpic estimate}
		Let $\Omega\subset\R^n$ be a Lipschitz domain endowed with a uniformly elliptic Riemannian metric $g=(g_{ij})$ and extension $\widetilde{g}$ to $\Omega\times \R_+$ given by \eqref{tilde g(x)}. Suppose that $V\geq 0$ is a bounded potential, $G\in L^2(\Omega\times \R_+)$ and $f\in H^{-1/2}(\Omega\times \{0\})$. If $v\in H^1_0(\Omega\times [0,\infty))$ solves
		\begin{equation}\label{eq: equation with source}
			\begin{cases}
				\LC -\Delta_{\widetilde{ g}} +V\RC v=G &\text{ in }\Omega \times \R_+,\\
				-\p_{y}v=f&\text{ on }\Omega \times \{0\},\\
				v=0 & \text{ on }\p \Omega \times \R_+,
			\end{cases}
		\end{equation}
		then there holds
		\begin{equation}
			\label{eq: elliptic estimate}
			\|v\|_{H^1(\Omega\times\R_+)}\leq C\big( \|G\|_{L^2(\Omega\times\R_+)}+\|f\|_{H^{-1/2}(\overline{\Omega}\times\{0\})}\big),
		\end{equation}
		for some constant $C>0$ independent of $v$, $G$ and $f$.
	\end{proposition}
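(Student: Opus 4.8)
The plan is to prove \eqref{eq: elliptic estimate} by a routine energy argument: I will test the weak formulation of \eqref{eq: equation with source} against the solution $v$ itself and then invoke the coercivity of the bilinear form $B_{g,V}$ established in Proposition~\ref{def: bilinear form}.

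First I would record the weak formulation adapted to the presence of the source term. Proceeding exactly as in the derivation of \eqref{eq: weak sols} --- multiplying the equation by $\varphi\in H^1_0(\Omega\times[0,\infty))$, integrating over $\Omega\times\R_+$ against the Riemannian measure $dV_{\widetilde g}$, integrating by parts, and using the Neumann condition $-\partial_y v=f$ on $\Omega\times\{0\}$ together with $\varphi|_{\partial\Omega\times\R_+}=0$ --- one finds that $v$ satisfies
\[
B_{g,V}(v,\varphi)=\int_{\Omega\times\R_+}G\,\varphi\,dV_{\widetilde g}+\big\langle f,|g|^{1/2}\varphi|_{\Omega\times\{0\}}\big\rangle
\]
for every $\varphi\in H^1_0(\Omega\times[0,\infty))$; here one uses that $|\widetilde g|=|g|$ by the block form \eqref{tilde g(x)}, so that $dV_{\widetilde g}=|g|^{1/2}\,dx\,dy$, which is why the factor $|g|^{1/2}$ appears in the boundary pairing just as in Lemma~\ref{lemma: well-posedness}.

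Next I would take $\varphi=v$. By coercivity there is $c>0$ with $c\|v\|_{H^1(\Omega\times\R_+)}^2\le B_{g,V}(v,v)$. On the right-hand side, Cauchy--Schwarz together with the norm equivalence \eqref{eq: equivalence} gives $\big|\int_{\Omega\times\R_+}Gv\,dV_{\widetilde g}\big|\le C\|G\|_{L^2(\Omega\times\R_+)}\|v\|_{H^1(\Omega\times\R_+)}$, and, exactly as in the proof of Lemma~\ref{lemma: well-posedness}, the trace theorem yields $\big|\langle f,|g|^{1/2}v|_{\Omega\times\{0\}}\rangle\big|\le C\|f\|_{H^{-1/2}(\overline\Omega\times\{0\})}\|v\|_{H^1(\Omega\times\R_+)}$. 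Combining these,
\[
c\|v\|_{H^1(\Omega\times\R_+)}^2\le C\big(\|G\|_{L^2(\Omega\times\R_+)}+\|f\|_{H^{-1/2}(\overline\Omega\times\{0\})}\big)\|v\|_{H^1(\Omega\times\R_+)},
\]
and dividing by $\|v\|_{H^1(\Omega\times\R_+)}$ (the inequality being trivial when $v=0$) gives \eqref{eq: elliptic estimate}; equivalently one may use Young's inequality to split the right-hand side and absorb a multiple of $\|v\|_{H^1(\Omega\times\R_+)}^2$ into the left-hand side.

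I do not expect any genuine obstacle. The only point that warrants a line of justification is that a distributional solution of the mixed boundary value problem \eqref{eq: equation with source} really does satisfy the displayed weak identity including the term $\int_{\Omega\times\R_+}G\varphi\,dV_{\widetilde g}$ --- that is, that the boundary contribution coming from the integration by parts is precisely $\langle f,|g|^{1/2}\varphi|_{\Omega\times\{0\}}\rangle$ --- but this is the same computation that underlies the notion of weak solution already used in Lemma~\ref{lemma: well-posedness}, now performed with a nonzero right-hand side.
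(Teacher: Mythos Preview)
Your proposal is correct and follows essentially the same approach as the paper: write the weak formulation with the source term, test against $\varphi=v$, use the coercivity of $B_{g,V}$ from Proposition~\ref{def: bilinear form} together with Cauchy--Schwarz and the trace theorem, and then cancel one factor of $\|v\|_{H^1(\Omega\times\R_+)}$.
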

	
	\begin{proof}
		Note that by assumption there holds
		\begin{equation}\label{another bilinear fomrula and ND}
			B_{g,V}(v,\varphi)=\langle G,\varphi\rangle_{L^2(\Omega\times\R_+,dV_{\tilde{g}})}+\big\langle f,|g|^{1/2}\varphi|_{\Omega\times \{0\}}\big\rangle,
		\end{equation}
		for all $\varphi\in H^1_0(\Omega\times[0,\infty))$. Using $\varphi=v$ as a test function, then the coercivity of $B_{g,V}$ (Proposition~\ref{bilinear form}) and the trace theorem imply
		\[
		\begin{split}
			c\|v\|_{H^1(\Omega\times\R_+)}^2 &\leq B_{g,V}(v,v)\\
			&\leq \|G\|_{L^2(\Omega\times \R_+,d V_{\tilde{g}})}\|v\|_{L^2(\Omega\times \R_+,d V_{\tilde{g}})} \\
			&\quad \, +\|f\|_{H^{-1/2}(\overline{\Omega}\times \{0\})}\||g|^{1/2}v\|_{H^{1/2}(\Omega\times\{0\})}\\
			&\leq C \big( \|G\|_{L^2(\Omega\times \R_+)}+\|f\|_{H^{-1/2}(\overline{\Omega}\times \{0\})}\big) \|v\|_{H^1(\Omega\times\R_+)},
		\end{split}
		\]
		for some $C>0$. Hence, we can conclude the proof.
	\end{proof}
	
	We also define the alternative bilinear form 
	\begin{equation}\label{bilinear form 2}
		\begin{split}
			&\mathcal{B}_{g,V}(u,\varphi)\vcentcolon = B_{g,V}(u,|g|^{-1/2}\varphi)\\
			&\,  =\int_{\Omega\times \R_+} \big[ \wt{g}^{-1}\nabla_{x,y} u \cdot \nabla_{x,y} \varphi + \abs{g}^{1/2}g^{-1}\nabla  \abs{g}^{-1/2}  \cdot \nabla u  \varphi +Vu\varphi  \big] dxdy,
		\end{split}
	\end{equation}	
	where $\wt g$ is given by \eqref{tilde g(x)} and the matrix $g^{-1}$ has coefficients $g^{ij}$ for $1\leq i,j\leq n$. In terms of this bilinear form, a solution $v\in H^1_0(\Omega\times [0,\infty))$ of \eqref{eq: equation with source} satisfies
	\begin{equation}
		\mathcal{B}_{g,V}(v,\varphi)= \langle G, \varphi \rangle_{L^2(\Omega\times \R_+)} + \left\langle f, \varphi|_{\Omega\times \{0\}} \right\rangle ,
	\end{equation}
	for all $\varphi \in H^1_0(\Omega\times [0,\infty))$. Here (and in the definition of $\mathcal{B}_{g,V}$) we are using that our Riemannian metric $g$ belongs to the class $C^{\infty}(\overline{\Omega};\R^{n\times n})$.

	\subsection{Neumann-to-Dirichlet map}
	
	With the well-posedness of \eqref{eq: main well-posedness} and defintion \eqref{bilinear form 2}, we can define the partial ND map.
	
	\begin{proposition}[Partial ND map]
		\label{prop: ND map}
		Let $\Omega\subset\R^n$ be a Lipschitz domain endowed with a uniformly elliptic Riemannian metric $g=\LC g_{ij}\RC $, and extension $\widetilde{g}$ to $\Omega\times \R_+$ given by \eqref{tilde g(x)}. Suppose that $0\leq V \in L^\infty(\Omega)$, and $\Gamma\Subset \Omega$ is an open set with Lipschitz boundary. Then the \emph{partial ND map} $\Lambda^{\Gamma}_{g,V}$ is given by
		\begin{equation}\label{eq: loc ND map}
			\begin{split}
				\Lambda^{\Gamma}_{g,V} \colon H^{-1/2}(\overline{\Gamma}\times \{0\})\to H^{1/2}(\Gamma\times \{0\}), \quad f\mapsto \left. u_f\right|_{\Gamma\times \{0\}},
			\end{split}
		\end{equation}
		where $u_f\in H^1_0(\Omega\times [0,\infty))$ is the unique solution to 
		\begin{align}\label{eq: eq ND map}
			\begin{cases}
				\LC -\Delta_{\widetilde{ g}} +V\RC u=0 &\text{ in }\Omega \times \R_+,\\
				-\p_{y}u=f&\text{ on }\Omega \times \{0\},\\
				u=0 & \text{ on }\p \Omega \times \R_+
			\end{cases}
		\end{align}
		(see Lemma~ \ref{lemma: well-posedness}), is a well-defined bounded map. Moreover, for any $F\in H^{-1/2}(\overline{\Gamma}\times \{0\})$ there holds
		\begin{equation}
			\label{eq: weak form ND map}
			\left\langle F,  \Lambda^{\Gamma}_{g,V}f\right\rangle=\mathcal{B}_{g,V}\LC u_F,u_f\RC ,
		\end{equation}
		where $u_F\in H^1_0(\Omega\times [0,\infty))$ is the unique solution to \eqref{eq: eq ND map} with Neumann data $F$.
	\end{proposition}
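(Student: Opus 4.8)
The plan is to obtain both assertions directly from Lemma~\ref{lemma: well-posedness}, the trace theorem, and the weak formulation \eqref{eq: weak sols} rewritten via the identity $\mathcal{B}_{g,V}(u,\varphi)=B_{g,V}(u,|g|^{-1/2}\varphi)$ recorded in \eqref{bilinear form 2}; no genuinely new ingredient is needed.

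\emph{Step 1: well-definedness and boundedness.} Given $f\in H^{-1/2}(\overline{\Gamma}\times\{0\})$, I would first view it as an element of $H^{-1/2}(\overline{\Omega}\times\{0\})$ by extension by zero, which is legitimate because its support lies in $\overline{\Gamma}\subset\overline{\Omega}$ and this does not enlarge its norm. Lemma~\ref{lemma: well-posedness} then produces a unique $u_f\in H^1_0(\Omega\times[0,\infty))$ solving \eqref{eq: eq ND map}, together with the bound $\|u_f\|_{H^1(\Omega\times\R_+)}\leq C\|f\|_{H^{-1/2}(\overline{\Gamma}\times\{0\})}$. Composing the solution map $f\mapsto u_f$ with the bounded trace operator $H^1(\Omega\times\R_+)\to H^{1/2}(\Omega\times\{0\})$ and with the bounded restriction $H^{1/2}(\Omega\times\{0\})\to H^{1/2}(\Gamma\times\{0\})$ then shows that $\Lambda^{\Gamma}_{g,V}f=u_f|_{\Gamma\times\{0\}}$ is well-defined and bounded; linearity is inherited from the linearity of \eqref{eq: eq ND map} together with the uniqueness part of Lemma~\ref{lemma: well-posedness}.

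\emph{Step 2: the bilinear identity \eqref{eq: weak form ND map}.} For $F\in H^{-1/2}(\overline{\Gamma}\times\{0\})$ let $u_F$ be the corresponding solution. Testing \eqref{eq: weak sols} written for $u_F$ against $\psi=|g|^{-1/2}\varphi$, where $\varphi\in H^1_0(\Omega\times[0,\infty))$ is arbitrary (note that $|g|^{-1/2}\varphi\in H^1_0(\Omega\times[0,\infty))$ since $g\in C^{\infty}(\overline{\Omega};\R^{n\times n})$), and recalling \eqref{bilinear form 2}, one obtains
\begin{equation*}
\mathcal{B}_{g,V}(u_F,\varphi)=\left\langle F,\varphi|_{\Omega\times\{0\}}\right\rangle\qquad\text{for all }\varphi\in H^1_0(\Omega\times[0,\infty)).
\end{equation*}
The key move is then to choose $\varphi=u_f$, which is admissible since $u_f\in H^1_0(\Omega\times[0,\infty))$; this yields $\mathcal{B}_{g,V}(u_F,u_f)=\langle F,u_f|_{\Omega\times\{0\}}\rangle$. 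Finally, because $F$ is supported in $\overline{\Gamma}\times\{0\}$, the duality pairing depends only on $u_f|_{\Gamma\times\{0\}}=\Lambda^{\Gamma}_{g,V}f$, so $\langle F,u_f|_{\Omega\times\{0\}}\rangle=\langle F,\Lambda^{\Gamma}_{g,V}f\rangle$, which is exactly \eqref{eq: weak form ND map}.

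\emph{Where the care is needed.} The argument is essentially bookkeeping, but three points deserve attention. First, since $\mathcal{B}_{g,V}$ is \emph{not} symmetric---it carries the first-order term coming from $\nabla|g|^{-1/2}$---one must test the weak equation for $u_F$ against $u_f$ and not the reverse; the reverse choice instead produces $\langle f,\Lambda^{\Gamma}_{g,V}F\rangle=\mathcal{B}_{g,V}(u_f,u_F)$, which is a genuinely different identity. Second, one must track the $|g|^{1/2}$ weights: the form $\mathcal{B}_{g,V}$ in \eqref{bilinear form 2} is precisely engineered so that the Neumann datum enters the right-hand side without the $|g|^{1/2}$ factor present in \eqref{eq: weak sols}. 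Third, the reduction of $\langle F,w\rangle$ for $w\in H^{1/2}(\Omega\times\{0\})$ to the pairing of $H^{-1/2}(\overline{\Gamma}\times\{0\})$ with $H^{1/2}(\Gamma\times\{0\})$ uses that elements of $H^{-1/2}(\overline{\Gamma}\times\{0\})$ are supported in $\overline{\Gamma}\times\{0\}$ together with the definition of $H^{1/2}(\Gamma\times\{0\})$ as a restriction space; this is standard, but it is exactly what makes the right-hand side of \eqref{eq: weak form ND map} meaningful.
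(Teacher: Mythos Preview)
Your proof is correct and follows essentially the same approach as the paper: well-definedness and boundedness come from the inclusion $H^{-1/2}(\overline{\Gamma}\times\{0\})\hookrightarrow H^{-1/2}(\overline{\Omega}\times\{0\})$, Lemma~\ref{lemma: well-posedness}, and the trace theorem, while \eqref{eq: weak form ND map} follows by testing the weak formulation for $u_F$ against $u_f$ via the alternative bilinear form $\mathcal{B}_{g,V}$. Your write-up is simply more detailed than the paper's terse justification, and your remarks about the asymmetry of $\mathcal{B}_{g,V}$ and the handling of the $|g|^{1/2}$ weights are accurate.
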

	
	\begin{proof}
		First note that $\Lambda^{\Gamma}_{g,V}$ is a well-defined map by the inclusion $H^{-1/2}(\overline{\Gamma}\times\{0\})\hookrightarrow H^{-1/2}(\Omega\times\{0\})$, Lemma~\ref{lemma: well-posedness} and the mapping properties of the trace operator. It is bounded by the trace estimates and the continuity estimate \eqref{eq: continuity of sols}. The identity  \eqref{eq: weak form ND map} is a direct consequence of the fact that $u_F$ solves \eqref{eq: eq ND map} and $u_f\in H^1_0(\Omega\times [0,\infty))$. This concludes the proof.
	\end{proof}

	\begin{remark}
		Similar to the identity \eqref{eq: weak form ND map}, we can also derive the identity 
		\begin{equation}
			\label{eq: weak form ND map 2}
			\big\langle F,  \abs{g}^{1/2}\Lambda^{\Gamma}_{g,V}f\big\rangle=B_{g,V}( u_F,u_f) ,
		\end{equation}
		for any $f, F\in H^{-1/2}(\overline{\Gamma}\times \{0\})$, where $u_f$ and $u_F\in H^1_0(\Omega\times [0,\infty))$ are the solutions to \eqref{eq: eq ND map} with Neumann data $f$ and $F$, respectively, and $B_{g,V}(\cdot,\cdot)$ is defined by \eqref{bilinear form}.
	\end{remark}

	\begin{lemma}[Integral identity]\label{Lemma: integral id}
		Let $\Lambda_{q_j,V_j}^\Gamma$ be the partial ND maps of \eqref{eq: conductivity in thm} for $j=1,2$ and suppose that \eqref{ND map agree} holds. Then we have 
		\begin{equation}\label{eq: integral id}
			\begin{split}
				\big\langle f,\left| g_1 \right|^{1/2}\Lambda^\Gamma_{g_1,V_1} f\big\rangle - \big\langle f , | g_2 |^{1/2}\Lambda^\Gamma_{g_2,V_2} f \big\rangle = ( B_{g_1,V_1}-B_{g_2,V_2})\big( u_f^{(1)}, u_f^{(2)}\big) ,
			\end{split}
		\end{equation}
		for any $f\in C^\infty_c (\Gamma)$, where $\mathcal{B}_{g_j,V_j}$ is given by  \eqref{bilinear form 2} as $g=g_j$ and $V=V_j$ ($j=1,2$), and $u_f^{(j)}$ is the solution to \eqref{eq: conductivity in thm}, for $j=1,2$.
	\end{lemma}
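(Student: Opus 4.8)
The plan is to derive \eqref{eq: integral id} by the standard Alessandrini-type bilinear identity, the only point of care being to carry along the Jacobian weights $|g_j|^{1/2}$ that enter the weak formulation of \eqref{eq: conductivity in thm}. Recall from Lemma~\ref{lemma: well-posedness} that the solution $u_f^{(j)}$ of \eqref{eq: conductivity in thm} is characterized by
\begin{equation*}
	B_{g_j,V_j}\big(u_f^{(j)},\varphi\big)=\big\langle f,\,|g_j|^{1/2}\,\varphi|_{\Omega\times\{0\}}\big\rangle,\qquad \varphi\in H^1_0(\Omega\times[0,\infty)),
\end{equation*}
that the bilinear form $B_{g_j,V_j}$ of \eqref{bilinear form} is \emph{symmetric}, and that the duality pairing $\big\langle\cdot,\cdot\big\rangle$ between $H^{1/2}(\Omega\times\{0\})$ and $H^{-1/2}(\overline{\Omega}\times\{0\})$ only sees the Dirichlet trace on $\overline{\Gamma}$ because $f\in C_c^\infty(\Gamma)$, i.e.\ $\supp f\subset\overline{\Gamma}$. (It is the symmetric form $B_{g_j,V_j}$, rather than $\mathcal{B}_{g_j,V_j}$, that is relevant here, since $B_{g_j,V_j}$ is what appears on the right-hand side of \eqref{eq: integral id}.)

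First I would take $\varphi=u_f^{(j)}$ as a test function in the weak formulation for $u_f^{(j)}$; since $u_f^{(j)}|_{\Gamma\times\{0\}}=\Lambda^\Gamma_{g_j,V_j}f$ and $\supp f\subset\overline{\Gamma}$, this reproduces \eqref{eq: weak form ND map 2} with $F=f$ and yields
\begin{equation*}
	B_{g_j,V_j}\big(u_f^{(j)},u_f^{(j)}\big)=\big\langle f,\,|g_j|^{1/2}\,\Lambda^\Gamma_{g_j,V_j}f\big\rangle,\qquad j=1,2.
\end{equation*}
Next I would cross-test: plug $\varphi=u_f^{(2)}$ into the weak formulation for $u_f^{(1)}$, and $\varphi=u_f^{(1)}$ into the weak formulation for $u_f^{(2)}$; using $\supp f\subset\overline{\Gamma}$ once more gives
\begin{align*}
	B_{g_1,V_1}\big(u_f^{(1)},u_f^{(2)}\big)&=\big\langle f,\,|g_1|^{1/2}\,\Lambda^\Gamma_{g_2,V_2}f\big\rangle,\\
	B_{g_2,V_2}\big(u_f^{(2)},u_f^{(1)}\big)&=\big\langle f,\,|g_2|^{1/2}\,\Lambda^\Gamma_{g_1,V_1}f\big\rangle.
\end{align*}

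Now the hypothesis \eqref{ND map agree}, that $\Lambda^\Gamma_{g_1,V_1}f=\Lambda^\Gamma_{g_2,V_2}f$ on $\Gamma$, enters: it permits replacing $\Lambda^\Gamma_{g_2,V_2}f$ by $\Lambda^\Gamma_{g_1,V_1}f$ in the first of the last two identities and conversely in the second, whence, by comparison with the previous display (and the symmetry of $B_{g_2,V_2}$ in the second case),
\begin{equation*}
	B_{g_1,V_1}\big(u_f^{(1)},u_f^{(2)}\big)=B_{g_1,V_1}\big(u_f^{(1)},u_f^{(1)}\big),\qquad B_{g_2,V_2}\big(u_f^{(1)},u_f^{(2)}\big)=B_{g_2,V_2}\big(u_f^{(2)},u_f^{(2)}\big).
\end{equation*}
Subtracting these two identities and inserting $B_{g_j,V_j}(u_f^{(j)},u_f^{(j)})=\langle f,|g_j|^{1/2}\Lambda^\Gamma_{g_j,V_j}f\rangle$ from before produces
\begin{equation*}
	\big\langle f,|g_1|^{1/2}\Lambda^\Gamma_{g_1,V_1}f\big\rangle-\big\langle f,|g_2|^{1/2}\Lambda^\Gamma_{g_2,V_2}f\big\rangle=B_{g_1,V_1}\big(u_f^{(1)},u_f^{(2)}\big)-B_{g_2,V_2}\big(u_f^{(1)},u_f^{(2)}\big),
\end{equation*}
which is exactly \eqref{eq: integral id}.

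I do not expect a genuine obstacle here: this is a routine bilinear identity, and every step above is either the definition of the weak solution, the symmetry of $B_{g_j,V_j}$, or the hypothesis \eqref{ND map agree}. The only two points deserving a sentence of justification are that the duality pairing may be read against the \emph{partial} ND map — legitimate precisely because $\supp f\subset\overline{\Gamma}$, so that $\langle f,|g_j|^{1/2}u_f^{(k)}|_{\Omega\times\{0\}}\rangle=\langle f,|g_j|^{1/2}\Lambda^\Gamma_{g_k,V_k}f\rangle$ for $k=1,2$ — and that the two smooth positive Jacobians $|g_1|^{1/2},|g_2|^{1/2}$ must be tracked consistently through every pairing; in particular the identity \eqref{eq: integral id} holds \emph{without} assuming a priori that $g_1=g_2$ on $\Gamma$. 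All trace and continuity facts used are those recorded in Lemma~\ref{lemma: well-posedness} and Proposition~\ref{prop: ND map}.
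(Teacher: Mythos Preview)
Your proof is correct and follows essentially the same approach as the paper: both arguments cross-test the weak formulation \eqref{eq: weak sols} with $\varphi=u_f^{(2)}$ (resp.\ $u_f^{(1)}$), invoke the hypothesis \eqref{ND map agree} to swap the ND maps inside the boundary pairing, and then subtract using the symmetry of $B_{g_j,V_j}$. Your detour through the self-pairings $B_{g_j,V_j}(u_f^{(j)},u_f^{(j)})$ is unnecessary but harmless; the paper proceeds slightly more directly by writing $\langle f,|g_1|^{1/2}\Lambda^\Gamma_{g_1,V_1}f\rangle=\langle f,|g_1|^{1/2}\Lambda^\Gamma_{g_2,V_2}f\rangle=B_{g_1,V_1}(u_f^{(1)},u_f^{(2)})$ in one chain.
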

	
	\begin{proof}
		Recall that $\Lambda_{g_j,V_j}^\Gamma f= u_f^{(j)}|_{\Gamma \times\{0\}}$, for $j=1,2$. Then by \eqref{eq: weak sols}, one has 
		\[
		\begin{split}
			\big\langle f,|g_1|^{1/2}\Lambda_{g_1,V_1}^{\Gamma}f\big\rangle&= \big\langle f,|g_1|^{1/2}\Lambda_{g_2,V_2}^{\Gamma}f\big\rangle\\
			&=\big\langle f,|g_1|^{1/2}u_f^{(2)}|_{\Gamma}\big\rangle\\
			&=B_{g_1,V_1}\big(u_f^{(1)},u_f^{(2)}\big).
		\end{split}
		\]
		By the same argument we have
		\[
		\big\langle f,|g_2|^{1/2}\Lambda_{g_2,V_2}^{\Gamma}f\big\rangle=B_{g_2,V_2}\big(u_f^{(2)},u_f^{(1)}\big).
		\]
		Using the symmetry of $B_{g_j,V_j}$, $j=1,2$, we arrive at the formula \eqref{eq: integral id} after subtracting the previous two identities.
	\end{proof}

	\section{Boundary determination}\label{sec: boundary determination}
	
	The main goal of this section is to prove that the partial ND map \eqref{ND map agree} implies that the Riemannian metrics and potentials coincide in $\Gamma$. Suppose the partial ND data $\big(\Gamma,g,V,\Lambda^{\Gamma}_{g,V}\big)$ satisfies the assumptions of Theorem~\ref{Thm: Main}, and we want to prove:

	\begin{theorem}[Local boundary determination]\label{Thm: BD}
		Let us adopt all assumptions and notations from Theorem \ref{Thm: Main}. Suppose \eqref{ND map agree} holds, then we have
		\begin{align}\label{boundary det in thm}
			g_1= g_2 \quad  \text{and}\quad  V_1 =V_2 \text{ in }\Gamma.
		\end{align}
	\end{theorem}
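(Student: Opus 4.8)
\textit{Setup.} With the notation of Theorem~\ref{Thm: Main}, the plan is to reduce \eqref{boundary det in thm} to a local statement at an arbitrary point $x_0\in\Gamma$, probed by highly oscillating solutions concentrated near $x_0$. Fix $x_0\in\Gamma$, a covector $\xi\in\R^n\setminus\{0\}$, and a cut-off $\chi\in C_c^\infty(\Gamma)$ equal to $1$ near $x_0$ and supported in a small ball around $x_0$. For a large parameter $\tau>0$ I would construct (this is the role of Lemma~\ref{Lemma: approx sol}) WKB-type approximate solutions $U^{(j)}_\tau$, $j=1,2$, of the schematic form
\[
U^{(j)}_\tau(x,y)=e^{\im\tau(\xi\cdot x+\im\beta_j y)}\big(a_0(x)+\tau^{-1}a_1^{(j)}(x)+\dots+\tau^{-N}a_N^{(j)}(x)\big)\chi(x),
\]
where $\beta_j=\beta_j(x_0,\xi):=\big(g_j^{kl}(x_0)\xi_k\xi_l\big)^{1/2}$ is the principal symbol of $-\Delta_{g_j}$ at $x_0$, so that $U^{(j)}_\tau$ decays exponentially in $y$, vanishes on $\partial\Omega\times\R_+$ (because $\chi$ is compactly supported in $\Omega$), has a common Neumann trace $-\partial_y U^{(j)}_\tau|_{y=0}=f_\tau$, and leaves a residual $R^{(j)}_\tau:=(-\Delta_{\widetilde{g}_j}+V_j)U^{(j)}_\tau$ that is small in $L^2(\Omega\times\R_+)$; the amplitudes are obtained by solving the eikonal/transport hierarchy, with $a_0$ \emph{common} to $j=1,2$ and the potentials $V_j$ entering first through the subleading amplitude $a_1^{(j)}$. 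Since $V_j\ge0$ and $g_j$ is uniformly elliptic, Proposition~\ref{Prop: ellitpic estimate} produces exact solutions $u^{(j)}_{f_\tau}=U^{(j)}_\tau+w^{(j)}_\tau\in H^1_0(\Omega\times[0,\infty))$ of \eqref{eq: conductivity in thm} with $\|w^{(j)}_\tau\|_{H^1(\Omega\times\R_+)}\lesssim\|R^{(j)}_\tau\|_{L^2(\Omega\times\R_+)}$, and the construction is arranged so that these correctors are negligible on the scales appearing below.

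\textit{Recovering the metric.} Next I would feed $f=f_\tau$ into the integral identity of Lemma~\ref{Lemma: integral id}. Because $f_\tau$ is supported near $x_0\in\Gamma$ and $\Lambda^\Gamma_{g_1,V_1}f_\tau=\Lambda^\Gamma_{g_2,V_2}f_\tau$, its left-hand side equals $\int_\Gamma f_\tau\big(|g_1|^{1/2}-|g_2|^{1/2}\big)\,\Lambda^\Gamma f_\tau\,dx$, while its right-hand side is $(B_{g_1,V_1}-B_{g_2,V_2})(u^{(1)}_{f_\tau},u^{(2)}_{f_\tau})$, which I would expand by substituting $u^{(j)}_{f_\tau}=U^{(j)}_\tau+w^{(j)}_\tau$. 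Since $B_{g,V}$ carries no complex conjugation, I would take $f_\tau$ to be a real (cosine-type) packet, so that in the products of the two approximate solutions the rapidly oscillating contributions are killed in the limit $\tau\to\infty$ by the Riemann--Lebesgue lemma while the non-oscillatory part --- carrying the factor $e^{-\tau(\beta_1+\beta_2)y}$, whose $y$-integral is $\big(\tau(\beta_1+\beta_2)\big)^{-1}$ --- survives and gives the leading term; the corrector contributions drop out by the $H^1$-bound above together with the trace theorem and H\"older's inequality. Matching the leading power of $\tau$ on the two sides then yields, for every $\xi\neq0$, an algebraic relation among $\beta_1(x_0,\xi)$, $\beta_2(x_0,\xi)$ and the densities $|g_j(x_0)|^{1/2}$ that can hold only if $\beta_1(x_0,\xi)=\beta_2(x_0,\xi)$ (here one also lets the cut-off shrink to $x_0$). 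As this holds for all $\xi$, polarizing the identity $g_1^{kl}(x_0)\xi_k\xi_l=g_2^{kl}(x_0)\xi_k\xi_l$ gives $g_1^{-1}(x_0)=g_2^{-1}(x_0)$, hence $g_1(x_0)=g_2(x_0)$.

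\textit{Recovering the potential.} With the metric identified at $x_0$, the leading amplitude and the decay rate of $U^{(1)}_\tau$ and $U^{(2)}_\tau$ now coincide, so $U^{(1)}_\tau-U^{(2)}_\tau$ is of order $\tau^{-1}$ and is governed by $a_1^{(1)}-a_1^{(2)}$, which depends linearly on $V_1-V_2$ near $x_0$. The left-hand side of the integral identity now vanishes, and extracting the next term in the $\tau$-expansion of the right-hand side produces a relation forcing $(V_1-V_2)(x_0)=0$. Since $x_0\in\Gamma$ was arbitrary, this gives $g_1=g_2$ and $V_1=V_2$ in $\Gamma$, which is \eqref{boundary det in thm}.

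\textit{Main obstacle.} The principal difficulty I anticipate is Lemma~\ref{Lemma: approx sol} itself: one must build approximate solutions that simultaneously satisfy the prescribed inhomogeneous Neumann condition on $\Omega\times\{0\}$, the homogeneous Dirichlet condition on $\partial\Omega\times\R_+$, and have an $L^2$-residual small enough to be negligible at the $\tau^{-1}$-scale on which $V$ is read off; this forces a sufficiently long transport expansion and (if the phase is kept linear rather than solving the complex eikonal exactly) a carefully tuned $\tau$-dependent localization of the cut-off, valid in every dimension $n\ge2$. A secondary difficulty is the bookkeeping of the low-order terms: after the metric has been recovered one must verify that the subleading amplitude genuinely isolates the \emph{pointwise} value $(V_1-V_2)(x_0)$ rather than a derivative combination, so that the potential itself --- and not merely its jet --- is determined on $\Gamma$.
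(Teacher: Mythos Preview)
Your proposal follows the same overall strategy as the paper --- oscillating approximate solutions plugged into a bilinear/integral identity, with the metric read from the leading term and the potential from the subleading one --- but the execution differs in several ways worth noting.

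First, the paper does \emph{not} freeze the phase at a point $x_0$: the approximate solution in Lemma~\ref{Lemma: approx sol} uses the $x$-dependent decay rate $|\xi|_g(x)=(g^{ij}(x)\xi_i\xi_j)^{1/2}$, so the eikonal relation $|\xi|_g^2-\beta^2=0$ is satisfied exactly and no $\tau$-dependent shrinking of the cut-off is needed. The amplitudes $\psi_1,\psi_2$ are then polynomials in $Ny$ with explicit $x$-dependent coefficients, and the residual estimate \eqref{error estimate} is $O(N^{-1})$ uniformly on the fixed support of $\eta$. This sidesteps precisely the obstacle you flag at the end.

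Second, for the metric the paper does not use the difference identity of Lemma~\ref{Lemma: integral id}. Instead it computes, for each $(g_j,V_j)$ separately, the self-pairing $\langle\phi_N,\Lambda^\Gamma_{g_j,V_j}\overline{\phi_N}\rangle=\mathcal{B}_{g_j,V_j}(u_N,\overline{u_N})$ (complex Neumann data paired with its conjugate, so no real-packet workaround is needed) and shows $N^{-1}\langle\phi_N,\Lambda^\Gamma_{g_j,V_j}\overline{\phi_N}\rangle\to\int_\Gamma|\xi|_{g_j}^{-1}\eta^2\,dx$. Equality of ND maps then gives $|\xi|_{g_1}=|\xi|_{g_2}$ on $\Gamma$ for all $\xi$, hence $g_1=g_2$ on all of $\Gamma$ at once, without shrinking the cut-off.

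Third, for the potential the paper does use Lemma~\ref{Lemma: integral id}, but the key cancellation is structural rather than asymptotic: once $g_1=g_2$ on $\Gamma$ and the $\Phi_N^{(j)}$ are supported in $\Gamma$, the principal gradient contributions to $(B_{g_1,V_1}-B_{g_2,V_2})(u_N^{(1)},\overline{u_N^{(2)}})$ vanish \emph{identically}, leaving only remainder gradients (which are $O(N^{-3})$ by \eqref{estimate of r_N}) and the zero-order term. Rescaling by $N$ and passing to the limit yields $\int_\Gamma|g_1|^{1/2}(V_1-V_2)\,|\xi|_{g_1}^{-3}\eta^2\,dx=0$. Note also that in the paper's hierarchy $V$ first enters through $L_2$ in the equation for $\widetilde\psi_2$ (order $N^{-2}$), not in the first subleading amplitude; this does not matter for the argument since the potential is read off directly from the zero-order part of the bilinear form, not from a difference of amplitudes.

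Your route (frozen phase, shrinking cut-off, real packets, difference identity throughout) is closer in spirit to the classical boundary-determination arguments of Brown and Nakamura--Tanuma type and can in principle be pushed through, but it requires exactly the delicate $\tau$-dependent localization you anticipated; the paper's variable-phase construction buys a cleaner error analysis and an integral (rather than pointwise) identification on $\Gamma$.
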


	To show this, we next introduce suitable \emph{approximate solutions} of 
	\begin{equation}
		\label{eq: main sec approx sol}
		\begin{cases}
			\LC -\Delta_{\widetilde{ g}} +V\RC u=0 &\text{ in }\Omega \times \R_+,\\
			-\p_{y}u=f&\text{ on }\Omega \times \{0\},\\
			u=0 & \text{ on }\p \Omega \times \R_+.
		\end{cases}
	\end{equation}

	\subsection{Approximate solutions}

	Let us mention that the subsequent construction is inspired by the works \cite{kang2002boundary,LN2017boundary}. For this purpose, let us consider the sequence of Neumann data 
	\begin{align}\label{phi_N}
		\phi_N(x)=N e^{\mathsf i Nx \cdot\xi}\eta(x),
	\end{align}
	where $	\eta\in C^\infty_c(\Omega)$ is an arbitrary test function, and $\mathsf i=\sqrt{-1}$ is the imaginary unit, and $N\geq 1$.
	Here $x=(x^1,\ldots, x^n)$, $\xi=(\xi_1,\ldots, \xi_n)\in \R^n$ is a fixed co-vector and $x\cdot \xi= x^j\xi_j$ stands for the standard inner product in the Euclidean space.
	Throughout this section we will use the following notation
	\begin{equation}\label{xi_g square}
		|\xi|_g=\sqrt{ g^{ij}\xi_i\xi_j}
	\end{equation} 
	to distinguish it from the ususal Euclidean norm $|\xi|$ and we may notice that the uniform ellipticity of $g$ implies that $\sqrt{\lambda}\abs{\xi} \leq \abs{\xi}_g \leq \sqrt{\lambda^{-1}}\abs{\xi}$, where $\lambda>0$ is the ellipticity constant given in \eqref{ellipticity}. Using the above notation we have the following lemma.

	\begin{lemma}[Approximate solutions]\label{Lemma: approx sol}
		For any $N\geq 1$, there exists a smooth approximate solution $\Phi_N$ of the form 
		\begin{align}
			\label{app sol}
			\Phi_N(x,y)=e^{ N( \mathsf{i}x\cdot\xi-\abs{\xi}_gy)} \bigg(\frac{\eta(x)}{|\xi|_g}+ \sum_{k=1}^2 N^{-k}\psi_k(x,Ny)\bigg),
		\end{align}
		such that 
		\begin{equation}\label{Neumann data of Phi_N}
			\begin{cases}
				- \p_y  \Phi_{N}=\phi_N  &\text{ on }\Omega\times\{0\} \\
				\Phi_N=0&\text{ on } \partial\Omega\times\R_+,
			\end{cases}
		\end{equation} 
		where $\abs{\xi}_g$ is given by \eqref{xi_g square} and $\phi_N$ is given by \eqref{phi_N}. Here $\psi_k(x,Ny)$ is a polynomial in the variable $Ny$, whose coefficients are bounded in $x$. Moreover, we have the error estimate 
		\begin{align}\label{error estimate}
			\begin{split}
				\left|  \LC-\Delta_{\widetilde{g}} +V\RC \Phi_N\right| \leq C N^{-1}\mathcal{P}(x,Ny)e^{-N|\xi|_gy}, 
			\end{split}
		\end{align}
		for $x\in \Gamma$, $y>0$ and some constant $C>0$ independent of $N\geq 1$, where $\mathcal{P}(x,Ny)=Q(x)P(Ny)$ with $P(Ny)$ being of polynomial growth and $Q(x)$ compactly supported in the $x$ variable. Furthermore, if $\eta$ is supported in $\Gamma \Subset  \Omega$, then $\Phi_N$ is supported in $\Gamma$ as the functions $\psi_1,\psi_2$ are.
	\end{lemma}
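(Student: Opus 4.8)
\textit{Proof strategy for Lemma \ref{Lemma: approx sol}.} The plan is to treat $\Phi_N$ as a WKB/boundary-layer expansion in the thin variable $Y:=Ny$. Write $\rho(x):=|\xi|_g(x)=\sqrt{g^{ij}(x)\xi_i\xi_j}$, which is smooth and (for $\xi\neq 0$) bounded below by $\sqrt{\lambda}\abs{\xi}>0$ on $\overline{\Omega}$, and make the ansatz
\[
\Phi_N(x,y)=e^{\mathsf{i}Nx\cdot\xi}\,e^{-\rho(x)Y}\,\Psi_N(x,Y),\qquad \Psi_N=\frac{\eta}{\rho}+N^{-1}\psi_1(x,Y)+N^{-2}\psi_2(x,Y),
\]
with $\psi_1,\psi_2$ polynomials in $Y$ to be determined. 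Using $\Delta_{\wt g}=\Delta_g+\partial_y^2=\Delta_g+N^2\partial_Y^2$ and expanding $\Delta_g=g^{ij}\partial_i\partial_j+b^j\partial_j$ with $b^j=|g|^{-1/2}\partial_i(|g|^{1/2}g^{ij})$, a direct computation gives, with $w:=e^{-\rho Y}\Psi_N$,
\[
(-\Delta_{\wt g}+V)\Phi_N=e^{\mathsf{i}Nx\cdot\xi}\Big[N^2\big(\rho^2 w-\partial_Y^2 w\big)-2\mathsf{i}N\,g^{ij}\xi_i\partial_j w-\mathsf{i}N\,b^j\xi_j w+(-\Delta_g+V)w\Big].
\]
I then expand in powers of $N^{-1}$ and choose $\psi_1,\psi_2$ to annihilate successive orders.

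The key structural observation --- and the reason the exponent carries the genuine function $|\xi|_g$ --- is that $\partial_Y^2 e^{-\rho Y}=\rho^2 e^{-\rho Y}$, so $e^{-\rho(x)Y}$ solves the ``eikonal equation'' $\partial_Y^2=\rho^2$ exactly; hence $\rho^2 w-\partial_Y^2 w=e^{-\rho Y}(2\rho\partial_Y\Psi_N-\partial_Y^2\Psi_N)$, and since the leading amplitude $\eta/\rho$ is independent of $Y$ this quantity is $O(N^{-1})$, so the a priori $O(N^2)$ term is in fact only $O(N)$. No eikonal equation has to be solved, and after pulling out the overall factor $e^{\mathsf{i}Nx\cdot\xi}e^{-\rho Y}$ what remains is a triangular tower of transport equations
\[
\mathcal{L}\psi_k:=-\partial_Y^2\psi_k+2\rho\,\partial_Y\psi_k=R_k(x,Y),\qquad k=1,2,
\]
where $R_1$ is an explicit polynomial of degree $1$ in $Y$ built from $\eta/\rho$, its first $x$-derivatives, and the coefficients $g^{ij},b^j$, while $R_2$ is a polynomial of degree $3$ built additionally from $\psi_1$ and from $e^{\rho Y}(-\Delta_g+V)(e^{-\rho Y}\tfrac{\eta}{\rho})$. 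Their coefficients are smooth, bounded on $\overline{\Omega}$, and supported in $\supp\eta$.

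The transport equations are ODEs in $Y$ with $x$ a parameter. Since $\mathcal{L}$ sends a polynomial of degree $d\ge 1$ to one of degree $d-1$, acting on the leading coefficient by multiplication by $2d\rho\neq 0$, and has the constants as its only polynomial kernel, each $R_k$ has a polynomial antecedent $\psi_k$ --- of degree $2$ for $k=1$ and degree $4$ for $k=2$ --- determined uniquely up to an additive function of $x$; recursively the coefficients remain smooth, bounded on $\overline{\Omega}$, and supported in $\supp\eta$. The remaining freedom in the constant term $\psi_k(x,0)$ is used to enforce the Neumann condition: differentiating the ansatz gives
\[
-\partial_y\Phi_N\big|_{y=0}=N e^{\mathsf{i}Nx\cdot\xi}\eta+\sum_{k=1}^{2}N^{1-k}e^{\mathsf{i}Nx\cdot\xi}\big(\rho\,\psi_k(x,0)-\partial_Y\psi_k(x,0)\big),
\]
using $\rho\cdot(\eta/\rho)=\eta$, so imposing $\rho\,\psi_k(x,0)=\partial_Y\psi_k(x,0)$ fixes the two free constants and yields $-\partial_y\Phi_N|_{y=0}=\phi_N$ exactly. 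The homogeneous Dirichlet condition on $\partial\Omega\times\R_+$ and the support statement both follow at once since $\Phi_N$ is supported in $\supp\eta\Subset\Omega$ (respectively $\subset\Gamma$). Finally, having cancelled the $N^1$ and $N^0$ orders, the remainder of $(-\Delta_{\wt g}+V)\Phi_N$ consists only of the $N^{-1}$ and $N^{-2}$ orders, each of the form $e^{\mathsf{i}Nx\cdot\xi}e^{-\rho Y}$ times a polynomial in $Y$ of fixed degree with bounded coefficients supported in $\supp\eta$; as $|e^{\mathsf{i}Nx\cdot\xi}|=1$, $e^{-\rho Y}=e^{-N|\xi|_g y}$, $Y=Ny$ and $N\ge 1$, this is precisely $CN^{-1}\mathcal{P}(x,Ny)e^{-N|\xi|_g y}$ with $\mathcal{P}(x,Y)=Q(x)P(Y)$, $Q$ compactly supported and $P$ a fixed polynomial.

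I expect the main difficulty to be organizational rather than conceptual: spotting the boundary-layer scaling $Y=Ny$ that turns the separable exponential into an exact solver of the eikonal part, and then carefully tracking the polynomial degrees through the recursion so as to confirm that the leftover is genuinely $O(N^{-1})$ and not $O(1)$ --- together with checking that compact support in $x$ and uniform boundedness of all coefficients on $\overline{\Omega}$ are preserved at each step.
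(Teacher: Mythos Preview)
Your proposal is correct and follows essentially the same WKB/boundary-layer strategy as the paper: the paper writes $\Phi_N=e^{\mathsf{i}Nx\cdot\xi}\sum_{k=0}^2 N^{-k}\widetilde{\psi}_k(x,Ny)$, derives the hierarchy $L_0\widetilde{\psi}_0=0$, $L_0\widetilde{\psi}_1=\mathsf{i}L_1\widetilde{\psi}_0$, $L_0\widetilde{\psi}_2=\mathsf{i}L_1\widetilde{\psi}_1+L_2\widetilde{\psi}_0$ with $L_0=-\partial_y^2+|\xi|_g^2$, solves them as polynomials in $y$ times $e^{-|\xi|_g y}$, and only at the end sets $\widetilde{\psi}_j=\psi_j e^{-|\xi|_g y}$. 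You pull the factor $e^{-\rho Y}$ out from the start, which conjugates $L_0$ into your transport operator $\mathcal{L}=-\partial_Y^2+2\rho\partial_Y$ acting directly on polynomials; the degree-counting argument you give for inverting $\mathcal{L}$ on polynomials is exactly what the paper does explicitly via the identity $(-\partial_y^2+|\xi|_g^2)(y^k e^{-|\xi|_g y})=[2k|\xi|_g y^{k-1}-k(k-1)y^{k-2}]e^{-|\xi|_g y}$, and your Neumann normalization $\rho\psi_k(x,0)=\partial_Y\psi_k(x,0)$ is equivalent to the paper's condition $-\partial_y\widetilde{\psi}_k|_{y=0}=0$. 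The degrees you predict ($\psi_1$ of degree $2$, $\psi_2$ of degree $4$) match the paper's construction, and the remainder analysis is identical.
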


	\begin{proof}
		Unless otherwise stated all differential operators in this proof act only on the $x$ variable. The construction of approximate solutions is based on the Wentzel--Kramers--Brillouin (WKB) construction with respect to the parameter $N\geq 1$. Let us first consider the function $\Phi_N(x,y)$ of the form 
		\begin{align}\label{Phi_N}
			\Phi_N(x,y)=e^{\mathsf{i}Nx\cdot \xi}\Psi(x,Ny),
		\end{align}
		where $\xi=(\xi_1,\ldots,\xi_n)$, $x\cdot \xi=x^i \xi_i=g_{ij}x^i\xi^j$.
		We may calculate
		\[
		\begin{split}
			&\quad \, \Delta_g \Phi_N\\
			&=|g|^{-1/2}\partial_i\Big(|g|^{1/2}g^{ij}(\mathsf{i}N\xi_j\Psi+\partial_j\Psi)e^{\mathsf{i}Nx\cdot \xi}\Big)\\
			&=\mathsf{i}N\xi_ig^{ij}\LC \mathsf{i}N\xi_j\Psi+\partial_j\Psi\RC  e^{\mathsf{i} Nx\cdot\xi} \\
			& \quad \, +g^{ij}\left[ \mathsf{i} N\xi_j\partial_i\Psi+\partial_{ij}\Psi+\text{div}\LC  g^{-1}\RC \LC \mathsf{i}N\xi_j\Psi+\partial_j\Psi\RC \right] e^{\mathsf{i} Nx\cdot\xi}\\
			& =\left[-N^2 |\xi|_g^2\Psi+\mathsf{i}N(2\xi\cdot d\Psi+\text{div}g^{-1}\cdot \xi \Psi)+g^{-1}: D^2\Psi+\text{div} g^{-1} \cdot \nabla\Psi\right]e^{\mathsf{i}Nx\cdot\xi},
		\end{split}
		\]
		where we denote $\left[\text{div}\LC g^{-1}\RC\right]^i=|g|^{-1/2}\partial_j\LC |g|^{1/2}g^{ij}\RC$, 	$D^2\Psi=\LC \partial_{ij}\Psi\RC_{1\leq i, j\leq n}$ and $A:B$ is the contraction $A^{ij}B_{ij}$. Taking into account $\Delta_{\widetilde g}\equiv \Delta_g+\partial_y^2$, we get
		\begin{equation}
			\label{eq: expression for psi}
			\begin{split}
				\LC -\Delta_{\widetilde g} + V\RC \Phi_N
				&=\left[N^2\LC |\xi|_g^2\Psi-\partial_y^2\Psi\RC-\mathsf{i}N\LC 2\xi\cdot d\Psi+(\text{div}g^{-1}\cdot \xi )\Psi\RC\right.\\
				&\qquad  \left.-(g^{-1}\colon D^2\Psi+\text{div}g^{-1}\cdot\nabla \Psi+V\Psi)\right]e^{\mathsf{i}Nx\cdot\xi}.
			\end{split}
		\end{equation}
		If we insert the ansatz		
		\begin{equation}\label{Psi_N}
			\Psi(x,Ny)\vcentcolon=\sum_{k=0}^2 N^{-k}\widetilde{\psi}_k(x,Ny).
		\end{equation}
		into \eqref{eq: expression for psi}, then we obtain 		
		\begin{equation}
			\begin{split}
				&\quad \, (-\Delta_{\widetilde g} + V)\Phi_N\\
				&=\big[ N^2\big( |\xi|_g^2\widetilde{\psi}_0-\partial_y^2 \widetilde{\psi}_0\big) +N\big( |\xi|_g^2\widetilde{\psi}_1-\partial_y^2 \widetilde{\psi}_1\big)+\big(|\xi|_g^2\widetilde{\psi}_2-\partial_y^2 \widetilde{\psi}_2\big)\big] e^{\mathsf{i}Nx\cdot\xi}\\
				&\quad \,-\mathsf{i}N\big( 2\xi\cdot d\widetilde{\psi}_0+(\text{div}g^{-1}\cdot \xi)\widetilde{\psi}_0\big) e^{\mathsf{i}Nx\cdot\xi}\\
				&\quad \,-\mathsf{i}\big( 2\xi\cdot d\widetilde{\psi}_1+(\text{div}g^{-1}\cdot \xi)\widetilde{\psi}_1\big)  e^{\mathsf{i}Nx\cdot\xi}\\
				&\quad \,-\mathsf{i}N^{-1}\big( 2\xi\cdot d\widetilde{\psi}_2+(\text{div}g^{-1}\cdot \xi\big) \widetilde{\psi}_2)e^{\mathsf{i}Nx\cdot\xi}\\
				&\quad \,-\big( g^{-1}: D^2\widetilde{\psi}_0+\text{div}g^{-1}\cdot \nabla\widetilde{\psi}_0+V\widetilde{\psi}_0\big) e^{\mathsf{i}Nx\cdot\xi}\\
				&\quad \,-N^{-1}\big( g^{-1}: D^2\widetilde{\psi}_1+\text{div}g^{-1}\cdot \nabla\widetilde{\psi}_1+V\widetilde{\psi}_1\big)  e^{\mathsf{i}Nx\cdot\xi}\\
				&\quad \,-N^{-2}\big( g^{-1}: D^2\widetilde{\psi}_2+\text{div}g^{-1}\cdot \nabla\widetilde{\psi}_2+V\widetilde{\psi}_2\big) e^{\mathsf{i}Nx\cdot\xi}\\
				&=N^2\big( |\xi|_g^2\widetilde{\psi}_0-\partial_y^2\widetilde{\psi}_0\big)  e^{\mathsf{i}Nx\cdot\xi}\\
				&\quad \,+N\big( |\xi|_g^2\widetilde{\psi}_1-\partial_y^2\widetilde{\psi}_1-\mathsf{i}(2\xi\cdot d\widetilde{\psi}_0+(\text{div}g^{-1}\cdot \xi)\widetilde{\psi}_0)\big) e^{\mathsf{i}Nx\cdot\xi}\\
				&\quad \,+ \big[ \big( |\xi|_g^2\widetilde{\psi}_2-\partial_y^2 \widetilde{\psi}_2-\mathsf{i}(2\xi\cdot d\widetilde{\psi}_1+(\text{div}g^{-1}\cdot \xi)\widetilde{\psi}_1\big)  \\
				&\qquad\qquad  -\big( g^{-1}: D^2\widetilde{\psi}_0+\text{div}g^{-1}\cdot \nabla\widetilde{\psi}_0+V\widetilde{\psi}_0\big) \big]  e^{\mathsf{i}Nx\cdot\xi}\\
				&\quad \,-N^{-1}\big[ -\mathsf{i}\big( 2\xi\cdot d\widetilde{\psi}_2+(\text{div}g^{-1}\cdot \xi)\widetilde{\psi}_2\big)  \\
				&\qquad \qquad  +g^{-1}: D^2\widetilde{\psi}_1+\text{div}g^{-1}\cdot \nabla\widetilde{\psi}_1+V\widetilde{\psi}_1\big] e^{\mathsf{i}Nx\cdot\xi}\\
				&\quad \, -N^{-2}\big(  g^{-1}: D^2\widetilde{\psi}_2+\text{div}g^{-1}\cdot \nabla\widetilde{\psi}_2+V\widetilde{\psi}_2\big) e^{\mathsf{i}Nx\cdot\xi}.
			\end{split}
		\end{equation}
		The above identity is written in term of the orders of $N$.

		Next, let us set
		\begin{equation}
			\label{eq: def diff ops}
			\begin{split}
				L_0&\vcentcolon =-\partial_y^2+|\xi|_g^2,\\
				L_1&\vcentcolon =2\xi\cdot d + \text{div}g^{-1}\cdot \xi,\\
				L_2&\vcentcolon =g^{-1}: D^2+\text{div}g^{-1}\cdot \nabla+V.
			\end{split}
		\end{equation}
		Then the conjugate equation of $\Psi_N$ becomes
		\begin{equation}
			\label{eq: ode system}
			\begin{split}
				&\quad \, e^{-\mathsf{i}Nx\cdot\xi}\LC -\Delta_{\widetilde g} + V\RC \big(  e^{\mathsf{i}Nx\cdot\xi} \Psi_N \big) \\
				&=N^2L_0\widetilde{\psi}_0+N\big( L_0\widetilde{\psi}_1-\mathsf{i}L_1\widetilde{\psi}_1\big)+\big( L_0\widetilde{\psi}_2-\mathsf{i}L_1\widetilde{\psi}_1-L_2\widetilde{\psi}_0\big) \\
				&\quad -N^{-1}\big( -\mathsf{i}L_1\widetilde{\psi}_2+L_2\widetilde{\psi}_1\big) -N^{-2}L_2\widetilde{\psi}_2.
			\end{split}
		\end{equation}
		In order to prove \eqref{error estimate}, our aim is to solve the following system of ordinary differential equations (ODEs) in the $y$-variable
		\begin{align}\label{system diff.}
			\begin{cases}
				L_0 \widetilde{\psi}_0=0,\\
				L_0\widetilde{\psi}_1 =\mathsf{i}L_1 \widetilde{\psi}_0,\\
				L_0\widetilde{\psi}_2 =\mathsf{i}L_1 \widetilde{\psi}_1 +L_2 \widetilde{\psi}_0
			\end{cases}
		\end{align}
		with the boundary conditions 
		\begin{align}\label{system bc}
			\begin{cases}
				-\partial_y\widetilde{\psi}_0\big|_{y=0}=\eta(x),&\widetilde{\psi}_0\to 0\ \text{as}\ y\to\infty, \\
				-\partial_y\widetilde{\psi}_1\big|_{y=0}=0 ,&\widetilde{\psi}_1\to 0\ \text{as}\ y\to\infty,\\
				-\partial_y\widetilde{\psi}_2\big|_{y=0}=0,&\widetilde{\psi}_2\to 0\ \text{as}\ y\to\infty.
			\end{cases}
		\end{align}
		Notice that if \eqref{system diff.} holds, then \eqref{eq: ode system} is of order $N^{-1}$. Furthermore, the coefficient $V(x)$ only appears in the operator $L_2$ and so it enters only into $\widetilde{\psi}_2$. Similarly as in \cite[Lemma 2.1]{kang2002boundary}, we can solve the system \eqref{system diff.}, \eqref{system bc} iteratively.

		First observe that a solution of the first ODE in \eqref{system diff.} with the desired boundary conditions is 
		\begin{equation}
			\label{eq: sol psi0}
			\widetilde{\psi}_0(x,y)=\widetilde{\eta}(x)e^{-|\xi|_gy}\quad\text{with}\quad \widetilde{\eta}(x)\vcentcolon =\frac{\eta(x)}{\abs{\xi}_g}.
		\end{equation}
		By the definition of $L_1$ and $\widetilde{\psi}_0$ (see \eqref{eq: def diff ops} and \eqref{eq: sol psi0}), we may calculate
		\begin{equation}
			\label{eq: L1 psi0}
			\begin{split}
				\mathsf{i}L_1\widetilde{\psi}_0&\,=\mathsf{i}\left[ 2\xi\cdot d\widetilde{\eta}+(\text{div}g^{-1}\cdot\xi)\widetilde{\eta}\right] e^{-|\xi|_gy}+2\mathsf{i}\widetilde{\eta}\big( \xi\cdot d e^{-|\xi|_gy}\big)\\
				&\,=\mathsf{i}\left[ 2\xi\cdot d\widetilde{\eta}+(\text{div}g^{-1}\cdot\xi)\widetilde{\eta}\right]e^{-|\xi|_gy}+\mathsf{i}\widetilde{\eta}\frac{\LC \partial_k g^{ij}\RC\xi_k\xi_i\xi_j}{|\xi|_g}ye^{-|\xi|_gy}\\
				&\vcentcolon=f_1(x)e^{-|\xi|_gy}+f_2(x)ye^{-|\xi|_gy}.
			\end{split}
		\end{equation}
		Next note that for $k\in\N$, there holds
		\[
		\begin{split}
			\partial_y^2 \big( y^k e^{-|\xi|_gy}\big)&=\partial_y\big[ \big( k y^{k-1}-|\xi|_gy^k\big) e^{-|\xi|_gy}\big] \\
			&=\big[ k(k-1)y^{k-2}-2k|\xi|_gy^{k-1}+|\xi|_g^2 y^k\big] e^{-|\xi|_gy}
		\end{split}
		\]
		and thus we obtain
		\begin{equation}
			\label{eq: derivatives products}
			\LC -\partial_y^2+|\xi|_g^2\RC \big(  y^ke^{-|\xi|_g^2y}\big)=\left[ 2k|\xi|_g y^{k-1}-k(k-1)y^{k-2}\right] e^{-|\xi|_gy}.
		\end{equation}
		Now, we make the ansatz
		\begin{equation}
			\label{eq: psi1}
			\widetilde{\psi}_1=\widetilde{\psi}_{1,0}+\widetilde{\psi}_{1,1}\quad\text{with}\quad \begin{cases}
				\widetilde{\psi}_{1,0}(x,y)=h_0(x)e^{-|\xi|_gy}\\
				\widetilde{\psi}_{1,1}(x,y)=\LC  h_1(x)y+h_2(x)y^2\RC e^{-|\xi|_gy}.
			\end{cases}
		\end{equation}
		Using \eqref{eq: derivatives products}, we deduce that
		\[
		\begin{split}
			L_0\widetilde{\psi}_{1,1}(x,y)&=\left[2h_1|\xi|_g+h_2\LC 4|\xi|_gy-2\RC \right]e^{-|\xi|_gy}\\
			&=\left[ 2\LC h_1|\xi|_g-h_2\RC +4h_2|\xi|_gy\right] e^{-|\xi|_gy}.
		\end{split}
		\]
		Comparing with \eqref{eq: L1 psi0} infers that $\widetilde{\psi}_{1,1}$ solves the second ODE in \eqref{system diff.}, if we choose
		\begin{equation}
			\label{eq: coeff psi1}
			h_1(x)=\frac{f_1(x)}{2|\xi|_g}+\frac{f_2(x)}{4|\xi|_g^2}\quad\text{and}\quad h_2(x)=\frac{f_2(x)}{4|\xi|_g}.
		\end{equation}
		Moreover, $\widetilde{\psi}_{1,1}$ satisfies
		\begin{equation}
			\label{eq: psi 1 tilde}
			-\partial_y \widetilde{\psi}_{1,1}\big|_{y=0}=-h_1\quad \text{and}\quad \widetilde{\psi}_{1,1}\to 0\ \text{as}\ y\to\infty.
		\end{equation}
		On the other hand, from \eqref{eq: derivatives products} we known that 
		\begin{equation}
			\label{eq: h0 term}
			\widetilde{\psi}_{1,0}(x,y)=h_0(x)e^{-|\xi|_gy}\quad \text{with}\quad h_0(x)=\frac{h_1(x)}{|\xi|_g}
		\end{equation}
		solves
		\[
		L_0\widetilde{\psi}_{1,0}=0\quad\text{and}\quad  -\partial_y\widetilde{\psi}_{1,0}\big|_{y=0}=h_1
		\]
		and hence $\widetilde{\psi}_1$ with $h_0,h_1,h_2$ as in \eqref{eq: coeff psi1} and \eqref{eq: h0 term} is the desired solution of the second ODEs in \eqref{system diff.} with the right boundary conditions \eqref{system bc}.
		
		Next, let us compute $L_1\widetilde{\psi}_1$ and $L_2\widetilde{\psi}_0$. The first one is easily seen via
		\begin{equation}
			\label{eq: computation L1 psi1}
			\begin{split}
				L_1\widetilde{\psi}_1&=2\xi\cdot d\widetilde{\psi}_1+\LC \text{div}g^{-1}\cdot\xi\RC \widetilde{\psi}_1\\
				&=\sum_{k=0}^2y^k \left[ 2\xi\cdot dh_k+\LC \text{div}g^{-1}\cdot\xi\RC h_k\right] e^{-|\xi|_gy}-\sum_{k=0}^2y^{k+1}h_k\LC 2\xi\cdot d|\xi|_g\RC e^{-|\xi|_gy}.
			\end{split}
		\end{equation}
		For the second one, let us observe that
		\begin{equation}
			\label{eq: calc 2nd order derivatives}
			\begin{split}
				\partial_k \big( \widetilde{\eta}e^{-|\xi|_gy}\big)&=\LC \partial_k \widetilde{\eta}-y\widetilde{\eta}\partial_k|\xi|_g\RC e^{-|\xi|_gy}\\
				\partial_{k\ell}^2 \big( \widetilde{\eta}e^{-|\xi|_gy}\big) &=\left\{\partial_{k\ell}^2\widetilde{\eta}-y\left[\LC \partial_{k\ell}^2 |\xi|_g\RC \widetilde{\eta}+\LC \partial_k \widetilde{\eta}\RC\LC \partial_{\ell}|\xi|_g \RC+\LC \partial_{\ell} \widetilde{\eta}\RC \LC \partial_{k}|\xi|_g \RC \right] \right.\\
				&\qquad  \left.+y^2\LC \partial_k|\xi|_g \RC \LC\partial_{\ell}|\xi|_g \RC \widetilde{\eta}\right\}e^{-|\xi|_gy}
			\end{split}
		\end{equation}
		for $1\leq \ell,k\leq n$.
		This implies
		\begin{equation}
			\label{eq: computation L2 psi0}
			\begin{split}
				L_2\widetilde{\psi}_0&=g^{-1}:D^2\widetilde{\psi}_0+\text{div}g^{-1}\cdot\nabla \widetilde{\psi}_0+V\widetilde{\psi}_0 \\
				&=\LC g^{-1}:D^2\widetilde{\eta}+\text{div}g^{-1}\cdot\nabla \widetilde{\eta}+V\widetilde{\eta}\RC e^{-|\xi|_gy}\\
				&\quad \,-y\left[g^{-1}: D^2|\xi|_g+2d \widetilde{\eta}\cdot d|\xi|_g  +\widetilde{\eta}\,\text{div}g^{-1}\cdot\nabla |\xi|_g\right]e^{-|\xi|_gy}\\
				&\quad \, +y^2 |d|\xi|_g|^2 \widetilde{\eta}e^{-|\xi|_gy}.
			\end{split}
		\end{equation}

		Therefore, we can write
		\begin{equation}
			\mathsf{i}L_1\widetilde{\psi}_1+L_2\widetilde{\psi}_0=\LC F_1+yF_2+y^2F_3+y^3F_4 \RC e^{-|\xi|_gy},
		\end{equation}
		for appropriate functions $F_1,F_2,F_3$ and $F_4$. As we want to find $\widetilde{\psi}_2$ solving
		\begin{equation}
			\label{eq: equation for psi2}
			L_0\widetilde{\psi}_2=\LC F_1+yF_2+y^2F_3+y^3F_4\RC e^{-|\xi|_g y},
		\end{equation}
		the identity \eqref{eq: derivatives products} suggests the ansatz
		\begin{equation}
			\label{eq: ansatz psi2}
			\widetilde{\psi}_2=\widetilde{\psi}_{2,0}+\widetilde{\psi}_{2,1}
		\end{equation}
		with 
		\begin{equation}\label{eq: ansatz psi2-2}
			\begin{cases}
				\widetilde{\psi}_{2,0}(x,y)=H_0(x)e^{-|\xi|_gy} , \\
				\widetilde{\psi}_{2,1}(x,y)=\LC H_1(x)y+H_2(x)y^2+H_3(x)y^3+H_4(x)y^4\RC e^{-|\xi|_gy},
			\end{cases}
		\end{equation}
		where again we use the zeroth order term to correct the Neumann data. Using \eqref{eq: derivatives products} we can write
		\[
		\begin{split}
			L_0\widetilde{\psi}_{2,1}&=\left[2|\xi|_g H_1+H_2\LC 4|\xi|_gy-2\RC +H_3\LC 6|\xi|_gy^2-6y\RC \right. \\
			& \quad \ \left. +H_4\LC 8|\xi|_gy^{3}-12y^{2}\RC\right]e^{-|\xi|_gy}\\
			&=\left[2\LC |\xi|_gH_1-H_2\RC+\LC 4|\xi|_gH_2-6H_3\RC y \right. \\
			&\quad \  \left. +\LC 6|\xi|_gH_3-12H_4\RC y^2+ 8|\xi|_gH_4 y^3\right]e^{-|\xi|_gy}.
		\end{split}
		\]
		By comparing this expression to \eqref{eq: equation for psi2} in terms of order of the $y$-variable, we see that if the algebraic system 
		\begin{equation}
			\begin{cases}
				F_1=2(|\xi|_gH_1-H_2),\\
				F_2=4|\xi|_gH_2-6H_3,\\
				F_3=6|\xi|_gH_3-12H_4,\\
				F_4=8|\xi|_gH_4,
			\end{cases}
		\end{equation}
		holds true, then $\widetilde{\psi}_{2,1}$ solves the ODE \eqref{eq: equation for psi2}. Thus, the coefficients are given by
		\begin{equation}
			\label{eq: coeff psi 2 tilde}
			\begin{cases}
				H_1=\frac{4|\xi|_g^3 F_1+2|\xi|_g^2 F_2+2|\xi|_g F_3+3F_4}{8|\xi|_g^4},\\
				H_2= \frac{2|\xi|_g^2 F_2+2|\xi|_g F_3+3F_4}{8|\xi|_g^3},\\
				H_3=\frac{2|\xi|_g F_3+3F_4}{12|\xi|_g^2},\\
				H_4=\frac{F_4}{8|\xi|_g}.
			\end{cases}
		\end{equation}
		The function $\widetilde{\psi}_{2,1}$ has Neumann data $-H_1$ and hence as above we choose
		\begin{equation}
			\label{eq: H0 term}
			\widetilde{\psi}_{2,0}(x,y)=H_0(x)e^{-|\xi|_gy}\quad \text{with}\quad H_0(x)=\frac{H_1(x)}{|\xi|_g}.
		\end{equation}
		Then $\widetilde{\psi}_2$ given by \eqref{eq: ansatz psi2}, \eqref{eq: coeff psi 2 tilde} and \eqref{eq: H0 term} solves the last equation in \eqref{system diff.} with the correct boundary conditions \eqref{system bc}.

		Now, since $\widetilde{\psi}_j$, $j=0,1,2$ solve \eqref{system diff.}, the identity \eqref{eq: ode system} implies
		\begin{equation}
			\label{eq: almost final identity}
			\LC -\Delta_{\widetilde{g}}+V\RC \Phi_N=-e^{\mathsf{i}Nx\cdot\xi}\big[N^{-1}(-L_1\widetilde{\psi}_2+L_2\widetilde{\psi}_1)+N^{-2}L_2\widetilde{\psi}_2\big].
		\end{equation}
		To further simplify this identity, we next calculate the operators. Using the expansions  \eqref{eq: psi1}, \eqref{eq: ansatz psi2} and the identity \eqref{eq: calc 2nd order derivatives}, we get
		\[
		\begin{split}
			L_1\widetilde{\psi}_2&=\sum_{k=0}^4 y^k\LC L_1 H_k\RC e^{-|\xi|_g y}-\sum_{k=0}^4 y^{k+1}\LC 2\xi\cdot d|\xi|_g\RC H_ke^{-|\xi|_gy},\\
			L_2\widetilde{\psi}_1&=\sum_{k=0}^2y^k\LC L_2h_k\RC e^{-|\xi|_g y}+\sum_{k=0}^2 y^kh_k \big(  g^{-1}:D^2 e^{-|\xi|_g y}+\text{div}g^{-1}\cdot\nabla e^{-|\xi|_gy}\big)\\
			&\quad +\sum_{k=0}^2 y^k d h_k\cdot de^{-|\xi|_g y}\\
			&=\sum_{k=0}^2y^k(L_2 h_k)e^{-|\xi|_g y}\\
			&\quad -\sum_{k=0}^2y^{k+1}\big[h_k\LC g^{-1} : D^2|\xi|_g+\text{div}g^{-1}\cdot\nabla|\xi|_g\RC+2dh_k\cdot d|\xi|_g\big] e^{-|\xi|_gy}\\
			&\quad +\sum_{k=0}^2y^{k+2}h_k|d|\xi|_g|^2 e^{-|\xi|_gy},
		\end{split}
		\]
		and 
		\[
		\begin{split}
			L_2\widetilde{\psi}_2&=\sum_{k=0}^4 y^k(L_2H_k)e^{-|\xi|_gy}\\
			&\quad -\sum_{k=0}^4 y^{k+1}\left[H_k\LC  g^{-1}:D^2|\xi|_g+\text{div}g^{-1}\cdot\nabla|\xi|_g \RC +2 dH_k\cdot d|\xi|_g\right]e^{-|\xi|_gy}\\
			&\quad +\sum_{k=0}^4 y^{k+2}H_k|d|\xi|_g|^2  e^{-|\xi|_g y}.
		\end{split}
		\]
		Therefore, we can write
		\begin{equation}
			\label{eq: final representation}
			\LC-\Delta_{\widetilde{g}}+V\RC \Phi_N=-e^{N\LC \mathsf{i}x\cdot\xi-|\xi|_gy\RC}\left[N^{-1}\alpha(x)P_5(Ny)+N^{-2}\beta(x)P_6(Ny)\right],
		\end{equation}
		where $\alpha,\beta\in C_c^{\infty}(\Omega)$ and $P_j$ is a polynomial of degree at most $j$. The representation \eqref{eq: final representation} immediately implies the estimate \eqref{error estimate}. Next observe that the constructed function
		$\Phi_N(x,y)=e^{\mathsf{i}Nx\cdot\xi} \sum_{k=0}^2 N^{-k}\widetilde{\psi}_k(x,Ny)$
		has by \eqref{Phi_N}, \eqref{Psi_N} and \eqref{system bc}, the Neumann data 
		\[
		\begin{split}
			\left.-\partial_y\Phi_N(x,y)\right|_{y=0}&=Ne^{\mathsf{i}
				Nx\cdot\xi}\sum_{k=0}^2N^{-k}(-\partial_y\widetilde{\psi}_k(x,0))=Ne^{\mathsf{i}
				Nx\cdot\xi}\eta(x)=\phi_N(x).
		\end{split}
		\]
		Finally, the error estimate \eqref{error estimate} and the assertion on the support are direct consequences of the above construction. Therefore, we have constructed approximate solutions, if we define $\psi_j$ for $j=0,1,2$ via $\widetilde{\psi}_j=\psi_j e^{-|\xi|_g y}$ (see \eqref{eq: sol psi0}, \eqref{eq: psi1} and \eqref{eq: ansatz psi2}), and from the above considerations we can conclude the proof.
	\end{proof}
	
	\subsection{Proof of Theorem \ref{Thm: BD}}
	
	With the approximate solutions \eqref{app sol} at hand, we can prove Theorem \ref{Thm: BD}.
	
	\begin{proof}[Proof of Theorem \ref{Thm: BD}]
		For the ease of notation, let us set $g=g_j$ and $V=V_j$ for either $j=1$ or $j=2$.
		Let $u_N$ be the solution of 
		\begin{align}
			\begin{cases}
				\LC -\Delta_{\widetilde{g}}  +V\RC u_N=0 &\text{ in }\Omega\times \R_+,\\
				-\p_y u_N=\phi_N&\text{ on }\Omega\times \{0\},\\
				u_N =0 & \text{ on }\p \Omega \times \R_+,
			\end{cases}
		\end{align}
		where $\phi_N$ is given by \eqref{phi_N}. Clearly, the same reasoning as in Section~\ref{sec: preliminary} works, if the Neumann data and related functions are complex valued.
		Let $\Phi_N$ be the approximate solution of $u_N$ with $\left. -\p_y\Phi_N \right|_{y=0}=\phi_N$. Note that we have
		\begin{equation}
			\label{eq: Neumannd data remainder}
			\partial_y u_N=\partial_y \Phi_N\text{ and }\p_y r_N=0\text{ in }\Omega \times \{0\},
		\end{equation}
		where $r_N$ is the remainder term given by $r_N=u_N-\Phi_N$.
		Via \eqref{eq: weak form ND map}, one has 
		\begin{align}
			\begin{split}
				\left\langle \phi_N,\Lambda^\Gamma_{g,V}\overline{\phi_N} \right\rangle =\mathcal{B}_{g,V}(u_N, \overline{u_N}),
			\end{split}
		\end{align}
		where $\mathcal{B}_{g,V}$ is the bilinear form given by \eqref{bilinear form 2} and  $\overline{\phi_N}$ denotes the complex conjugate of $\phi_N$.
		Thus, using the decomposition $u_N=\Phi_N+r_N$, we get
		\begin{equation}\label{integral id, boundary determination}
			\begin{split}
				&\quad \, \left\langle \phi_N ,\Lambda^\Gamma_{g,V}\overline{\phi_N}\right\rangle \\
				&\ = \int_{\Omega\times \R_+} \Big[ \wt{g}^{-1}\nabla_{x,y} u_N \cdot\nabla_{x,y} \overline{u_N } + \abs{g}^{1/2}g^{-1}\nabla  \abs{g}^{-1/2} \cdot \nabla u_N  \,\overline{u_N} +V\abs{u_N}^2  \Big] dxdy\\
				& \vcentcolon = I_N +II_N,
			\end{split}
		\end{equation}
		where we set
		\begin{equation}
			\begin{split}
				I_N &\vcentcolon= \int_{\Omega\times \R_+} \wt{g}^{-1}\nabla_{x,y} \Phi_N \cdot\nabla_{x,y} \overline{\Phi_N}\, dxdy, \\
				II_N&\vcentcolon=\int_{\Omega\times \R_+}  \Big[ 2 \wt{g}^{-1}  \mathrm{Re}\LC \nabla_{x,y} \Phi_N \cdot \nabla_{x,y}\overline{r_N} \RC +  \wt{g}^{-1} \nabla_{x,y} r_N \cdot \nabla_{x,y} \overline{r_N} \\
				&\qquad  \qquad \quad +\abs{g}^{1/2}g^{-1}\nabla  \abs{g}^{-1/2}  \cdot \nabla u_N \, \overline{u_N}  +V \abs{u_N}^2 \Big] dxdy.
			\end{split}
		\end{equation}
		Here $\mathrm{Re}(f)$ stands for the real part of the complex-valued function $f$. Let us next estimate $I_N$ and $II_N$ separately.
		
		\medskip
		
		{\it Step 1. Estimate of $I_N$.}
		
		\medskip

		\noindent 
		Let us first compute the $L^2$-norm of $\Phi_N$. By \eqref{app sol} and the change of variables $z=Ny$ one easily obtains the bound
		\begin{equation}\label{L^2 norm of Phi_N}
			\begin{split}
				\left\| \Phi_N \right\|_{L^2(\Omega\times \R_+)} &\leq \bigg\|e^{-N|\xi|_g y}\frac{\eta}{|\xi|_g}\bigg\|_{L^2(\Omega\times\R_+)}+\sum_{k=1}^2N^{-k}\big\|e^{-N|\xi|_gy}\psi_k(\cdot,Ny)\big\|_{L^2(\Omega\times\R_+)}\\
				&\lesssim N^{-1/2}
			\end{split}
		\end{equation}
		for $N\geq 1$.
		Again using the representation formula \eqref{app sol}, a direct computation yields that 
		\begin{align}\label{grad Phi}
			\nabla_{x,y} \Phi_N =\left[ N \left(\begin{array}{cc}
				\mathsf{i}\xi-y\nabla |\xi|_g \\
				-\abs{\xi}_g
			\end{array}\right) \frac{\eta}{|\xi|_g}+q(x,Ny)\right] e^{N(\im x\cdot \xi-\abs{\xi}_gy)},
		\end{align}
		where $q(x,Ny)$ is of polynomial growth in $Ny$ and a bounded function $x$.
		Similarly as for the $L^2$ norm of $\Phi_N$, the identity \eqref{grad Phi} and the change of variables $z=Ny$ imply the following gradient estimate
		\begin{equation}\label{gradient estimate of Phi_N}
			\begin{split}
				\left\| \nabla_{x,y} \Phi_N\right\|_{L^2(\Omega\times \R_+)} &\leq N\left\|\left(\begin{array}{cc}
					\mathsf{i}\xi \\
					-\abs{\xi}_g
				\end{array}\right) \frac{\eta}{|\xi|_g}e^{-N|\xi|_g y}\right\|_{L^2(\Omega\times\R_+)}\\
				&\quad \,  +\|\widetilde{q}(x,Ny)e^{-N|\xi|_g y}\|_{L^2(\Omega\times\R_+)}\\
				&\lesssim N^{1/2}
			\end{split}
		\end{equation}
		for $N\geq 1$, where $\widetilde{q}$ is of polynomial growth in $Ny$ and bounded in $x$.

		On the other hand, with the representation formula \eqref{grad Phi} at hand, a direct computation ensures that   
		\begin{equation}\label{I_N_1}
			\begin{split}
				I_N&=\int_{\Omega\times\R_+}\wt{g}^{-1} \nabla_{x,y} \Phi_N \cdot \nabla_{x,y} \overline{\Phi_N} \,   dxdy \\
				&= N^2 \int_{\Gamma\times \R_+}   \wt{g}^{-1} \left(\begin{array}{cc}
					\mathsf{i}\xi -y\nabla |\xi|_g \\
					-\abs{\xi}_g
				\end{array}\right)  \cdot \left(\begin{array}{cc}
					-\mathsf{i}\xi-y\nabla |\xi|_g \\
					-\abs{\xi}_g
				\end{array}\right)\frac{\eta^2}{|\xi|_g^2} e^{-2N\abs{\xi}_g y}\, dxdy \\
				&\quad \, +N\int_{\Gamma\times\R_+}p(x,Ny)e^{-2N\abs{\xi}_g y}\,dxdy\\
				&= N^2 \int_{\Gamma\times \R_+}  (2|\xi|_g^2+|d|\xi|_g|^2y^2)\frac{\eta^2}{|\xi|_g^2} e^{-2N\abs{\xi}_g y}\, dxdy +\mathcal{O}(1)\\
				&=2N^2 \int_{\Gamma\times \R_+} e^{-2N|\xi|_g y}\eta^2\,dxdy+N^{-1}\int_{\Gamma\times\R_+}\frac{\left|d|\xi|_g\right|^2}{|\xi|_g^2}y^2\eta^2e^{-2|\xi|_gy}\,dxdy+\mathcal{O}(1)\\
				&=N \int_{\Gamma} |\xi|_g^{-1}\eta^2\,dx+\mathcal{O}(1)
			\end{split}
		\end{equation}
		for $\xi \neq 0$, where $p(x,Ny)$ is of polynomial growth in $Ny$ and a bounded function in $x$. Moreover, in the last equality we used the fundamental theorem of calculus and the notation $\mathcal{O}(1)$ or more generally $\mathcal{O}(N^{\alpha})$ for some $\alpha\in\R$ means that the term has growth $N^{\alpha}$ as $N\to\infty$.
		Multiplying \eqref{I_N_1} by $N^{-1}$, one can see that 
		\begin{equation}\label{I_1 N limit}
			\begin{split}
				\lim_{N\to \infty} N^{-1}I_N=\lim_{N\to \infty} \left\{  \int_{\Gamma} \abs{\xi}^{-1}_g \eta^2 \, dx +\mathcal{O}\LC N^{-1}\RC \right\} = \int_{\Gamma} \abs{\xi}^{-1}_g \eta^2 \, dx.
			\end{split}
		\end{equation}

		\medskip
		
		{\it Step 2. Estimate of $II_N$.}
		
		\medskip
		
		\noindent Notice that 
		\begin{equation}\label{II_N}
			\begin{split}
				II_N&\vcentcolon=\int_{\Omega\times \R_+}  \Big[ 2 \wt{g}^{-1}  \mathrm{Re}\LC \nabla_{x,y} \Phi_N \cdot \nabla_{x,y}\overline{r_N} \RC +  \wt{g}^{-1} \nabla_{x,y} r_N \cdot \nabla_{x,y} \overline{r_N} \\
				&\qquad  \qquad \quad +\abs{g}^{1/2}g^{-1}\nabla  \abs{g}^{-1/2} \cdot \nabla \LC \Phi_N +r_N \RC \LC \overline{\Phi_N + r_N} \RC \\
				&\qquad \qquad \quad +V \abs{\Phi_N+r_N}^2 \Big] dx,
			\end{split}
		\end{equation}
		where we used $u_N=\Phi_N+r_N$ again.
		By the construction of $\Phi_N$,  \eqref{Neumann data of Phi_N} and \eqref{eq: Neumannd data remainder}, we see that $r_N$ is a solution to 
		\begin{equation}
			\begin{cases}
				\LC -\Delta _g + V \RC r_N = -\LC -\Delta_g +V \RC \Phi_N & \text{ in } \Omega \times \R_+,\\
				-\p _y r_N =0 &\text{ on }\Omega \times \{0\}, \\
				r_N =0&\text{ on }\p \Omega \times \R_+.
			\end{cases}
		\end{equation}
		By the elliptic estimate \eqref{eq: elliptic estimate} and the change of variables $z=Ny$, there holds  
		\begin{equation}\label{estimate of r_N}
			\begin{split}
				\left\| r_N \right\|_{H^1(\Omega\times\R_+)}&\lesssim \left\| \LC -\Delta_g +V \RC \Phi_N \right\|_{L^2(\Omega\times\R_+)} \\
				&\lesssim N^{-1} \big\| \mathcal{P}(x,Ny)e^{-N|\xi|_gy} \big\|_{L^2(\Omega\times \R_+)} \\
				&\lesssim N^{-3/2}
			\end{split}
		\end{equation}
		for $N\geq 1$. 
		Using H\"older's inequality, \eqref{L^2 norm of Phi_N}, \eqref{gradient estimate of Phi_N} and \eqref{estimate of r_N}, we can estimate \eqref{II_N} as 
		\begin{equation}\label{estimate of Phi_N r_N}
			\begin{split} 
				\left| II_N \right| &\lesssim  \|\nabla_{x,y} \Phi_N\|_{L^2(\Omega\times\R_+)}\|\nabla_{x,y}r_N\|_{L^2(\Omega\times\R_+)} +\|r_N\|_{H^1(\Omega\times\R_+)}^2\\
				&\quad \,  +\|\nabla_{x,y}\Phi_N\|_{L^2(\Omega\times\R_+)}(\|\Phi_N\|_{L^2(\Omega\times \R_+)}+\|r_N\|_{L^2(\Omega\times\R_+)})\\
				&\quad \,  +\|\nabla_{x,y}r_N\|_{L^2(\Omega\times \R_+)}\|\Phi_N\|_{L^2(\Omega\times\R_+)}+\|\Phi_N\|_{L^2(\Omega\times \R_+)}^2\\
				&\lesssim 1
			\end{split}
		\end{equation}		
		for all $\xi \neq 0$ and $N\geq 1$. Multiplying \eqref{estimate of Phi_N r_N} by $N^{-1}$ and passing to the limit $N\to\infty$, we get
		\begin{equation}\label{II_1 N limit}
			\lim_{N\to \infty}N^{-1} II_N =0.
		\end{equation}
		Combining \eqref{I_1 N limit} and \eqref{II_1 N limit}, we get
		\begin{equation}\label{total limit}
			\begin{split}
				\lim_{N\to \infty} N^{-1}\left\langle \phi_N ,\Lambda^\Gamma_{g,V}\overline{\phi_N}\right\rangle  &=\lim_{N\to \infty} N^{-1}\LC I_N + II_N \RC  =\int_{\Gamma} \abs{\xi}^{-1}_g \eta^2 \, dx.
			\end{split}
		\end{equation}

		\medskip
		
		{\it Step 3. Recovery the metric $g$ on $\Gamma$.}
		
		\medskip
		
		\noindent     Now, suppose the condition \eqref{ND map agree} holds, then one can determine the metric $\LC g_{ij}(x)\RC$ on $ \Gamma$ by varying $0\neq \xi=\LC \xi_1,\ldots, \xi_n\RC\in \R^n$. More precisely, let $u_N^{(j)}=\Phi_N^{(j)}+r_N^{(j)}$ be the solutions to 
		\begin{equation}\label{eq: approx sol for j=1,2}
			\begin{cases}
				\LC -\Delta_{g_j} -\p_y^2 \RC  u_N^{(j)}+V_j u_N^{(j)}=0 &\text{ in }\Omega\times \R_+,\\
				-\p_y u_N^{(j)}(x,0)=\phi_N(x)&\text{ on }\Omega\times \{0\},\\
				u_N^{(j)} =0 & \text{ on }\p \Omega \times \R_+,
			\end{cases}
		\end{equation}
		where $\phi_N$ is given by \eqref{phi_N}, $\Phi_N^{(j)}$ stands for the approximate solution constructed by Lemma \ref{Lemma: approx sol}, and $r_N^{(j)}$ is the remainder term, for $j=1,2$ and $N\geq 1$. With these approximate solutions at hand, by using \eqref{total limit}, we have 
		\begin{equation}
			\begin{split}
				\int_{\Gamma} \abs{\xi}^{-1}_{g_1} \eta^2 \, dx	&=\lim_{N\to \infty} N^{-1} \left\langle \phi_N ,\Lambda^\Gamma_{g_1, V_1}\overline{\phi_N}\right\rangle \\
				&=	\lim_{N\to \infty} N^{-1} \left\langle \phi_N ,\Lambda^\Gamma_{g_1, V_2}\overline{\phi_N}\right\rangle \\ &=\int_{\Gamma} \abs{\xi}^{-1}_{g_2} \eta^2 \, dx,
			\end{split}
		\end{equation}
		for any test function $ \eta\in C^\infty_c(\Gamma)$, and for any $0\neq \xi =\LC \xi_1,\ldots, \xi_n\RC \in \R^n$. Thus, after polarization of test functions, we deduce $|\xi|_{g_1}=|\xi|_{g_2}$ on $\Gamma$, for any $\xi =\LC \xi_1,\ldots, \xi_n\RC \in \R^n$. Therefore, we deduce that $g^{k\ell}_1\zeta_k\eta_\ell=g^{k\ell}_2\zeta_k\eta_\ell$ on $\Gamma$, for all $\zeta,\eta\in\R^n$ and hence
		\begin{equation}\label{eq: bd of g on Gamma}
			g_1 =g_2 \text{ on }\Gamma.
		\end{equation}

		\medskip
		
		{\it Step 4. Recovery the potential $V$ on $\Gamma$.}
		
		\medskip
		
		\noindent First, let us note that by \eqref{eq: weak form ND map} and the fact that all coefficients $(g_j,V_j)$ for $j=1,2$ are real-valued one has $\langle\overline{\phi_N},  | g_j |^{1/2} \Lambda^\Gamma_{g_j,V_j}\phi_N  \rangle \in \R$, for $j=1,2$. Next, observe that in the complex-valued case formula \eqref{eq: integral id} in Lemma \ref{Lemma: integral id} becomes
		\[
		\big\langle \overline{f},\left| g_1 \right|^{1/2}\Lambda^\Gamma_{g_1,V_1} f\big\rangle - \big\langle \overline{f} , | g_2 |^{1/2}\Lambda^\Gamma_{g_2,V_2} f \big\rangle = ( B_{g_1,V_1}-B_{g_2,V_2}) \big( u_f^{(1)}, \overline{u_f^{(2)}}\big).
		\]
		Thus, by $g_1=g_2$ in $\Gamma$ and \eqref{ND map agree}, there holds that 
		\begin{equation}\label{integral id BD j=1,2_1}
			\begin{split}
				0&=	\langle \overline{\phi_N},\left| g_1 \right|^{1/2}\Lambda^\Gamma_{g_1,V_1} \phi_N\rangle - \langle \overline{\phi_N} , | g_2 |^{1/2}\Lambda^\Gamma_{g_2,V_2} \phi_N \rangle \\
				&= \LC B_{g_1,V_1}-B_{g_2,V_2}\RC \big( u_N^{(1)}, \overline{u_N^{(2)}}\big) \\
				&=\int_{\Omega \times \R_+} \big( \left| g_1\right|^{1/2}g_1^{-1}-\left| g_2\right|^{1/2}g_2^{-1}\big) \nabla u_N^{(1)}\cdot \nabla \overline{u_N^{(2)}}\, dxdy \\
				&\quad \, + \int_{\Omega\times \R_+} \big(\left| g_1\right|^{1/2} V_1 -\left| g_2\right|^{1/2}V_2 \big) u_N^{(1)}\overline{u_N^{(2)}}\, dxdy.
			\end{split}
		\end{equation}
		for $j=1,2$. As in Step 3, we expand 
		\begin{equation}\label{algebraic id}
			|g_k|^{1/2}g_k^{-1}\nabla u_N^{(1)}\cdot \nabla \overline{u_N^{(2)}}=|g_k|^{1/2}g_k^{-1} \big( \nabla \Phi_N^{(1)}+ \nabla r_N^{(1)}\big) \cdot \big( \overline{ \nabla \Phi_N^{(2)} +\nabla r_N^{(2)}}\big)
		\end{equation} 
		for $k=1,2$. Next, inserting \eqref{algebraic id} into \eqref{integral id BD j=1,2_1} and using $g_1=g_2$ in $\Gamma$ as well as $\supp\Phi_N^{(j)}\subset \Gamma$, we get 
		\begin{equation}\label{integral id BD j=1,2_2}
			\begin{split}
				0&= \int_{\Omega \times \R_+} \big( \left| g_1\right|^{1/2}g_1^{-1}-\left| g_2\right|^{1/2}g_2^{-1}\big) \nabla r_N^{(1)}\cdot \nabla \overline{r_N^{(2)}}\, dxdy \\
				&\quad \,  + \int_{\Omega\times \R_+} \big(\left| g_1\right|^{1/2} V_1 -\left| g_2\right|^{1/2}V_2 \big)\big( \Phi_N^{(1)}+r_N^{(1)}\big) \big( \overline{\Phi_N^{(2)}+r_N^{(2)}}\big)\, dxdy.
			\end{split}
		\end{equation}
		Applying the error estimate \eqref{estimate of r_N} of $r_N^{(j)}$ for $j=1,2$ and H\"older's inequality, we have
		\begin{equation}\label{integral id BD j=1,2_3}
			\begin{split}
				\bigg|  \int_{\Omega \times \R_+} \big( \left| g_1\right|^{1/2}g_1^{-1}-\left| g_2\right|^{1/2}g_2^{-1}\big) \nabla r_N^{(1)}\cdot \nabla \overline{r_N^{(2)}}\, dxdy\bigg|\lesssim N^{-3}.
			\end{split}
		\end{equation}
		On the other hand, for the second term in \eqref{integral id BD j=1,2_2}, one can see that 
		\begin{equation}\label{integral id BD j=1,2_4}
			\begin{split}
				&\quad \,  \int_{\Omega\times \R_+} \big(\left| g_1\right|^{1/2} V_1 -\left| g_2\right|^{1/2}V_2 \big) \big( \Phi_N^{(1)}+r_N^{(1)}\big) \big( \overline{ \Phi_N^{(2)}+r_N^{(2)}}\big) dxdy \\
				&= \int_{\Omega\times \R_+} \big(\left| g_1\right|^{1/2} V_1 -\left| g_2\right|^{1/2}V_2 \big) \Phi_N^{(1)}\overline{\Phi_N^{(2)}}\, dxdy +\mathcal{O}\LC N^{-2}\RC,
			\end{split}
		\end{equation}
		where we used \eqref{L^2 norm of Phi_N} and \eqref{estimate of r_N}. Hence, \eqref{integral id BD j=1,2_2}, \eqref{integral id BD j=1,2_3} and \eqref{integral id BD j=1,2_4} imply
		\[
		\begin{split}
			&N\int_{\Omega\times \R_+} \big(\left| g_1\right|^{1/2} V_1 -\left| g_2\right|^{1/2}V_2 \big) \Phi_N^{(1)}\overline{\Phi_N^{(2)}}\, dxdy=\mathcal{O}\LC N^{-1}\RC, 
		\end{split}
		\]
		which gives
		\begin{equation}
			\label{eq: almost rec of V}
			\lim_{N\to\infty}N\int_{\Omega\times \R_+} \big(\left| g_1\right|^{1/2} V_1 -\left| g_2\right|^{1/2}V_2 \big) \Phi_N^{(1)}\overline{\Phi_N^{(2)}}\, dxdy=0.
		\end{equation}
		Thus, from the representation formula \eqref{app sol} and the change of variables $z=Ny$ we can conclude that
		\begin{equation}\label{integral id BD j=1,2_5}
			\begin{split}
				&\quad \, \int_{\Omega\times \R_+}\big(\left| g_1\right|^{1/2} V_1 -\left| g_2\right|^{1/2}V_2 \big) \Phi_N^{(1)} \overline{\Phi_N^{(2)}} dxdy \\
				&= \int_{\Omega \times \R_+}\big(\left| g_1\right|^{1/2} V_1 -\left| g_2\right|^{1/2}V_2 \big) e^{- N (\abs{\xi}_{g_1}+|\xi|_{g_2})y}  \frac{\eta^2}{|\xi|_{g_1}\abs{\xi}_{g_2}}  \, dxdy +\mathcal{O}\LC N^{-2}\RC\\
				&= N^{-1}\int_{\Gamma\times\R_+}|g_1|^{1/2} \left( V_1 -V_2 \right)e^{-2 \abs{\xi}_{g_1}y}  \frac{\eta^2}{|\xi|_{g_1}^2} \, dxdy  + \mathcal{O}\LC N^{-2}\RC,
			\end{split}
		\end{equation}
		where we used in the last equality that $g_1=g_2$ on $\Gamma$ and $\eta\in C_c^{\infty}(\Gamma)$. Inserting this into  \eqref{eq: almost rec of V} and using the fundamental theorem of calculus, we deduce that
		\[
		\begin{split}
			0&=\int_{\Gamma\times\R_+}|g_1|^{1/2} \left( V_1 -V_2 \right)e^{-2 \abs{\xi}_{g_1}y}  \frac{\eta^2}{|\xi|_{g_1}^2} \, dxdy \\
			&=\int_{\Gamma}|g_1|^{1/2} \left( V_1 -V_2 \right) \frac{\eta^2}{2|\xi|_{g_1}^3} \, dx
		\end{split}
		\]
		for any $\eta \in C^{\infty}_c(\Gamma)$. This shows by the usual polarization argument that
		\[
		V_1=V_2\text{ on }\Gamma.
		\]
		This concludes the proof.
	\end{proof}

	\section{Inverse problem for nonlocal equations}\label{sec: local to nonlocal}
	
	We start by reviewing in Section~\ref{sec: fractional powerts of elliptic operators} the definition of fractional powers of elliptic operators. In Section~\ref{sec: C-S extension} we recall the extension property of elliptic variable coefficient nonlocal operators. This helps us in Section~\ref{sec: ND to S-t-S map} to relate the Neumann derivative with the square root of an elliptic operator. Finally, in Section~\ref{sec: Determination of heat kernel} we show that the ND map uniquely determines the heat kernel of the operator $-\Delta_g+V$.

	\subsection{Fractional powers of $-\Delta_g+V$}\label{sec: fractional powerts of elliptic operators}
	
	As usual, let $\Omega\subset\R^n$ denote a bounded smooth domain. For any uniformly elliptic Riemannian metric $g\in C^{\infty}(\overline{\Omega};\R^{n\times n})$ and potential $0\leq V\in C^{\infty}(\overline{\Omega})$, we introduce the operator
	\begin{align}\label{P_j}
		\mathsf{P}_{ g,V}:=-\Delta_{g}+V
	\end{align} 
	on $L^2(\Omega,dV_g)$ with homogeneous Dirichlet condition on $\partial\Omega$, that is, it has domain 
	\begin{equation}
		\label{eq: domain PgV}
		\mathsf{Dom}\LC \mathsf{P}_{g,V}\RC=\left\{u\in H^1_0(\Omega,dV_g)\,;\,\mathsf{P}_{g,V}u\in L^2(\Omega;dV_g)\right\},
	\end{equation}
	where $\mathsf{P}_{g,V}u\in L^2(\Omega;dV_g)$ has to be understood in the weak sense.
	Below, we will show that\footnote{Here and in the following, we make repeatedly use of the fact that $H^k(\Omega)=H^k(\Omega, dV_g)$ with equivalent norms by the ellipticity \eqref{ellipticity} of $g$ (see Section~\ref{subsec: function spaces}).} $\mathsf{Dom}(\mathsf{P}_{g,V})=H^1_0(\Omega)\cap H^2(\Omega)$.
	Arguing as in \cite[Theorem~8.22, Theorem~9.31]{Brezis}, one deduces that there exists a Hilbert basis $\LC \phi_k\RC_{k \in \N }\subset H^1_0(\Omega)$ of $L^2(\Omega,dV_g)$ and a sequence $(\lambda_k)_{k \in \N }\subset \R_+$ with $\lambda_k\to\infty$ as $k\to \infty$ such that
	\begin{equation}
		\begin{cases}
			\mathsf{P}_{g,V}\phi_k=\lambda_k\phi_k &\text{ in }\Omega, \\
			\phi_k =0 &\text{ on }\p \Omega,
		\end{cases} 
	\end{equation}
	for all $k\in \N$. Moreover, by \cite[Theorem~9.25, Remark~24]{Brezis} it follows that $\phi_k\in C^{\infty}(\overline{\Omega})$. 
	
	Next observe that any $u\in \mathsf{Dom}(\mathsf{P}_{g,V})$ with spectral decomposition $u=\sum_{k\geq 1} u_k\phi_k$ satisfies
	\begin{equation}
		\label{eq: integrability L2 of P}
		\left\|P_{g,V}u \right\|_{L^2(\Omega,dV_g)}^2=\sum_{k\geq  1}\lambda_k^2 \left|u_k \right|^2<\infty
	\end{equation}
	and there holds
	\begin{equation}
		\label{eq: spectral decomp of P}
		\mathsf{P}_{g,V}u=\sum_{k\geq 1}\lambda_k u_k\phi_k.
	\end{equation}
	The identities \eqref{eq: spectral decomp of P} and  \eqref{eq: integrability L2 of P} suggest a natural definition for the fractional powers $\mathsf{P}_{g,V}^s$, $0<s<1$ (more details are given in Appendix \ref{sec: appendix_Fractional powerts of elliptic operators}). To define it, let us introduce the spaces $\widetilde{H}^{2s}_{g,V}(\Omega)$ consisting of all $u\in L^2(\Omega,dV_g)$ such that
	\begin{equation}
		\label{eq: well-defined}
		\sum_{k\geq 1}\lambda_k^{2s} \left|u_k \right|^2<\infty,
	\end{equation}
	where $u$ has the spectral decomposition $u=\sum_{k \geq 1} u_k\phi_k$ in $L^2(\Omega,dV_g)$ (i.e.~$u_k=\langle u,\phi_k\rangle_{L^2(\Omega,dV_g)}$ for $k\in \N$). 
	Note that $\widetilde{H}^{2s}_{g,V}(\Omega)$ equipped with the inner product
	\begin{equation}
		\label{eq: inner product in H2s}
		\langle u,v\rangle_{\widetilde{H}^{2s}_{g,V}(\Omega)}=\sum_{k\geq  1}\lambda_k^{2s}u_kv_k
	\end{equation}
	for $u,v\in \widetilde{H}^{2s}_{g,V}(\Omega)$ becomes a Hilbert space. This is true for any $s\geq 0$. Similarly, for $s<0$, we denote by $H^{-2s}_{g,V}(\Omega)$ the set of all $u=\sum_{k\geq 1}u_k\phi_k$
	satisfying
	\begin{equation}
		\label{eq: dual space}
		\|u\|_{H^{-2s}_{g,V}(\Omega)}=\sum_{k\geq 1}\lambda_k^{-2s}\left|u_k \right|^2<\infty.
	\end{equation}
	If one defines the inner product similarly as in \eqref{eq: inner product in H2s}, then $H^{-2s}_{g,V}(\Omega)$ becomes a Hilbert space and one can identify the dual space $(\widetilde{H}^{2s}_{g,V}(\Omega))^\ast$ and $H^{-2s}_{g,V}(\Omega)$ with equivalent norms.
	
	Hence, for $u\in \widetilde{H}^{2s}_{g,V}(\Omega)$, we can define
	\begin{equation}\label{nonlocal elliptic operator}
		\mathsf{P}_{g,V}^s u=\sum_{k\geq 1} \lambda_k^s u_k \phi_k\in L^2(\Omega,dV_g)
	\end{equation}
	and $\mathsf{Dom}\LC \mathsf{P}^s_{g,V}\RC =\widetilde{H}^{2s}_{g,V}(\Omega)$. By construction we have
	\begin{equation}
		\label{eq: L2 norm of fractional operators}
		\left\|\mathsf{P}_{g,V}^s u \right\|_{L^2(\Omega;dV_g)}=\|u\|_{\widetilde{H}^{2s}_{g,V}(\Omega)}, \text{ for all }u\in\widetilde{H}^{2s}_{g,V}(\Omega).
	\end{equation}

	\begin{lemma}
		We have
		\begin{equation}
			\label{eq: domain of fractional power}
			\mathsf{Dom}\LC\mathsf{P}_{g,V}\RC \hookrightarrow \mathsf{Dom}\LC \mathsf{P}^s_{g,V}\RC 
		\end{equation}
		and 
		\begin{equation}
			\label{eq: monotonicity of spaces}
			\widetilde{H}^s_{g,V}(\Omega)\hookrightarrow \widetilde{H}^t_{g,V}(\Omega).
		\end{equation}
		for all $0\leq t<s<\infty$.
	\end{lemma}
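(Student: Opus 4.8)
The plan is to reduce both inclusions to an elementary comparison of the weighted $\ell^2$-series defining the spaces $\widetilde{H}^\sigma_{g,V}(\Omega)$ and $\mathsf{Dom}(\mathsf{P}^s_{g,V})$, the only real input being that the spectrum of $\mathsf{P}_{g,V}$ is bounded away from $0$.

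First I would record that the bottom eigenvalue is strictly positive: testing the eigenvalue equation $\mathsf{P}_{g,V}\phi_k=\lambda_k\phi_k$ with $\phi_k$ and using $V\ge0$, the uniform ellipticity \eqref{ellipticity} (hence the equivalence of Euclidean and Riemannian $L^2$-norms from \eqref{eq: equivalence}) and the Poincar\'e inequality on the bounded domain $\Omega$, one gets
\[
\lambda_k=\int_\Omega\LC|d\phi_k|^2+V\phi_k^2\RC\,dV_g\ \ge\ \int_\Omega|d\phi_k|^2\,dV_g\ \ge\ c_0\,\|\phi_k\|_{L^2(\Omega,dV_g)}^2\ =\ c_0
\]
for a constant $c_0>0$ independent of $k$, so $\lambda_k\ge\lambda_1\ge c_0>0$ for all $k$. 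From this I get, for any exponents $0\le t\le s$ and every $k$, the pointwise bound $\lambda_k^{\,t}=\lambda_k^{\,s}\lambda_k^{\,t-s}\le\lambda_1^{\,t-s}\lambda_k^{\,s}$ (using $t-s\le0$ and $\lambda_k\ge\lambda_1$).

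Next I would prove \eqref{eq: monotonicity of spaces}: given $0\le t<s$ and $u=\sum_{k\ge1}u_k\phi_k\in\widetilde{H}^s_{g,V}(\Omega)$, multiplying the previous bound by $|u_k|^2$ and summing yields
\[
\|u\|_{\widetilde{H}^t_{g,V}(\Omega)}^2=\sum_{k\ge1}\lambda_k^{\,t}|u_k|^2\ \le\ \lambda_1^{\,t-s}\sum_{k\ge1}\lambda_k^{\,s}|u_k|^2\ =\ \lambda_1^{\,t-s}\,\|u\|_{\widetilde{H}^s_{g,V}(\Omega)}^2<\infty ,
\]
which gives both the set inclusion and boundedness of the embedding (for $t=0$ this is $\widetilde{H}^s_{g,V}(\Omega)\hookrightarrow L^2(\Omega,dV_g)$). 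Finally, for \eqref{eq: domain of fractional power} I would note that if $u\in\mathsf{Dom}(\mathsf{P}_{g,V})$ then \eqref{eq: integrability L2 of P} shows $\sum_k\lambda_k^2|u_k|^2=\|\mathsf{P}_{g,V}u\|_{L^2(\Omega,dV_g)}^2<\infty$, i.e.\ $u\in\widetilde{H}^2_{g,V}(\Omega)$ with continuous inclusion; since $0<s<1$ implies $2s<2$, applying the embedding just proved with exponents $2s<2$ and using $\mathsf{Dom}(\mathsf{P}^s_{g,V})=\widetilde{H}^{2s}_{g,V}(\Omega)$ from \eqref{nonlocal elliptic operator} gives $\mathsf{Dom}(\mathsf{P}_{g,V})\hookrightarrow\widetilde{H}^2_{g,V}(\Omega)\hookrightarrow\widetilde{H}^{2s}_{g,V}(\Omega)=\mathsf{Dom}(\mathsf{P}^s_{g,V})$.

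The computation is entirely routine; the one point that is not pure bookkeeping is the strict positivity $\lambda_1>0$, and this is exactly where the standing hypotheses $V\ge0$, the homogeneous Dirichlet condition, and the boundedness of $\Omega$ are used (through Poincar\'e's inequality). So I expect no real obstacle.
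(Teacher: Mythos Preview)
Your proposal is correct and follows essentially the same approach as the paper: both reduce to elementary weighted $\ell^2$ comparisons via the spectral decomposition, with the positivity $\lambda_1>0$ as the only nontrivial input. The only cosmetic difference is that the paper splits the sum at the index $k_0$ where $\lambda_{k_0}\ge1$ and treats the two embeddings separately, whereas you use the uniform bound $\lambda_k\ge\lambda_1>0$ throughout and then obtain \eqref{eq: domain of fractional power} as a corollary of \eqref{eq: monotonicity of spaces} via $\mathsf{Dom}(\mathsf{P}_{g,V})\hookrightarrow\widetilde{H}^2_{g,V}(\Omega)$.
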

	\begin{proof}
		To see \eqref{eq: domain of fractional power}, let $k_0\in \N$ be the smallest natural number such that $\lambda_{k_0}\geq 1$. Then for $u=\sum_{k\geq 1}u_k\phi_k$ we have
		\[
		\begin{split}
			\sum_{k\geq 1}\lambda_k^{2s}|u_k|^2&\leq \sum_{1\leq k\leq k_0-1}\lambda_k^{2s}|u_k|^2+\sum_{k\geq k_0}\lambda_k^{2s}|u_k|^2\\
			&\leq \sum_{k\geq 1}|u_k|^2+\sum_{k\geq 1}\lambda_k^2|u_k|^2\\
			&\leq \|u\|_{L^2(\Omega,dV_g)}^2+\|\mathsf{P}_{g,V}u\|_{L^2(\Omega,dV_g)}^2<\infty.
		\end{split}
		\]
		We only prove \eqref{eq: monotonicity of spaces} for $t=0$, that is
		\begin{equation}
			\label{eq: embedding into L2}
			\widetilde{H}^{2s}_{g,V}(\Omega)\hookrightarrow L^2(\Omega,dV_g),
		\end{equation}
		and the general result follows by a simple modification. In fact, a direct calculation shows
		\[
		\begin{split}
			\|u\|_{L^2(\Omega,dV_g)}^2&\leq \sum_{1\leq k\leq k_0-1}|u_k|^2+\sum_{k\geq k_0}|u_k|^2\\
			&\leq \sum_{1\leq k\leq k_0-1}\frac{\lambda_k^{2s}}{\lambda_k^{2s}}|u_k|^2+\sum_{k\geq k_0}\lambda_k^{2s}|u_k|^2\\
			&\leq \sum_{k\geq 1}\lambda_k^{2s}|u_k|^2 \\
			&=\|u\|_{\widetilde{H}^{2s}_{g,V}(\Omega)},
		\end{split}
		\]
		where we used that the first eigenvalue $\lambda_1>0$. 
	\end{proof}
	
	\begin{remark}
		Let us remark that in the special case $g_{ij}=\delta_{ij}$ and $V=0$, the operator defined via \eqref{nonlocal elliptic operator} is called the \emph{spectral fractional Laplacian}, and for more details, in particular an alternative characterization of $\widetilde{H}^{2s}_{(\delta_{ij}),0}(\Omega)$, we refer the interested reader to \cite{cabre2010positive} and \cite[Section 3.1.3]{bonforte2014existence}. 
	\end{remark}

	The following integration by parts formula holds true.
	\begin{lemma}
		\label{lemma: integration by parts}
		For all $u,v\in \widetilde{H}^{2s}_{g,V}(\Omega)$, we have
		\begin{equation}
			\label{eq: integration by parts formula}
			\left\langle \mathsf{P}^s_{g,V} u,v \right\rangle_{L^2(\Omega,dV_g)}=\left\langle u,\mathsf{P}^s_{g,V}v\right\rangle_{L^2(\Omega,dV_g)}=\big\langle \mathsf{P}^{s/2}_{g,V} u, \mathsf{P}^{s/2}_{g,V} v\big\rangle_{L^2(\Omega,dV_g)}.
		\end{equation}
	\end{lemma}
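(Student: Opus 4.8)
The plan is to prove the integration by parts formula \eqref{eq: integration by parts formula} by working directly with the spectral decompositions of $u$ and $v$ and using the definition \eqref{nonlocal elliptic operator} of the fractional powers together with the orthonormality of the eigenbasis $(\phi_k)_{k\in\N}$. First I would write $u=\sum_{k\geq 1}u_k\phi_k$ and $v=\sum_{k\geq 1}v_k\phi_k$ in $L^2(\Omega,dV_g)$, where $u_k=\langle u,\phi_k\rangle_{L^2(\Omega,dV_g)}$ and similarly for $v_k$. Since $u,v\in\widetilde H^{2s}_{g,V}(\Omega)$, the series $\sum_k\lambda_k^{2s}|u_k|^2$ and $\sum_k\lambda_k^{2s}|v_k|^2$ converge by \eqref{eq: well-defined}, hence by Cauchy--Schwarz the series $\sum_k\lambda_k^s|u_k||v_k|$ converges as well; this absolute convergence is what justifies all the manipulations below.

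Next I would compute each of the three pairings. By the definition \eqref{nonlocal elliptic operator}, $\mathsf{P}^s_{g,V}u=\sum_k\lambda_k^s u_k\phi_k$, so using that $(\phi_k)$ is a Hilbert basis of $L^2(\Omega,dV_g)$ we obtain
\[
\big\langle\mathsf{P}^s_{g,V}u,v\big\rangle_{L^2(\Omega,dV_g)}=\sum_{k\geq 1}\lambda_k^s u_k v_k .
\]
The same computation applied to $\langle u,\mathsf{P}^s_{g,V}v\rangle_{L^2(\Omega,dV_g)}$ gives the identical series $\sum_k\lambda_k^s u_k v_k$, which yields the first equality. For the last pairing, note that $u,v\in\widetilde H^{2s}_{g,V}(\Omega)\hookrightarrow\widetilde H^{s}_{g,V}(\Omega)$ by \eqref{eq: monotonicity of spaces} (applied with the pair $s/2<s$, after rescaling the index), so $\mathsf{P}^{s/2}_{g,V}u$ and $\mathsf{P}^{s/2}_{g,V}v$ are well defined elements of $L^2(\Omega,dV_g)$ with $\mathsf{P}^{s/2}_{g,V}u=\sum_k\lambda_k^{s/2}u_k\phi_k$ and likewise for $v$; hence
\[
\big\langle\mathsf{P}^{s/2}_{g,V}u,\mathsf{P}^{s/2}_{g,V}v\big\rangle_{L^2(\Omega,dV_g)}=\sum_{k\geq 1}\lambda_k^{s/2}\lambda_k^{s/2}u_k v_k=\sum_{k\geq 1}\lambda_k^s u_k v_k,
\]
which is the same series, completing the proof.

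The only genuinely technical point—and the one I would be most careful about—is the interchange of the $L^2$ inner product with the infinite sums, i.e. the continuity of $\langle\cdot,\phi_k\rangle$ against the spectral series. This is handled by the absolute convergence noted above (together with Parseval's identity for the $L^2(\Omega,dV_g)$ inner product), so it is routine rather than a real obstacle; there are no subtleties about domains since membership in $\widetilde H^{2s}_{g,V}(\Omega)$ is exactly what makes every series in sight converge. One should also remark that \eqref{eq: integration by parts formula} in particular shows $\mathsf{P}^s_{g,V}$ is symmetric on $\widetilde H^{2s}_{g,V}(\Omega)$ and that $\|\mathsf{P}^{s/2}_{g,V}u\|^2_{L^2(\Omega,dV_g)}=\langle\mathsf{P}^s_{g,V}u,u\rangle_{L^2(\Omega,dV_g)}$, consistent with \eqref{eq: L2 norm of fractional operators}.
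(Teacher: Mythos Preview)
Your proposal is correct and follows essentially the same approach as the paper: the paper simply remarks that the identity follows from the definition \eqref{nonlocal elliptic operator} and straightforward computations, and your spectral expansion argument (computing each pairing as $\sum_{k\geq 1}\lambda_k^s u_k v_k$ via orthonormality of $(\phi_k)$) is exactly that computation, carried out with appropriate care about convergence.
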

	\begin{proof}
		The proof can be easily seen by using \eqref{nonlocal elliptic operator} and straightforward computations. 
	\end{proof}

	It is easily seen that the last expression in the integration by parts formula \eqref{eq: integration by parts formula} is precisely the inner product in $\widetilde{H}^s_{g,V}(\Omega)$. Because of this, for given $f\in (\widetilde{H}^s_{g,V}(\Omega))^\ast$, we say $u\colon \Omega\to\R$ (weakly) solves 
	\begin{equation}
		\label{eq: weak formulation fractional powers}
		\begin{cases}
			\mathsf{P}^s_{g,V} u = f &\text{ in }\Omega,\\
			u=0  &\text{ in }\partial\Omega,
		\end{cases}
	\end{equation}
	if $u\in \widetilde{H}^s_{g,V}(\Omega)$ and 
	\begin{equation}
		\label{eq: weak form of sol to PgV}
		\langle u,v\rangle_{\widetilde{H}^s_{g,V}(\Omega)}= \langle f,v\rangle,
	\end{equation}
	where $\langle\cdot,\cdot\rangle$ denotes the duality pairing between $\widetilde{H}^s_{g,V}(\Omega)$ and $(\widetilde{H}^s_{g,V}(\Omega))^*$. 
	In fact, it is not hard to see that $(\widetilde{H}^s_{g,V}(\Omega))^*=H^{-s}_{g,V}(\Omega)$ and
	\[
	\langle f,v\rangle =\sum_{k\geq 1} f_k v_k
	\]
	for $f\in (\widetilde{H}^s_{g,V}(\Omega))^\ast$ and $v\in \widetilde{H}^s_{g,V}(\Omega)$ (see \cite[Section 7.9]{bonforte2014existence}).
	
	Next, we want to relate the fractional powers $\mathsf{P}_{g,V}^s$ given by \eqref{nonlocal elliptic operator} with the associated heat semigroup $e^{-t\mathsf{P}_{g,V}}$, $t\geq 0$. First, we have the next lemma.
	
	\begin{lemma}
		\label{lemma: alternative char of semigroup}
		There holds
		\begin{equation}
			\label{eq: formula for heat semigroup discrete case}
			e^{-t\mathsf{P}_{g,V}}u=\sum_{k\geq 1}e^{-t\lambda_k}u_k\phi_k
		\end{equation}
		for any $u\in L^2(\Omega, dV_g)$.
	\end{lemma}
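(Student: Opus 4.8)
The plan is to show that the right-hand side of \eqref{eq: formula for heat semigroup discrete case} defines a strongly continuous contraction semigroup on $L^2(\Omega,dV_g)$ whose infinitesimal generator is precisely $-\mathsf{P}_{g,V}$, and then to invoke the uniqueness of the $C_0$-semigroup generated by a given (closed, densely defined) operator, together with the fact that $e^{-t\mathsf{P}_{g,V}}$ is by definition the $C_0$-semigroup generated by $-\mathsf{P}_{g,V}$.

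First I would set $S(t)u\vcentcolon=\sum_{k\geq 1}e^{-t\lambda_k}u_k\phi_k$ for $u=\sum_{k\geq 1}u_k\phi_k\in L^2(\Omega,dV_g)$ and $t\geq 0$, where $u_k=\langle u,\phi_k\rangle_{L^2(\Omega,dV_g)}$. Since $\lambda_k>0$ (recall $\lambda_1>0$ as noted after \eqref{eq: spectral decomp of P}), we have $0<e^{-t\lambda_k}\leq 1$ for all $t\geq 0$, so Parseval's identity gives $\|S(t)u\|_{L^2(\Omega,dV_g)}^2=\sum_{k\geq 1}e^{-2t\lambda_k}|u_k|^2\leq\sum_{k\geq 1}|u_k|^2=\|u\|_{L^2(\Omega,dV_g)}^2$; in particular the series converges in $L^2(\Omega,dV_g)$ and $S(t)$ is a bounded linear operator with $\|S(t)\|\leq 1$. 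The semigroup relations $S(0)=\mathrm{Id}$ and $S(t)S(s)=S(t+s)$ are immediate from $e^{-t\lambda_k}e^{-s\lambda_k}=e^{-(t+s)\lambda_k}$, and strong continuity at $t=0^+$ follows by dominated convergence for series, since $\|S(t)u-u\|_{L^2(\Omega,dV_g)}^2=\sum_{k\geq 1}(e^{-t\lambda_k}-1)^2|u_k|^2\to 0$ as $t\to 0^+$, the $k$-th summand being dominated by $|u_k|^2$ and converging pointwise to $0$. Hence $(S(t))_{t\geq 0}$ is a $C_0$-semigroup of contractions on $L^2(\Omega,dV_g)$.

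Next I would identify its generator $A$. For $u\in\mathsf{Dom}(\mathsf{P}_{g,V})$ one has $\sum_{k\geq 1}\lambda_k^2|u_k|^2<\infty$ by \eqref{eq: integrability L2 of P}, and since $|t^{-1}(e^{-t\lambda_k}-1)|\leq\lambda_k$ for all $t>0$, dominated convergence again yields $t^{-1}(S(t)u-u)=\sum_{k\geq 1}t^{-1}(e^{-t\lambda_k}-1)u_k\phi_k\to-\sum_{k\geq 1}\lambda_k u_k\phi_k=-\mathsf{P}_{g,V}u$ in $L^2(\Omega,dV_g)$ as $t\to 0^+$; thus $\mathsf{Dom}(\mathsf{P}_{g,V})\subseteq\mathsf{Dom}(A)$ and $A=-\mathsf{P}_{g,V}$ there. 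Conversely, if $u\in\mathsf{Dom}(A)$, so $t^{-1}(S(t)u-u)\to v$ in $L^2(\Omega,dV_g)$ for some $v=\sum_{k\geq 1}v_k\phi_k$, then testing against $\phi_k$ gives $v_k=\lim_{t\to 0^+}t^{-1}(e^{-t\lambda_k}-1)u_k=-\lambda_k u_k$, hence $\sum_{k\geq 1}\lambda_k^2|u_k|^2=\|v\|_{L^2(\Omega,dV_g)}^2<\infty$ and $u\in\mathsf{Dom}(\mathsf{P}_{g,V})$. Therefore $\mathsf{Dom}(A)=\mathsf{Dom}(\mathsf{P}_{g,V})$ and $A=-\mathsf{P}_{g,V}$.

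Finally, since a $C_0$-semigroup is uniquely determined by its infinitesimal generator, and $e^{-t\mathsf{P}_{g,V}}$ is the $C_0$-semigroup generated by $-\mathsf{P}_{g,V}$, we conclude $e^{-t\mathsf{P}_{g,V}}=S(t)$, which is exactly \eqref{eq: formula for heat semigroup discrete case}. The only point requiring a little care is the equality of domains in the identification of the generator — that is, verifying that $A$ is not merely an extension of $-\mathsf{P}_{g,V}$ — and this is precisely what the computation of testing $t^{-1}(S(t)u-u)$ against each eigenfunction $\phi_k$ accomplishes; everything else is routine. (If one prefers to avoid invoking semigroup uniqueness, one can instead observe that $-\mathsf{P}_{g,V}$ is self-adjoint and nonpositive and apply the Lumer--Phillips theorem to conclude directly, but this gives the same formula.)
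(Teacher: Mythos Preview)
Your proof is correct, but it takes a genuinely different route from the paper's. The paper argues via PDE uniqueness: it invokes the Galerkin method (referencing Evans and Dautray--Lions) to show that the weak heat equation \eqref{weak heat equation} has a unique solution in $L^2(0,T;H^1_0)$ with $\partial_t u\in L^2(0,T;H^{-1})$, that the Galerkin approximants $u_m(t)=\sum_{k=1}^m e^{-\lambda_k t}f_k\phi_k$ converge to the spectral series, and then observes (using \eqref{eq: L2 regularity of sol}, \eqref{eq: regularity of time derivative of Hille Yosida}) that the Hille--Yosida solution $U(t)=e^{-t\mathsf{P}_{g,V}}f$ is also such a weak solution, so the two must coincide. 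Your argument stays entirely within abstract semigroup theory: you verify directly that the spectral series defines a $C_0$-contraction semigroup, compute its generator by difference quotients, and appeal to uniqueness of the semigroup generated by $-\mathsf{P}_{g,V}$. Your approach is more self-contained (no external PDE references) and arguably cleaner; the paper's approach has the minor advantage of tying the formula explicitly to the variational/Galerkin framework used elsewhere. One small point to make explicit in your write-up: the step ``$\sum_{k\geq 1}\lambda_k^2|u_k|^2<\infty$ implies $u\in\mathsf{Dom}(\mathsf{P}_{g,V})$'' is the reverse of what \eqref{eq: integrability L2 of P} states; it follows immediately from the closedness of $\mathsf{P}_{g,V}$ (Lemma~\ref{lemma: properties of PgV}) by the same partial-sum argument the paper uses just after \eqref{eq: spectral decomp of P}, but you should say so.
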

	The proof of the above lemma is in Appendix \ref{sec: appendix_Fractional powerts of elliptic operators}. Furthermore, note that the uniform ellipticity \eqref{ellipticity} of $g$, $V\geq 0$ and the Poincar\'e inequality imply
	\begin{align}\label{L2 estimate for heat kernel}
		\left\| e^{-t\mathsf P_{g, V}}\right\|_{L(L^2(\Omega,dV_g))}\leq e^{-\gamma t} \leq 1, 
	\end{align}
	for all $t\geq 0$ and some $\gamma>0$ (cf.~e.g.~\cite[Theorem~3.4.3]{HeatKernelsArendt}). Additionally, by \cite[Example 9.2.2]{HeatKernelsArendt} we have $e^{-t\mathsf{P}_{g,V}}\geq 0$ for $t\geq 0$.

	Recall that the Gamma function is defined by 
	\begin{align}
		\Gamma(s)\vcentcolon =\int_0^\infty e^{-t} t^{s-1}\, dt,
	\end{align}
	and one has
	\begin{equation}
		\label{eq: s power of lambda}
		\lambda^s=\frac{1}{\Gamma(-s)}\int_0^{\infty}\LC e^{-t\lambda}-1\RC \frac{dt}{t^{1+s}}
	\end{equation}
	for $\lambda>0$ and $0<s<1$. Then using fundamental properties of $\mathsf{P}_{g,V}$ (see Lemma~\ref{lemma: properties of PgV}) and \eqref{eq: s power of lambda}, one can show that there holds
	\begin{equation}
		\label{eq: semigroup formula fractional powers}
		\mathsf{P}_{g,V}^s u=\frac{1}{\Gamma(-s)}\int_0^\infty \LC e^{-t\mathsf{P}_{g,V}} u-u \RC \frac{dt}{t^{1+s}}.
	\end{equation}
	for $u\in \widetilde{H}^{2s}_{g,V}(\Omega)$, which is called semigroup formula for $\mathsf{P}^s_{g,V}$. It is well-known that this holds in a very general setting, but in our case the argument is more elementary. 
	
	In fact, first of all taking in \eqref{eq: s power of lambda} $\lambda=\lambda_k$, multiplying by $u_k\phi_k$ and summing $k$ over $\{1,\ldots,m\}$ we get
	\[
	\mathsf{P}_{g,V}^s \sum_{k=1}^m u_k\phi_k=\frac{1}{\Gamma(-s)}\int_0^{\infty}\sum_{k=1}^m \LC e^{-t\lambda_k}-1\RC u_k\phi_k \, \frac{dt}{t^{1+s}}
	\]
	for all $m\in\N$. Here, we used $\phi_k\in \mathsf{Dom}(\mathsf{P}_{g,V})$, \eqref{eq: domain of fractional power}, \eqref{nonlocal elliptic operator} and \eqref{eq: formula for heat semigroup discrete case}. By construction the left hand side converges to $\mathsf{P}^s_{g,V}u$ in $L^2(\Omega,dV_g)$ and hence passing to the limit $m\to\infty$ gives  
	\[
	\mathsf{P}_{g,V}^s u=\lim_{m\to\infty}\frac{1}{\Gamma(-s)}\int_0^{\infty}\sum_{k=1}^m\LC e^{-t\lambda_k}-1\RC u_k\phi_k \, \frac{dt}{t^{1+s}}
	\]
	in $L^2(\Omega,dV_g)$. Hence, for all $v\in L^2(\Omega,dV_g)$ there holds
	\begin{equation}
		\label{eq: approximation}
		\begin{split}
			&\quad \, \left\langle \mathsf{P}_{g,V}^s u,v \right\rangle_{L^2(\Omega,dV_g)}\\
			&=\lim_{m\to\infty}\frac{1}{\Gamma(-s)}\bigg\langle \int_0^{\infty}\sum_{k=1}^m\LC e^{-t\lambda_k}-1\RC u_k\phi_k\frac{dt}{t^{1+s}},v\bigg\rangle_{L^2(\Omega,dV_g)}\\
			&=\lim_{m\to\infty}\frac{1}{\Gamma(-s)}\int_0^{\infty}\bigg\langle \sum_{k=1}^m \LC e^{-t\lambda_k}-1\RC u_k\phi_k,v\bigg\rangle_{L^2(\Omega,dV_g)}\frac{dt}{t^{1+s}}\\
			&=\frac{1}{\Gamma(-s)}\sum_{k=1}^{\infty}\int_0^{\infty}\LC e^{-t\lambda_k}-1\RC u_kv_k \, \frac{dt}{t^{1+s}}
		\end{split}
	\end{equation}
	where we set $v_k=\left\langle v,\phi_k \right\rangle_{L^2(\Omega,dV_g)}$. Next, note that
	\begin{equation}
		\label{eq: fubini}
		\begin{split}
			&\quad \, \sum_{k=1}^{\infty}\int_0^{\infty}\LC 1-e^{-t\lambda_k}\RC |u_k||v_k| \, \frac{dt}{t^{1+s}}\\
			&\leq \sum_{k=1}^{\infty}\bigg(\int_0^{1/\lambda_k}\LC 1-e^{-t\lambda_k}\RC \frac{dt}{t^{1+s}}+\int_{1/\lambda_k}^{\infty}\LC 1-e^{-t\lambda_k}\RC \frac{dt}{t^{1+s}}\bigg)|u_k||v_k|.
		\end{split}
	\end{equation}
	Now, the second integral in the right hand side of \eqref{eq: fubini} can be bounded as 
	\[
	\begin{split}
		\int_{1/\lambda_k}^{\infty}\LC 1-e^{-t\lambda_k}\RC \frac{dt}{t^{1+s}}\leq \int_{1/\lambda_k}^{\infty} \frac{dt}{t^{1+s}}\lesssim\lambda^s_k,
	\end{split}
	\]
	whereas the change of variables $\tau=t\lambda_k$ in the first integral yields
	\[
	\begin{split}
		\int_0^{1/\lambda_k}\LC 1-e^{-t\lambda_k}\RC \frac{dt}{t^{1+s}}&=\lambda_k^s\int_0^1 \LC 1-e^{-\tau}\RC \frac{d\tau}{\tau^{1+s}}\lesssim\lambda_k^s.
	\end{split}
	\]
	Inserting these estimates into \eqref{eq: fubini} gives
	\[
	\begin{split}
		\sum_{k=1}^{\infty}\int_0^{\infty}\LC 1-e^{-t\lambda_k}\RC |u_k||v_k|\, \frac{dt}{t^{1+s}} &\lesssim \sum_{k=1}^{\infty}\lambda_k^s|u_k||v_k|\\
		&\lesssim \sum_{k=1}^{\infty}\lambda_k^{2s}|u_k|^2+\sum_{k=1}^{\infty}|v_k|^2 \\
		&\lesssim \|u\|_{\widetilde{H}^{2s}_{g,V}(\Omega)}^2+\|v\|_{L^2(\Omega,dV_g)}^2<\infty.
	\end{split}
	\]
	Therefore, we can invoke Fubini's theorem in \eqref{eq: approximation} to get
	\[
	\begin{split}
		\left\langle \mathsf{P}_{g,V}^s u,v\right\rangle_{L^2(\Omega,dV_g)}&=\frac{1}{\Gamma(-s)}\int_0^{\infty}\sum_{k=1}^{\infty}\LC e^{-t\lambda_k}-1\RC u_kv_k \, \frac{dt}{t^{1+s}}\\
		&=\frac{1}{\Gamma(-s)}\int_0^{\infty}\left\langle \LC e^{-t\mathsf{P}_{g,V}}-1\RC u,v \right\rangle_{L^2(\Omega,dV_g)}\frac{dt}{t^{1+s}},
	\end{split}
	\]
	where we used \eqref{nonlocal elliptic operator} and \eqref{eq: formula for heat semigroup discrete case}. Hence, we have established \eqref{eq: semigroup formula fractional powers}.
	
	Next, let us introduce negative powers of $\mathsf{P}_{g,V}$. For fixed $0<s<1$, we set
	\begin{equation}
		\label{eq: negative powers in discrete case}
		\mathsf{P}^{-s}_{g,V}u=\sum_{k\geq 1}\lambda_k^{-s}u_k\phi_k,
	\end{equation}
	which is well-defined for $u\in L^2(\Omega,dV_g)$ as $\lambda_k>0$. One easily verifies by a direct calculation that $\mathsf{P}^s_{g,V}$ is an isomorphism as a map from $\widetilde{H}^{2s}_{g,V}(\Omega)$ to $L^2(\Omega)$ and there holds 
	\begin{equation}
		\label{eq: inverse}
		\mathsf{P}_{g,V}^{-s} \mathsf{P}^{s}_{g,V}=\id_{\widetilde{H}^{2s}_{g,V}(\Omega)}\quad \text{and}\quad \mathsf{P}_{g,V}^s \mathsf{P}^{-s}_{g,V}=\id_{L^2(\Omega,dV_g)}.
	\end{equation}
	Let us remark here that through the integration by parts formula \eqref{eq: integration by parts formula}, the operator $\mathsf{P}^s_{g,V}$ can be extended to a continuous map from $\widetilde{H}^s_{g,V}(\Omega)$ to $H^{-s}_{g,V}(\Omega)$ and its again an isomorphism with inverse $\mathsf{P}_{g,V}^{-s}$. 
	Furthermore, if one uses the identity
	\begin{align}\label{a-s}
		\lambda^{-s}=\frac{1}{\Gamma(s)}\int_0^\infty e^{-t\lambda}\frac{dt}{t^{1-s}}, \quad \lambda>0,
	\end{align}
	then there holds 
	\begin{align}\label{P_j integral}
		\mathsf{P}_{g, V}^{-s}u=\frac{1}{\Gamma(s)}\int_0^\infty e^{-t\mathsf P_{g, V}} u \frac{dt}{t^{1-s}}
	\end{align}
	for $u\in L^2 (\Omega,dV_g)$.
	Note that the right hand side of \eqref{P_j integral} converges in $L^2 (\Omega,dV_g)$ due to \eqref{L2 estimate for heat kernel}. In fact, \eqref{L2 estimate for heat kernel} implies
	\begin{equation}
		\label{eq: neg powers}
		\begin{split}
			\int_0^{\infty} \left\|e^{-t\mathsf P_{g, V}} u \right\|_{L^2(\Omega,dV_g)}\frac{dt}{t^{1-s}}&\leq \left(\int_0^{\infty}e^{-\gamma t}\frac{dt}{t^{1-s}}\right)\|u\|_{L^2(\Omega,dV_g)}\\
			&\leq \frac{\Gamma(s)}{\gamma^s}\|u\|_{L^2(\Omega,dV_g)}.
		\end{split}
	\end{equation}
	Again to see the identity \eqref{P_j integral} one can rely on the abstract theory or argue similarly as for  \eqref{eq: semigroup formula fractional powers} via an expansion in eigenfunctions and using the identity \eqref{eq: formula for heat semigroup discrete case}.
	
	\subsection{The Neumann derivative and the nonlocal equation}\label{sec: C-S extension}
	
	We start by recalling that, in a similar vein as the fractional Laplacian $(-\Delta)^s$ \cite{CS07}, a wide class of nonlocal operators can be recovered as Neumann derivatives of solutions to suitable extension problems. For example in \cite[Theorem 1.1]{stinga2010extension} it is shown that if $\mu$ is a ($\sigma$-finite) nonnegative measure on $\Omega\subset\R^n$, $\mathcal{L}$ is a nonnegative, densely defined, self-adjoint operator on $L^2(\Omega,d\mu)$ with domain $\mathsf{Dom}(\mathcal{L})$ and $w\in \mathsf{Dom}\LC\mathcal{L}^s\RC$ for some $0<s<1$, then 
	\begin{equation}
		\label{eq: solution of extension}
		W(x,y)=\frac{1}{\Gamma(s)}\int_0^{\infty}e^{-t\mathcal{L}}\LC\mathcal{L}^s w\RC (x)e^{-y^2/4t}\frac{dt}{t^{1-s}}
	\end{equation}
	solves
	\begin{align}\label{extension problem}
		\begin{cases}
			\mathcal{L} W -\frac{1-2s}{y}\p_y W -\p_y^2 W=0 & \text{ in }\Omega\times \R_+ , \\
			W=w &\text{ on }\Omega\times \{0\}
		\end{cases}
	\end{align}
	and one has 
	\begin{align}\label{relation of extension}
		-\lim_{y\to 0^+} y^{1-2s}\p_y W= c_s \mathcal{L}^s w \text{ on }\Omega\times \{0\},
	\end{align}
	where $c_s>0$ is a constant depending only on $s$. The previous limit has to be understood in the $L^2(\Omega,d\mu)$ sense.
	
	In particular, as $s=1/2$, we can connect \eqref{eq: main} to a nonlocal equation. Let us make a few remarks.
	\begin{enumerate}[(a)]
		\item We observe that actually in our special case $s=1/2$ and $\mathcal{L}=\mathsf{P}_{g,V}=-\Delta_g +V$, the above result follows by a more elementary argument for smooth functions. More precisely, let $g\in C^{\infty}(\overline{\Omega};\R^{n\times n})$, $V\in C^{\infty}(\overline{\Omega})$ be independent of $y$, and $\Omega$ has smooth boundary, then one can easily see, arguing as in \cite{CS07}, that the operator
		\[
		Tf=\left.-\partial_y u\right|_{y=0},
		\]
		where $u\in H^1_0(\Omega\times\R_+)$ uniquely solves
		\begin{align}\label{extension problem 1/2}
			\begin{cases}
				\LC- \Delta_g +V\RC u -\p_y^2 u=0 & \text{ in }\Omega\times \R_+ , \\
				u=0 &\text{ on }\p \Omega \times \R_+, \\
				u=f &\text{ on }\Omega\times \{0\},
			\end{cases}
		\end{align}
		which is a positive operator with $T^2f=(-\Delta_g +V)f$. Therefore, we have
		\begin{equation}
			\label{eq: s 1/2 relation}
			Tf=(-\Delta_g+V)^{1/2}f
		\end{equation}
		for $f\in C^{\infty}(\overline{\Omega})$ vanishing on $\partial\Omega$. The identity $T^2f=\left(-\Delta_g +V\right)f$, for $f\in C^{\infty}(\overline{\Omega})$ with $f=0$ on $\partial \Omega$, which can be seen as follows 
		\[
		\begin{split}
			T^2 f &=T\LC \left.-\partial_y u\right|_{y=0}\RC=\left.\partial_y^2 u\right|_{y=0}\\
			&=\left.\LC-\Delta_g +V\RC u\right|_{y=0}=\LC-\Delta_g +V\RC f,
		\end{split}
		\]
		since both $g$ and $V$ are $y$-independent.
		\item\label{uniqueness} Furthermore, in our case $\mathcal{L}=\mathsf{P}_{g,V}$, we get from \cite[Section~3]{stinga2010fractional} that under the additional boundary condition at infinity
		\begin{equation}
			\lim_{y\to\infty}W(x,y)=0\text{ weakly in }L^2(\Omega,dV_g)
		\end{equation}
		that $W$ given by \eqref{eq: solution of extension} is the unique solution of \eqref{extension problem} and in particular coincides with the one obtained via the Fourier method, i.e. making the ansatz $W(x,y)=\sum_{k\geq 1}c_k(y)\phi_k$. 
		
	\end{enumerate}

	\subsection{ND map and source-to-solution map}
	\label{sec: ND to S-t-S map}
	
	We next transfer the ND map of \eqref{eq: main} to the source-to-solution map for the nonlocal elliptic equation 
	\begin{align}\label{nonlocal equation}
		\mathsf{P}^{1/2}_{g,V} v = f \text{ in }\Omega.
	\end{align}
	By \eqref{eq: domain of fractional power} and our notion of weak solutions to \eqref{nonlocal equation} (see in particular \eqref{eq: weak form of sol to PgV}), we know that for any $f\in C_c^{\infty}(\Omega)$ there exists a unique solution $v\in \widetilde{H}^{1/2}_{g,V}(\Omega)$ of \eqref{nonlocal equation}.
	
	Hence, taking into account \eqref{eq: embedding into L2}, for a given open subset $\Gamma\subsetneq\Omega$ we can define the local \emph{source-to-solution} map corresponding to \eqref{nonlocal equation} by 
	\begin{align}\label{local S-t-S}
		\begin{split}
			\mathcal{S}_{g,V}^{\Gamma}\colon C_c^{\infty}(\Gamma)\to L^2(\Gamma), \quad f\mapsto\left.v^f\right|_{\Gamma},
		\end{split}
	\end{align}
	for any $f\in C_c^{\infty}(\Gamma)$, where $v^f\in \wt H^{1/2}_{g,V}(\Omega)$ is the solution to \eqref{nonlocal equation}. Clearly, the source-to-solution map could be defined on a larger space like $H^{-1/2}_{g,V}(\Omega)$ by our notion of weak solutions, but $C_c^{\infty}(\Gamma)$ is for our purposes enough.
This naturally leads to the following inverse problem:

\begin{enumerate}[\textbf{(IP2)}]
	\item \label{IP2}\textbf{Inverse problem for the nonlocal elliptic equation.}   Can one determine the metric $g$ and potential $V$ in $\Omega$ by using the knowledge of the local source-to-solution map $\mathcal{S}_{g,V}^{\Gamma}$?
\end{enumerate}

Recalling that with the boundary determination at hand, we know the the information of both $g$ and $V$ on the measured open subset $\Gamma \Subset \Omega$.
We next assert that measurements in the inverse problem \ref{IP1} determine the measurements in \ref{IP2}. 
\begin{lemma}\label{Lem: ND to source-to-solution}
	Let $\Omega$, $\Gamma$, $\LC g_1,V_1\RC$ and $\LC g_2,V_2 \RC$ be given as in Theorem \ref{Thm: Main}. Suppose \eqref{ND map agree} holds, then one has
	\begin{equation}
		\label{eq: equality of source-to-sol maps}
		\mathcal{S}_{g_1,V_1}^{\Gamma}f=\mathcal{S}_{ g_2,V_2}^{\Gamma}f \text{ for any }f\in C^\infty_c (\Gamma),
	\end{equation}
	where $\mathcal{S}_{g_j,V_j}\colon C^\infty_c(\Gamma) \ni f \mapsto  v_j^f\big|_{\Gamma} \in L^2(\Gamma)$ is the local source-to-solution map of 
	\begin{equation}
		\mathsf{P}_{g_j,V_j}^{1/2}v_j^f =f \text{ in }\Omega.
	\end{equation}
	for $j=1,2$.
\end{lemma}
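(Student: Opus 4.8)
The plan is to prove the pointwise identity $\Lambda_{g,V}^{\Gamma}f=\mathcal{S}_{g,V}^{\Gamma}f$ for every $f\in C_c^{\infty}(\Gamma)$ and every admissible pair $(g,V)$; the lemma then follows immediately by applying this identity to $(g_1,V_1)$ and $(g_2,V_2)$ and invoking the hypothesis \eqref{ND map agree}. The key observation is that, since the value $s=\tfrac12$ annihilates the degenerate weight $\tfrac{1-2s}{y}\partial_y$ appearing in the extension equation, the mixed boundary value problem \eqref{eq: main well-posedness} is precisely the Caffarelli--Silvestre--Stinga--Torrea extension problem for $\mathsf{P}_{g,V}=-\Delta_g+V$, but with the \emph{Neumann} datum prescribed on $\Omega\times\{0\}$ instead of the Dirichlet datum. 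Thus one expects the bottom trace of the extended solution to be exactly the solution of the nonlocal equation, and the partial ND map to coincide with the partial source-to-solution map.

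To make this precise I would fix $f\in C_c^{\infty}(\Gamma)$, let $u_f\in H^1_0(\Omega\times[0,\infty))$ be the unique solution of \eqref{eq: main well-posedness} furnished by Lemma~\ref{lemma: well-posedness}, and let $v^f=\mathsf{P}_{g,V}^{-1/2}f$ be the weak solution of \eqref{nonlocal equation}. I would then expand $u_f$ in the Dirichlet eigenbasis $(\phi_k)_{k\in\N}$ of $\mathsf{P}_{g,V}$ by setting $c_k(y):=\langle u_f(\cdot,y),\phi_k\rangle_{L^2(\Omega,dV_g)}$, which lies in $H^1_{\mathrm{loc}}([0,\infty))$ because $u_f\in H^1(\Omega\times\R_+)$. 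Testing the weak formulation of the equation against $\varphi(x,y)=\chi(y)\phi_k(x)$ with $\chi\in C_c^{\infty}(\R_+)$ and using $\mathsf{P}_{g,V}\phi_k=\lambda_k\phi_k$ shows that $c_k''=\lambda_k c_k$ on $\R_+$, so $c_k(y)=a_ke^{-\sqrt{\lambda_k}\,y}+b_ke^{\sqrt{\lambda_k}\,y}$; the square integrability $\sum_k\int_0^\infty|c_k(y)|^2\,dy=\|u_f\|_{L^2(\Omega\times\R_+)}^2<\infty$ forces $b_k=0$. Testing now against $\varphi(x,y)=\chi(y)\phi_k(x)$ with $\chi\in C_c^{\infty}([0,\infty))$ and $\chi(0)=1$, the weak Neumann condition \eqref{eq: weak sols} together with an integration by parts in $y$ yields $-c_k'(0)=\langle f,|g|^{1/2}\phi_k\rangle=\langle f,\phi_k\rangle_{L^2(\Omega,dV_g)}=:f_k$, hence $\sqrt{\lambda_k}\,a_k=f_k$. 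Therefore $u_f|_{\Omega\times\{0\}}=\sum_k c_k(0)\phi_k=\sum_k\lambda_k^{-1/2}f_k\phi_k=\mathsf{P}_{g,V}^{-1/2}f=v^f$, and restricting to $\Gamma$ gives $\Lambda_{g,V}^{\Gamma}f=u_f|_{\Gamma\times\{0\}}=v^f|_{\Gamma}=\mathcal{S}_{g,V}^{\Gamma}f$, as desired.

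A more conceptual variant, which offloads the computation onto the results already recalled, runs as follows: since $f\in C_c^{\infty}(\Omega)$ vanishes near $\partial\Omega$, all boundary compatibility conditions hold, so $v^f=\sum_k\lambda_k^{-1/2}f_k\phi_k\in\bigcap_{m\in\N}\widetilde{H}^{2m}_{g,V}(\Omega)$ and hence $v^f\in C^{\infty}(\overline{\Omega})$ with $v^f|_{\partial\Omega}=0$ by elliptic regularity and Sobolev embedding; then remark~(a) in Section~\ref{sec: C-S extension} applies to the solution $W$ of \eqref{extension problem 1/2} with boundary value $v^f$, giving $-\partial_yW|_{y=0}=\mathsf{P}_{g,V}^{1/2}v^f=f$, so that $W$ solves \eqref{eq: main well-posedness} and $W=u_f$ by the uniqueness part of Lemma~\ref{lemma: well-posedness}. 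I expect the main obstacle to be precisely the bookkeeping underlying either presentation: keeping track of the density weight $|g|^{1/2}$ that enters the weak Neumann condition \eqref{eq: weak sols} through $dV_{\widetilde g}$, matching it with the $L^2(\Omega,dV_g)$-sense Neumann limit used in the extension theory, and justifying the termwise manipulation of the eigenfunction expansion (including the continuity of $y\mapsto u_f(\cdot,y)$ into $L^2(\Omega,dV_g)$ up to $y=0$). None of these points is deep, but together they are the substance of the argument.
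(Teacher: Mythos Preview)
Your argument is correct, and both of your variants reach the same identity $\Lambda_{g,V}^{\Gamma}f=\mathcal{S}_{g,V}^{\Gamma}f$ that the paper establishes, but the route differs. The paper does not expand $u_f$ spectrally; instead it first proves a regularity claim ($u_f\in H^3(\Omega\times[0,R))$ for all $R>0$, via even reflection and boundary elliptic estimates, deferred to an appendix), uses this to show $u_f|_{y=0}\in H^1_0(\Omega)\cap H^2(\Omega)=\mathsf{Dom}(\mathsf{P}_{g,V})$, checks that $u_f(\cdot,y)\to 0$ weakly in $L^2(\Omega,dV_g)$ as $y\to\infty$, and then invokes the Stinga--Torrea uniqueness theorem for the extension problem to identify $u_f$ with the canonical extension of its own trace; the Neumann relation \eqref{relation of extension} then gives $\mathsf{P}_{g,V}^{1/2}(u_f|_{y=0})=f$. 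Your first approach is more elementary and self-contained: it bypasses both the $H^3$ regularity claim and the appeal to the extension-uniqueness literature, at the cost of the ODE/Fourier bookkeeping you flag. Your second variant is essentially the paper's argument run in reverse (build the extension $W$ from $v^f$ and match it to $u_f$ by uniqueness of the Neumann problem), which has the advantage that the regularity input is on $v^f$ rather than on $u_f$ and therefore avoids the appendix claim entirely. Either way, the identity you obtain is exactly what the paper needs, and the application to $(g_j,V_j)$ together with \eqref{ND map agree} finishes the lemma.
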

\begin{proof}
	Let us start by recalling that the boundary determination result established Section \ref{sec: boundary determination} ensures that
	\begin{equation}
		g_1=g_2 \text{ and }V_1=V_2 \text{ on }\Gamma.
	\end{equation}
	Next, we show \eqref{eq: equality of source-to-sol maps}. For a given $f\in C_c^{\infty}(\Gamma)$, we denote by $u_j^f\in H^1_0(\Omega\times [0,\infty))$ the unique solutions of \eqref{eq: conductivity in thm} for $j=1,2$ (see Lemma~\ref{lemma: well-posedness}). 
	
	\begin{claim}
		\label{claim regularity}
		For $j=1,2$, we have $u_j^f\in H^3(\Omega\times [0,R))$ for any $R>0$.
	\end{claim} 
	
	Let us offer the proof of Claim~\ref{claim regularity} in Appendix \ref{sec: appendix_elliptic}.
	By using the previous claim and suitable trace theorems, we know that $u^f_j|_{y=0}\in H^1_0(\Omega)\cap H^2(\Omega)$ and hence $ u^f_j|_{y=0}\in \mathsf{Dom}(\mathsf{P}_{g_j,V_j}^s) $ by \eqref{eq: domain of fractional power}. In fact, Claim~\ref{claim regularity} ensures that $u_j^f\in H^1(0,R;H^2(\Omega))$ for fixed $R>0$ and so by the trace theorem we have $u_j^f\in C([0,R];H^2(\Omega))$, which gives $u_j^f|_{y=0}\in H^2(\Omega)$. Next, let us note that $u_j^f\in H^1_0(\Omega\times [0,\infty))\hookrightarrow L^2(0,\infty;H^1_0(\Omega))$ ensures $u_j^f(\cdot,y)\in H^1_0(\Omega)$ for a.e. $y>0$. Thus, we can deduce from $u_j^f\in C([0,R];H^1(\Omega))$ that $u_j^f|_{y=0}\in H^1_0(\Omega)$ as $H^1_0(\Omega)$ is a closed subspace of $H^1(\Omega)$. Let $\varphi\in L^2(\Omega,dV_g)$ be fixed and consider the function $U_j\colon \R_+\to\R$ defined by
	\[
	U^f_j(y)\vcentcolon = \int_{\Omega} u^f_j(x,y)\varphi(x)\,dV_g(x).
	\]
	Using $u_j^f\in H^1_0(\Omega\times [0,\infty))\hookrightarrow H^1(\R_+;L^2(\Omega;dV_g))$ we know that $U^f_j\in H^1(\R_+)$ and hence the Sobolev embedding ensures the uniform continuity of $U^f_j$ on $[0,\infty)$. But then we get
	\[
	U_j^f\to 0 \text{ as }y\to \infty.
	\]
	Thus, we can invoke the uniqueness statement \ref{uniqueness} of Section~\ref{sec: C-S extension} to see that $u^f_j$ is the unique solution of the extension problem
	\begin{align}\label{extension problem 1/2 transfering map}
		\begin{cases}
			\LC-\Delta_g +V\RC u -\p_y^2 u=0 & \text{ in }\Omega\times \R_+ , \\
			u=0 &\text{ on }\p \Omega \times \R_+, \\
			u=\left.u^f_j \right|_{y=0} &\text{ on }\Omega\times \{0\},
		\end{cases}
	\end{align}
	for $j=1,2$.
	Thus, from \eqref{relation of extension} with $\mathcal{L}=\mathsf{P}_{g_j,V_j}$ and $s=1/2$, we get that $v_f^j= u_f^j |_{y=0}$ satisfies
	\begin{align}\label{nonlocal equ j=12}
		\begin{cases}
			\mathsf{P}^{1/2}_{g_j,V_j} v=f &\text{ in }\Omega, \\
			v=0 &\text{ on }\p \Omega.
		\end{cases}
	\end{align}
	
	Now, $v^f_j$ is indeed the unique solution of this problem by $v^f_j\in H^1_0(\Omega)\cap H^2(\Omega)$ for $j=1,2$, \eqref{eq: domain of fractional power} and the discussion at the beginning of the section.
	Combining \eqref{nonlocal equ j=12} with the condition \eqref{ND map agree}, we have
	\begin{align}\label{S-t-S same}
		v^f_1 =v^f_2\text{ in }\Gamma, \text{ for any }f\in C^\infty_c (\Gamma) ,
	\end{align}
	or stated alternatively \eqref{eq: equality of source-to-sol maps}.	This proves the assertion.
\end{proof}

\subsection{Determination of heat kernel}
\label{sec: Determination of heat kernel}

The purpose of this section is to show that if $(g_j,V_j)$ is is prescribed on the measurement set $\Gamma$ and the source-to-solution maps $\mathcal{S}_{g_j,V_j}^\Gamma$ coincide on $\Gamma$, then the Schwartz kernels $e^{-t \mathsf{P}_{g_j,V_j}}(\cdot, \cdot)$ of the semigroup $e^{-t\mathsf{P}_{g_j,V_j}}$ related to $\p_t +\mathsf{P}_{g_j,V_j}$ in $\Omega\times (0,\infty)$ (see Appendix \ref{sec: appendix_Fractional powerts of elliptic operators} for more details) agree on $\Gamma$. More precisely, we have the following lemma.

\begin{lemma}
	\label{Lemma: same heat kernel}
	Assume that $\Omega$, $\Gamma$, $\LC g_j,V_j\RC$ for $j=1,2$ are given as in Theorem~\ref{Thm: Main} and let $\LC g, V \RC\in C^{\infty}(\overline{\Omega};\R^{n\times n})\times C^{\infty}(\overline{\Omega})$ be any pair of a uniformly elliptic Riemannian metric $g$ and nonnegative potential $V$ such that \eqref{same g,V on Gamma} holds.
	Let $\mathcal{S}^{\Gamma}_{g_j,V_j}\colon C^\infty_c(\Gamma) \ni f \mapsto v_j^f|_{\Gamma} \in L^2(\Gamma)$ be the local source-to-solution map of \eqref{nonlocal equ j=12} for $j=1,2$.
	Suppose that \eqref{same l-S-t-S} holds, then we have 
	\begin{equation}
		\label{same heat kernel}
		e^{-t \mathsf{P}_{g_1,V_1}}(x,z)=e^{-t \mathsf{P}_{g_2,V_2}}(x,z) \text{ for }x,z\in \Gamma \text{ and }t>0.
	\end{equation}
\end{lemma}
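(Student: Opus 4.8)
The plan is to realize the extension solutions behind the two source-to-solution maps through the Caffarelli--Silvestre/Stinga--Torrea heat-semigroup formula, to match these extension solutions on the whole cylinder $\Gamma\times\R_+$ by a unique continuation argument, and finally to recover the heat kernels by inverting a Laplace transform. \emph{Step 1 (representation of the extension solutions).} For $f\in C^\infty_c(\Gamma)$ let $v^f_j\in\widetilde{H}^{1/2}_{g_j,V_j}(\Omega)$ solve \eqref{nonlocal equ j=12}, i.e. $v^f_j=\mathsf{P}^{-1/2}_{g_j,V_j}f$. By Claim~\ref{claim regularity} and the trace theorem $v^f_j\in H^1_0(\Omega)\cap H^2(\Omega)$, and by the extension characterization recalled in Section~\ref{sec: C-S extension} (in particular the uniqueness statement~\ref{uniqueness}) $v^f_j$ is the trace at $y=0$ of the unique solution $u^f_j\in H^1_0(\Omega\times[0,\infty))$ of \eqref{extension problem 1/2 transfering map}, which moreover satisfies $-\p_y u^f_j|_{y=0}=\mathsf{P}^{1/2}_{g_j,V_j}v^f_j=f$ and, by \eqref{eq: solution of extension} with $\mathcal{L}=\mathsf{P}_{g_j,V_j}$, $s=\tfrac12$ and $w=v^f_j$ (so that $\mathcal{L}^s w=f$),
\begin{equation}
	\label{eq: CS rep in plan}
	u^f_j(x,y)=\frac{1}{\sqrt{\pi}}\int_0^\infty\big(e^{-t\mathsf{P}_{g_j,V_j}}f\big)(x)\,e^{-y^2/(4t)}\,\frac{dt}{\sqrt{t}},\qquad (x,y)\in\Omega\times\R_+,
\end{equation}
the integral converging absolutely thanks to the decay estimate \eqref{L2 estimate for heat kernel}.

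\emph{Step 2 (matching the extension solutions on $\Gamma\times\R_+$).} By hypothesis \eqref{same g,V on Gamma} we have $g_1=g_2$ and $V_1=V_2$ on $\Gamma$, so both $u^f_1$ and $u^f_2$ solve the \emph{same} elliptic equation $(-\Delta_g+V-\p_y^2)u=0$ in $\Gamma\times\R_+$, with smooth coefficients. Their Cauchy data on $\Gamma\times\{0\}$ agree: the Dirichlet traces coincide by \eqref{same l-S-t-S} (they both equal $\mathcal{S}^\Gamma_{g_1,V_1}f=\mathcal{S}^\Gamma_{g_2,V_2}f$), and the conormal derivatives both equal $f$. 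Hence $w:=u^f_1-u^f_2$ has vanishing Cauchy data on the noncharacteristic hypersurface $\Gamma\times\{0\}$; unique continuation for second order elliptic operators with smooth coefficients makes $w$ vanish in an open neighborhood of $\Gamma\times\{0\}$ inside $\Gamma\times\R_+$, and since $\Gamma\times\R_+$ is connected the weak unique continuation principle yields $w\equiv0$, i.e. $u^f_1=u^f_2$ throughout $\Gamma\times\R_+$.

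\emph{Step 3 (inverting the Laplace transform).} Combining Steps 1 and 2, for every $x\in\Gamma$, $y>0$ and $f\in C^\infty_c(\Gamma)$,
\begin{equation}
	\label{eq: vanishing in plan}
	\int_0^\infty\Big[\big(e^{-t\mathsf{P}_{g_1,V_1}}f\big)(x)-\big(e^{-t\mathsf{P}_{g_2,V_2}}f\big)(x)\Big]e^{-y^2/(4t)}\,\frac{dt}{\sqrt t}=0.
\end{equation}
Writing the semigroups through their continuous Schwartz kernels, using $dV_{g_1}=dV_{g_2}$ on $\Gamma$, and invoking Fubini's theorem (legitimate by \eqref{L2 estimate for heat kernel} and the Gaussian factor) together with the arbitrariness of $f\in C^\infty_c(\Gamma)$, we obtain that for every fixed $x,z\in\Gamma$ the function $\Phi(t):=e^{-t\mathsf{P}_{g_1,V_1}}(x,z)-e^{-t\mathsf{P}_{g_2,V_2}}(x,z)$ satisfies $\int_0^\infty\Phi(t)e^{-y^2/(4t)}t^{-1/2}\,dt=0$ for all $y>0$. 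The substitution $\tau=1/t$ turns this into $\int_0^\infty\Phi(1/\tau)\,\tau^{-3/2}e^{-(y^2/4)\tau}\,d\tau=0$ for all $y>0$, i.e. the Laplace transform of $\tau\mapsto\Phi(1/\tau)\tau^{-3/2}$ vanishes identically on $(0,\infty)$. Since $t\mapsto e^{-t\mathsf{P}_{g_j,V_j}}(x,z)$ is continuous on $(0,\infty)$, of at most polynomial growth as $t\to0^+$ and exponentially decaying as $t\to\infty$, the injectivity of the Laplace transform forces $\Phi\equiv0$, which is exactly \eqref{same heat kernel}.

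The main obstacle is Step 2: one must ensure that the extension solutions $u^f_j$ are regular enough near $\Gamma\times\{0\}$ for their Cauchy data to be meaningful (this is where Claim~\ref{claim regularity} and elliptic boundary regularity enter, exactly as in Lemma~\ref{Lem: ND to source-to-solution}), and that unique continuation propagates all the way up the cylinder $\Gamma\times\R_+$, not merely into a neighborhood of its base $\Gamma\times\{0\}$. Once $u^f_1=u^f_2$ on $\Gamma\times\R_+$ has been established, the remaining manipulations — interchanging the integrals and stripping off the test function $f$ — are routine thanks to the exponential decay of the heat semigroup and the smoothness of the heat kernel.
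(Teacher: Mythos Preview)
Your argument is correct and takes a genuinely different route from the paper's. The paper never touches the extension solutions $u^f_j$ for $y>0$; instead it works entirely at the level of the semigroup formula $\mathsf{P}^{-1/2}_{g_j,V_j}f=\pi^{-1/2}\int_0^\infty e^{-t\mathsf{P}_{g_j,V_j}}f\,t^{-1/2}\,dt$, inserts $(-\Delta_g+V)^kf$ in place of $f$, commutes with the semigroup to produce $\partial_t^k$, and integrates by parts $k$ times against $t^{-1/2}$. This yields the moment conditions $\int_0^\infty\chi(x,t)\,t^{-k-1/2}\,dt=0$ for all $k$, but only for $x$ in a set $\mathcal{O}_2$ disjoint from $\supp f$ (the separation is needed to kill the boundary terms via Gaussian bounds). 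The moments are then converted into vanishing of a Laplace transform by a Taylor-remainder estimate, and finally parabolic unique continuation propagates the conclusion from $\mathcal{O}_2$ to all of $\Gamma$.

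Your approach trades all of that machinery for a single application of elliptic Cauchy uniqueness on the Caffarelli--Silvestre extension: once $u^f_1=u^f_2$ on $\Gamma\times\R_+$, the $y$-variable plays exactly the role of the Laplace parameter, and you get the full family of transforms for free, on all of $\Gamma$, without the disjoint-support trick or the parabolic UCP step. The cost is that you must invoke Cauchy uniqueness for second-order elliptic operators (Calder\'on/H\"ormander), which is a slightly heavier black box than the ingredients the paper uses, and you need the $H^3$-regularity of Claim~\ref{claim regularity} to make the Cauchy data and the odd-extension argument rigorous. One small clean-up for your Step~3: it is simpler to first fix $x\in\Gamma$, invert the Laplace transform in $y$ to obtain $(e^{-t\mathsf{P}_{g_1,V_1}}f)(x)=(e^{-t\mathsf{P}_{g_2,V_2}}f)(x)$ for all $t>0$ (here $\chi(t)$ is bounded and exponentially decaying, so injectivity is immediate), and only then strip off $f$; this order avoids having to discuss the on-diagonal $t^{-n/2}$ blow-up of the kernel.
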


Notice that the conditions \eqref{same g,V on Gamma} and \eqref{same l-S-t-S} are the conclusions of Theorem~\ref{Thm: BD} and Lemma \ref{Lem: ND to source-to-solution}.

\begin{proof}[Proof of Lemma \ref{Lemma: same heat kernel}]
	Fix any nonempty open subset $\mathcal{O}_1 \Subset \Gamma$ and let $f\in C^\infty_c (\mathcal{O}_1)\subset \mathsf{Dom}(\mathsf{P}_{g,V}^k)$ for all $k\in\N_0=\N \cup \{0\}$ (see Lemma~\ref{lemma: embedding of domains}). Using \eqref{same g,V on Gamma}, we deduce the identity
	\begin{align}
		\LC -\Delta_{g_1}+V_1 \RC^k f =\LC -\Delta_{g_2}+V_2 \RC^k f =\LC -\Delta_{ g}+V \RC^k f  \in C^\infty_c (\mathcal{O}_1)
	\end{align}
	for any $k\in \N_0$.
	Therefore, using the preceding identity, \eqref{eq: inverse} and \eqref{same l-S-t-S}, we can deduce that 
	\begin{align}\label{S-t-S1}
		\mathsf{P}_{g_1, V_1}^{-1/2}\LC -\Delta_{g}+V \RC^k f =	\mathsf{P}_{g_2, V_2}^{-1/2}\LC -\Delta_{g}+V \RC^k f    \text{ on }\Gamma.
	\end{align}
	Then \eqref{P_j integral} 
	ensures that 
	\begin{align}\label{P integral}
		\int_0^\infty \big( e^{-t\mathsf P_{g_j, V_j}}-e^{-t\mathsf P_{g_j, V_j}}\big)\LC -\Delta_{g}+V \RC^k f \, \frac{dt}{t^{1/2}}=0\text{ on }\Gamma
	\end{align}
	for $k\in \N_0$ (see \eqref{eq: neg powers}). We also recall that by Lemma~\ref{regularity lemma heat semigroup}, \ref{statement 3} we have 
	\[
	e^{-t\mathsf{P}_{g_j, V_j}}\LC -\Delta_{g}+V \RC^k f  \in C^{\infty}(\overline{\Omega}\times [0,\infty)) 
	\]
	for $j=1,2$. 

	Next, we follow arguments from \cite[Section 2]{feizmohammadi2021fractional}  (see \cite[Proposition 3.1]{GU2021calder} for nonlocal elliptic operators and \cite[Section 4]{LLU2022calder} for nonlocal parabolic operators). We first note that by the semigroup property of $e^{-t\mathsf{P}_{g,V}}$, $t\geq 0$, we have the commutativity
	\begin{align}\label{interchange}
		e^{-t\mathsf{P}_{g_j, V_j}} \LC -\Delta_{g_j}+V_j\RC^k g= \LC -\Delta_{g_j}+V_j\RC^ke^{-t\mathsf{P}_{g_j, V_j}}g 
	\end{align}
	and
	\begin{align}\label{heat in}
		\p_t^k  \big( e^{-t\mathsf{P}_{g_j, V_j}} g\big) =(-1)^k\LC -\Delta_{ g_j}+V_j\RC^k e^{-t\mathsf{P}_{ g_j, V_j}}  g
	\end{align}
	for all $g\in \mathsf{Dom}(\mathsf{P}_{g,V}^k)$ (see \eqref{eq: domain of power} and Lemma~\ref{regularity lemma heat semigroup}, \ref{statement 1}).
	Thus, inserting \eqref{interchange} and \eqref{heat in} into \eqref{P integral}, we have 
	\begin{align}\label{P integral time}
		\int_0^\infty \p_t^k \LC e^{-t\mathsf P_{g_1, V_1}}-e^{-t\mathsf P_{g_2, V_2}}\RC f \,  \frac{dt}{t^{1/2}}=0 \text{ on } \Gamma, \text{ for all }k\in \N_0.
	\end{align}
	We claim that there are no boundary contributions, when performing in \eqref{P integral time} an integration by parts. 
	Using the above relations, Lemma~\ref{regularity lemma heat semigroup}, \ref{statement 2}, \eqref{eq: inner product on domain of power}, \eqref{L2 estimate for heat kernel} and \eqref{eq: continuous embedding}, we get
	\begin{equation}\label{P estimate O1}
		\begin{split}
			&\quad \,  \left\|  \p_t ^k \LC e^{-t\mathsf P_{g_1, V_1}}-e^{-t\mathsf P_{g_2, V_2}}\RC f \right\|_{L^{\infty}(\Omega)} \\
			&\lesssim \sum_{i=1}^{2}\left\|\partial_t^ke^{-t\mathsf P_{g_i, V_i}}f \right\|_{\mathsf{Dom}(\mathsf{P}^{m-k}_{g_i,V_i})}\\
			&\lesssim  \sum_{i=1}^{2}\sum_{\ell=0}^{m-k} \left\|\mathsf{P}^{\ell}_{g_i,V_i} \partial_t^ke^{-t\mathsf P_{g_i, V_i}}f \right\|_{L^2(\Omega,dV_g)} \\
			&\lesssim\sum_{i=1}^{2}\sum_{\ell=0}^{m-k}\left\|e^{-t\mathsf{P}_{g_i, V_i}}\mathsf{P}_{g_i, V_i}^{\ell+k}f \right\|_{L^2(\Omega,dV_g)}\\
			&\lesssim\sum_{i=1}^{2} e^{-\gamma_i t}\sum_{\ell=0}^{m-k}\left\|\mathsf{P}_{g_i, V_i}^{\ell+k}f \right\|_{L^2(\Omega,dV_g)}\\
			&\lesssim\|f\|_{H^{2m}(\Omega,dV_g)} e^{-\gamma t},
		\end{split}
	\end{equation}
	where $\gamma=\min\LC\gamma_1,\gamma_2\RC >0$.
	In the calculation above $m$ is chosen such that $m-k>n/4$. Note that formula \eqref{P estimate O1} shows that for $t\to\infty$ there are no boundary contributions. 
	
	To proceed, we want to estimate the left hand side of \eqref{P estimate O1} for $t>0$ and $x\in \mathcal{O}_2$, where $\mathcal{O}_2$ is an nonempty open subset of $\Gamma$ such that $\overline{\mathcal{O}}_1\cap \overline{\mathcal{O}}_2=\emptyset$. Indeed, by \eqref{eq: schwartz kernel} and $f\in C_c^{\infty}(\mathcal{O}_1)$ we may write
	\begin{equation}\label{derv 1}
		\begin{split}
			&\quad \,  \p_t^k \left[ \LC e^{-t\mathsf P_{g_1, V_1}}-e^{-t\mathsf P_{g_2, V_2}}\RC f \right] (x)\\
			&=(-1)^k\big[\LC e^{-t\mathsf P_{g_1, V_1}}-e^{-t\mathsf P_{g_2, V_2}}\RC \LC -\Delta_{g}+V \RC^k f\big](x)\\
			&=(-1)^k\int_{\mathcal{O}_1} \big[\LC e^{-t\mathsf P_{g_1, V_1}}(x,z)-e^{-t\mathsf P_{g_2, V_2}}(x,z)\RC \LC -\Delta_{g}+V \RC^k f(z)\big] \, dV_g(z) ,
		\end{split}
	\end{equation}
	for $x\in\mathcal{O}_2$, where $e^{-t\mathsf P_{g_j, V_j}}(x,z)\geq 0$ is the (bounded) Schwartz kernel of $e^{-t\mathsf P_{g_j, V_j}}$ for $j=1,2$.
	Via \eqref{derv 1}, we have 
	\begin{align}\label{derv 2}
		\begin{split}
			&\quad \left| \p_t^k \left[ \LC e^{-t\mathsf P_{g_1, V_1}}-e^{-t\mathsf P_{ g_2, V_2}}\RC f \right] (x) \right| \\
			&\leq \int_{\mathcal{O}_1} \Big|\LC e^{-t\mathsf P_{ g_1, V_1}}(x,z)-e^{-t\mathsf P_{g_2, V_2}}(x,z)\RC \LC -\Delta_{g}+V \RC^k f(z)\Big| \, dV_g(z) \\
			&\leq  \left\|  e^{-t\mathsf P_{g_1, V_1}}(\cdot,\cdot )-e^{-t\mathsf P_{g_2, V_2}}(\cdot,\cdot) \right\|_{L^\infty(\mathcal{O}_2 \times \mathcal{O}_1)}  \big\| \LC -\Delta_{g}+V \RC^k f\big\|_{L^1(\mathcal{O}_1,dV_g)},
		\end{split}
	\end{align}
	for $x\in\mathcal{O}_2$ and any $k\in \N_0$. Moreover, we can use a Gaussian upper bound for the kernel $ e^{-t\mathsf P_{g_j, V_j}}(\cdot, \cdot)$ (see \eqref{eq: Gaussian upper bound}) to obtain
	\begin{align}
		\label{derv 3}
		\begin{split}
			&\quad \, \left| \p_t^k \left[ \LC e^{-t\mathsf P_{g_1, V_1}}-e^{-t\mathsf P_{g_2, V_2}}\RC f \right] (x) \right|  \\
			&\leq c t^{-n/2}e^{-b\,(\mathrm{dist}(\mathcal{O}_1,\mathcal{O}_2))^2/t}e^{\omega t}\big\| \LC -\Delta_{g}+V \RC^k f\big\|_{L^1(\mathcal{O}_1,dV_g)},
		\end{split}
	\end{align}
	for $x\in\mathcal{O}_2$, any $k\in \N_0$ and $t>0$, where $b,c>0$, $\omega\in\R$ only depend on the heat kernel  $e^{-t\mathsf{P}_{g_j,V_j}}$ (cf. \eqref{eq: Gaussian upper bound}) and $\dist(\mathcal{O}_1,\mathcal{O}_2)\vcentcolon=\inf\left\{\abs{x_1-x_2}; \, x_1\in \mathcal{O}_1, \  x_2\in \mathcal{O}_2 \right\}$. 
	This shows that we also do not have a boundary contribution at $t=0$ as by assumption $\dist(\mathcal{O}_1,\mathcal{O}_2)>0$.

	Therefore, using $e^{-t\mathsf{P}_{g_j,V_j}}f\in C^{\infty}(\overline{\Omega}\times [0,\infty))$ for $j=1,2$, \eqref{P estimate O1} and \eqref{derv 3}, an integration by parts ($k$ times) with respect to the $t$-variable in \eqref{P integral time} yields that  
	\begin{align}\label{derv 4}
		\int_{0}^{\infty}\left[ \LC e^{-t\mathsf P_{g_1, V_1}}-e^{-t\mathsf P_{g_2, V_2}}\RC f \right](x) \,  \frac{dt}{t^{k+1/2}}=0 , 
	\end{align}
	for $x\in \mathcal{O}_2$ and any $k\in \N_0$. 
	In particular, by using the change of variables $\zeta=\frac{1}{t}$, we obtain 
	\begin{align}\label{derv 6}
		\int_0 ^\infty \phi_x(\zeta) \zeta ^k \, d\zeta =0,
	\end{align}
	for $x\in\mathcal{O}_2$ and any $k\in \N_0$, where introduced for fixed $x\in \mathcal{O}_2$ the function $\phi_x\colon (0,\infty)\to\R$ by
	\begin{align}
		\phi_x (\zeta)\vcentcolon =\frac{ \big( e^{-\frac{1}{\zeta}\mathsf P_{g_1, V_1}}-e^{-\frac{1}{\zeta}\mathsf P_{g_2, V_2}}\big) f(x)}{\zeta^{1/2}}.
	\end{align}
	\begin{claim}
		\label{claim: properties of phi}
		The functions $\LC \phi_x\RC_{x\in\mathcal{O}_2}$ have the following properties
		\begin{enumerate}[(a)]
			\item\label{regularity of phi} $\phi_x\in C^{\infty}((0,\infty))\cap L^2((0,\infty))$
			\item\label{zero laplace transform} and for some $\alpha>0$ we have $\mathcal{L}(\phi_x)(s)=0$ for $0<s<\alpha$, where $\mathcal{L}\colon L^2((0,\infty))\to L^2((0,\infty))$ is the Laplace transform defined by
			\begin{equation}
				\label{eq: Laplace transform}
				\mathcal{L}f(s)=\int_0^{\infty}f(t)e^{-st}\,dt
			\end{equation}
			for $f\in L^2((0,\infty))$ and $s>0$.
		\end{enumerate}
	\end{claim}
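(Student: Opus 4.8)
The plan is to prove the two parts of Claim~\ref{claim: properties of phi} separately, in both cases building on the estimates \eqref{P estimate O1} and \eqref{derv 3} already obtained, together with the Gaussian off-diagonal bound for the heat kernels of $\mathsf{P}_{g_j,V_j}$.

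For part~\ref{regularity of phi}, I would first observe that by Lemma~\ref{regularity lemma heat semigroup}, \ref{statement 3} the map $t\mapsto\big(e^{-t\mathsf{P}_{g_1,V_1}}-e^{-t\mathsf{P}_{g_2,V_2}}\big)f$ belongs to $C^\infty(\overline{\Omega}\times[0,\infty))$, so for fixed $x\in\mathcal{O}_2$ its evaluation at $x$ is a smooth function $h=h(t)$ on $[0,\infty)$, and hence $\phi_x(\zeta)=\zeta^{-1/2}h(1/\zeta)$ is smooth on $(0,\infty)$ as a composition of smooth maps. For the $L^2$ bound I would combine two decay estimates for $h$: on the one hand \eqref{P estimate O1} with $k=0$ gives $|h(t)|\lesssim e^{-\gamma t}$, so $|\phi_x(\zeta)|\lesssim\zeta^{-1/2}e^{-\gamma/\zeta}$ for $\zeta\le1$, which is bounded and rapidly vanishing as $\zeta\to0^+$; on the other hand \eqref{derv 3} with $k=0$ gives $|h(t)|\lesssim t^{-n/2}e^{-b\,d^2/t}e^{\omega t}$ with $d=\dist(\mathcal{O}_1,\mathcal{O}_2)>0$, and after absorbing the polynomial factor $t^{-n/2}$ into the exponential and bounding $e^{\omega t}$ for $t\le1$ this becomes $|h(t)|\lesssim e^{-b'd^2/t}$ for $t\le1$ (for a suitable $b'>0$), whence $|\phi_x(\zeta)|\lesssim\zeta^{-1/2}e^{-b'd^2\zeta}$ for $\zeta\ge1$, rapidly decaying as $\zeta\to\infty$. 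Together with the local boundedness coming from continuity on $(0,\infty)$, this gives $\phi_x\in L^2((0,\infty))$.

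For part~\ref{zero laplace transform}, the idea is to expand $e^{-s\zeta}=\sum_{k\ge0}\frac{(-s\zeta)^k}{k!}$ in the definition \eqref{eq: Laplace transform} and integrate term by term, so that \eqref{derv 6}, i.e. $\int_0^\infty\phi_x(\zeta)\,\zeta^k\,d\zeta=0$ for every $k\in\N_0$, forces $\mathcal{L}(\phi_x)(s)=0$. The substance is to justify the interchange, that is, to bound the absolute moments $m_k:=\int_0^\infty|\phi_x(\zeta)|\,\zeta^k\,d\zeta$ well enough that $\sum_{k\ge0}\tfrac{s^k}{k!}m_k<\infty$ on some interval $0<s<\alpha$. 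Splitting at $\zeta=1$ and inserting the two decay bounds from above: on $(0,1)$ one has $|h(1/\zeta)|\lesssim e^{-\gamma/\zeta}\le1$, so $\int_0^1|\phi_x(\zeta)|\zeta^k\,d\zeta\lesssim\int_0^1\zeta^{k-1/2}\,d\zeta\le2$; on $(1,\infty)$ one has $|h(1/\zeta)|\lesssim e^{-b'd^2\zeta}$, so $\int_1^\infty|\phi_x(\zeta)|\zeta^k\,d\zeta\lesssim\int_0^\infty e^{-b'd^2\zeta}\zeta^{k-1/2}\,d\zeta=\Gamma(k+\tfrac12)(b'd^2)^{-k-1/2}\lesssim k!\,(b'd^2)^{-k}$. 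Hence $m_k\lesssim1+k!\,(b'd^2)^{-k}$, so $\sum_{k\ge0}\tfrac{s^k}{k!}m_k\lesssim e^s+\sum_{k\ge0}\big(s/(b'd^2)\big)^k<\infty$ for $0<s<b'd^2=:\alpha$. Consequently (by Tonelli's theorem) the series $\sum_{k\ge0}\tfrac{(-s\zeta)^k}{k!}\phi_x(\zeta)$ converges to $e^{-s\zeta}\phi_x(\zeta)$ in $L^1((0,\infty))$, so it may be integrated term by term, and \eqref{derv 6} yields $\mathcal{L}(\phi_x)(s)=\sum_{k\ge0}\tfrac{(-s)^k}{k!}\int_0^\infty\phi_x(\zeta)\zeta^k\,d\zeta=0$ for all $0<s<\alpha$, as claimed.

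The main obstacle I anticipate is the factorial bound $m_k\lesssim k!\,(b'd^2)^{-k}$ on the moments, since it is exactly what upgrades the vanishing of all polynomial moments \eqref{derv 6} to the vanishing of the Laplace transform on a genuine interval. That bound hinges on the strict positivity of $\dist(\mathcal{O}_1,\mathcal{O}_2)$, equivalently on the Gaussian off-diagonal decay $e^{-b\,d^2/t}$ of the heat kernels: without separating $\mathcal{O}_1$ from $\mathcal{O}_2$ one would only control $|h(t)|$ by $t^{-n/2}$ near $t=0$, the moments $m_k$ would diverge, and the Laplace-transform argument would fail.
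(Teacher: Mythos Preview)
Your proposal is correct and follows essentially the same route as the paper: both parts rest on combining the exponential decay \eqref{P estimate O1} for large $t$ with the Gaussian off-diagonal bound \eqref{derv 3} for small $t$, and part~\ref{zero laplace transform} is obtained by controlling the moments $\int_0^\infty|\phi_x(\zeta)|\zeta^k\,d\zeta$ well enough to pass from \eqref{derv 6} to the vanishing of the Laplace transform. The only cosmetic difference is that the paper phrases the last step via the Taylor remainder of $e^{-s\zeta}$ and Gamma-function asymptotics, whereas you sum the full series directly via Tonelli; your version is arguably tidier but the substance is the same.
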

	\begin{proof}[Proof of Claim \ref{claim: properties of phi}]
		The smoothness assertion follows immediately from Lemma~\ref{regularity lemma heat semigroup}, \ref{statement 3}. To see $\phi_x\in L^2((0,\infty))$, we use the change of variables $\zeta=1/t$ to write
		\[
		\begin{split}
			\left\|\phi_x \right\|_{L^2((0,\infty))}^2&=\int_0^{\infty}\frac{\big| \big( e^{-\frac{1}{\zeta}\mathsf P_{g_1, V_1}}-e^{-\frac{1}{\zeta}\mathsf P_{g_2, V_2}}\big)  f(x)\big|^2}{\zeta}\,d\zeta\\
			&=\int_0^1\left| \LC e^{-t\mathsf P_{g_1, V_1}}-e^{-t\mathsf P_{g_2, V_2}}\RC f(x)\right|^2\,\frac{dt}{t} \\
			&\quad \, +\int_1^{\infty} \left| \LC e^{-t\mathsf P_{g_1, V_1}}-e^{-t\mathsf P_{g_2, V_2}}\RC  f(x)\right|^2\,\frac{dt}{t}.
		\end{split}
		\]
		The second integral is finite as $e^{-t\mathsf P_{g_j, V_j}}f\in L^2(0,\infty;H^{1}_0(\Omega))$ for $j=1,2$ (see~\eqref{eq: L2 regularity of sol}) and using \eqref{derv 3} the first integral can be estimated as
		\[
		\begin{split}
			&\quad \, \int_0^{1} \left| \LC e^{-t\mathsf P_{g_1, V_1}}-e^{-t\mathsf P_{g_2, V_2}}\RC  f(x)\right|^2\, \frac{dt}{t}\\
			&\lesssim \left\|f\right\|_{L^1(\mathcal{O}_1,dV_g)} \int_0^{1} t^{-n/2}e^{-d/t}\,\frac{dt}{t}\\
			&=d^{-n/2}\left\|f\right\|_{L^1(\mathcal{O}_1,dV_g)} \int_d^{\infty}e^{-\tau}\tau^{n/2-1}\,d\tau\\
			&\lesssim d^{-n/2}\left\|f\right\|_{L^1(\mathcal{O}_1,dV_g)}\Gamma(n/2)<\infty
		\end{split}
		\]
		for some constant $d>0$. This establishes $\phi_x\in L^2((0,\infty))$ and hence completes the proof of assertion \ref{regularity of phi}.
		
		First recall that we have
		\[
		\left|e^{-s\zeta}-\sum_{k=0}^N\frac{(-s\zeta)^k}{(N+1)! }\right|=\frac{e^{-s\xi}}{(N+1)! }(-s\zeta)^{N+1},
		\]
		for any $N\in\N$, $\zeta>0$, $s>0$ and some fixed $\xi\in (0,\zeta)$. As the Laplace transform is a bounded operator from $L^2((0,\infty))$ to itself, we know that $\mathcal{L}(\phi_x)(s)$ makes sense for $s>0$ (up to a set of measure zero). By \eqref{derv 6}, we have
		\[
		\begin{split}
			\int_{0}^{\infty}\phi_x(\zeta)e^{-s\zeta}\,d\zeta&=\int_0^{\infty}\phi_x(\zeta)\left(e^{-s\zeta}-\sum_{k=0}^N\frac{(-s\zeta)^k}{(N+1)! }\right)\,d\zeta,
		\end{split}
		\]
		for any $N\in\N$ and $s>0$. Therefore, we may estimate
		\begin{equation}
			\label{eq: zero laplace transform}
			\begin{split}
				&\quad \, \left|\int_0^{\infty}\phi_x(\zeta)\left(e^{-s\zeta}-\sum_{k=0}^N\frac{(-s\zeta)^k}{(N+1)! }\right)\,d\zeta\right| \\
				&\lesssim \frac{s^{N+1}}{(N+1)! }\int_0^{\infty}\left|\phi_x(\zeta)\right|\zeta^{N+1}\,d\zeta\\
				&=\frac{s^{N+1}}{(N+1)! }\left(\int_0^{1}\left|\phi_x(\zeta)\right|\zeta^{N+1}\,d\zeta+\int_1^{\infty}\left|\phi_x(\zeta)\right|\zeta^{N+1}\,d\zeta\right)\\
				&\lesssim \frac{s^{N+1}}{(N+1)! }\left(\left\|\phi_x\right\|_{L^2((0,\infty))}+\int_1^{\infty}\left|\phi_x(\zeta)\right|\zeta^{N+1}\,d\zeta\right).
			\end{split}
		\end{equation}
		The last integral can be controlled by using the Gaussian bound \eqref{derv 3} as
		\[
		\begin{split}
			\int_1^{\infty}\left|\phi_x(\zeta)\right|\zeta^{N+1}\,d\zeta &\lesssim \int_1^{\infty}e^{-\alpha\zeta}e^{\omega/\zeta}\zeta^{N+n/2+1/2}\,d\zeta\\
			&\lesssim \int_1^{\infty}e^{-\alpha\zeta}\zeta^{N+n/2+1/2}\,d\zeta\\
			&=\alpha^{-(N+n/2+3/2)}\int_d^{\infty}e^{-\rho}\rho^{N+n/2+1/2}\,d\rho\\
			&\lesssim \alpha^{-(N+n/2+3/2)}\Gamma(N+n/2+3/2).
		\end{split}
		\]
		for some $\alpha>0$. Next, let us recall that for any $\beta\in \C$ we have the asymptotics 
		\begin{equation}
			\label{eq: asym gamma}
			\Gamma(x+\beta)\sim \Gamma(x)x^\beta\text{ as }x\to\infty.
		\end{equation}
		Inserting this into \eqref{eq: zero laplace transform} and using \eqref{eq: asym gamma}, we arrive at the estimate
		\[
		\begin{split}
			&\quad \, \left|\int_0^{\infty}\phi_x(\zeta)\left(e^{-s\zeta}-\sum_{k=0}^N\frac{(-s\zeta)^k}{(N+1)! }\right)\,d\zeta\right|\\
			&\lesssim \frac{s^{N+1}}{(N+1)! }\left(\left\|\phi_x\right\|_{L^2((0,\infty))}+\alpha^{-(N+n/2+3/2)}\Gamma(N+n/2+3/2)\right)\\
			&\lesssim  \frac{s^{N+1}}{(N+1)! }+\frac{s^{N+1}}{\alpha^{N+n/2+3/2}} \frac{\Gamma(N+n/2+3/2)}{\Gamma(N+2)}\\
			&\sim \frac{s^{N+1}}{(N+1)! }+\left(\frac{s}{\alpha}\right)^N (N+2)^{(n-1)/2}\\
			&\lesssim \frac{s^{N+1}}{(N+1)! }+\left(\frac{s}{\alpha}\right)^N N^{(n-1)/2}\\
			&=0
		\end{split}
		\]
		as $N\to\infty$. Here, we used that as $N\to\infty$ the first term goes to zero for all $s>0$ and the second term as long as $0<s<\alpha$. Hence, we deduce that 
		\[
		\mathcal{L}(\phi_x)(s)=0\text{ for }0<s<\alpha
		\]
		and this concludes the proof of \ref{zero laplace transform}. Hence, Claim \ref{claim: properties of phi} is proved.
	\end{proof}

	Since $\phi_x\in L^2((0,\infty))$ its Laplace transform can be extended analytically to the right half plane of $\C$ and thus \ref{zero laplace transform} of Claim~\ref{claim: properties of phi} together with the identity theorem for analytic functions guarantee that $\mathcal{L}\phi_x=0$ for $s>0$. Now, we can invoke the inversion formula to deduce $\phi_x(\zeta)=0$ for $\zeta>0$. This in turn implies 
	\begin{align}\label{derv 7}
		\left[ \LC e^{-t\mathsf P_{g_1, V_1}}-e^{-t\mathsf P_{g_2, V_2}}\RC f \right](x)=0, \text{ for }t>0 \text{ and }x\in \mathcal{O}_2.
	\end{align}
	On the other hand, via the condition \eqref{same g,V on Gamma}, the function 
	$$
	v=\LC e^{-t\mathsf P_{g_1, V_1}}-e^{-t\mathsf P_{g_2, V_2}}\RC f
	$$
	is a solution to 
	\begin{align}\label{heat difference}
		\begin{cases}
			\LC \p_t +\mathsf{P}_{g,V} \RC v=0& \text{ in } \Gamma\times (0,\infty),\\
			v=0&\text{ in }\mathcal{O}_2\times (0,\infty),
		\end{cases}
	\end{align}
	where we utilized the notation $g=g_1=g_2$ and $V=V_1=V_2$ on the open subset $\Gamma \Subset \Omega$.
	We may deduce from the fact that $v$ solves \eqref{heat difference}, $\Gamma$ is connected and the unique continuation property of solutions to heat equations (see, for example, \cite[Sections 1 and 4]{lin1990uniqueness}) that 
	\begin{align}\label{derv 8}
		\left[ \LC e^{-t\mathsf P_{g_1, V_1}}-e^{-t\mathsf P_{g_2, V_2}}\RC f \right](x)=0, \text{ for }t>0 \text{ and }x\in \Gamma.
	\end{align}
	Let us also note that for any given $f\in C^\infty_c(\Gamma)$ we can always choose open sets $\mathcal{O}_1,\mathcal{O}_2\subset\Gamma$ such that $\supp f\subset \mathcal{O}_1\Subset \Gamma$ and $\overline{\mathcal{O}}_2\cap \overline{\mathcal{O}}_1=\emptyset$. Hence, by \eqref{derv 8} there holds
	\begin{align}\label{derv 9}
		\left.  e^{-t\mathsf P_{g_1, V_1}}f \right|_{\Gamma} = \left. e^{-t\mathsf P_{g_2, V_2}} f \right|_{\Gamma}, \text{ for }t>0,
	\end{align}
	for any $f\in C^\infty_c(\Gamma)$. Finally, \eqref{derv 9} and \eqref{eq: schwartz kernel} yield that 
	\begin{align}
		e^{-t\mathsf P_{g_1, V_1}} (x,z) =  e^{-t\mathsf P_{g_2, V_2}} (x,z) \text{ for }t>0 \text{ and }x,z\in \Gamma,
	\end{align}
	which implies that the condition \eqref{same heat kernel} holds true. This completes the proof.
\end{proof}



\section{Inverse problem for wave equations}\label{sec: wave}

In this section, we introduce another key tool -- the \emph{Kannai type transmutation formula} (see \cite{kannai1977off}). This will transfer solutions of wave equations to solutions of heat equations, via time integration against suitable kernel functions (see~eq.~\eqref{Kann 3}). 
Using Lemma~\ref{Lemma: same heat kernel} this allows us to relate the inverse problem \ref{IP2} to an inverse source problem for the associated wave equation
\begin{equation}
	\label{wave equ with source}
	\begin{cases}
		\LC \p_t^2 +\mathsf{P}_{g,V}\RC w =F &\text{ in } \Omega\times [0,\infty),\\
		w(0)=w_0,\quad \p_t w(0)=w_1 &\text{ in }\Omega.
	\end{cases}
\end{equation}
By establishing unique determination for this inverse problem, we will prove in Section~\ref{subsec: proof of main result} our main result, Theorem~\ref{Thm: Main}.

Before proceeding, let us collect some relevant well-posedness and regularity results for the Cauchy problem \eqref{wave equ with source}, whose proof is presented in Appendix~\ref{sec: appendix_wave} for completeness.

\begin{theorem}
	\label{theorem: well-posedness wave equation}
	Let $\Omega\subset\R^n$ be a smoothly bounded domain, $g\in C^{\infty}(\overline{\Omega};\R^{n\times n})$ a uniformly elliptic Riemannian metric, $V\in C^{\infty}(\overline{\Omega})$ be a nonnegative potential and let $\mathsf{P}_{g,V}$ be the unbounded operator introduced in \eqref{P_j}-\eqref{eq: domain PgV}. 
	\begin{enumerate}[(a)]
		\item\label{well-posedness base case wave} Suppose that $w_0\in H^2(\Omega,dV_g)\cap H^1_0(\Omega,dV_g)$, $w_1\in H^1_0(\Omega,dV_g)$ and $F\in C^1([0,\infty);L^2(\Omega,dV_g))$.
		Then there exists a unique function $w$ satisfying
		\begin{equation}
			\label{eq: regularity of sols wave with source}
			\begin{cases}
				w\in C([0,\infty);\mathsf{Dom}(\mathsf{P}_{g,V})),\\
				\partial_t w\in C([0,\infty);H^1_0(\Omega;dV_g)),\\
				\partial_t^2 w\in C([0,\infty);L^2(\Omega,dV_g))
			\end{cases}
		\end{equation} 
		and solving the Cauchy problem \eqref{wave equ with source}.
		\item\label{well-posedness smooth solutions}  If $w_j\in \bigcap_{k\in\N}H^k(\Omega,dV_g)$ satisfy $\mathsf{P}_{g,V}^k w_j\in H^1_0(\Omega,dV_g)$ for $k\in\N_0$, $j=0,1$ and $F\in C_c^{\infty}(\Omega\times (0,\infty))$, then the unique solution $w$ of \eqref{wave equ with source} belongs to $ C^{\infty}(\overline{\Omega}\times [0,\infty))$.
		\item\label{alternative description of solutions} Under the assumptions of assertion \ref{well-posedness base case wave}, the unique solution $w$ of \eqref{wave equ with source} has the representation formula 
		\begin{equation}\label{wave solution}
			\begin{split}
				w(t) &=\sum_{k\geq 1}\left[\cos(t\lambda_k^{1/2})w_0^k+\frac{\sin(t\lambda_k^{1/2})}{\lambda_k^{1/2}}w_1^k+\int_0^t\frac{\sin ( (t-\tau)\lambda_k^{1/2})
				}{\lambda_k^{1/2}}F_k(\tau)\, d\tau\right]\phi_k\\
				&=\cos(t\mathsf{P}_{g,V}^{1/2})w_0+\frac{\sin(t\mathsf{P}_{g,V}^{1/2})}{\mathsf{P}_{g,V}^{1/2}}w_1+\int_0^t \frac{\sin ( (t-\tau)\mathsf P_{g,V}^{1/2})}{\mathsf P_{g,V}^{1/2}}F(\tau)\, d\tau,
			\end{split}
		\end{equation}
		where $F_k(t)=\left\langle F(t),\phi_k \right\rangle_{L^2(\Omega,dV_g)}$ and  $w_j^k=\left\langle w_j,\phi_k \right\rangle_{L^2(\Omega,dV_g)}$ for $k\in\N$, $t\geq 0$, $j=0,1$.
	\end{enumerate}
\end{theorem}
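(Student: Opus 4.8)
The plan is to diagonalize $\mathsf{P}_{g,V}$ using the orthonormal basis of eigenfunctions $(\phi_k)_{k\geq 1}$ from Section~\ref{sec: fractional powerts of elliptic operators} and to reduce the Cauchy problem \eqref{wave equ with source} to the family of scalar ODEs $\ddot c_k(t)+\lambda_k c_k(t)=F_k(t)$, $c_k(0)=w_0^k$, $\dot c_k(0)=w_1^k$, whose unique solution is $c_k(t)=\cos(t\lambda_k^{1/2})w_0^k+\lambda_k^{-1/2}\sin(t\lambda_k^{1/2})w_1^k+\int_0^t\lambda_k^{-1/2}\sin((t-\tau)\lambda_k^{1/2})F_k(\tau)\,d\tau$. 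Setting $w(t)=\sum_k c_k(t)\phi_k$ formally produces the representation formula \eqref{wave solution} of assertion \ref{alternative description of solutions}, so the substance of the theorem is to justify convergence of this series in the relevant topologies and to read off the regularity.

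For assertion \ref{well-posedness base case wave}, I would carry out the standard energy estimate on the partial sums $w_N(t)=\sum_{k=1}^N c_k(t)\phi_k$. Multiplying the $k$-th ODE by $\dot c_k$ and by $\lambda_k\dot c_k$ and summing shows that $\sum_k(|\dot c_k(t)|^2+\lambda_k|c_k(t)|^2)$ and $\sum_k\lambda_k(|\dot c_k(t)|^2+\lambda_k|c_k(t)|^2)$ are controlled on $[0,T]$ by the data norms $\|w_1\|_{H^1_0}^2\sim\sum_k\lambda_k|w_1^k|^2$, $\|w_0\|_{\mathsf{Dom}(\mathsf{P}_{g,V})}^2\sim\sum_k\lambda_k^2|w_0^k|^2$ (using the norm equivalences and $\mathsf{Dom}(\mathsf{P}_{g,V})=H^1_0(\Omega)\cap H^2(\Omega)$ from Section~\ref{sec: fractional powerts of elliptic operators}) together with $\int_0^T\|F(\tau)\|_{L^2}\,d\tau$, via Gr\"onwall's inequality. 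Hence $(w_N)$ is Cauchy in $C([0,T];\mathsf{Dom}(\mathsf{P}_{g,V}))$, $(\partial_t w_N)$ is Cauchy in $C([0,T];H^1_0(\Omega,dV_g))$, and from $\partial_t^2 w_N=F-\mathsf{P}_{g,V}w_N$ the sequence $(\partial_t^2 w_N)$ is Cauchy in $C([0,T];L^2(\Omega,dV_g))$. The limit $w$ has the regularity \eqref{eq: regularity of sols wave with source}; it lies in $\mathsf{Dom}(\mathsf{P}_{g,V})$ for each $t$ with $\mathsf{P}_{g,V}w=\sum_k\lambda_k c_k\phi_k$ because $\mathsf{P}_{g,V}$ is closed; it solves \eqref{wave equ with source} and attains the initial data since evaluation at $t=0$ is continuous on $C([0,T];\cdot)$; and uniqueness follows from the same energy identity applied to a solution with vanishing data.

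For assertion \ref{well-posedness smooth solutions}, the key point is that $\mathsf{P}_{g,V}$ commutes with the solution operators in \eqref{wave solution}: if $w_0,w_1$ satisfy $\mathsf{P}_{g,V}^m w_j\in H^1_0(\Omega)$ for all $m\in\N_0$ and $F\in C_c^{\infty}(\Omega\times(0,\infty))$, then $\mathsf{P}_{g,V}^m w$ solves the same type of Cauchy problem with data $\mathsf{P}_{g,V}^m w_0$, $\mathsf{P}_{g,V}^m w_1$ and source $\mathsf{P}_{g,V}^m F\in C_c^{\infty}(\Omega\times(0,\infty))\subset C^1([0,\infty);L^2(\Omega,dV_g))$, so assertion \ref{well-posedness base case wave} yields $w\in C([0,T];\mathsf{Dom}(\mathsf{P}_{g,V}^{m+1}))$ for every $m$ and $T$. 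By iterated elliptic regularity for $-\Delta_g+V$ one has $\mathsf{Dom}(\mathsf{P}_{g,V}^{m+1})\hookrightarrow H^{2m+2}(\Omega)$, so the Sobolev embedding gives $w(t)\in C^{\infty}(\overline{\Omega})$, uniformly on compact $t$-intervals; differentiating the equation in $t$, using $\partial_t^2 w=F-\mathsf{P}_{g,V}w$ to trade time derivatives for space derivatives, and exploiting that $F$ vanishes near $t=0$ (so no compatibility obstruction arises), one upgrades this to joint smoothness $w\in C^{\infty}(\overline{\Omega}\times[0,\infty))$. I expect this last assertion to be the main technical obstacle: one must carefully bootstrap joint regularity in $(x,t)$ \emph{up to the boundary}, keeping track of the compatibility conditions encoded in the hypotheses $\mathsf{P}_{g,V}^k w_j\in H^1_0(\Omega)$ and the vanishing of $F$ near $t=0$, and then convert the abstract statement "$\partial_t^j w(t)\in\mathsf{Dom}(\mathsf{P}_{g,V}^m)$ for all $j,m$, locally uniformly in $t$" into classical smoothness through the Sobolev embedding theorem.
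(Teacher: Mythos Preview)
Your approach is correct but genuinely different from the paper's. You construct the solution directly via the spectral series (Galerkin/Fourier method), establishing convergence through energy estimates on partial sums; this proves \ref{well-posedness base case wave} and \ref{alternative description of solutions} simultaneously, and then you bootstrap for \ref{well-posedness smooth solutions} by exploiting that $\mathsf{P}_{g,V}^m$ commutes with the solution operator. The paper instead reformulates \eqref{wave equ with source} as the first-order system $\partial_t W+\mathcal{P}_{g,V}W=\widetilde F$ on $\mathcal{H}=H^1_0\times L^2$, proves $\mathcal{P}_{g,V}+\lambda$ is maximal monotone, and invokes Hille--Yosida/Duhamel to obtain \ref{well-posedness base case wave}; for \ref{well-posedness smooth solutions} it characterizes $\mathsf{Dom}(\mathcal{P}_{g,V}^k)$ and shows $\mathcal{P}_{g,V}^{(k)}+\lambda$ is again maximal monotone on $\mathsf{Dom}(\mathcal{P}_{g,V}^{k-1})$, bootstrapping regularity through the abstract machinery; only afterwards, in \ref{alternative description of solutions}, does it project the already-constructed solution onto eigenfunctions and identify the coefficients with the explicit ODE solutions. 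Your route is more elementary and self-contained (no semigroup theory needed), while the paper's approach is more robust---it would go through even without a discrete spectrum and cleanly separates existence from the representation formula. Two small points to tighten in your write-up: in the partial-sum identity you should have $\partial_t^2 w_N=F_N-\mathsf{P}_{g,V}w_N$ with $F_N$ the projected source (not $F$), and the commutation argument for \ref{well-posedness smooth solutions} should be phrased inductively (first $\mathsf{P}_{g,V}w$ solves the shifted problem, hence $w\in C([0,T];\mathsf{Dom}(\mathsf{P}_{g,V}^2))$, then $\mathsf{P}_{g,V}^2w$, etc.), since a priori $\mathsf{P}_{g,V}^m w$ is not yet defined.
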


From now on, for any source $F\in C^1([0,\infty);L^2(\Omega,dV_g))$, we denote by $w^F\in C([0,\infty);\mathsf{Dom}(\mathsf{P}_{g,V}))$ the unique solution to the Cauchy problem for the wave equation with zero initial data 
\begin{equation}\label{wave equ zero Kannai}
	\begin{cases}
		\LC \partial_t^2+\mathsf{P}_{g,V}\RC w=F &\text{ in }\Omega \times (0,\infty),\\
		w(0)=\partial_tw(0)=0 &\text{ in }\Omega.
	\end{cases}
\end{equation}
Next, using this notation, we introduce the (local) \emph{source-to-solution map} by
\begin{equation}\label{S-t-S wave}
	\begin{split}
		\mathcal{J}_{g, V}^{\Gamma} \colon C^1([0,\infty);L^2(\overline{\Gamma},dV_g)) &\to C([0,\infty);H^2(\Gamma)), \\
		F &\mapsto  w^F \big|_{\Gamma\times [0,\infty)},
	\end{split}
\end{equation}
where $\Gamma \Subset  \Omega$ and $L^2(\overline{\Gamma},dV_g)$ denotes the collection of functions $G\in L^2(\Omega,dV_g)$ with $\supp G\subset \overline{\Gamma}$. Observe that by  Theorem~\ref{theorem: well-posedness wave equation}, \ref{well-posedness smooth solutions} we know that $\mathcal{J}_{g,V}^{\Gamma}F\in C^{\infty}(\Gamma\times[0,\infty))$, whenever $F\in C_c^{\infty}(\Omega\times (0,\infty))$. The above considerations lead naturally to the following inverse problem.

\begin{enumerate}[\textbf{(IP3)}]
	\item \label{IP3}\textbf{Inverse problem for the wave equation.}  Can one uniquely determine the metric $g$ and potential $V$ from the local source-to-solution map $\mathcal{J}_{g,V}^\Gamma$?
\end{enumerate}

The rest of this section is structured as follows. In Section~\ref{section: Kannai} we show that the inverse problem \ref{IP2} can be related to \ref{IP3}, in Section~\ref{sec: Determination the metric for wave equations} we establish an affirmative answer to the question \ref{IP3} and finally in Section~\ref{subsec: proof of main result} we proof our main result, Theorem~\ref{Thm: Main}.


\subsection{Kannai type transmutation and relation between \ref{IP2} and \ref{IP3} }
\label{section: Kannai}

The following lemma is similar to the one in \cite[Section 3]{feizmohammadi2021fractional} and we offer the proof for the sake of completeness. 

\begin{lemma}
	\label{Lemma: S-t-S heat wave} 
	Let $\Omega$, $\Gamma$, $\LC g_1,V_1\RC$ and $\LC g_2,V_2 \RC$ be given as in Theorem \ref{Thm: Main}.
	Consider the local source-to-solution map $\mathcal{J}_{g_j,V_j}^{\Gamma}$ of 
	\begin{align}\label{wave equ with source j=1,2}
		\begin{cases}
			\LC \p_t^2 +\mathsf{P}_{g_j,V_j}\RC w_j =F &\text{ in } \Omega\times (0,\infty),\\
			w_j(0)=\p_t w_j(0)=0 &\text{ in }\Omega,
		\end{cases}
	\end{align}
	for $j=1,2$ and suppose that the conditions \eqref{same g,V on Gamma} and \eqref{same heat kernel} hold for some pair $\LC g, V \RC\in C^{\infty}(\overline{\Omega};\R^{n\times n})\times C^{\infty}(\overline{\Omega})$ consisting of a uniformly elliptic Riemannian metric $g$ and nonnegative potential $V$. Then there holds
	\begin{align}\label{same S-t-S wave}
		\mathcal{J}_{ g_1, V_1}^{\Gamma}F=\mathcal{J}_{ g_2, V_2}^{\Gamma}F, \text{ for any }F\in C^\infty_c(\Gamma\times (0,\infty)).
	\end{align}
\end{lemma}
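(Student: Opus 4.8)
The plan is to use the Kannai transmutation formula to express the wave source-to-solution map in terms of the heat semigroup, so that the hypothesis \eqref{same heat kernel} on the heat kernels transfers directly to the wave data. Concretely, recall the Kannai identity: for $\lambda>0$ one has
\begin{equation}
	\frac{\sin(t\lambda^{1/2})}{\lambda^{1/2}}=\frac{1}{\sqrt{\pi}}\int_0^\infty e^{-\tau\lambda}\,\frac{t}{2\tau^{3/2}}\,e^{-t^2/(4\tau)}\,d\tau
\end{equation}
(equivalently, applying $\mathsf{P}_{g,V}$, a kernel representation of $\cos(t\mathsf{P}_{g,V}^{1/2})$ and of $\sin(t\mathsf{P}_{g,V}^{1/2})/\mathsf{P}_{g,V}^{1/2}$ against $e^{-\tau\mathsf{P}_{g,V}}$). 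Using the representation formula \eqref{wave solution} from Theorem~\ref{theorem: well-posedness wave equation}\,\ref{alternative description of solutions} with $w_0=w_1=0$, we get for $F\in C_c^\infty(\Gamma\times(0,\infty))$
\begin{equation}
	w^F_j(x,t)=\int_0^t\Bigl(\int_0^\infty e^{-\tau\mathsf{P}_{g_j,V_j}}F(\cdot,s)(x)\,\frac{(t-s)}{2\sqrt{\pi}\,\tau^{3/2}}\,e^{-(t-s)^2/(4\tau)}\,d\tau\Bigr)ds,
\end{equation}
after interchanging the $k$-sum with the $\tau$-integral (justified by the $L^2$-bound \eqref{L2 estimate for heat kernel} together with the rapid decay of the Gaussian kernel in $\tau$ near $0$, exactly as in the convergence discussions around \eqref{eq: semigroup formula fractional powers} and \eqref{P_j integral}).

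First I would establish this transmutation identity rigorously at the level of the spectral decomposition: verify the scalar identity above by the substitution $\tau\mapsto t^2/(4u)$ reducing it to a standard Gaussian integral, then sum against $F_k(s)\phi_k$ and use dominated convergence / Fubini to pass the $\tau$-integral and the $s$-integral through the sum. The function $F$ being smooth and compactly supported in $\Omega\times(0,\infty)$ guarantees $F(\cdot,s)\in\mathsf{Dom}(\mathsf{P}_{g,V}^k)$ for all $k$, so all the manipulations from Section~\ref{sec: fractional powerts of elliptic operators} apply and the series converge in $C([0,\infty);\mathsf{Dom}(\mathsf{P}_{g,V}))$.

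Next, I would restrict to $x\in\Gamma$ and rewrite $e^{-\tau\mathsf{P}_{g_j,V_j}}F(\cdot,s)(x)$ using the Schwartz kernel: since $\supp F(\cdot,s)\subset\Gamma$,
\begin{equation}
	\bigl(e^{-\tau\mathsf{P}_{g_j,V_j}}F(\cdot,s)\bigr)(x)=\int_{\Gamma}e^{-\tau\mathsf{P}_{g_j,V_j}}(x,z)\,F(z,s)\,dV_{g_j}(z),\qquad x\in\Gamma.
\end{equation}
By hypothesis \eqref{same g,V on Gamma} we have $g_1=g_2=g$ on $\Gamma$, so $dV_{g_1}=dV_{g_2}$ on $\Gamma$, and by \eqref{same heat kernel} the kernels $e^{-t\mathsf{P}_{g_1,V_1}}(x,z)$ and $e^{-t\mathsf{P}_{g_2,V_2}}(x,z)$ agree for $x,z\in\Gamma$, $t>0$. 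Hence $\bigl(e^{-\tau\mathsf{P}_{g_1,V_1}}F(\cdot,s)\bigr)(x)=\bigl(e^{-\tau\mathsf{P}_{g_2,V_2}}F(\cdot,s)\bigr)(x)$ for all $x\in\Gamma$, $\tau>0$, $s>0$. Plugging this into the Kannai representation of $w^F_j$ and integrating in $\tau$ and $s$ yields $w_1^F(x,t)=w_2^F(x,t)$ for $x\in\Gamma$, $t\ge 0$, which is exactly \eqref{same S-t-S wave} since $\mathcal{J}_{g_j,V_j}^\Gamma F=w_j^F|_{\Gamma\times[0,\infty)}$.

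\textbf{Main obstacle.} The only nontrivial point is the rigorous justification of the transmutation formula and the interchange of summation/integration — one must control the $\tau$-integral near $\tau=0$, where the Gaussian $\frac{t}{2\tau^{3/2}}e^{-t^2/(4\tau)}$ is integrable but not uniformly so, and confirm that the resulting object indeed coincides with the solution given by \eqref{wave solution}. I expect this to follow by the same elementary eigenfunction-expansion arguments used repeatedly in Section~\ref{sec: fractional powerts of elliptic operators} (e.g.\ for \eqref{eq: semigroup formula fractional powers} and \eqref{P_j integral}), combined with $F\in C_c^\infty(\Omega\times(0,\infty))$ which makes $\|\mathsf{P}_{g,V}^k F(\cdot,s)\|_{L^2}$ finite and locally bounded in $s$; uniform convergence on compact $t$-intervals then gives the claimed pointwise identity on $\Gamma$. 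Everything else is a direct substitution of the hypotheses \eqref{same g,V on Gamma} and \eqref{same heat kernel}.
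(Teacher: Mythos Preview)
Your transmutation identity is wrong. The integral you wrote,
\[
\frac{1}{\sqrt{\pi}}\int_0^\infty e^{-\tau\lambda}\,\frac{t}{2\tau^{3/2}}\,e^{-t^2/(4\tau)}\,d\tau,
\]
is the classical subordination formula and evaluates to $e^{-t\lambda^{1/2}}$ (the Poisson semigroup), not to $\sin(t\lambda^{1/2})/\lambda^{1/2}$. You can check this by the very substitution $\tau\mapsto t^2/(4u)$ you propose: the integral becomes $\int_0^\infty e^{-u-t^2\lambda/(4u)}u^{-1/2}\,du=\sqrt{\pi}\,e^{-t\sqrt{\lambda}}$. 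In fact the wave propagator $\sin(t\lambda^{1/2})/\lambda^{1/2}$ oscillates in sign, so it cannot be a nonnegative superposition of the positive quantities $e^{-\tau\lambda}$; there is no real-kernel formula of the type you need.

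The paper therefore runs the Kannai transmutation in the \emph{opposite} direction: one writes the heat semigroup as a Laplace-type integral of the wave propagator,
\[
e^{-t\mathsf{P}_{g_j,V_j}}f=\frac{1}{4\sqrt{\pi}\,t^{3/2}}\int_0^\infty e^{-\tau/(4t)}\,\frac{\sin(\tau^{1/2}\mathsf{P}_{g_j,V_j}^{1/2})}{\mathsf{P}_{g_j,V_j}^{1/2}}f\,d\tau,
\]
uses the heat-kernel hypothesis \eqref{same heat kernel} to equate the left-hand sides on $\Gamma$, and then invokes \emph{injectivity of the Laplace transform} in the variable $1/(4t)$ to conclude that $\frac{\sin(\tau^{1/2}\mathsf{P}_{g_1,V_1}^{1/2})}{\mathsf{P}_{g_1,V_1}^{1/2}}f=\frac{\sin(\tau^{1/2}\mathsf{P}_{g_2,V_2}^{1/2})}{\mathsf{P}_{g_2,V_2}^{1/2}}f$ on $\Gamma$ for all $\tau>0$ and $f\in C_c^\infty(\Gamma)$. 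The final step, plugging into \eqref{wave solution} with $F\in C_c^\infty(\Gamma\times(0,\infty))$, is exactly as you indicate. So your overall architecture (reduce to heat data via a Kannai relation, then use \eqref{same heat kernel} and \eqref{same g,V on Gamma}) is right, but the missing ingredient is the Laplace-inversion step going from heat to wave; your shortcut formula does not exist.
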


\begin{proof}[Proof of Lemma \ref{Lemma: S-t-S heat wave}]
	
	First note that via the Fourier inversion formula, we have 
	\begin{equation}\label{Kann 1}
		\begin{split}
			e^{-t\lambda^2} =\frac{1}{\sqrt{4\pi t}}\int_{-\infty}^{\infty} e^{-\frac{\zeta^2}{4t}}e^{\mathsf i \zeta \lambda}\, d\zeta =\frac{1}{\sqrt{4\pi t}} \int_{-\infty}^{\infty}e^{-\frac{\zeta^2}{4t}} \cos (\zeta \lambda) \, d\zeta , \quad t>0.
		\end{split}
	\end{equation}
	For $\lambda\neq 0$, an integration by parts in \eqref{Kann 1} yields that 
	\begin{align}\label{Kann 2}
		\begin{split}
			e^{-t\lambda^2}&=\frac{2}{4\sqrt{\pi }t^{3/2}} \int_{0}^\infty\zeta e^{-\frac{\zeta^2}{4t}} \frac{\sin (\zeta \lambda)}{\lambda} \, d\zeta\\
			&=\frac{1}{4\sqrt{\pi }t^{3/2}} \int_{0}^\infty e^{-\frac{\tau}{4t}} \frac{\sin \LC \tau^{1/2} \lambda\RC}{\lambda} \, d\tau.
		\end{split}
	\end{align}
	Here we used that the sine function is odd and the change of variables $\tau=\zeta^2$.
	For any $f\in L^2(\Omega,dV_g)$, Lemma~\ref{lemma: alternative char of semigroup}, \eqref{Kann 2} and Fubini's theorem ensure that the Kannai type transmutation formula holds, that is
	\begin{align}\label{Kann 3}
		e^{-t\mathsf P_{g_j,V_j}}f=\frac{1}{4\sqrt{\pi }t^{3/2}} \int_{0}^\infty  e^{-\frac{\tau}{4t}} \frac{\sin ( \tau^{1/2} \mathsf{P}_{g_j,V_j}^{1/2})}{\mathsf{P}_{ g_j, V_j}^{1/2}} f\, d\tau\text{ in }L^2(\Omega,dV_g)
	\end{align}
	for $j=1,2$. If $f\in C^\infty_c(\Gamma)$, then \eqref{eq: schwartz kernel}, \eqref{same heat kernel} and \eqref{Kann 3} imply 
	\begin{equation}\label{Kann 4}
		\begin{split}
			\quad \int_{0}^\infty  e^{- \frac{\tau}{4t}} \LC \frac{\sin \LC \tau^{1/2} \mathsf{P}_{g_1, V_1}^{1/2}\RC}{\mathsf{P}_{g_1,V_1}^{1/2}} f\RC (x)\, d\tau 
			=\int_{0}^\infty  e^{-\frac{\tau}{4t}}  \LC \frac{\sin \LC \tau^{1/2} \mathsf{P}_{g_2, V_2}^{1/2}\RC}{\mathsf{P}_{g_2,V_2}^{1/2}} f\RC(x)\, d\tau,
		\end{split}
	\end{equation}
	for $t>0$ and $x \in \Gamma$. This holds in the sense that we test in the $L^2$-sense the expression under the integral against any $h\in C_c^{\infty}(\Gamma)$. Applying the inverse Laplace transform in \eqref{Kann 4}, we obtain 
	\begin{align}\label{Kann 5}
		\LC \frac{\sin \LC \tau^{1/2} \mathsf{P}_{g_1,V_1}^{1/2}\RC}{\mathsf{P}_{ g_1,V_1}^{1/2}} f\RC (x)=\LC \frac{\sin \LC \tau^{1/2} \mathsf{P}_{ g_2,V_2}^{1/2}\RC}{\mathsf{P}_{g_2,V_2}^{1/2}} f\RC(x),
	\end{align}
	for $\tau >0$ and $x \in \Gamma$. Therefore, with the representation formula \eqref{wave solution} and $F\in C^\infty_c(\Gamma\times (0,\infty))$, by using \eqref{Kann 5}, one can conclude 
	\[
	w_1^F(x,t)=w_2^F(x,t) \text{ in }\Gamma\times [0,\infty),
	\]
	which proves \eqref{same S-t-S wave}. This proves the assertion.
\end{proof}


\subsection{Simultaneous determination for wave equations}
\label{sec: Determination the metric for wave equations}

The goal of this section is to prove the following affirmative answer to the inverse problem \ref{IP3}.

\begin{theorem}\label{Thm: Wave}
	Let $\Omega$, $\Gamma$, $\LC g_1,V_1\RC$ and $\LC g_2,V_2 \RC$ be given as in Theorem \ref{Thm: Main}. Let $	\mathcal{J}_{g_j, V_j}^{\Gamma}$ be the local source-to-solution map of \eqref{wave equ with source j=1,2}. Suppose that the conditions \eqref{same g,V on Gamma} and \eqref{same S-t-S wave} hold for some pair $\LC g, V \RC\in C^{\infty}(\overline{\Omega};\R^{n\times n})\times C^{\infty}(\overline{\Omega})$ consisting of a uniformly elliptic Riemannian metric $g$ and nonnegative potential $V$, then there exists a diffeomorphism $\Psi \colon \overline{\Omega}\to \overline{\Omega}$ with $\Psi|_{\overline{\Gamma}}=\id_{\overline{\Gamma}}$ on $\overline{\Gamma}$ such that $ g_1 = \Psi_\ast g_2$ and $V_1=V_2\circ \Psi$ in $\Omega$.
\end{theorem}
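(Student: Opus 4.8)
The plan is to reduce the hypothesis \eqref{same S-t-S wave} on the interior source-to-solution map to the setting of a known uniqueness theorem for the Riemannian wave equation, and then invoke it. Throughout, write $(g,V)=(g_1,V_1)=(g_2,V_2)$ on $\Gamma$ in view of \eqref{same g,V on Gamma}, and set $\Omega_{\mathrm{ext}}:=\Omega\setminus\overline{\Gamma}$, a smooth domain with boundary $\partial\Omega_{\mathrm{ext}}=\partial\Omega\sqcup\partial\Gamma$; recall that $\Omega$, hence $\partial\Omega$ and $\partial\Gamma$, are \emph{known}, only the coefficients on $\Omega_{\mathrm{ext}}$ being unknown. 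First I would fix $F\in C_c^{\infty}(\Gamma\times(0,\infty))$ and use Theorem~\ref{theorem: well-posedness wave equation}, \ref{well-posedness smooth solutions} to obtain $w_j^F\in C^{\infty}(\overline{\Omega}\times[0,\infty))$. Since $(g_j,V_j)$ and $F$ agree on $\Gamma$ and, by \eqref{same S-t-S wave}, $w_1^F|_{\Gamma}=w_2^F|_{\Gamma}$, the full Cauchy data $\big(w_j^F,\,\partial_{\nu}w_j^F\big)$ on $\partial\Gamma\times(0,\infty)$ are independent of $j$: the tangential data agree because $w_1^F=w_2^F$ on $\overline{\Gamma}$, and the normal derivative because it is a one-sided limit from the $\Gamma$-side and the solutions are smooth across $\partial\Gamma$. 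Restricting $w_j^F$ to $\Omega_{\mathrm{ext}}$, it solves $(\partial_t^2-\Delta_{g_j}+V_j)w=0$ with zero initial data, $w=0$ on $\partial\Omega\times(0,\infty)$, and $w=w_j^F|_{\partial\Gamma}$ on $\partial\Gamma\times(0,\infty)$. Denoting by $\Lambda^{j}_{\mathrm{ext}}$ the lateral Dirichlet-to-Neumann map $h\mapsto\partial_{\nu}w|_{\partial\Gamma\times(0,\infty)}$ for this boundary value problem (with homogeneous Dirichlet condition kept on $\partial\Omega$), the above shows $\Lambda^{1}_{\mathrm{ext}}h=\Lambda^{2}_{\mathrm{ext}}h$ for every $h$ in the class $\mathcal D:=\{\,w_1^F|_{\partial\Gamma}:F\in C_c^{\infty}(\Gamma\times(0,\infty))\,\}$ of realizable Dirichlet data. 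A controllability/density argument — based on Tataru's sharp unique continuation theorem, finite speed of propagation, and the connectedness of $\Gamma$ — then shows that $\mathcal D$ is dense in the natural space of boundary data, whence $\Lambda^{1}_{\mathrm{ext}}=\Lambda^{2}_{\mathrm{ext}}$. Equivalently, one can avoid $\mathcal D$ altogether and work directly with the source-to-solution map, using a Blagovestchenskii-type identity to express the inner products $\langle w^F(T),w^H(T)\rangle_{L^2(\Omega,dV_g)}$ and $\langle\partial_tw^F(T),\partial_tw^H(T)\rangle_{L^2(\Omega,dV_g)}$ of controlled waves purely in terms of $\mathcal J^{\Gamma}_{g,V}$; this is precisely the data consumed by the boundary control method.

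With the equality of the restricted lateral DN maps (equivalently, the inner-product data) in hand, the second step is to apply the uniqueness theory for the Riemannian wave equation with partial boundary data and a potential — the boundary control method in the form of, e.g., \cite{KOP18} and the references therein. This yields that the triples $(\Omega_{\mathrm{ext}},g_1,V_1)$ and $(\Omega_{\mathrm{ext}},g_2,V_2)$ are related by a diffeomorphism that equals the identity in a neighbourhood of the measurement region $\partial\Gamma$ (using that $(g,V)$ is known there, so that boundary normal coordinates at $\partial\Gamma$ are the canonical ones) and maps $\partial\Omega$ to $\partial\Omega$. Since $\Omega$ is bounded, every point of $\Omega_{\mathrm{ext}}$ lies in the domain of influence of $\partial\Gamma$ at some finite time, and $\Omega_{\mathrm{ext}}$ is connected, so the reconstruction is global on $\Omega_{\mathrm{ext}}$. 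Gluing this diffeomorphism with $\id_{\overline{\Gamma}}$ along $\partial\Gamma$ — legitimate precisely because it equals the identity near $\partial\Gamma$ — produces a diffeomorphism $\Psi\colon\overline{\Omega}\to\overline{\Omega}$ with $\Psi|_{\overline{\Gamma}}=\id_{\overline{\Gamma}}$ and $g_1=\Psi_{\ast}g_2$, $V_1=V_2\circ\Psi$ in $\Omega$, which is the assertion.

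The hard part is Step 1: extracting from the \emph{interior} measurements the data in a form the boundary control method can digest. Concretely, the density of the realizable Dirichlet data $\mathcal D$ on $\partial\Gamma$ (equivalently, the approximate boundary controllability of the wave equation from sources inside $\Gamma$) must be derived from Tataru's unique continuation theorem together with finite speed of propagation, and one must ensure that the construction returns a genuine diffeomorphism of $\overline{\Omega}$ restricting to the identity on $\overline{\Gamma}$ — rather than merely an abstract isometry of reconstructed manifolds — which is where the connectedness of $\Gamma$ and of $\Omega\setminus\overline{\Gamma}$, and the a priori knowledge of $(g,V)$ on $\Gamma$, are used. The remaining ingredients (well-posedness and smoothing for the wave equation, Theorem~\ref{theorem: well-posedness wave equation}; the representation formula \eqref{wave solution}; the finite diameter of $(\Omega,g)$) are routine, and the simultaneous recovery of the potential $V$ alongside the metric is already built into the wave-equation uniqueness result invoked.
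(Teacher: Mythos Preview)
Your proposal is correct and follows essentially the same two-step route as the paper: reduce the interior source-to-solution data on $\Gamma$ to the restricted Dirichlet-to-Neumann map for the wave equation on $\Omega\setminus\overline{\Gamma}$ with measurements on $\partial\Gamma$, then invoke the uniqueness theorem of \cite{KOP18}. The only difference is packaging: where you sketch Step~1 by hand (matching Cauchy data on $\partial\Gamma$ from the $\Gamma$-side, then arguing density of realizable traces via Tataru/controllability, or alternatively via a Blagovestchenskii identity), the paper simply cites \cite[Lemma~4.2]{KOP18}, which carries out exactly this passage from source-to-solution data to the restricted DN map; Step~2 is identical in both, applying \cite[Theorem~1.1]{KOP18} on $\Omega'=\Omega\setminus\overline{\Gamma}$.
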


We will reduce the proof of Theorem \ref{Thm: Wave} to a unique determination problem in \cite{KOP18}. 

\begin{proof}[Proof of Theorem \ref{Thm: Wave}]
	Let us start by recalling that by assumption $(\overline{\Omega}, g)$ is a compact, connected, smooth
	manifold with smooth boundary $\partial\Omega$ and we have given the following data
	\begin{align}\label{local S-t-S data}
		\LC \Gamma, \,  g|_{\Gamma}, V|_{\Gamma},\, 	\mathcal{J}_{g, V}^{\Gamma} \RC ,
	\end{align}
	where $\mathcal{J}_{g,V}^{\Gamma}$ denotes the local source-to-solution map for the wave equation with zero initial data (see~\eqref{S-t-S wave}).
	Now, our aim is to recover $(g,V)$ in the connected set $\Omega'\vcentcolon =\Omega\setminus \overline{\Gamma}$ up to a diffeomorphism. We divide the proof of Theorem \ref{Thm: Wave} into two steps:
	
	\medskip
	
	{\it Step 1. Source-to-solution data \eqref{local S-t-S data} determines the restricted DN map.}
	
	\medskip
	
	\noindent Let us consider the wave equation in the domain $\Omega' \times (0,\infty)$: 
	\begin{equation}\label{IBVP wave}
		\begin{cases}
			\LC \p_t ^2 +\mathsf{P}_{g,V} \RC \wt w = 0 &\text{ in } \Omega'\times (0,\infty), \\
			\wt w=f &\text{ on }\p \Gamma\times (0,\infty), \\
			\wt w=0 &\text{ on }\p \Omega\times (0,\infty),\\
			\wt w(0)=\p_t \wt w(0)=0 &\text{ in }\Omega'.
		\end{cases}
	\end{equation}
	It is known that the restricted DN map for the wave equation \eqref{IBVP wave} can be defined by 
	\begin{equation}\label{restricted DN map}
		\Lambda^{\mathrm{w},\p \Gamma,T}_{g,V}\colon  f |_{\p \Gamma\times (0,T)}\mapsto \left. \p_{\nu_g }\wt w_f \right|_{\p \Gamma\times (0,T)},
	\end{equation}
	for any $T>0$, where $f\in C_c^{\infty}(\partial\Gamma\times (0,T))$ and $w_f$ is the unique solution to \eqref{IBVP wave}. In fact, the well-posedness of \eqref{IBVP wave} can be obtained as follows: First extend the boundary condition $f$ to a function $\Bar{f}\in C_c^{\infty}(\overline{\Omega'}\times [0,\infty))$ with $\Bar{f}|_{t=0}=0$, $\Bar{f}|_{\partial\Omega\times [0,\infty)}=0$ and set $\widetilde{w}=\widetilde{v}+\Bar{f}$. Then $\widetilde{v}$ solves a wave equation of the form \eqref{wave equ with source}, which uniquely exists by Theorem~\ref{theorem: well-posedness wave equation} and hence showing the well-posedness of \eqref{IBVP wave}. By \cite[Lemma 4.2]{KOP18}, it is known that the data \eqref{local S-t-S data} determines the map $\Lambda_{g,V}^{\mathrm{w},\partial\Gamma,T}$. 
	
	\medskip
	
	{\it Step 2. Determination of the metric and potential from the restricted DN map.}
	
	\medskip
	
	\noindent   By Step 1, \eqref{same g,V on Gamma} and \eqref{same S-t-S wave} we have  
	\begin{equation}\label{restricted DN map same}
		\Lambda^{\mathrm{w},\p \Gamma,T}_{g_1,V_1}f=	\Lambda^{\mathrm{w},\p \Gamma,T}_{g_2,V_2}f, \text{ for any }f\in C^\infty_c(\partial\Gamma \times (0,T)),
	\end{equation}
	for any $T>0$, where $\Lambda^{\mathrm{w},\p \Gamma,T}_{g_j,V_j}$ stands for the restricted DN map given by \eqref{restricted DN map}, for $j=1,2$. We may apply \cite[Theorem 1.1]{KOP18} (with $E_j=\overline{\Omega'}\times\C$, $\mathcal{S}_j=\partial\Gamma$ and $\phi=\id_{\partial\Gamma\times\C}$) to conclude that there exists a hermitian vector bundle isomorphism $\Phi\colon \overline{\Omega'}\times\C\to \overline{\Omega'}\times\C$ such that $\Phi|_{\partial\Gamma\times\C}=\phi$ and there holds
	\begin{equation}
		\label{eq: first application of thm 1.1}
		\Psi^\ast g_2=g_1\text{ and }\Phi^\ast V_2=V_1,
	\end{equation}  
	where $\Psi\colon\overline{\Omega'}\to\overline{\Omega'}$ is the induced diffeomorphism of $\Phi$. Thus, we have 
	$$
	\Phi(x,v)=(\Psi(x),c(x)v),
	$$ 
	where $c\colon \overline{\Omega'}\to \C$ is smooth scalar function with $c(x)=1$ on $\partial\Gamma$. Observe that $V_1=\Phi^\ast V_2$ means nothing else in the scalar case than $V_1=V_2\circ \Psi$. 
\end{proof}

\begin{remark}
	Notice that the results in \cite{KOP18} hold in the more general vector-valued setting, where the potential $V$ is not anymore scalar valued as in our case. Moreover, in \cite{KOP18}, the authors even allowed the leading order operator to have a drift term, which emerges from an additional vector potential $A$. Thus, in the present article we do not invoke the full strength of the results in \cite{KOP18}.
\end{remark}

\subsection{Proof of Theorem \ref{Thm: Main}}
\label{subsec: proof of main result}

Last but not least, we can show Theorem \ref{Thm: Main}.

\begin{proof}[Proof of Theorem~\ref{Thm: Main}]
	First as the ND data agree (see \eqref{ND map agree}), the boundary determination (Theorem \ref{Thm: BD}) shows that
	\begin{equation}
		\label{eq: main metrics pot agree on gamma}
		g_1=g_2\text{ and }V_1=V_2\text{ on }\Gamma.
	\end{equation}
	Furthermore, Lemma~\ref{Lem: ND to source-to-solution} guarantees that 
	\begin{equation}
		\label{eq: main StS map nonlocal agree}    \mathcal{S}_{g_1,V_1}^{\Gamma}f=\mathcal{S}_{g_2,V_2}^{\Gamma}f \text{ for all }f\in C_c^{\infty}(\Gamma),
	\end{equation}
	where $\mathcal{S}_{g_j,V_j}^{\Gamma}$ is the source-to-solution map for the nonlocal equation
	\[
	\mathsf{P}^{1/2}_{g_j,V_j} v = f \text{ in }\Omega,
	\]
	for $j=1,2$.
	Next, let us fix any extension $(g,V)$, consisting of a uniformly elliptic Riemannian metric $g$ and nonnegative potential $V$, of $(g_1|_{\Gamma},V_1|_{\Gamma})$ to the whole domain $\overline{\Omega}$. By Lemma~\ref{Lemma: same heat kernel} we know from \eqref{eq: main metrics pot agree on gamma} and \eqref{eq: main StS map nonlocal agree}  that the Schwartz kernels of the corresponding heat semigroups agree on $\Gamma$, that is
	\begin{equation}
		\label{eq: same schwartz kernels}
		e^{-t\mathsf P_{g_1, V_1}} (x,z) =  e^{-t\mathsf P_{g_2, V_2}} (x,z) \text{ for }t>0 \text{ and }x,z\in \Gamma.
	\end{equation}
	By Lemma~\ref{Lemma: S-t-S heat wave} the conditions  \eqref{eq: main metrics pot agree on gamma}, \eqref{eq: main StS map nonlocal agree} and \eqref{eq: same schwartz kernels} ensure that
	\begin{equation}
		\label{eq: main StS map wave agree} 
		\mathcal{J}_{ g_1, V_1}^{\Gamma}F=\mathcal{J}_{ g_2, V_2}^{\Gamma}F \text{ for any }F\in C^\infty_c(\Gamma\times (0,\infty)),
	\end{equation}
	where $\mathcal{J}_{ g_j, V_j}^{\Gamma}$ denotes the source-to-solution map for the wave equation
	\[
	\begin{cases}
		\LC \p_t^2 +\mathsf{P}_{g_j,V_j}\RC w =F &\text{ in } \Omega\times [0,\infty),\\
		w(0)=w_0, \quad \p_t w(0)=w_1 &\text{ in }\Omega,
	\end{cases}
	\]
	for $j=1,2$.
	Finally, using \eqref{eq: main metrics pot agree on gamma} and \eqref{eq: main StS map wave agree} we can apply Theorem \ref{Thm: Wave} to establish the assertion of Theorem~\ref{Thm: Main}.
\end{proof}

\begin{remark}
	Let us note that the above proof of Theorem~\ref{Thm: Main} also establishes Theorem~\ref{Thm: nonlocal} and the methods in this work can be used to study more general versions of it. For example, one can consider the problem 
	\begin{equation}
		\begin{cases}
			\LC -\Delta_g +V \RC ^s u=f &\text{ in }\Omega,\\
			u=0 & \text{ on }\p \Omega,
		\end{cases}
	\end{equation}
	where $f\in C^\infty_c(\Gamma)$, $\Gamma\Subset \Omega$ is a given smooth domain and $0<s<1$. If $(g|_{\Gamma},V|_{\Gamma})$ and the local source-to-solution map $\mathcal{S}_{g,V}^{s,\Gamma}\colon f|_{\Gamma}\mapsto  u_f |_{\Gamma}$ are prescribed, for any $f\in C^\infty_c(\Gamma)$, then one could apply similar methods as in this work to determine simultaneosuly $(g,V)$ in $\Omega$ up to a diffeomorphism. 
\end{remark}

\appendix

\section{Reflection and Poincar\'e inequality}
\label{sec: appendix_elliptic}

To derive a suitable Poincar\'e inequality, we will make use of the following simple lemma on first order reflections.

\begin{lemma}[First order reflection]
	\label{lemma; first order reflection}
	Let $\Omega\subset\R^n$ be an open set. Then for any function $u\colon \Omega\times [0,\infty)\to \R$, we define its \emph{first order reflection} $\widetilde{u}\colon\Omega\times \R\to\R$ by 
	\begin{equation}
		\label{eq: first order reflection}
		\widetilde{u}(x,y)\vcentcolon = \begin{cases}
			u(x,y), \quad&\text{if}\quad y\geq 0\\
			-3u(x,-y)+4u(x,-y/2), \quad&\text{if}\quad y\leq 0
		\end{cases}
	\end{equation}
	and set $u_+\vcentcolon = \overline{u}|_{\Omega\times [0,\infty)}$, $u_{-}\vcentcolon = \overline{u}|_{\Omega\times (-\infty,0]}$. If $u\in C_c^1(\Omega\times [0,\infty))$, then there holds
	\begin{enumerate}[(a)]
		\item\label{1st ord reflection item 1} $\widetilde{u}\in C_c^1(\Omega\times\R)$,
		\item\label{1st ord reflection item 2} $\left. u_+\right|_{y=0}=\left.u_{-} \right|_{y=0}$,
		\item\label{1st ord reflection item 3} $\left.\partial_y u_+ \right|_{y=0}=\left.\partial_y u_{-}\right|_{y=0}$,
		\item\label{1st ord reflection item 4} $\left.\partial_{x^i}u_+\right|_{y=0}=\left.\partial_{x^i}u_{-}\right|_{y=0}$ for $1\leq i\leq n$
		\item\label{1st ord reflection item 5} and there exists $C>0$ independent of $u$ such that
		\begin{equation}
			\label{eq: extension estimate}
			\left\|\widetilde{u}\right\|_{L^2(\Omega\times \R)}\leq C\|u\|_{L^2(\Omega\times \R_+)}\quad \text{and}\quad \left\|\nabla\widetilde{u}\right\|_{L^2(\Omega\times \R)}\leq C\|\nabla u\|_{L^2(\Omega\times \R_+)}.
		\end{equation}
	\end{enumerate}
	If $u\in H^1_0(\Omega\times[0,\infty))$, then $\widetilde{u}\in H^1_0(\Omega\times\R)$ and the estimate \eqref{eq: extension estimate} still holds.
\end{lemma}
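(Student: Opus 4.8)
The plan is to first verify everything directly in the smooth case, where the interface $\{y=0\}$ is the only issue, and then obtain the $H^1_0$ statement by density. \emph{Smooth case.} I would begin with $u\in C^1_c(\Omega\times[0,\infty))$, say $\supp u\subset\Omega\times[0,M]$. For $y<0$ both arguments $-y$ and $-y/2$ are positive, so on $\Omega\times(-\infty,0)$ the reflection $\widetilde u$ is a linear combination of the $C^1$ functions $(x,y)\mapsto u(x,-y)$ and $(x,y)\mapsto u(x,-y/2)$; hence $\widetilde u$ is $C^1$ on each open half $\Omega\times(0,\infty)$ and $\Omega\times(-\infty,0)$, and $\supp\widetilde u\subset\Omega\times[-2M,M]$ is compact. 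The coefficients $(-3,4)$ and the scales $(1,\tfrac12)$ are chosen precisely so that the value, the $y$-derivative and the tangential derivatives of $\widetilde u$ match from both sides at $\{y=0\}$: from $-3+4=1$ one gets $u_-|_{y=0}=u_+|_{y=0}$, and differentiating $-3u(x,-y)+4u(x,-y/2)$ in $y$ and in $x^i$ and evaluating at $y=0$ (the $y$-derivative producing the combination $3(\partial_y u)-2(\partial_y u)=\partial_y u$) gives $\partial_y u_-|_{y=0}=\partial_y u_+|_{y=0}$ and $\partial_{x^i}u_-|_{y=0}=\partial_{x^i}u_+|_{y=0}$. These are items \ref{1st ord reflection item 2}--\ref{1st ord reflection item 4}. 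Since $\widetilde u$ is continuous and each of its first partial derivatives has matching one-sided limits along $\{y=0\}$, a standard one-variable mean-value argument upgrades this to $\widetilde u\in C^1(\Omega\times\R)$, which is item \ref{1st ord reflection item 1}.

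\emph{The $L^2$ bounds.} Next I would split $\|\widetilde u\|_{L^2(\Omega\times\R)}^2=\|u\|_{L^2(\Omega\times\R_+)}^2+\|\widetilde u\|_{L^2(\Omega\times\R_-)}^2$, use $|{-3a+4b}|^2\le 18|a|^2+32|b|^2$, and change variables $t=-y$ (respectively $t=-y/2$, so $dy=-2\,dt$) in the two resulting integrals to obtain $\|\widetilde u\|_{L^2(\Omega\times\R_-)}\le C\|u\|_{L^2(\Omega\times\R_+)}$ with an explicit constant. Running the identical computation on $\nabla_x\widetilde u$ and, on $\{y<0\}$, on $\partial_y\widetilde u(x,y)=3(\partial_y u)(x,-y)-2(\partial_y u)(x,-y/2)$ gives the gradient inequality, so \eqref{eq: extension estimate}, hence item \ref{1st ord reflection item 5}, holds; in particular the linear map $E\colon u\mapsto\widetilde u$ satisfies $\|Eu\|_{H^1(\Omega\times\R)}\le C\|u\|_{H^1(\Omega\times\R_+)}$.

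\emph{Passage to $H^1_0$.} Since $H^1_0(\Omega\times[0,\infty))$ is by definition the $H^1$-closure of $C^1_c(\Omega\times[0,\infty))$, the bounded map $E$ extends uniquely to $\overline E\colon H^1_0(\Omega\times[0,\infty))\to H^1(\Omega\times\R)$ for which \eqref{eq: extension estimate} survives the limit. Given $u\in H^1_0(\Omega\times[0,\infty))$, I would pick $u_j\in C^1_c(\Omega\times[0,\infty))$ with $u_j\to u$ in $H^1$; then $\widetilde{u_j}=Eu_j\to\overline Eu$ in $H^1(\Omega\times\R)$, while the reflection formula together with the $L^2$ bound above forces $\widetilde{u_j}\to\widetilde u$ in $L^2(\Omega\times\R)$, so $\overline Eu=\widetilde u$. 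Finally each $\widetilde{u_j}\in C^1_c(\Omega\times\R)\subset H^1_0(\Omega\times\R)$ (the $H^1$-closure of $C^1_c$ coincides with that of $C^\infty_c$ by mollification), and $H^1_0(\Omega\times\R)$ is closed, so $\widetilde u\in H^1_0(\Omega\times\R)$, which finishes the proof.

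\emph{Expected main obstacle.} The first two steps are essentially bookkeeping (chain rule at the interface and a change of variables). The only point requiring genuine care is the last step: identifying the abstractly-extended operator $\overline E u$ with the explicitly-defined reflection $\widetilde u$ on all of $H^1_0(\Omega\times[0,\infty))$, and checking that its range lies in $H^1_0(\Omega\times\R)$ and not merely in $H^1(\Omega\times\R)$. This rests on the structural feature that the reflection formula only recombines $y$-slices of $u$ at a fixed $x$, so that the vanishing of $u$ near $\partial\Omega$ is automatically inherited by $\widetilde u$.
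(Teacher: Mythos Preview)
Your proposal is correct and follows essentially the same approach as the paper: verify the $C^1_c$ case directly (the paper just cites \cite[Section~5.4]{EvansPDE} here, whereas you spell out the interface matching and change-of-variables bounds), then pass to $H^1_0$ by density using the $H^1$-boundedness of the reflection operator. The only cosmetic difference is in identifying the limit: you use $L^2$-continuity of the reflection to get $\widetilde{u_j}\to\widetilde u$ in $L^2$, while the paper extracts an a.e.-convergent subsequence; both arguments are valid and yield the same conclusion.
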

\begin{proof}
	For the first part of Lemma~\ref{lemma; first order reflection} dealing with functions in $C_c^1(\Omega\times [0,\infty))$, we refer to \cite[Section~5.4]{EvansPDE}. Now, suppose that $u\in H^1_0(\Omega\times [0,\infty))$ and choose $\LC \varphi_k\RC_{k\in\N}\subset C_c^1(\Omega\times [0,\infty))$ such that $\varphi_k\to u$ in $H^1(\Omega\times \R_+)$ as $k\to\infty$. By \ref{1st ord reflection item 1} we know that $\widetilde{\varphi}_k\in C_c^1(\Omega\times\R)$. Using \eqref{eq: extension estimate} we deduce that $\LC \widetilde{\varphi}_k\RC_{k\in\N}$ is a Cauchy sequence in $H^1(\Omega\times\R)$ and hence there exists $v\in H^1(\Omega\times \R)$ such that $\widetilde{\varphi}_k\to v$ in $H^1(\Omega\times\R)$ as $k\to\infty$. On the other hand, up to extracting a subsequence we have $\widetilde{\varphi}_k\to \widetilde{u}$ a.e. in $\Omega\times\R$ as $k\to\infty$ and hence $v=\widetilde{u}$ in $\Omega\times \R$. Thus, $\widetilde{u}\in H^1_0(\Omega\times \R)$ as $v$ belongs to this space. Now, by \eqref{eq: extension estimate} we have 
	\[
	\left\|\widetilde{\varphi}_k\right\|_{L^2(\Omega\times \R)}\leq C\left\|\varphi_k\right\|_{L^2(\Omega\times \R_+)}\quad \text{and}\quad \left\|\nabla\widetilde{\varphi}_k\right\|_{L^2(\Omega\times \R)}\leq C\left\|\nabla \varphi_k\right\|_{L^2(\Omega\times \R_+)}
	\]
	for all $k\in\N$ and hence passing to the limit $k\to\infty$ gives 
	\[
	\left\|\widetilde{u}\right\|_{L^2(\Omega\times \R)}\leq C\|u\|_{L^2(\Omega\times \R_+)}\quad \text{and}\quad \left\|\nabla\widetilde{u}\right\|_{L^2(\Omega\times \R)}\leq C\|\nabla u\|_{L^2(\Omega\times \R_+)},
	\]
	which concludes the proof.
\end{proof}

This lemma allow us to establish the following Poincar\'e inequality.

\begin{theorem}[Poincar\'e inequality]
	\label{thm: poincare}
	Let $\Omega\subset\R^n$ be a bounded domain endowed with a uniformly elliptic Riemannian metric $g=(g_{ij})$ and extension $\widetilde{g}$ to $\Omega\times \R_+$. Then there exists $C>0$ such that there holds
	\begin{equation}
		\label{eq: Poincare inequality}
		\|u\|_{L^2(\Omega\times \R_+, dV_{\widetilde{g}})}\leq C\|du\|_{L^2(\Omega\times \R_+;dV_{\widetilde{g}})}
	\end{equation}
	for all $u\in H^1_0(\Omega\times [0,\infty))$.
\end{theorem}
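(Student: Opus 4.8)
The plan is to reduce the Poincaré inequality on the half-cylinder $\Omega \times \R_+$ to the classical Poincaré inequality on the bounded domain $\Omega$, by exploiting that $\Omega$ is bounded (hence has finite width in some direction) while using the reflection of Lemma~\ref{lemma; first order reflection} only to stay within the natural function class. First I would use the norm equivalence \eqref{eq: equivalence}: since $\widetilde g$ is uniformly elliptic (it is block-diagonal with $g$ uniformly elliptic and a $1$ in the $y$-slot), it suffices to prove the inequality for the Euclidean measure, i.e. to show $\|u\|_{L^2(\Omega\times\R_+)}\leq C\|\nabla u\|_{L^2(\Omega\times\R_+)}$ for all $u\in H^1_0(\Omega\times[0,\infty))$, and then transfer back. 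By density it is enough to prove this for $u\in C_c^1(\Omega\times[0,\infty))$.

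For such $u$, the key point is that $u$ vanishes on $\partial\Omega\times[0,\infty)$ (it has compact support in $\Omega\times[0,\infty)$, so for each fixed $y\geq 0$ the slice $u(\cdot,y)$ lies in $C_c^1(\Omega)$), whereas no decay in $y$ is assumed. Since $\Omega$ is bounded, after a translation we may assume $\Omega\subset\{0<x^1<d\}$ for some $d>0$. Then for fixed $y\geq 0$ and fixed $(x^2,\dots,x^n)$, the fundamental theorem of calculus gives $u(x^1,x',y)=\int_0^{x^1}\partial_{x^1}u(s,x',y)\,ds$, and Cauchy--Schwarz yields $|u(x^1,x',y)|^2\leq d\int_0^d|\partial_{x^1}u(s,x',y)|^2\,ds$. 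Integrating this in $x^1$ over $(0,d)$, then in $x'$ over $\R^{n-1}$, and then in $y$ over $\R_+$, gives
\[
\int_{\Omega\times\R_+}|u|^2\,dx\,dy\leq d^2\int_{\Omega\times\R_+}|\partial_{x^1}u|^2\,dx\,dy\leq d^2\int_{\Omega\times\R_+}|\nabla u|^2\,dx\,dy,
\]
which is exactly the desired Euclidean estimate with $C=d$. (The Fubini manipulations are all legitimate for $u\in C_c^1$.) Passing to the limit along an approximating sequence $\varphi_k\to u$ in $H^1(\Omega\times\R_+)$ extends the bound to all of $H^1_0(\Omega\times[0,\infty))$, and then \eqref{eq: equivalence} converts the Euclidean norms on both sides to the $dV_{\widetilde g}$-norms, proving \eqref{eq: Poincare inequality}.

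There is essentially no serious obstacle here; the only point requiring a little care is the justification that elements of $H^1_0(\Omega\times[0,\infty))$ can indeed be approximated in $H^1$ by functions in $C_c^1(\Omega\times[0,\infty))$ with the one-sided boundary behaviour, but this is precisely the definition \eqref{eq: vanishing on lateral boundary} of that space, so density is automatic. I note that the reflection lemma (Lemma~\ref{lemma; first order reflection}) is not strictly needed for this direct argument — it would be needed if one instead wanted to deduce the inequality by extending $u$ to $\Omega\times\R$ and quoting a Poincaré inequality there — but the transverse-strip argument above is cleaner because it uses only the boundedness of $\Omega$ and the homogeneous lateral Dirichlet condition, and needs no control in the unbounded $y$-direction. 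The main thing to emphasize in writing it up is that the constant depends only on the diameter of $\Omega$ and the ellipticity constant $\lambda$ of $g$, not on $u$.
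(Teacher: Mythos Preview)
Your proof is correct but takes a more direct route than the paper. The paper's argument uses Lemma~\ref{lemma; first order reflection} to extend $u$ to a function $\widetilde{u}\in H^1_0(\Omega\times\R)$, then invokes the classical Poincar\'e inequality on $\Omega\times\R$ (valid because $\Omega$ is bounded, so the domain has finite width in one coordinate direction), and finally uses the extension estimate \eqref{eq: extension estimate} to transfer back to $\Omega\times\R_+$. You bypass the reflection entirely and run the one-dimensional strip argument directly on $\Omega\times\R_+$, using only the lateral Dirichlet condition and the boundedness of $\Omega$. This is cleaner and shows explicitly that the constant depends only on $\mathrm{diam}(\Omega)$ and the ellipticity constant $\lambda$; the paper's version has the small advantage of black-boxing the Poincar\'e step rather than reproving it, but at the cost of invoking an auxiliary lemma that, as you correctly observe, is not actually needed here.
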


\begin{proof}
	Let $u\in H^1_0(\Omega\times [0,\infty))$ and denote by $\widetilde{u}\in H^1_0(\Omega\times \R)$ the corresponding first order reflection of $u$ from Lemma~\ref{lemma; first order reflection}. Then by the classical Poincar\'e inequality we know that there holds
	\[
	\left\|\widetilde{u}\right\|_{L^2(\Omega\times \R)}\leq C\left\|\nabla\widetilde{u}\right\|_{L^2(\Omega\times\R)}
	\]
	for some $C>0$ independent of $\overline{u}$. Using \eqref{eq: extension estimate}, we deduce
	\[
	\|u\|_{L^2(\Omega\times\R_+)}\leq \left\|\widetilde{u}\right\|_{L^2(\Omega\times \R)}\leq C\left\|\nabla\widetilde{u}\right\|_{L^2(\Omega\times\R)}\leq C\|\nabla u\|_{L^2(\Omega\times\R_+)}.
	\]
	The uniform ellipticity of $g$ ensures the equivalences \eqref{eq: equivalence} and thus the estimate \eqref{eq: Poincare inequality} follows.
\end{proof}

In the end of this section, let us prove Claim \ref{claim regularity}.

\begin{proof}[Proof of Claim \ref{claim regularity}]
	For $j=1,2$, let us set $u=u_j^f$, $g=g_j$ and $V=V_j$. By construction $u\in H^1_0(\Omega\times[0,\infty))$ solves
	\begin{equation}
		\label{eq: regularity equation}
		\begin{cases}
			\LC -\Delta_{\widetilde{g}}+V\RC u=0&\text{ in }\Omega\times\R_+,\\
			u=0&\text{ on }\partial\Omega\times \R_+,\\
			-\partial_y u=f&\text{ on }\Omega\times \{0\}.
		\end{cases}
	\end{equation}
	Since $f\in C_c^{\infty}(\Gamma)$, we can find $F\in C_c^{\infty}(\Gamma\times \R)$ such that $\left.\partial_y F \right|_{y=0}=f$. For example one can take $F(x,y)=yf(x)\rho(y)$, where $\rho\in C_c^{\infty}(\R)$ is a cutoff function with $\rho=1$ in a neighborhood of $y=0$. Now, we may observe that $v=u-F\in H^1_0(\Omega\times [0,\infty))$ solves
	\begin{equation}
		\label{eq: zero neumann regularity equation}
		\begin{cases}
			\LC-\Delta_{\widetilde{g}}+V\RC v=G&\text{ in }\Omega\times\R_+,\\
			v=0&\text{ on }\partial\Omega\times \R_+,\\
			-\partial_y v=0&\text{ on }\Omega\times \{0\}
		\end{cases}
	\end{equation}
	with $G=-\LC-\Delta_{\widetilde{g}}+V\RC F$.
	
	Next, with $\left. \p_y v \right|_{\Omega \times \{0\}}=0$, let us define the \emph{even reflection} of $v$ by
	\begin{equation}
		\label{eq: reflection of $y$}
		v^{\ast}(x,y)=\begin{cases}
			v(x,y),&\text{ for }(x,y)\in \Omega\times [0,\infty)\\
			v(x,-y),&\text{ for }(x,y)\in \Omega\times (-\infty,0)
		\end{cases}.
	\end{equation}
	It is well-known that $v^\ast\in H^1_0(\Omega\times\R)$ with 
	\[
	\partial_y v^{\ast} (x,y)=\begin{cases}
		\partial_y v(x,y),&\text{ in }\Omega\times [0,\infty)\\
		-\partial_y v(x,-y),&\text{ in }\Omega\times (-\infty,0).
	\end{cases}
	\]
	Then a simple calculation shows that $v^\ast$ solves 
	\begin{equation}
		\label{eq: zero neumann regularity equation extended}
		\begin{cases}
			\LC -\Delta_{\widetilde{g}}+V\RC  v=G^\ast &\text{ in }\Omega\times\R,\\
			v=0&\text{ on }\partial\Omega\times \R,
		\end{cases}
	\end{equation}
	where $G^\ast$ denotes the even reflection of $G$. Let $\eta \in C_c^{\infty}(\Omega\times \R)$, then $w=\eta v^\ast\in H^1(\R^{n+1})$ (extended by zero outside of $\Omega$) solves
	\begin{equation}
		\label{eq: zero neumann interior regularity equation extended}
		\LC -\Delta_{\widetilde{g}}+V\RC w=H^\ast  \text{ in }\R^{n+1},
	\end{equation}
	where $H^\ast\vcentcolon =\eta G^\ast -v^\ast \Delta_{\widetilde{g}}\eta -2dv^\ast \cdot d\eta -2\partial_y v^\ast \partial_y\eta\in L^2(\R^{n+1})$.
	Hence, elliptic regularity theory implies $w\in H^2_{\mathrm{loc}}(\R^{n+1})$ and thus $v\in H^2(\omega\times [0,R))$ for all $\omega\Subset\Omega$ and $R>0$. Although in general $G^\ast$ is not regular for regular functions $G$, in our case close to $y=0$ we have $G^\ast = |y|(-\Delta_g+V)f$ and therefore $G^\ast \in H^1(\R^{n+1})$. This in turn implies $H^\ast\in H^1(\R^{n+1})$. Thus, by differentiating \eqref{eq: zero neumann interior regularity equation extended} and arguing as before we get $v \in H^3(\omega\times [0,R))$ for any $\omega\Subset\Omega$ and $R>0$. 
	Boundary regularity can be obtained precisely as in \cite[Theorem~8.12]{gilbarg2015elliptic} by using the method of difference quotients. In fact, one obtains that for any $x\in \partial\Omega$, there exists $r>0$ such that $v \in H^2(B_r(x)\times [0,R))$ for any $R>0$ and hence by a covering $\partial\Omega$ with such balls and taking into account the interior regularity result, we get $v \in H^3(\Omega\times [0,R))$ for any $R>0$. Going back from $v$ to our original solution $u$, the Claim \ref{claim regularity} is followed.
\end{proof}

\section{Heat semigroup and powers of $-\Delta_g+V$}\label{sec: appendix_Fractional powerts of elliptic operators}

\subsection{Functional analytic properties of $-\Delta_g+V$ and heat semigroup}

Let us make the following observations, which were used repeatedly throughout this article.
\begin{enumerate}[(a)]
	\item There holds
	\begin{equation}
		\label{eq: derivative determinant}
		\partial_{k}|g|^{\pm 1/2}=\pm\frac{|g|^{\pm 1/2}}{2}g^{ij}\partial_{k}g_{ji}
	\end{equation}
	for all $1\leq k\leq n$. Hence, iteratively we get $|g|^{\pm 1/2}\in C^{\infty}(\overline{\Omega})$.
	\item\label{invariance by multiplication with |g|1/2} We have $\varphi \in H^1_0(\Omega)$ if and only if $|g|^{1/2}\varphi\in H^1_0(\Omega)$.
	\item Suppose that $u\in H^1(\Omega)$ (weakly) solves
	\begin{equation}
		\label{eq: schroedinger eqs}
		(-\Delta_g+q)=f\text{ in }\Omega
	\end{equation}
	for some $f\in L^2(\Omega,dV_g)$ and $q\in L^{\infty}(\Omega)$, that is
	\begin{equation}
		\label{eq: weak formulation ell problems}
		\int_{\Omega} (du\cdot d\varphi+qu\varphi)\,dV_g=\int_{\Omega} f\varphi\,dV_g
	\end{equation}
	for all $\varphi\in H^1_0(\Omega)$. Then by \eqref{eq: derivative determinant} we deduce that
	\[
	\begin{split}
		&\quad \, \int_{\Omega}f\varphi |g|^{1/2}\,dx\\
		&=\int_{\Omega}\LC g^{ij}\partial_i u\partial_j(|g|^{1/2}\varphi) -( g^{ij}\partial_i u \partial_j|g|^{1/2})\varphi+qu|g|^{1/2}\varphi \RC dx\\
		&=\int_{\Omega}\Big(g^{ij}\partial_i u\partial_j(|g|^{1/2}\varphi)-\frac{1}{2}(g^{ij}g^{k\ell}\partial_j g_{\ell k}\partial_i u)|g|^{1/2}\varphi+qu|g|^{1/2}\varphi\Big)dx.
	\end{split}
	\]
	Thus, by \ref{invariance by multiplication with |g|1/2} we can replace $\varphi$ in the previous formula by $|g|^{-1/2}\psi$ with $\psi\in H^1_0(\Omega)$ and obtain that $u\in H^1(\Omega)$ solves \eqref{eq: schroedinger eqs} if and only if $u\in H^1(\Omega)$ solves
	\begin{equation}
		\label{eq: usual formulation of problem}
		-\text{div}(g\nabla u)+b\cdot \nabla u+qu=f\text{ in }\Omega,
	\end{equation}
	where $b=\LC b^1,\ldots, b^n\RC \in C^{\infty}(\overline{\Omega},\R^n)$ is given by
	\begin{equation}\label{vector b}
		b^{i}=-\frac{1}{2}g^{ij}g^{k\ell}\partial_j g_{\ell k}, \text{ for }i=1,\ldots, n.
	\end{equation}
\end{enumerate}

\begin{lemma}
	\label{lemma: properties of PgV}
	The operator $\mathsf{P}_{g,V}$ has the following properties:
	\begin{enumerate}[(a)]
		\item\label{symmetry} $\mathsf{P}_{g,V}$ is symmetric meaning that
		\[
		\left\langle \mathsf{P}_{g,V}u,v \right\rangle_{L^2(\Omega,dV_g)}=\left\langle u,\mathsf{P}_{g,V}v \right\rangle_{L^2(\Omega,dV_g)}\,\text{ for all }u,v\in\mathsf{Dom}\LC \mathsf{P}_{g,V}\RC.
		\]
		\item\label{maximal monotone} $\mathsf{P}_{g,V}$ is maximal monotone, that is there holds
		\begin{enumerate}[(i)]
			\item\label{monotone} \textit{Monotonicity:} For all $u\in\mathsf{Dom}\LC \mathsf{P}_{g,V}\RC$  one has $$\left\langle \mathsf{P}_{g,V} u, u \right\rangle_{L^2(\Omega,dV_g)} \geq 0,$$
			\item\label{maximal} \textit{Maximality:} $\mathrm{Ran}\LC 1+\mathsf{P}_{g,V}\RC=L^2(\Omega,dV_g)$.
		\end{enumerate}
	\end{enumerate}
	Furthermore, $\mathsf{P}_{g,V}$ is a self-adjoint operator on $L^2(\Omega,dV_g)$.
\end{lemma}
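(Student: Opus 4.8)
The plan is to attach to $\mathsf{P}_{g,V}$ the bilinear form
\[
a(u,v)\vcentcolon=\int_\Omega\big(du\cdot dv+Vuv\big)\,dV_g,\qquad u,v\in H^1_0(\Omega,dV_g),
\]
and to read off every assertion from its properties. Boundedness of $a$ is immediate from $V\in L^\infty(\Omega)$, the Cauchy--Schwarz inequality and the norm equivalence \eqref{eq: equivalence}, while coercivity follows from $V\geq0$ together with the classical Poincar\'e inequality on the bounded domain $\Omega$ and again \eqref{eq: equivalence}. The first thing I would establish is the integration-by-parts identity
\[
\langle\mathsf{P}_{g,V}u,\varphi\rangle_{L^2(\Omega,dV_g)}=a(u,\varphi)\qquad\text{for all }u\in\mathsf{Dom}(\mathsf{P}_{g,V}),\ \varphi\in H^1_0(\Omega,dV_g).
\]
For $\varphi\in C_c^\infty(\Omega)$ this is precisely the meaning of $\mathsf{P}_{g,V}u\in L^2(\Omega,dV_g)$ in the weak sense entering the definition of $\mathsf{Dom}(\mathsf{P}_{g,V})$; since both sides depend continuously on $\varphi$ in the $H^1$-norm (the left side by Cauchy--Schwarz, the right by boundedness of $a$), the identity extends to all $\varphi\in H^1_0(\Omega,dV_g)$ by density.

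From this identity the symmetry and monotonicity of $\mathsf{P}_{g,V}$ are immediate: for $u,v\in\mathsf{Dom}(\mathsf{P}_{g,V})\subset H^1_0(\Omega,dV_g)$ one has, using symmetry of $a$ and that all functions are real-valued,
\[
\langle\mathsf{P}_{g,V}u,v\rangle_{L^2(\Omega,dV_g)}=a(u,v)=a(v,u)=\langle u,\mathsf{P}_{g,V}v\rangle_{L^2(\Omega,dV_g)},
\]
and $\langle\mathsf{P}_{g,V}u,u\rangle_{L^2(\Omega,dV_g)}=a(u,u)=\int_\Omega(|du|^2+V|u|^2)\,dV_g\geq0$. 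For maximality I would apply the Lax--Milgram theorem to the bounded, coercive form $a(u,v)+\langle u,v\rangle_{L^2(\Omega,dV_g)}$ on $H^1_0(\Omega,dV_g)$: given $f\in L^2(\Omega,dV_g)$ this yields a unique $u\in H^1_0(\Omega,dV_g)$ with $a(u,\varphi)+\langle u,\varphi\rangle_{L^2(\Omega,dV_g)}=\langle f,\varphi\rangle_{L^2(\Omega,dV_g)}$ for all $\varphi\in H^1_0(\Omega,dV_g)$; testing against $\varphi\in C_c^\infty(\Omega)$ shows $\mathsf{P}_{g,V}u=f-u\in L^2(\Omega,dV_g)$, so $u\in\mathsf{Dom}(\mathsf{P}_{g,V})$ and $(1+\mathsf{P}_{g,V})u=f$, i.e. $\mathrm{Ran}(1+\mathsf{P}_{g,V})=L^2(\Omega,dV_g)$. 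In passing, rewriting this equation in the divergence form \eqref{eq: usual formulation of problem} with the smooth drift \eqref{vector b} and invoking interior and up-to-the-boundary elliptic regularity (here we use that $\Omega$ is smoothly bounded and $g\in C^\infty(\overline\Omega;\R^{n\times n})$) gives $u\in H^2(\Omega)$; since $H^1_0(\Omega)\cap H^2(\Omega)\subset\mathsf{Dom}(\mathsf{P}_{g,V})$ is clear, this also proves the announced identity $\mathsf{Dom}(\mathsf{P}_{g,V})=H^1_0(\Omega)\cap H^2(\Omega)$.

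Finally, to upgrade symmetry and maximal monotonicity to self-adjointness I would invoke the standard criterion that a symmetric operator $T$ with $\mathrm{Ran}(1+T)=H$ is self-adjoint. Since $T\subset T^*$ by symmetry, it suffices to show $\mathsf{Dom}(T^*)\subset\mathsf{Dom}(T)$: for $v\in\mathsf{Dom}(T^*)$, surjectivity of $1+T$ produces $u\in\mathsf{Dom}(T)$ with $(1+T)u=(1+T^*)v$, so $(1+T^*)(u-v)=0$; pairing with arbitrary $w\in\mathsf{Dom}(T)$ gives $0=\langle(1+T^*)(u-v),w\rangle=\langle u-v,(1+T)w\rangle$, and $\mathrm{Ran}(1+T)=H$ forces $u=v$. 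Applying this with $T=\mathsf{P}_{g,V}$ concludes the proof. There is no genuine obstacle in any of this; the only points demanding a little care are the density argument extending the integration-by-parts identity to $H^1_0(\Omega,dV_g)$ and the elliptic-regularity step used to pin down $\mathsf{Dom}(\mathsf{P}_{g,V})$, the bookkeeping between $dV_g$ and Lebesgue measure being absorbed into \eqref{eq: equivalence}. One could equally well bypass the explicit criterion above by citing Kato's first representation theorem for closed, symmetric, bounded-below forms, which directly furnishes a self-adjoint operator that one then identifies with $\mathsf{P}_{g,V}$.
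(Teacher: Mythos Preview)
Your proof is correct and follows essentially the same approach as the paper: both establish symmetry and monotonicity via the bilinear form $a(u,v)=\int_\Omega(du\cdot dv+Vuv)\,dV_g$, obtain maximality from Lax--Milgram/Riesz applied to $a+\langle\cdot,\cdot\rangle_{L^2}$, and then deduce self-adjointness from the standard criterion (the paper cites \cite[Proposition~7.6]{Brezis}, which is exactly the argument you spell out). One minor streamlining in your version: you observe that the Lax--Milgram solution $u\in H^1_0(\Omega)$ already lies in $\mathsf{Dom}(\mathsf{P}_{g,V})$ by the weak definition of the domain, so elliptic regularity is only needed to identify $\mathsf{Dom}(\mathsf{P}_{g,V})=H^1_0(\Omega)\cap H^2(\Omega)$ rather than as part of the maximality proof itself, whereas the paper routes through $H^2$ regularity and integration by parts.
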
 
\begin{proof}
	Note that \ref{symmetry} and \ref{monotone} of \ref{maximal monotone} follow by a simple integration by parts as $\mathsf{Dom}(\mathsf{P}_{g,V})=H^1_0(\Omega)\cap H^2(\Omega)$ (see Lemma~\ref{lemma: embedding of domains} below). Hence, we only need to show \ref{maximal}. To see this, fix some $f\in L^2(\Omega,dV_g)$, then one observes that
	\[
	\ell_f\colon H^1_0(\Omega)\to \R,\quad \left\langle \ell_f,\varphi\right\rangle=\int_{\Omega}f\varphi\,dV_g
	\]
	is a continuous linear form on $H^1_0(\Omega)$ and as $g$ is uniformly elliptic as well as $0\leq V\in L^{\infty}(\Omega)$ an equivalent inner product on $H^1_0(\Omega)$ is given by
	\[
	\langle u,v\rangle_{g,V}=\int_{\Omega}\LC g^{ij}\partial_i u \partial_j v+(V+1)uv \RC dV_g.
	\]
	Therefore, by the Riesz representation theorem there exists a unique $u\in H^1_0(\Omega)$ satisfying 
	\[
	\langle u,\varphi\rangle_{g,V}=\int_{\Omega}f\varphi\,dV_g\text{ for all }\varphi\in H^1_0(\Omega).
	\]
	As explained above, using $\varphi=|g|^{-1/2}\psi$ with $\psi\in H^1_0(\Omega)$ as a test function, we get an equation of the type \eqref{eq: usual formulation of problem} and then we can invoke the usual elliptic regularity theory to deduce $u\in H^2(\Omega)$ (see \cite[Theorem~8.12]{gilbarg2015elliptic}). Hence, an integration by parts guarantees that $u\in \mathsf{Dom}\LC \mathsf{P}_{g,V}\RC $ satisfies
	\[
	\LC \id+\mathsf{P}_{g,V}\RC u=f\text{ in }L^2(\Omega,dV_g)
	\]
	and this establishes \ref{maximal}. Now, we can apply \cite[Proposition~7.6]{Brezis} to infer that $\mathsf{P}_{g,V}$ is in fact a self-adjoint operator on $L^2(\Omega,dV_g)$.
\end{proof}

Now, we explain the reason for the validity of the identities \eqref{eq: integrability L2 of P} and \eqref{eq: spectral decomp of P}. The first identity follows by using the orthonormality of $(\phi_k)_{k\in\N}$ and $\mathsf{P}_{g,V}\phi_k=\lambda_k\phi_k$. If $u\in\mathsf{Dom}(\mathsf{P}_{g,V})$, then the identity \eqref{eq: integrability L2 of P} guarantees the $\LC \lambda_ku_k\RC_{k\in\N}\subset \ell^2(\N)$. Let
\[
U_m=\sum_{k=1}^m u_k\phi_k\text{ and }V_m=\sum_{k=1}^m\lambda_k u_k\phi_k.
\]
By construction, we have $U_m\in \mathsf{Dom}(\mathsf{P}_{g,V})$ and $\mathsf{P}_{g,V}U_m=V_m$, for $m\in \N$. Then clearly $U_m\to u$ in $L^2(\Omega,dV_g)$ and as $V_m$ is a Cauchy sequence in $L^2(\Omega,dV_g)$ it converges to some limit in $L^2(\Omega,dV_g)$. By \cite[Proposition~7.1]{Brezis} maximal monotone operators are closed and hence we may conclude that $\mathsf{P}_{g,V}u=\sum_{k=1}^{\infty} \lambda_k u_k\phi_k$.

Since $\mathsf{P}_{g,V}$ is a symmetric, maximal monotone operator, \cite[Theorem~2.3.1]{HeatKernelsArendt} implies that $-\mathsf{P}_{g,V}$ generates a $C_0$-semigroup of contractive, self-adjoint operators on $L^2(\Omega,dV_g)$, which we denote as usual by $\LC e^{-t\mathsf{P}_{g,V}}\RC_{t\geq 0}$ (the heat kernel of $\p_t + \mathsf{P}_{g,V}$). Here, contractive means nothing else than $\left\|e^{-t\mathsf{P}_{g,V}}\right\|_{L(L^2(\Omega;dV_g))}\leq 1$, where $L(L^2(\Omega,dV_g))$ denotes the operator norm from $L^2(\Omega,dV_g)$ to $L^2(\Omega,dV_g)$. More precisely, for a given function $f\in L^2(\Omega,dV_g)$, the function $U(t)\vcentcolon= e^{-t\mathsf P_{g, V}} f$ is the unique solution of
\begin{align}\label{heat equation}
	\begin{cases}
		u\in C([0,\infty);L^2(\Omega,dV_g))\cap C^1((0,\infty);L^2(\Omega,dV_g)),\\
		u\in C((0,\infty);\mathsf{Dom}(\mathsf{P}_{g,V})),\\
		\LC\p_t  +\mathsf{P}_{g,V} \RC u =0 \text{ in }(0,\infty),\\
		u(0)=f
	\end{cases}
\end{align}
(see \cite[Theorem~7.7]{Brezis}). In \eqref{heat equation} the space $\mathsf{Dom}(\mathsf{P}_{g,V})$ is regarded as a Hilbert space with inner product given by
\[
\langle u,v\rangle_{\mathsf{Dom}(\mathsf{P}_{g,V})}=\left\langle u,v \right\rangle_{L^2(\Omega;dV_g)}+\left\langle \mathsf{P}_{g,V} u,\mathsf{P}_{g,V}v\right\rangle_{L^2(\Omega;dV_g)}.
\]
Furthermore, we have
\begin{equation}
	\label{eq: L2 regularity of sol}
	U\in L^{2}(0,\infty;H^1_0(\Omega,dV_g))
\end{equation}
with
\begin{equation}
	\label{eq: energy inequality}
	\begin{split}
		& \quad \, \frac{\|U(t)\|_{L^2(\Omega;dV_g)}^2}{2}+\int_0^t \Big( \| d U(\tau)\|_{L^2(\Omega,dV_g)}^2+\big\|V^{1/2}U(\tau) \big\|_{L^2(\Omega,dV_g)}^2\Big) d\tau\\
		&=\frac{\|f\|_{L^2(\Omega,dV_g)}^2}{2}
	\end{split}
\end{equation}
for all $t>0$. To see this let us consider the function $\varphi\in C^1((0,\infty))$ given by
\[
\varphi(t)=\frac{\|U(t)\|_{L^2(\Omega,dV_g)}^2}{2}.
\]
Using $U\in C^1((0,\infty);L^2(\Omega,dV_g))\cap C((0,\infty);\mathsf{Dom}(\mathsf{P}_{g,V}))$, we get
\[
\begin{split}
	\varphi'(\tau)&=\left\langle U(\tau),\partial_t U(\tau)\right\rangle_{L^2(\Omega,dV_g)} \\
	&=-\left\langle U(\tau),\mathsf{P}_{g,V}U (\tau)\right\rangle_{L^2(\Omega,dV_g)}\\
	&=- \|dU(\tau)\|_{L^2(\Omega,dV_g)}^2-\big\|V^{1/2}U(\tau)\big\|_{L^2(\Omega;dV_g)}^2
\end{split}
\]
for any $\tau>0$. Therefore, the fundamental theorem of calculus implies
\[
\begin{split}
	\varphi(t)-\varphi(\eps)&=\int_{\eps}^t\varphi'(\tau)\,d\tau\\
	&=- \int_{\eps}^t\left(\|dU(\tau)\|_{L^2(\Omega,dV_g)}^2+\big\|V^{1/2}U(\tau)\big\|_{L^2(\Omega,dV_g)}^2\right)\,d\tau
\end{split}
\]
for all $0<\eps<t<\infty$. As $U\in C([0,\infty);L^2(\Omega,dV_g))$ and $U(0)=f$, we obtain in the limit $\eps\to 0$ the energy identity \eqref{eq: energy inequality}. But now the energy inequality shows 
\[
\int_0^t\|dU(\tau)\|_{L^2(\Omega,dV_g)}^2\,d\tau\leq \frac{\|f\|_{L^2(\Omega,dV_g)}^2}{2}<\infty
\]
for all $t\geq 0$, which in turn implies $U\in L^2(0,\infty;H^1_0(\Omega,dV_g))$ and hence establishes \eqref{eq: L2 regularity of sol}. Finally, from the fact that $U\in C((0,\infty);\mathsf{Dom}(\mathsf{P}_{g,V}))\cap L^2(0,\infty;H^1_0(\Omega,dV_g))$ it also follows that
\begin{equation}
	\label{eq: regularity of time derivative of Hille Yosida}
	\partial_tU\in L^2(0,\infty;H^{-1}(\Omega,dV_g)).
\end{equation}

\begin{proof}[Proof of Lemma \ref{lemma: alternative char of semigroup}]
	First note that by the Galerkin method, using the finite dimensional subspaces spanned by $(\phi_k)_{k\in\N}\subset H^1_0(\Omega,dV_g)$ as the Galerkin approximation of $L^2(\Omega,dV_g)$, the problem
	\begin{align}\label{weak heat equation}
		\begin{cases}
			\LC\p_t  +\mathsf{P}_{g,V} \RC u =0& \text{ in }\Omega\times (0,T),\\
			u=0 &\text{ on }\p \Omega,\\
			u(0)=f &\text{ in }\Omega
		\end{cases}
	\end{align}
	has a unique (weak) solution 
	$$
	u\in L^2(0,T;H^1_0(\Omega,dV_g)) \text{ with }\partial_t u\in L^2(0,T;H^{-1}(\Omega,dV_g)),
	$$ 
	for any $T>0$ (see \cite[Chapter 7]{EvansPDE} or \cite[Chapter~XVIII]{DautrayLionsVol5}). By construction the approximate solutions $u_m$ are given by
	\[
	u_m(t)=\sum_{k=1}^m e^{-\lambda_k t}f_k\phi_k\,\text{ with }\, f_k=\langle f,\phi_k\rangle_{L^2(\Omega,dV_g)},
	\]
	which converge in $L^2(\Omega,dV_g)$ to the solution $u$, that is
	\begin{equation}
		u(t)=\sum_{k=1}^{\infty}e^{-\lambda_k t}f_k\phi_k
	\end{equation}
	(see \cite[Chapter~XVIII, Section~3.5, Remark 4]{DautrayLionsVol5}). By uniqueness of the problem \eqref{weak heat equation}, \eqref{eq: L2 regularity of sol} and \eqref{eq: regularity of time derivative of Hille Yosida}, we deduce that
	\[
	U(t)=\sum_{k=1}^{\infty}e^{-\lambda_k t}f_k\phi_k.
	\]
	This concludes the proof.
\end{proof}

Next, let us recall that by \cite[Theorem~3.1]{GaussianEstimatesArendt} (see also \cite[Section~7]{aronson68}) there exists $b,c>0$, $\omega\in\R$ and $K_t=K_t(x,z)\in L^{\infty}(\Omega\times\Omega)$ such that
\begin{equation}
	\label{eq: schwartz kernel}
	e^{-t\mathsf{P}_{g,V}}\varphi(x)=\int_{\Omega}K_t(x,z)\varphi(z)\,dV_g(z)\text{ for a.e. }x\in\Omega
\end{equation}
for all $t>0$, $\varphi\in L^2(\Omega,dV_g)$ and 
\begin{equation}
	\label{eq: Gaussian upper bound}
	\left|K_t(x,z)\right|\leq ct^{-n/2}e^{-b|x-z|^2/t}e^{\omega t}\text{ for a.e. }x,z\in\Omega.
\end{equation}
By the above discussion we have $K_t\geq 0$. In the following, we set
\begin{equation}
	\label{eq: notation kernel}
	e^{-t\mathsf{P}_{g,V}}(\cdot,\cdot)\vcentcolon = K_t(\cdot,\cdot).
\end{equation}

\subsection{Integer powers of $-\Delta_g+V$}
\label{subsec: integer powers}

Next, let us introduce integer powers of our operator $\mathsf{P}_{g,V}=-\Delta_g +V$, and recalling some regularity results for solutions of \eqref{heat equation} for regular initial conditions. For $2\leq k\in \N$, we set
\begin{equation}
	\label{eq: domain of power}
	\mathsf{Dom}\LC \mathsf{P}_{g,V}^k\RC= \big\{v\in\mathsf{Dom}(\mathsf{P}_{g,V}^{k-1})\,;\,\mathsf{P}_{g,V}v\in \mathsf{Dom}(\mathsf{P}_{g,V}^{k-1})\big\},
\end{equation}
and define 
\begin{equation}
	\label{eq: integer power operator}
	\mathsf{P}_{g,V}^k=\underbrace{\mathsf{P}_{g,V}\cdots \mathsf{P}_{g,V}}_{\text{k times}}.
\end{equation}
It is easily seen that the space $\mathsf{Dom}(\mathsf{P}_{g,V}^k)$, $k\geq 1$, is a Hilbert spaces, if we endow it with the inner product
\begin{equation}
	\label{eq: inner product on domain of power}
	\langle u,v\rangle_{\mathsf{Dom}(\mathsf{P}_{g,V}^k)}=\sum_{j=0}^k \big\langle \mathsf{P}_{g,V}^j u,\mathsf{P}_{g,V}^j v \big\rangle_{L^2(\Omega,dV_g)}.
\end{equation}

\begin{lemma}
	\label{lemma: embedding of domains}
	For all $k\geq 1$, we have
	\begin{equation}
		\label{eq: continuous embedding}
		\mathsf{Dom}(\mathsf{P}_{g,V}^k)\hookrightarrow H^{2k}(\Omega)
	\end{equation}
	and there holds
	\begin{equation}
		\label{eq: precise characterization of domain}
		\mathsf{Dom}(\mathsf{P}_{g,V}^k)=\big\{u\in H^{2k}(\Omega)\,;\,u,P_{g,V}u,\ldots,P_{g,V}^{k-1}u\in H^1_0(\Omega)\big\}.
	\end{equation}
\end{lemma}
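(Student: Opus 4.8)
The plan is to argue by induction on $k\geq 1$, simultaneously establishing the continuous embedding \eqref{eq: continuous embedding} and the characterization \eqref{eq: precise characterization of domain}. The base case $k=1$ is essentially the statement $\mathsf{Dom}(\mathsf{P}_{g,V})=H^1_0(\Omega)\cap H^2(\Omega)$, which was already announced after \eqref{eq: domain PgV}; I would prove it here as follows. If $u\in\mathsf{Dom}(\mathsf{P}_{g,V})$, then by definition $u\in H^1_0(\Omega)$ and $\mathsf{P}_{g,V}u\in L^2(\Omega,dV_g)$ in the weak sense, i.e. $u$ weakly solves $(-\Delta_g+V)u=f$ with $f\vcentcolon=\mathsf{P}_{g,V}u\in L^2$. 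Using the observation recorded before Lemma~\ref{lemma: properties of PgV}, replacing the test function $\varphi$ by $|g|^{-1/2}\psi$ with $\psi\in H^1_0(\Omega)$ converts the weak equation into the divergence-form equation \eqref{eq: usual formulation of problem} with smooth leading coefficient $g$, smooth drift $b$ as in \eqref{vector b}, and $q=V\in L^\infty(\Omega)$. The classical $L^2$ elliptic regularity theory (interior plus boundary, as in \cite[Theorem~8.12]{gilbarg2015elliptic}) then gives $u\in H^2(\Omega)$ together with the estimate $\|u\|_{H^2(\Omega)}\lesssim \|f\|_{L^2(\Omega)}+\|u\|_{L^2(\Omega)}\lesssim \|u\|_{\mathsf{Dom}(\mathsf{P}_{g,V})}$, using the equivalence of $H^k(\Omega)$ and $H^k(\Omega,dV_g)$ norms from Section~\ref{subsec: function spaces}. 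Conversely, if $u\in H^1_0(\Omega)\cap H^2(\Omega)$ then $-\Delta_g u+Vu\in L^2(\Omega)$ and an integration by parts shows $u\in\mathsf{Dom}(\mathsf{P}_{g,V})$. This settles $k=1$.

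For the inductive step, suppose \eqref{eq: continuous embedding} and \eqref{eq: precise characterization of domain} hold for $k-1$ (with $k\geq 2$), and let $u\in\mathsf{Dom}(\mathsf{P}_{g,V}^k)$. By \eqref{eq: domain of power} we have $u\in\mathsf{Dom}(\mathsf{P}_{g,V}^{k-1})$ and $\mathsf{P}_{g,V}u\in\mathsf{Dom}(\mathsf{P}_{g,V}^{k-1})$; in particular $u\in H^1_0(\Omega)$, and by the induction hypothesis $\mathsf{P}_{g,V}u\in H^{2(k-1)}(\Omega)$ with $\mathsf{P}_{g,V}u,\mathsf{P}_{g,V}^2u,\ldots,\mathsf{P}_{g,V}^{k-1}u\in H^1_0(\Omega)$. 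Now $u$ weakly solves $(-\Delta_g+V)u=f$ with $f\vcentcolon=\mathsf{P}_{g,V}u\in H^{2k-2}(\Omega)$. Converting once more to the divergence form \eqref{eq: usual formulation of problem} with smooth $g$, smooth $b$, and $q=V$ (here I would use $V\in C^\infty(\overline\Omega)$, which is stronger than needed but available by hypothesis), the higher-order elliptic regularity theory upgrades $u\in H^{2k-2}(\Omega)$ with $f\in H^{2k-2}(\Omega)$ to $u\in H^{2k}(\Omega)$, together with the quantitative bound $\|u\|_{H^{2k}(\Omega)}\lesssim \|f\|_{H^{2k-2}(\Omega)}+\|u\|_{L^2(\Omega)}$, which in turn is $\lesssim \|u\|_{\mathsf{Dom}(\mathsf{P}_{g,V}^k)}$ after chaining the induction-hypothesis norm equivalences. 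Combined with $u\in H^1_0(\Omega)$ and the already-known memberships $\mathsf{P}_{g,V}^ju\in H^1_0(\Omega)$ for $1\leq j\leq k-1$, this gives the ``$\subset$'' inclusion in \eqref{eq: precise characterization of domain} as well as \eqref{eq: continuous embedding} at level $k$.

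For the reverse inclusion in \eqref{eq: precise characterization of domain}, suppose $u\in H^{2k}(\Omega)$ with $u,\mathsf{P}_{g,V}u,\ldots,\mathsf{P}_{g,V}^{k-1}u\in H^1_0(\Omega)$ (the expressions $\mathsf{P}_{g,V}^ju=(-\Delta_g+V)^ju$ being classical derivatives here, well-defined since $u\in H^{2k}(\Omega)$). Then $\mathsf{P}_{g,V}^{k-1}u\in H^1_0(\Omega)\cap H^2(\Omega)=\mathsf{Dom}(\mathsf{P}_{g,V})$ by the base case, and applying the induction hypothesis to $u$ and to $\mathsf{P}_{g,V}u$ (both of which satisfy the right-hand characterization at level $k-1$) shows $u,\mathsf{P}_{g,V}u\in\mathsf{Dom}(\mathsf{P}_{g,V}^{k-1})$, hence $u\in\mathsf{Dom}(\mathsf{P}_{g,V}^k)$ by \eqref{eq: domain of power}. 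This closes the induction. I expect the only genuine subtlety to be bookkeeping: one must be careful that the weak equation satisfied by $u$ is exactly the divergence-form PDE \eqref{eq: usual formulation of problem} so that standard elliptic regularity applies verbatim, and that at each stage the drift and potential are smooth enough (they are, since $g\in C^\infty(\overline\Omega;\R^{n\times n})$ and $V\in C^\infty(\overline\Omega)$) for the regularity gain of two derivatives; everything else is a routine induction. An alternative, slightly slicker route would be to observe that $\mathsf{P}_{g,V}^k$ is, up to lower-order terms, an elliptic operator of order $2k$ with smooth coefficients and Dirichlet-type boundary conditions of the form $u=\mathsf{P}_{g,V}u=\cdots=\mathsf{P}_{g,V}^{k-1}u=0$ on $\partial\Omega$, and invoke elliptic regularity for this higher-order boundary value problem directly; but the inductive argument above is more self-contained and reuses only \cite[Theorem~8.12]{gilbarg2015elliptic}.
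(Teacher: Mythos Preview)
Your proposal is correct and follows essentially the same approach as the paper: induction on $k$, with the base case handled by converting to the divergence-form equation \eqref{eq: usual formulation of problem} and invoking \cite[Theorem~8.12]{gilbarg2015elliptic}, and the inductive step using that $\mathsf{P}_{g,V}u\in\mathsf{Dom}(\mathsf{P}_{g,V}^{k-1})\hookrightarrow H^{2k-2}(\Omega)$ together with higher-order elliptic regularity (the paper cites \cite[Theorem~8.13]{gilbarg2015elliptic} here). Your treatment of the reverse inclusion is slightly more explicit than the paper's, but the argument is the same.
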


\begin{proof}
	Let us prove it via the mathematical induction.
	
	\medskip
	
	\textit{Case 1. $k=1$:} 
	
	\medskip
	
	\noindent	Let $u\in\mathsf{Dom}(\mathsf{P}_{g,V})$. By \eqref{eq: domain PgV}, the function $u\in H^1_0(\Omega)$ solves
	\[
	\mathsf{P}_{g,V}u=f\text{ in }\Omega
	\]
	for some $f\in L^2(\Omega)$. Therefore $u\in H^1_0(\Omega)$ satisfies
	\begin{equation}
		\label{eq: PDE for domain}
		-\text{div}(g\nabla u)+b\cdot \nabla u+Vu=f\text{ in }\Omega,
	\end{equation}
	where $b\in C^{\infty}(\overline{\Omega};\R^n)$ is defined as in \eqref{vector b}, but then \cite[Theorem~8.12]{gilbarg2015elliptic} implies $u\in H^2(\Omega)$ with 
	\[
	\|u\|_{H^2(\Omega)}\lesssim \|u\|_{L^2(\Omega)}+\|f\|_{L^2(\Omega)}.
	\]
	This in turn shows that
	\begin{equation}
		\label{eq: case k=1 regularity}
		\|u\|_{H^2(\Omega)}\lesssim \|u\|_{L^2(\Omega;dV_g)}+\left\|\mathsf{P}_{g,V}u \right\|_{L^2(\Omega;dV_g)}\lesssim \|u\|_{\mathsf{Dom}(\mathsf{P}_{g,V})}
	\end{equation}
	(see \eqref{eq: inner product on domain of power}). This establishes \eqref{eq: continuous embedding} in the case $k=1$. On the other hand by definition of $\mathsf{Dom}(\mathsf{P}_{g,V})$ we know $u\in H^1_0(\Omega)$. Hence, we have 
	\[
	\mathsf{Dom}(\mathsf{P}_{g,V})\subset \{u\in H^{2}(\Omega)\,;\,u\in H^1_0(\Omega)\}.
	\]
	The reversed inclusion is clearly also true. Thus, \eqref{eq: precise characterization of domain} holds for $k=1$.
	
	\medskip
	
	\textit{Case 2. $k-1 \mapsto k$:} 
	
	\medskip
	
	\noindent Let $u\in \mathsf{Dom}(\mathsf{P}_{g,V}^k)$. As $\mathsf{Dom}(\mathsf{P}_{g,V}^{k-1})\hookrightarrow H^{2k-2}(\Omega)$ and $u\in H^1_0(\Omega)$ solves \eqref{eq: PDE for domain} with $f\in H^{2k-2}(\Omega)$, elliptic regularity theory \cite[Theorem~8.13]{gilbarg2015elliptic} implies that $u\in H^k(\Omega)$ and
	\[
	\begin{split}
		\|u\|_{H^k(\Omega)}&\lesssim \|u\|_{L^2(\Omega)}+ \left\|\mathsf{P}_{g,V}u \right\|_{H^{2k-2}(\Omega)}\\
		&\lesssim \|u\|_{L^2(\Omega,dV_g)}+ \left\|\mathsf{P}_{g,V}u \right\|_{\mathsf{Dom}(\mathsf{P}_{g,V}^{k-1})}\\
		&\lesssim \|u\|_{\mathsf{Dom}(\mathsf{P}_{g,V}^k)}.
	\end{split}
	\]
	In the second estimate, we used $\mathsf{P}_{g,V}u\in \mathsf{Dom}(\mathsf{P}_{g,V}^{k-1})$ and $\mathsf{Dom}(\mathsf{P}_{g,V}^{k-1})\hookrightarrow H^{2k-2}(\Omega)$. Therefore, we get \eqref{eq: continuous embedding}. As $u\in \mathsf{Dom}(\mathsf{P}_{g,V}^{k-1})$, we know already 
	\[
	u,\mathsf{P}_{g,V}u,\ldots,\mathsf{P}^{k-2}_{g,V}u\in H^1_0(\Omega).
	\]
	As $\mathsf{P}_{g,V}u\in \mathsf{Dom}(\mathsf{P}_{g,V}^{k-1})$, we also have $\mathsf{P}_{g,V}^{k-1}u \in H^1_0(\Omega)$ and thus 
	\[
	\mathsf{Dom}(\mathsf{P}_{g,V}^k)\subset \big\{u\in H^{2k}(\Omega)\,;\,u,P_{g,V}u,\ldots,P_{g,V}^{k-1}u\in H^1_0(\Omega)\big\}.
	\]
	The other inclusion $\supset$ is again easily seen by induction hypothesis. Hence, we have established \eqref{eq: precise characterization of domain} and can conclude the proof.
\end{proof}

\begin{lemma}[Regularity of heat semigroup]
	\label{regularity lemma heat semigroup}
	Let the notation be as above and in particular for given $f\in L^2(\Omega,dV_g)$ denote by $u=e^{-t\mathsf{P}_{g,V}}f$ the unique solution to \eqref{heat equation}.
	\begin{enumerate}[(a)]
		\item\label{statement 1} If $f\in \mathsf{Dom}( \mathsf{P}_{g,V}^k)$ for some $k\in\N$, then there holds
		\begin{equation}
			\label{eq: regularity statement}
			u\in C^{k-j}([0,\infty);\mathsf{Dom}(\mathsf{P}_{g,V}^j))\,\text{ for all }\,j=0,1,\ldots,k.
		\end{equation}
		\item\label{statement 2} If $f\in \mathsf{Dom}( \mathsf{P}_{g,V}^k)$ for some $k>n/4$ and $0\leq j\leq k$ satisfies $j>n/4$, then there holds $u\in C^{k-j}([0,\infty);C^{\ell_j,\alpha_j}(\overline{\Omega}))$ and
		\begin{equation}
			\label{eq: sobolev type embedding}
			\big\|\partial_t^{k-j}u(t) \big\|_{C^{\ell_j,\alpha_j}(\overline{\Omega})}\lesssim \big\|\partial_t^{k-j}u(t)\big\|_{\mathsf{Dom}(\mathsf{P}_{g,V}^j)}
		\end{equation}
		for any $t\geq 0$. The exponents $\ell_j\in \N_0$, $\alpha_j\in (0,1]$ are given by
		\[
		\begin{split}
			\ell&=\begin{cases}
				[2j-n/2],&\text{ if }2j-n/2\notin\N,\\
				2j-n/2-1,&\text{ if }2j-n/2\in\N
			\end{cases}\text{ and }\\
			\alpha&\in \begin{cases}
				[0,2j-[2j-n/2]-n/2],&\text{ if }n/2\notin\N,\\
				[0,1),&\text{ if }n/2\in\N,
			\end{cases}
		\end{split}
		\]
		where $[x]=\max\{k\in\Z\,;/,x\geq k\}$ for $x\in\R$.
		\item\label{statement 3} If $f\in \mathsf{Dom}(\mathsf{P}_{g,V}^k)$ for all $k\in\N$, then there holds $u\in C^{\infty}(\overline{\Omega}\times [0,\infty))$.
	\end{enumerate}
\end{lemma}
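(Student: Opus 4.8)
The plan is to derive all three assertions from the spectral representation $u(t)=e^{-t\mathsf{P}_{g,V}}f=\sum_{k\geq 1}e^{-t\lambda_k}f_k\phi_k$ of Lemma~\ref{lemma: alternative char of semigroup} (with $f_k=\langle f,\phi_k\rangle_{L^2(\Omega,dV_g)}$), combined with the elliptic embedding $\mathsf{Dom}(\mathsf{P}_{g,V}^k)\hookrightarrow H^{2k}(\Omega)$ of Lemma~\ref{lemma: embedding of domains} and the Sobolev embedding theorem. For assertion \ref{statement 1}, I would start from the fact that $f\in\mathsf{Dom}(\mathsf{P}_{g,V}^k)$ gives $\sum_{k'\geq 1}\lambda_{k'}^{2k}|f_{k'}|^2<\infty$ by the definition \eqref{eq: inner product on domain of power} of the graph norm. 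For $0\le j\le k$, $0\le m\le k-j$ and $0\le\ell\le j$ one has $\ell+m\le k$, hence $\lambda_{k'}^{2(\ell+m)}e^{-2t\lambda_{k'}}\le 1+\lambda_{k'}^{2k}$ uniformly in $t\ge 0$ (since $\lambda_{k'}>0$ and $e^{-t\lambda_{k'}}\le 1$); consequently the series $(-1)^m\sum_{k'}\lambda_{k'}^{\ell+m}e^{-t\lambda_{k'}}f_{k'}\phi_{k'}$ converges in $L^2(\Omega,dV_g)$ \emph{uniformly} for $t\in[0,\infty)$, its tail being bounded by $\sum_{k'>N}(1+\lambda_{k'}^{2k})|f_{k'}|^2\to 0$. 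Taking $m=0$ over all $\ell\le j$ and using that $\mathsf{P}_{g,V}^\ell$ is closed shows $u\in C^0([0,\infty);\mathsf{Dom}(\mathsf{P}_{g,V}^j))$; a standard term-by-term differentiation argument (iterated $m\le k-j$ times, each differentiated series again uniformly convergent by the same estimate) then identifies $\partial_t^m u(t)=(-1)^m\sum_{k'}\lambda_{k'}^m e^{-t\lambda_{k'}}f_{k'}\phi_{k'}$ as a continuous $\mathsf{Dom}(\mathsf{P}_{g,V}^j)$-valued function, i.e.\ $u\in C^{k-j}([0,\infty);\mathsf{Dom}(\mathsf{P}_{g,V}^j))$.

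Assertion \ref{statement 2} should then follow by post-composition. By assertion \ref{statement 1}, $\partial_t^m u\in C^0([0,\infty);\mathsf{Dom}(\mathsf{P}_{g,V}^j))$ for $0\le m\le k-j$; using the continuous embedding $\mathsf{Dom}(\mathsf{P}_{g,V}^j)\hookrightarrow H^{2j}(\Omega)$ (see \eqref{eq: continuous embedding}) and, since $\Omega$ is a bounded smooth domain with $2j>n/2$, the Sobolev embedding $H^{2j}(\Omega)\hookrightarrow C^{\ell_j,\alpha_j}(\overline{\Omega})$ with the indicated exponents, one gets $u\in C^{k-j}([0,\infty);C^{\ell_j,\alpha_j}(\overline{\Omega}))$, and concatenating the two embedding bounds yields $\|\partial_t^{k-j}u(t)\|_{C^{\ell_j,\alpha_j}(\overline{\Omega})}\lesssim\|\partial_t^{k-j}u(t)\|_{H^{2j}(\Omega)}\lesssim\|\partial_t^{k-j}u(t)\|_{\mathsf{Dom}(\mathsf{P}_{g,V}^j)}$.

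For assertion \ref{statement 3}, given $f$ in every domain and arbitrary $M,L\in\N_0$, I would choose $j>n/4$ large enough that the Sobolev exponent satisfies $\ell_j\ge L$ and set $k=M+j$, so that assertion \ref{statement 2} gives $u\in C^M([0,\infty);C^L(\overline{\Omega}))$. Writing $\partial_t^m u=(-1)^m e^{-t\mathsf{P}_{g,V}}\mathsf{P}_{g,V}^m f$ with $\mathsf{P}_{g,V}^m f$ again in every domain, the same reasoning applied to $\partial_t^m u$ shows that each mixed partial $\partial_x^\alpha\partial_t^m u$ with $m\le M$, $|\alpha|\le L$ is jointly continuous on $\overline{\Omega}\times[0,\infty)$ (the $L^2$-valued $t$-derivatives commute with $x$-derivatives of the values), and letting $M,L\to\infty$ gives $u\in C^\infty(\overline{\Omega}\times[0,\infty))$.

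The only genuine obstacle is regularity \emph{up to} $t=0$: for $t>0$ the analyticity of the semigroup makes all three assertions immediate, whereas on the closed half-line one must exploit the hypothesis $f\in\mathsf{Dom}(\mathsf{P}_{g,V}^k)$ quantitatively, which is precisely the role of the uniform-in-$t$ series estimate in the proof of assertion \ref{statement 1}. The remaining steps are bookkeeping with the two embeddings; the one point requiring a little care is that the H\"older indices $\ell_j,\alpha_j$ in assertion \ref{statement 2} are taken to be exactly those delivered by the Sobolev embedding theorem on a bounded $C^\infty$ domain, as spelled out in the statement.
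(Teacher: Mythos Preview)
Your proposal is correct and follows the same overall architecture as the paper's proof: establish \ref{statement 1}, then obtain \ref{statement 2} by composing the elliptic embedding \eqref{eq: continuous embedding} with Sobolev embedding, and deduce \ref{statement 3} from \ref{statement 2}. Parts \ref{statement 2} and \ref{statement 3} match the paper essentially verbatim.

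The one genuine difference is in \ref{statement 1}. The paper simply invokes the abstract Hille--Yosida regularity theory (\cite[Theorems~7.4--7.5]{Brezis}) for the semigroup generated by the maximal monotone operator $\mathsf{P}_{g,V}$, whereas you give a direct spectral argument: from $\sum_{k'}\lambda_{k'}^{2k}|f_{k'}|^2<\infty$ you bound $\lambda_{k'}^{2(\ell+m)}e^{-2t\lambda_{k'}}\le 1+\lambda_{k'}^{2k}$ uniformly in $t\ge 0$, obtain uniform convergence of the (differentiated) eigenfunction series in the graph norm, and conclude via closedness of $\mathsf{P}_{g,V}^\ell$. Your route is more elementary and self-contained---it avoids importing the general semigroup machinery and makes the role of the hypothesis $f\in\mathsf{Dom}(\mathsf{P}_{g,V}^k)$ at $t=0$ transparent---while the paper's citation is shorter and applies verbatim to operators without a discrete spectrum. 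Both arguments are valid here; yours exploits the specific structure (compact resolvent, eigenbasis) already established in Section~\ref{sec: fractional powerts of elliptic operators}.
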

\begin{proof}
	The statement \ref{statement 1} is an immediate consequence of \cite[Theorem~7.4-7.5]{Brezis}. Next, let us prove the assertion \ref{statement 2}. First, by \ref{statement 1} we know that $\partial_t^{k-j}u(t)\in \mathsf{Dom}(\mathsf{P}_{g,V}^j)$ for all $t\geq 0$ and thus Lemma~\ref{lemma: embedding of domains} implies 
	\[
	\big\|\partial_t^{k-j}u(t)\big\|_{H^{2j}(\Omega)}\lesssim \big\|\partial_t^{k-j}u(t) \big\|_{\mathsf{Dom}(\mathsf{P}_{g,V}^j)}.
	\]
	Therefore, by the Sobolev embedding we arrive at the estimate \eqref{eq: sobolev type embedding}. The statement \ref{statement 3} is a direct consequence of \ref{statement 2}.
\end{proof}

\section{Well-posedness of the wave equation}\label{sec: appendix_wave}

We next give the proof of the Theorem \ref{theorem: well-posedness wave equation}.

\begin{proof}[Proof of Theorem \ref{theorem: well-posedness wave equation}]
	For \ref{well-posedness base case wave}, let us start by rewriting \eqref{wave equ with source} as a system of first order equations
	\begin{align}
		\label{eq: system of first order equations}
		\begin{cases}
			\partial_t w-w'=0&\text{ in } \Omega\times (0,\infty),\\
			\partial_t w'+\mathsf{P}_{g,V}w=F&\text{ in } \Omega\times (0,\infty)
		\end{cases}
	\end{align}
	or in operator notation as
	\begin{equation}
		\label{eq: first order eqs in operator notation}
		\partial_t W+\mathcal{P}_{g,V}W=\widetilde{F},
	\end{equation}
	where we put $W=(w,w')$, $\widetilde{F}=(0,F)$ and
	\begin{equation}
		\label{eq: operator P}
		\mathcal{P}_{g,V}W \vcentcolon=\begin{pmatrix}
			0 & -\id\\
			\mathsf{P}_{g,V}& 0
		\end{pmatrix}\begin{pmatrix}
			w\\
			w'
		\end{pmatrix}=\begin{pmatrix}
			-w'\\
			\mathsf{P}_{g,V}w
		\end{pmatrix}.
	\end{equation}
	We aim to apply the Hille--Yosida theory for the evolution problem \eqref{eq: first order eqs in operator notation}. For this purpose, let us introduce the Hilbert space 
	\[
	\mathcal{H}\vcentcolon=H^1_0(\Omega,dV_g)\times L^2(\Omega,dV_g)
	\] 
	endowed with the inner product
	\[
	\left\langle W_1,W_2 \right\rangle_{\mathcal{H}}=\left\langle dw_1,dw_2\right\rangle_{L^2(\Omega,dV_g)}+\left\langle w'_1,w'_2\right\rangle_{L^2(\Omega,dV_g)},
	\]
	for $W_j=\LC w_j,w'_j\RC \in \mathcal{H}$ ($j=1,2$), and interpret $\mathcal{P}_{g,V}$ as an unbounded operator on $\mathcal{H}$ with domain
	\begin{equation}
		\label{eq: domain of mathcal P}
		\mathsf{Dom}(\mathcal{P}_{g,V})=\mathsf{Dom}(\mathsf{P}_{g,V})\times H^1_0(\Omega,dV_g).
	\end{equation}
	Let $C_0>0$ be the Poincar\'e constant on $\Omega$, that is there holds 
	\begin{equation}
		\label{eq: Poincare inequality for wave}
		\|u\|_{L^2(\Omega,dV_g)}^2\leq C_0\|du\|_{L^2(\Omega,dV_g)}^2
	\end{equation}
	for all $u\in H^1_0(\Omega,dV_g)$, and let $\lambda\geq 0$ satisfy
	\begin{equation}
		\label{eq: cond on lambda}
		\lambda\geq \frac{\|V\|_{L^{\infty}(\Omega)}}{2\min\LC C_0^{-1},1\RC}.
	\end{equation}
	This constant will be fixed throughout the whole proof.
	\begin{claim}
		\label{claim: P+lambda maximal monotone on H}
		$\mathcal{P}_{g,V}+\lambda$ is a maximal monotone operator on $\mathcal{H}$.
	\end{claim}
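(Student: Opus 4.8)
The plan is to verify the two defining properties of a maximal monotone operator on the Hilbert space $\mathcal{H}=H^1_0(\Omega,dV_g)\times L^2(\Omega,dV_g)$: that $\mathcal{P}_{g,V}+\lambda$ (with domain $\mathsf{Dom}(\mathcal{P}_{g,V})=\mathsf{Dom}(\mathsf{P}_{g,V})\times H^1_0(\Omega,dV_g)$) is monotone, and that the range condition $\mathrm{Ran}(\id+\mathcal{P}_{g,V}+\lambda)=\mathcal{H}$ holds. Recall that on a Hilbert space a monotone operator satisfying this range condition is automatically closed and densely defined, so nothing further has to be checked.

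For monotonicity, I would take $W=(w,w')\in\mathsf{Dom}(\mathcal{P}_{g,V})$ and use the explicit form \eqref{eq: operator P} of $\mathcal{P}_{g,V}$ together with the inner product of $\mathcal{H}$ to write
\[
\langle\mathcal{P}_{g,V}W,W\rangle_{\mathcal{H}}=-\langle dw',dw\rangle_{L^2(\Omega,dV_g)}+\langle\mathsf{P}_{g,V}w,w'\rangle_{L^2(\Omega,dV_g)}.
\]
Since $w\in\mathsf{Dom}(\mathsf{P}_{g,V})$ and $w'\in H^1_0(\Omega,dV_g)$, Green's formula gives $\langle\mathsf{P}_{g,V}w,w'\rangle_{L^2(\Omega,dV_g)}=\langle dw,dw'\rangle_{L^2(\Omega,dV_g)}+\langle Vw,w'\rangle_{L^2(\Omega,dV_g)}$, so (all functions being real-valued) the two gradient terms cancel and $\langle\mathcal{P}_{g,V}W,W\rangle_{\mathcal{H}}=\langle Vw,w'\rangle_{L^2(\Omega,dV_g)}$. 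I would then bound $|\langle Vw,w'\rangle_{L^2(\Omega,dV_g)}|\le\tfrac12\|V\|_{L^\infty(\Omega)}\big(\|w\|_{L^2(\Omega,dV_g)}^2+\|w'\|_{L^2(\Omega,dV_g)}^2\big)$ by Young's inequality, use the Poincar\'e inequality \eqref{eq: Poincare inequality for wave} to replace $\|w\|_{L^2(\Omega,dV_g)}^2$ by $C_0\|dw\|_{L^2(\Omega,dV_g)}^2$, and note $\big(\min(C_0^{-1},1)\big)^{-1}=\max(C_0,1)$; the choice of $\lambda$ in \eqref{eq: cond on lambda} then yields
\[
\langle(\mathcal{P}_{g,V}+\lambda)W,W\rangle_{\mathcal{H}}\ge\Big(\lambda-\tfrac12\|V\|_{L^\infty(\Omega)}\max(C_0,1)\Big)\big(\|dw\|_{L^2(\Omega,dV_g)}^2+\|w'\|_{L^2(\Omega,dV_g)}^2\big)\ge0.
\]

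For the range condition, given $\widetilde F=(F_1,F_2)\in\mathcal{H}$ I would seek $W=(w,w')\in\mathsf{Dom}(\mathcal{P}_{g,V})$ with $(\id+\mathcal{P}_{g,V}+\lambda)W=\widetilde F$; componentwise this reads $(1+\lambda)w-w'=F_1$ and $(1+\lambda)w'+\mathsf{P}_{g,V}w=F_2$. Eliminating $w'=(1+\lambda)w-F_1$ reduces the system to the single elliptic equation
\[
\big(\mathsf{P}_{g,V}+(1+\lambda)^2\big)w=F_2+(1+\lambda)F_1\quad\text{in }\Omega,
\]
whose right-hand side lies in $L^2(\Omega,dV_g)$ since $F_1\in H^1_0(\Omega,dV_g)\hookrightarrow L^2(\Omega,dV_g)$ and $F_2\in L^2(\Omega,dV_g)$. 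As $\mathsf{P}_{g,V}$ is maximal monotone by Lemma~\ref{lemma: properties of PgV}, adding the nonnegative constant $(1+\lambda)^2-1$ keeps it maximal monotone, hence $\mathrm{Ran}\big((1+\lambda)^2\id+\mathsf{P}_{g,V}\big)=L^2(\Omega,dV_g)$ (alternatively, one solves this coercive problem directly via Lax--Milgram and elliptic regularity). Either way one obtains a unique $w\in\mathsf{Dom}(\mathsf{P}_{g,V})$, and setting $w'=(1+\lambda)w-F_1\in H^1_0(\Omega,dV_g)$ — legitimate since $\mathsf{Dom}(\mathsf{P}_{g,V})\subset H^1_0(\Omega,dV_g)$ and $F_1\in H^1_0(\Omega,dV_g)$ — produces $W=(w,w')\in\mathsf{Dom}(\mathcal{P}_{g,V})$ solving $(\id+\mathcal{P}_{g,V}+\lambda)W=\widetilde F$. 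Neither ingredient poses a genuine obstacle: the heavy lifting for the range condition is done by the maximal monotonicity of $\mathsf{P}_{g,V}$ established earlier, and the only slightly delicate points are the bookkeeping of constants in the monotonicity estimate (matching the factor $\max(C_0,1)=\big(\min(C_0^{-1},1)\big)^{-1}$ appearing in \eqref{eq: cond on lambda}) and checking that the recovered component $w'$ truly lands in $H^1_0(\Omega,dV_g)$ so that $W\in\mathsf{Dom}(\mathcal{P}_{g,V})$.
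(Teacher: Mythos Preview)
Your proof is correct and follows essentially the same approach as the paper: you compute $\langle\mathcal{P}_{g,V}W,W\rangle_{\mathcal{H}}=\langle Vw,w'\rangle_{L^2(\Omega,dV_g)}$ and control it using Young and Poincar\'e exactly as the paper does (the paper bounds in terms of $\|w\|_{L^2}^2+\|w'\|_{L^2}^2$ while you bound in terms of $\|W\|_{\mathcal{H}}^2$, but the constant condition \eqref{eq: cond on lambda} is the same either way), and for the range condition you perform the identical reduction to the scalar elliptic equation $(\mathsf{P}_{g,V}+(1+\lambda)^2)w=(1+\lambda)F_1+F_2$ and recover $w'\in H^1_0(\Omega,dV_g)$. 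The only cosmetic difference is that the paper solves this elliptic equation directly via Lax--Milgram and elliptic regularity, whereas you first invoke Lemma~\ref{lemma: properties of PgV}; both are fine.
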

	\begin{proof}[Proof of Claim \ref{claim: P+lambda maximal monotone on H}]
		Let us show the following facts:
		\begin{enumerate}[(a)]
			\item \textit{Monotonicity:} For $W=(w,w')\in \mathsf{Dom}(\mathcal{P}_{g,V})$, we may calculate
			\[
			\begin{split}
				\left\langle \mathcal{P}_{g,V}W,W \right\rangle_{\mathcal{H}}&=\left\langle \LC -w',\mathsf{P}_{g,V}w\RC,(w,w')\right\rangle_{\mathcal{H}}\\
				&=-\langle dw',dw\rangle_{L^2(\Omega,dV_g)}+ \left\langle \mathsf{P}_{g,V}w,w'\right\rangle_{L^2(\Omega,dV_g)}\\
				&=\left\langle V w,w' \right\rangle_{L^2(\Omega,dV_g)}.
			\end{split}
			\]
			Next, observe that by \eqref{eq: Poincare inequality for wave} we have
			\[
			\begin{split}
				\|W\|_{\mathcal{H}}^2&=\|dw\|_{L^2(\Omega,dV_g)}^2+\|w'\|_{L^2(\Omega,dV_g)}^2\\
				&\geq \min\LC C_0^{-1},1\RC \Big( \|w\|_{L^2(\Omega,dV_g)}^2+\|w'\|_{L^2(\Omega,dV_g)}^2 \Big) .
			\end{split}
			\]
			This implies
			\[
			\begin{split}
				& \quad \, \left\langle \LC \mathcal{P}_{g,V}+\lambda\RC W,W\right\rangle_{\mathcal{H}}\\
				&\geq \langle V w,w'\rangle_{L^2(\Omega,dV_g)}+\lambda\min\LC C_0^{-1},1\RC\LC \|w\|_{L^2(\Omega,dV_g)}^2+\|w'\|_{L^2(\Omega,dV_g)}^2\RC\\
				&\geq \left(\lambda\min\LC C_0^{-1},1\RC-\frac{\|V\|_{L^{\infty}(\Omega)}}{2}\right)\LC \|w\|_{L^2(\Omega,dV_g)}^2+\|w'\|_{L^2(\Omega,dV_g)}^2\RC \\
				& \geq 0.
			\end{split}
			\]
			\item \textit{Maximality:} Let $H=(h,h')\in \mathcal{H}$. We want to show that there exists $W=(w,w')\in \mathsf{Dom}(\mathcal{P}_{g,V})$ such that 
			\begin{equation}
				\label{eq: equation for maximality}
				\LC \mathcal{P}_{g,V}+(\lambda+1)\RC W=H.
			\end{equation}
			Note that this is equivalent to the condition that $W$ solves
			\[
			\begin{cases}
				(\lambda+1)w-w'=h &\text{ in }H^1_0(\Omega,dV_g),\\
				\mathsf{P}_{g,V}w+(\lambda+1)w'=h'&\text{ in }L^2(\Omega,dV_g).
			\end{cases}
			\]
			Inserting the first equation into the second one, we arrive at the following equation for $w$:
			\[
			\mathsf{P}_{g,V}w+(\lambda+ 1)^2w=(\lambda+1)h+h'\text{ in }L^2(\Omega,dV_g).
			\]
			It follows from Lax--Milgram theorem and elliptic regularity theory \cite[Theorem~8.12]{gilbarg2015elliptic} that this problem has a unique solution $w\in H^2(\Omega,dV_g)\cap H^1_0(\Omega,dV_g)$. Hence, by defining 
			\[
			w'=-(\lambda+1)w+h\in H^1_0(\Omega,dV_g)
			\]
			we arrive at a solution $W=(w,w')$ of the original problem \eqref{eq: equation for maximality}.
		\end{enumerate}
		This proves the Claim \ref{claim: P+lambda maximal monotone on H}.
	\end{proof}
	Hence, we have shown that $\mathcal{P}_{g,V}+\lambda$, for $\lambda\geq 0$ satisfying \eqref{eq: cond on lambda} is a maximal monotone operator. Therefore, we can use \cite[Theorem~7.4]{Brezis} to see that for any $W_0=(w_0,w_1)$ with $w_0\in H^2(\Omega,dV_g)\cap H^1_0(\Omega,dV_g)$ and $w_1\in H^1_0(\Omega,dV_g)$, there exists a unique function
	\begin{equation}
		W_{\lambda}\in C([0,\infty);\mathsf{Dom}(\mathcal{P}_{g,V}))\cap C^1([0,\infty);\mathcal{H})
	\end{equation}
	satisfying
	\begin{equation}
		\label{eq: shifted PDE for wave equation}
		\begin{cases}
			\partial_t W_{\lambda}+\LC \mathcal{P}_{g,V}+\lambda\RC W_{\lambda}=0\text{ for }t\geq 0,\\
			W_{\lambda}(0)=W_0.
		\end{cases}
	\end{equation}
	Moreover, we have
	\begin{equation}
		\label{eq: continuity estimate}
		\|W_{\lambda}\|_{\mathcal{H}}\leq \|W_0\|_{\mathcal{H}}.
	\end{equation}
	But then the function $W=e^{\lambda t}W_{\lambda}$ satisfies
	\begin{equation}
		\label{eq: regularity of sols to hom abstract eq}
		W\in C([0,\infty);\mathsf{Dom}(\mathcal{P}_{g,V}))\cap C^1([0,\infty);\mathcal{H})
	\end{equation}
	and 
	\begin{equation}
		\label{eq: PDE for wave equation}
		\begin{cases}
			\LC \partial_t +\mathcal{P}_{g,V} \RC W=0\text{ for }t\geq 0,\\
			W(0)=W_0.
		\end{cases}
	\end{equation}
	It is immediate to see that this solution $W$ is again unique. Now, for each $t\geq 0$ let us define the linear map
	\begin{equation}
		T_t\colon \mathsf{Dom}(\mathcal{P}_{g,V})\to \mathsf{Dom}(\mathcal{P}_{g,V}), \quad W_0\mapsto W(t),
	\end{equation}
	where $W=W(t)$ is the unique solution constructed above with the initial condition $W(0)=W_0$. Note that by \eqref{eq: continuity estimate} the linear operators $T_t$ satisfy the continuity estimate
	\begin{equation}
		\label{eq: continuity estimate for Tt}
		\left\|T_t\right\|_{L(\mathcal{H})}\leq e^{\lambda t}, \text{ for }t\geq 0.
	\end{equation}

	As $\mathcal{P}_{g,V}+\lambda$ is maximal monotone, we can deduce that $\mathsf{Dom}(\mathcal{P}_{g,V})$ is dense in $\mathcal{H}$ and this in turn allows us to extend the family $T_t$ to maps in $L(\mathcal{H})$ such that the bound \eqref{eq: continuity estimate for Tt} still holds. This extension will still be still denoted by $T_t$. It is not difficult to see that this extension $T_t$ satisfies the semigroup property, and is strongly continuous. In fact, we may estimate
	\[
	\begin{split}
		\left\|T_t W_0-W_0\right\|_{\mathcal{H}}&\leq \left\|T_t(W_0-W_0^k)\right\|_{\mathcal{H}}+\left\|T_tW_0^k-W_0^k\right\|_{\mathcal{H}}+\left\|W_0^k-W_0 \right\|_{\mathcal{H}}\\
		&\leq \big( \left\|T_t\right\|_{L(\mathcal{H})}+1\big) \left\|W_0^k-W_0\right\|_{\mathcal{H}}+\left\|T_tW_0^k-W_0^k\right\|_{\mathcal{H}},
	\end{split}
	\]
	for any $W_0\in\mathcal{H}$, where $W_0^k\in \mathsf{Dom}(\mathcal{P}_{g,V})$ is any sequence satisfying $W_0^k\to W$ in $\mathcal{H}$ as $k\to \infty$. Passing to the limit $t\to 0$ shows the strong continuity of $(T_t)_{t\geq 0}$ on $\mathcal{H}$. Hence, $\LC T_t\RC_{t\geq 0}$ is a $C_0$-semigroup. Furthermore, since the solution of \eqref{eq: PDE for wave equation} satisfies \eqref{eq: regularity of sols to hom abstract eq}, we have by construction
	\[
	\partial_t W(0)=-\lim_{t\to 0}\mathcal{P}_{g,V}W=-\mathcal{P}_{g,V}W_0
	\]
	for any $W_0\in\mathsf{Dom}(\mathcal{P}_{g,V})$. Therefore, $-\mathcal{P}_{g,V}$ is the generator of the $C_0$-semigroup $(T_t)_{t\geq 0}$ (cf.~\cite[Lemma~7.1.17]{buhler2018functional} and \cite[Exercise~2.6.2]{HeatKernelsArendt}).

	Now, we turn our attention to the inhomogeneous problem \eqref{eq: first order eqs in operator notation}. Note that by the assumptions on $F$, we have $\widetilde{F}\in C^1([0,\infty);\mathcal{H})$ and deduce from \cite[Lemma~7.1.14]{buhler2018functional} that
	\begin{equation}
		\label{eq: function Wf}
		W_F(t)=\int_0^t T_{t-\tau}\widetilde{F}(\tau)\,d\tau
	\end{equation}
	is continuously differentiable as a map from $[0,\infty)$ to $\mathcal{H}$, $W_F(t)\in\mathsf{Dom}(\mathcal{P}_{g,V})$ for all $t\geq 0$. Meanwhile, via the equation \eqref{eq: first order eqs in operator notation}, differentiate \eqref{eq: function Wf} with respect to $t$, then there holds 
	\begin{equation}\label{eq: ODE for Wf}
		\begin{split}
			\mathcal{P}_{g,V}W_F(t)+\widetilde{F}(t) =	\partial_t W_F(t)=T_t \widetilde{F}(0)+\int_0^tT(t-\tau)\partial_{\tau}\widetilde{F}(\tau)\,d\tau
		\end{split}
	\end{equation}
	for all $t\geq 0$.
	Thus, \cite[Chapter XVII, B, \S 1, Theorem~1]{DautrayLionsVol5} guarantees that for any $W_0=\LC w_0,w_1\RC\in\mathsf{Dom}(\mathcal{P}_{g,V})$ there exists a function
	\begin{equation}
		\label{eq: regularity of inhom problem wave eq}
		W\in C([0,\infty);\mathcal{H})\cap C^1((0,\infty);\mathcal{H})\cap C((0,\infty);\mathsf{Dom}(\mathcal{P}_{g,V}))
	\end{equation}
	satisfying the initial-value problem 
	\begin{equation}
		\label{eq: PDE of inhom problem wave eq}
		\begin{cases}
			\LC \partial_t +\mathcal{P}_{g,V} \RC W=\widetilde{F}\text{ for }t>0\\
			W(0)=W_0.
		\end{cases}
	\end{equation}
	Indeed, the solution $W$ is given by Duhamel's formula
	\begin{equation}
		\label{eq: Duhamel formula}
		W(t)=T_t W_0+\int_0^t T_{t-\tau}\widetilde{F}(\tau)\,d\tau
	\end{equation}
	and so its unique. Note that $W_0\in\mathsf{Dom}(\mathcal{P}_{g,V})$, \eqref{eq: regularity of sols to hom abstract eq} and $W_F\in C^1([0,\infty);\mathcal{H})$ implies $W\in C^1([0,\infty);\mathcal{H})$. Thus, \eqref{eq: PDE of inhom problem wave eq} shows
	\[
	\mathcal{P}_{g,V}W=\widetilde{F}-\partial_t W \in C([0,\infty);\mathcal{H}).
	\]
	Hence, we can deduce that 
	\begin{equation}
		\label{eq: regularity of W for inhom problem}
		W\in C^1([0,\infty);\mathcal{H})\cap C([0,\infty);\mathsf{Dom}(\mathcal{P}_{g,V}))
	\end{equation}
	and the PDE \eqref{eq: PDE of inhom problem wave eq} holds for $t\geq 0$.

	Next, let us write $W(t)=\LC w(t),w'(t)\RC$ for $t\geq 0$. By \eqref{eq: regularity of W for inhom problem}, \eqref{eq: PDE of inhom problem wave eq}, \eqref{eq: operator P}, $W_0=\LC w_0,w_1\RC$ and $\widetilde{F}=(0,F)$, we deduce that $w'(t)=\partial_t(w)$ for $t\geq 0$,
	\begin{equation}
		\label{eq: regularity of sol for wave final}
		\begin{split}
			C([0,\infty);H^2(\Omega,dV_g)\cap H^1_0(\Omega,dV_g))\text{ with }
			\begin{cases}
				\partial_t w\in C([0,\infty);H^1_0(\Omega;dV_g)),\\
				\partial_t^2 w\in C([0,\infty);L^2(\Omega,dV_g)),
			\end{cases}
		\end{split}
	\end{equation}
	and $w$ solves
	\[
	\LC \p_t^2 +\mathsf{P}_{g,V}\RC w =F\text{ on }[0,\infty).
	\]
	Observe that this solution $w$ is unique as if $\widetilde{w}$ is another solution, then $v=w-\widetilde{w}$ solves the homogeneous problem 
	\begin{equation}\label{eq: PDE for uniqueness wave}
		\begin{cases}
			\LC \p_t^2 +\mathsf{P}_{g,V}\RC v =0\text{ for }t\geq 0, \\
			v(0)=\partial_t v(0)=0.
		\end{cases}
	\end{equation}
	As $\partial_t v\in C([0,\infty);H^1_0(\Omega))$ we get
	\[
	\left\langle \LC \p_t^2 +\mathsf{P}_{g,V}\RC v,\partial_t v\right\rangle_{L^2(\Omega,dV_g)}=0
	\]
	for any $t\geq 0$. By \eqref{eq: regularity of sol for wave final}, we may calculate
	\[
	\begin{split}
		\left\langle \partial_t^2v,\partial_t v\right\rangle_{L^2(\Omega,dV_g)}&=\frac{1}{2}\partial_t \|\partial_t v\|_{L^2(\Omega,dV_g)}^2,\\
		\left\langle Vv,\partial_t v \right\rangle_{L^2(\Omega,dV_g)}&=\frac{1}{2} \partial_t \big\|V^{1/2}v\big\|_{L^2(\Omega,dV_g)}^2,\\
		\left\langle -\Delta_g v,\partial_t v \right\rangle_{L^2(\Omega,dV_g)}&=\left\langle dv,d\partial_t v \right\rangle_{L^2(\Omega,dV_g)}=\frac{1}{2}\partial_t \|dv\|_{L^2(\Omega,dV_g)}^2.
	\end{split}
	\]
	Hence, we deduce
	\[
	\partial_t \big(\left\|\partial_t v\right\|_{L^2(\Omega,dV_g)}^2+\|dv\|_{L^2(\Omega,dV_g)}^2+\big\|V^{1/2}v\big\|_{L^2(\Omega,dV_g)}^2\big)=0.
	\]
	Therefore, we may conclude that $v=0$ as $v(0)=\partial_tv(0)=0$. This demonstrates \ref{well-posedness base case wave}.\\

			\noindent For \ref{well-posedness smooth solutions}, let $w_0,w_1$ and $F$ be given as in the assumption. Recall from Section~\ref{subsec: integer powers} that for any $k\in\N$ the powers $\mathcal{P}_{g,V}^k$ are the unbounded operators 
			\[
			\mathcal{P}_{g,V}^k=\underbrace{\mathcal{P}_{g,V}\cdots \mathcal{P}_{g,V}}_{k\text{-times}} \text{ on }\mathcal{H}
			\]
			with domain
			\begin{equation}
				\label{eq: domain of powers of mathcal}
				\mathsf{Dom}(\mathcal{P}_{g,V}^k)=\big\{U\in\mathsf{Dom}(\mathcal{P}_{g,V}^{k-1})\,;\,\mathcal{P}_{g,V}U\in\mathsf{Dom}(\mathcal{P}_{g,V}^{k-1})\big\},
			\end{equation}
			which becomes a Hilbert space under the inner product
			\begin{equation}
				\label{eq: inner product on domain of powers of mathcal}
				\left\langle W_1,W_2\right\rangle_{\mathsf{Dom}(\mathcal{P}_{g,V}^k)}=\sum_{j=0}^k\big\langle \mathcal{P}_{g,V}^j W_1,\mathcal{P}_{g,V}^jW_2\big\rangle_{\mathcal{H}},
			\end{equation}
			for all $W_j\in \mathsf{Dom}(\mathcal{P}_{g,V}^k)$ and for $j=1,2$.
			
			\begin{claim}
				\label{claim: powers of mathcal P}
				For any $k\in\N$ the following assertions hold:
				\begin{enumerate}[(a)]
					\item\label{powers of domains mathcal} We have
					\begin{equation}
						\label{eq: powers of domains mathcal}
						\begin{split}
							&\mathsf{Dom}(\mathcal{P}_{g,V}^k)=\mathcal{Q}_{g,V}^k,
						\end{split}
					\end{equation}
					where $\mathcal{Q}_{g,V}^k$ denotes the set
					\[
					\left\{\begin{pmatrix}
						w\\
						w'
					\end{pmatrix}\,;\,\begin{matrix}
						w\in H^{k+1}(\Omega,dV_g)\text{ s.t. }w,\ldots,\mathsf{P}_{g,V}^{[k/2]}w\in H^1_0(\Omega,dV_g)\\
						w'\in H^{k}(\Omega,dV_g)\text{ s.t. }w',\ldots,\mathsf{P}_{g,V}^{[(k+1)/2]-1}w'\in H^1_0(\Omega,dV_g)
					\end{matrix}\right\}
					\]
					and there holds
					\begin{equation}\label{eq: cont emb of domains}
						\mathsf{Dom}(\mathcal{P}_{g,V}^k)\hookrightarrow H^{k+1}(\Omega,dV_g)\times H^k(\Omega,dV_g).
					\end{equation}
					\item\label{powers of mathcal P max monotone} Let $\mathcal{P}_{g,V}^{(k)}$ be defined by
					\begin{equation}
						\begin{split}
							\mathcal{P}_{g,V}^{(k)}\colon \mathsf{Dom}(\mathcal{P}_{g,V}^k)\subset \mathsf{Dom}(\mathcal{P}_{g,V}^{k-1})\to \mathsf{Dom}(\mathcal{P}_{g,V}^{k-1}),\quad U\mapsto \mathcal{P}_{g,V}U,
						\end{split}
					\end{equation}
					then $\mathcal{P}_{g,V}^{(k)}+\lambda$ is maximal monotone on $\mathsf{Dom}(\mathcal{P}_{g,V}^{k-1})$ for $k\in \N$, where we use the convention $\mathsf{Dom}(\mathcal{P}_{g,V}^0)=\mathcal{H}$.
				\end{enumerate}
			\end{claim}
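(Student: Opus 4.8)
The plan is to establish both assertions by induction on $k\in\N$, using as the base case $k=1$ the results already at hand: for \ref{powers of domains mathcal} the identity $\mathsf{Dom}(\mathcal{P}_{g,V})=\mathsf{Dom}(\mathsf{P}_{g,V})\times H^1_0(\Omega,dV_g)$ from \eqref{eq: domain of mathcal P} together with the characterization $\mathsf{Dom}(\mathsf{P}_{g,V})=\{u\in H^2(\Omega);u\in H^1_0(\Omega)\}$ of Lemma~\ref{lemma: embedding of domains} (which also provides the embedding $\mathsf{Dom}(\mathcal{P}_{g,V})\hookrightarrow H^2(\Omega)\times H^1(\Omega)$), and for \ref{powers of mathcal P max monotone} with $k=1$ exactly Claim~\ref{claim: P+lambda maximal monotone on H}.

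For the inductive step $k-1\mapsto k$ of part \ref{powers of domains mathcal}, I would unwind the definition \eqref{eq: domain of powers of mathcal}: a pair $W=(w,w')$ lies in $\mathsf{Dom}(\mathcal{P}_{g,V}^k)$ iff $W\in\mathsf{Dom}(\mathcal{P}_{g,V}^{k-1})$ and $\mathcal{P}_{g,V}W=(-w',\mathsf{P}_{g,V}w)\in\mathsf{Dom}(\mathcal{P}_{g,V}^{k-1})$. Applying the induction hypothesis $\mathsf{Dom}(\mathcal{P}_{g,V}^{k-1})=\mathcal{Q}_{g,V}^{k-1}$ to both $W$ and $\mathcal{P}_{g,V}W$ and intersecting the conditions thus obtained, one reads off that $w\in H^1_0(\Omega)$ with $\mathsf{P}_{g,V}w\in H^{k-1}(\Omega)$, so that elliptic regularity theory applied to the equation \eqref{eq: PDE for domain} (as in the proof of Lemma~\ref{lemma: embedding of domains}, i.e. \cite[Theorems~8.12 and 8.13]{gilbarg2015elliptic}) upgrades $w$ to $H^{k+1}(\Omega)$, while $w'\in H^k(\Omega)$ is forced directly by the first component of $W$. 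The boundary conditions $w,\dots,\mathsf{P}_{g,V}^{[k/2]}w\in H^1_0(\Omega)$ and $w',\dots,\mathsf{P}_{g,V}^{[(k+1)/2]-1}w'\in H^1_0(\Omega)$ follow by combining the traces coming from $W\in\mathcal{Q}_{g,V}^{k-1}$ with the extra ones coming from $\mathcal{P}_{g,V}W\in\mathcal{Q}_{g,V}^{k-1}$, using the elementary identities $[k/2]\ge[(k-1)/2]$ and $[(k+1)/2]-1=[(k-1)/2]$; the reverse inclusion $\mathcal{Q}_{g,V}^k\subset\mathsf{Dom}(\mathcal{P}_{g,V}^k)$ is immediate from the same bookkeeping. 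The embedding \eqref{eq: cont emb of domains} then follows from the quantitative form of these elliptic estimates, since the graph norm \eqref{eq: inner product on domain of powers of mathcal} dominates $\|\mathsf{P}_{g,V}^jw\|_{L^2(\Omega,dV_g)}$ for $0\le j\le k$.

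For part \ref{powers of mathcal P max monotone}, monotonicity of $\mathcal{P}_{g,V}^{(k)}+\lambda$ on $\mathsf{Dom}(\mathcal{P}_{g,V}^{k-1})$ is obtained by expanding the inner product \eqref{eq: inner product on domain of powers of mathcal} and applying, summand by summand, the computation from the proof of Claim~\ref{claim: P+lambda maximal monotone on H}: for $W\in\mathsf{Dom}(\mathcal{P}_{g,V}^k)$ one has $\langle(\mathcal{P}_{g,V}^{(k)}+\lambda)W,W\rangle_{\mathsf{Dom}(\mathcal{P}_{g,V}^{k-1})}=\sum_{j=0}^{k-1}\big(\langle Vw_j,w_j'\rangle_{L^2(\Omega,dV_g)}+\lambda\|\mathcal{P}_{g,V}^jW\|_{\mathcal{H}}^2\big)$ with $(w_j,w_j')=\mathcal{P}_{g,V}^jW$, and each summand is nonnegative by the Poincaré inequality \eqref{eq: Poincare inequality for wave} and the choice \eqref{eq: cond on lambda} of $\lambda$. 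For maximality, given $H\in\mathsf{Dom}(\mathcal{P}_{g,V}^{k-1})$ one first solves $(\mathcal{P}_{g,V}+\lambda+1)W=H$ in $\mathcal{H}$ by the already established case $k=1$, obtaining $W\in\mathsf{Dom}(\mathcal{P}_{g,V})$, and then bootstraps: since $\mathcal{P}_{g,V}W=H-(\lambda+1)W$ and $\mathsf{Dom}(\mathcal{P}_{g,V}^{k-1})\subset\mathsf{Dom}(\mathcal{P}_{g,V}^m)$ for $0\le m\le k-1$, an induction on $m$ shows $W\in\mathsf{Dom}(\mathcal{P}_{g,V}^{m+1})$ whenever $W\in\mathsf{Dom}(\mathcal{P}_{g,V}^m)$, hence $W\in\mathsf{Dom}(\mathcal{P}_{g,V}^k)$ with $(\mathcal{P}_{g,V}^{(k)}+\lambda+1)W=H$. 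Density of $\mathsf{Dom}(\mathcal{P}_{g,V}^k)$ in $\mathsf{Dom}(\mathcal{P}_{g,V}^{k-1})$, needed for the subsequent Hille--Yosida argument, is then automatic for maximal monotone operators.

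The main obstacle is not conceptual but the careful bookkeeping in part \ref{powers of domains mathcal}: tracking exactly which iterated powers $\mathsf{P}_{g,V}^j$ of each of the two components must vanish on $\partial\Omega$, reconciling the indices $[k/2]$ and $[(k+1)/2]-1$ appearing for $w$ and $w'$ with those produced by the two applications of the induction hypothesis, and making sure the gain of regularity from elliptic theory is invoked only for $w$ (which solves a genuine elliptic boundary value problem), with $w'$ inheriting its regularity solely from the off-diagonal structure of $\mathcal{P}_{g,V}$.
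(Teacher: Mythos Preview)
Your proposal is correct and follows essentially the same inductive strategy as the paper for both parts, with the same base cases and the same use of elliptic regularity in part \ref{powers of domains mathcal}. The one noteworthy difference is in the maximality step of part \ref{powers of mathcal P max monotone}: the paper uses the induction hypothesis on part \ref{powers of mathcal P max monotone} itself to first obtain $U\in\mathsf{Dom}(\mathcal{P}_{g,V}^{k-1})$, and then redoes the explicit scalar computation $(\mathsf{P}_{g,V}+(\lambda+1)^2)u=(\lambda+1)h+h'$, $u'=-(\lambda+1)u+h$ together with elliptic regularity and the characterization from part \ref{powers of domains mathcal} to upgrade to $U\in\mathcal{Q}_{g,V}^k=\mathsf{Dom}(\mathcal{P}_{g,V}^k)$. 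Your bootstrap, which only uses the case $k=1$ and then iterates the purely algebraic implication ``$W\in\mathsf{Dom}(\mathcal{P}_{g,V}^m)$ and $\mathcal{P}_{g,V}W=H-(\lambda+1)W\in\mathsf{Dom}(\mathcal{P}_{g,V}^m)$ $\Rightarrow$ $W\in\mathsf{Dom}(\mathcal{P}_{g,V}^{m+1})$'', is a cleaner alternative that avoids re-invoking elliptic estimates at this stage (they are already absorbed into part \ref{powers of domains mathcal}).
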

			\begin{proof}[Proof of Claim \ref{claim: powers of mathcal P}]
				For \ref{powers of domains mathcal}, note that in the case $k=1$ the identity \eqref{eq: powers of domains mathcal} holds by \eqref{eq: domain of mathcal P} and 	\eqref{eq: precise characterization of domain}. Moreover, the embedding \eqref{eq: cont emb of domains} follows from \eqref{eq: continuous embedding}. So let us suppose that the assertions in \ref{powers of domains mathcal} hold for $k-1$ and choose any $W=(w,w')\in \mathsf{Dom}(\mathcal{P}_{g,V}^k)$. In particular, this implies that 
				\begin{equation}
					\label{eq: for induction of domains}
					\mathcal{P}_{g,V}W=\begin{pmatrix}
						-w'\\
						\mathsf{P}_{g,V}w
					\end{pmatrix}\in \mathsf{Dom}(\mathcal{P}_{g,V}^{k-1})=\mathcal{Q}_{g,V}^{k-1}.
				\end{equation}
				Therefore, we have
				$\mathsf{P}_{g,V}w\in H^{k-1}(\Omega,dV_g)$ and $w'\in H^k(\Omega,dV_g)$. By elliptic regularity theory, this ensures $w\in H^{k+1}(\Omega,dV_g)$ with
				\[
				\begin{split}
					\|w\|_{H^{k+1}(\Omega,dV_g)}&\lesssim \|w\|_{L^2(\Omega,dV_g)}+\|\mathsf{P}_{g,V}w\|_{H^{k-1}(\Omega,dV_g)}\\
					&\lesssim \|w\|_{L^2(\Omega,dV_g)}+\left\|\begin{pmatrix}-w'\\
						\mathsf{P}_{g,V}w\end{pmatrix}\right\|_{H^k(\Omega,dV_g)\times H^{k-1}(\Omega,dV_g)}\\
					&\lesssim \|w\|_{L^2(\Omega,dV_g)}+\left\|\begin{pmatrix}-w'\\
						\mathsf{P}_{g,V}w\end{pmatrix}\right\|_{\mathsf{Dom}(\mathcal{P}_{g,V}^{k-1})}\\
					&\lesssim \|dw\|_{L^2(\Omega,dV_g)}+\sum_{j=0}^{k-1}\big\|\mathcal{P}^{j+1}_{g,V}W\big\|_{\mathcal{H}}\\
					&\lesssim \|W\|_{\mathcal{H}}+\sum_{j=0}^{k-1}\big\|\mathcal{P}^{j+1}_{g,V}W\big\|_{\mathcal{H}}\\
					&\lesssim \|W\|_{\mathsf{Dom}(\mathcal{P}_{g,V}^k)}.
				\end{split}
				\]
				In the above calculation we used the Poincar\'e inequality, the uniform ellipticity and the induction hypothesis. On the other hand \eqref{eq: for induction of domains} together with $$
				\mathsf{Dom}(\mathcal{P}_{g,V}^{k-1})\hookrightarrow H^k(\Omega,dV_g)\times H^{k-1}(\Omega,dV_g)$$ 
				shows
				\[
				\begin{split}
					\|w'\|_{H^k(\Omega,dV_g)}&\lesssim \left\|\begin{pmatrix}-w'\\
						\mathsf{P}_{g,V}w\end{pmatrix}\right\|_{\mathsf{Dom}(\mathcal{P}_{g,V}^{k-1})
					}\lesssim \|\mathcal{P}_{g,V}W\|_{\mathsf{Dom}(\mathcal{P}_{g,V}^{k-1})}\lesssim \|W\|_{\mathsf{Dom}(\mathcal{P}_{g,V}^{k})}.
				\end{split}
				\]
				Hence, we have established the embedding \eqref{eq: cont emb of domains}. Furthermore, by \eqref{eq: for induction of domains} and the induction hypothesis we know that
				\[
				\begin{split}
					w',\ldots,\mathsf{P}_{g,V}^{[(k-1)/2]}w' &\in H^1_0(\Omega,dV_g)\\
					w,\mathsf{P}_{g,V}w,\ldots,\mathsf{P}_{g,V}^{[k/2]}w&\in H^1_0(\Omega,dV_g).
				\end{split}
				\]
				Noting that $[(k+1)/2]-1=[(k-1)/2]$ gives $W\in \mathcal{Q}_{g,V}^k$ and hence $\mathsf{Dom}(\mathcal{P}_{g,V}^k)\subset \mathcal{Q}_{g,V}^k$. Let us next prove the reverse inclusion. If $W\in \mathcal{Q}_{g,V}^k$, then by monotonicity and induction hypothesis $W\in \mathcal{Q}_{g,V}^{k-1}= \mathsf{Dom}(\mathcal{P}_{g,V}^{k-1})$. Moreover, $W\in \mathcal{Q}_{g,V}^k$ implies
				\[
				\begin{split}
					w'&\in H^k(\Omega,dV_g), \\
					w',\ldots,\mathsf{P}_{g,V}^{[(k-1)/2]}w'&\in H^1_0(\Omega,dV_g),
				\end{split}
				\]
				and
				\[	
				\begin{split}
					\mathsf{P}_{g,V}w&\in H^{k-1}(\Omega,dV_g),\\
					\mathsf{P}_{g,V}w,\ldots,\mathsf{P}_{g,V}^{[k/2]-1}(\mathsf{P}_{g,V}w)&\in H^1_0(\Omega).
				\end{split}
				\]
				Note that $\mathsf{P}_{g,V}w\in H^{k-1}(\Omega,dV_g)$ followed from $w\in H^{k+1}(\Omega)$. Thus, we get
				\[
				\mathcal{P}_{g,V}W=\begin{pmatrix}
					-w'\\
					\mathsf{P}_{g,V}w
				\end{pmatrix}
				\in \mathcal{Q}_{g,V}^{k-1}=\mathsf{Dom}(\mathcal{P}_{g,V}^{k-1})
				\]
				and so we can conclude the proof of the inclusion $\mathcal{Q}_{g,V}^k\subset \mathsf{Dom}(\mathcal{P}_{g,V}^{k})$.\\
				
				\noindent For \ref{powers of mathcal P max monotone}, we already know that it holds in the case $k=1$ (see Claim~\ref{claim: P+lambda maximal monotone on H}), so let us suppose that it holds for $k-1$. Then we may calculation
				\[
				\begin{split}
					\big\langle \big( \mathcal{P}_{g,V}^{(k)}+\lambda\big) U,U \big\rangle_{\mathsf{Dom}(\mathcal{P}_{g,V}^{k-1})}&=\sum_{j=1}^{k-1}\big\langle\mathcal{P}_{g,V}^j\LC \mathcal{P}_{g,V}+\lambda\RC U,\mathcal{P}_{g,V}^jU\big\rangle_{\mathcal{H}}\\
					&=\sum_{j=1}^{k-1}\big\langle \LC \mathcal{P}_{g,V}+\lambda\RC\mathcal{P}_{g,V}^jU,\mathcal{P}_{g,V}^jU \big\rangle_{\mathcal{H}}\geq 0
				\end{split}
				\]
				for any $U\in \mathsf{Dom}(\mathcal{P}_{g,V}^k)$. Above we used the case $k=1$ and that $U\in \mathsf{Dom}(\mathcal{P}_{g,V}^k)$ implies $\mathcal{P}_{g,V}^j U\in \mathsf{Dom}(\mathcal{P}_{g,V}^{k-j})\subset \mathsf{Dom}(\mathcal{P}_{g,V})$ for $j=0,1,\ldots,k-1$.
				
				Next, let us prove the range condition. For this purpose suppose that $H=(h,h')\in \mathsf{Dom}(\mathcal{P}_{g,V}^{k-1})$. Then we wish to solve
				\[
				\big( \mathcal{P}_{g,V}^{(k)}+(\lambda+1)\big) U=H
				\]
				in $\mathsf{Dom}(\mathcal{P}_{g,V}^{k})$. By induction hypothesis there exists $U\in \mathsf{Dom}(\mathcal{P}_{g,V}^{k-1})$ such that
				\[
				\LC \mathcal{P}_{g,V}+(\lambda+1)\RC U=H. 
				\]
				In particular, $u,u'\in H^1_0(\Omega,dV_g)$ satisfy
				\[
				\LC \mathsf{P}_{g,V}+(\lambda+1)^2\RC u=(\lambda+1)h+h'\text{ and }u'=-(\lambda+1)u+h.
				\]
				Since $(\lambda+1)h+h'\in H^{k-1}(\Omega,dV_g)$, elliptic regularity theory guarantees that $u\in H^{k+1}(\Omega,dV_g)$. However, as $h\in H^k(\Omega,dV_g)$, we know $u'\in H^k(\Omega,dV_g)$. Moreover, by $U\in \mathsf{Dom}(\mathcal{P}_{g,V}^{k-1})$ and $H\in \mathsf{Dom}(\mathcal{P}_{g,V}^{k})$ we know that
				\[
				\begin{split}
					\mathsf{P}_{g,V}u&=\underbrace{-(\lambda+1)^2u}_{\in H^{k+1}(\Omega,dV_g)}+\underbrace{(\lambda+1)h}_{\in H^k(\Omega)}+\underbrace{h'}_{\in H^{k-1}(\Omega)}\\
					&\in \big\{v\in H^{k-1}(\Omega,dV_g)\,;\,v,\ldots,\mathsf{P}_{g,V}^{[(k-2)/2]}v\in H^1_0(\Omega,dV_g)\big\}
				\end{split}
				\]
				and 
				\[
				\begin{split}
					u'=-(\lambda+1)u+h&\in \big\{v\in H^{k}(\Omega,dV_g)\,;\,v,\ldots,\mathsf{P}_{g,V}^{[(k-1)/2]}v\in H^1_0(\Omega,dV_g)\big\}\\
					&= \big\{v\in H^{k}(\Omega,dV_g)\,;\,v,\ldots,\mathsf{P}_{g,V}^{[(k+1)/2]-1}v\in H^1_0(\Omega,dV_g)\big\}.
				\end{split}
				\]
				The first identity implies
				\[
				\mathsf{P}_{g,V}u,\ldots,\mathsf{P}_{g,V}^{[k/2]}u\in H^1_0(\Omega,dV_g)
				\]
				and thus
				\[
				U\in \mathcal{Q}_{g,V}^k=\mathsf{Dom}(\mathsf{P}_{g,V}^k).
				\]
				So, we have shown the range condition and hence $\mathsf{P}_{g,V}^{(k)}+\lambda$ is maximal monotone. This shows Claim \ref{claim: powers of mathcal P}.
			\end{proof}
			
			Next, we aim to show: 
			
			\begin{claim}
				\label{claim: smoothness of sols}
				If $W_0=\LC w_0,w_1\RC\in \mathsf{Dom}(\mathsf{P}_{g,V}^k)$, $\widetilde{F}=(0,F)$ with $F\in C_c^{\infty}(\Omega\times (0,\infty))$, then the above constructed unique solution $W\in C^1([0,\infty);\mathcal{H})\cap C([0,\infty);\mathsf{Dom}(\mathsf{P}_{g,V}))$ of \eqref{eq: PDE of inhom problem wave eq} satisfies
				\begin{equation}
					\label{eq: higher regularity of sol to inhom problem}
					W\in C^{k-j}([0,\infty);\mathsf{Dom}(\mathsf{P}_{g,V}^j))\text{ for }j=0,1,\ldots,k.
				\end{equation}
			\end{claim}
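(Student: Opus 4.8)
The plan is to fix $k\in\N$ (the case $k=0$ being trivially part of Theorem~\ref{theorem: well-posedness wave equation}\,\ref{well-posedness base case wave}) and to re-run the Hille--Yosida argument of the proof of part \ref{well-posedness base case wave}, but on the Hilbert space $\mathsf{Dom}(\mathcal{P}_{g,V}^{k-1})$ in place of $\mathcal{H}$. This is legitimate precisely because Claim~\ref{claim: powers of mathcal P}\,\ref{powers of mathcal P max monotone} asserts that $\mathcal{P}_{g,V}^{(k)}+\lambda$ is maximal monotone on $\mathsf{Dom}(\mathcal{P}_{g,V}^{k-1})$, with the very same $\lambda$ from \eqref{eq: cond on lambda}, so that $-\mathcal{P}_{g,V}^{(k)}$ generates a $C_0$-semigroup there. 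In other words, all the analytic content sits in the already-established Claim~\ref{claim: powers of mathcal P}, and what remains is bookkeeping on the Hilbert scale $\big(\mathsf{Dom}(\mathcal{P}_{g,V}^m)\big)_{m\in\N_0}$.

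First I would check that the source has enough regularity in the strong topology. Since $F\in C_c^{\infty}(\Omega\times (0,\infty))$, for every $t\ge 0$ and every $m\in\N$ we have $F(t)\in C_c^{\infty}(\Omega)\subset\mathsf{Dom}(\mathsf{P}_{g,V}^m)$ by Lemma~\ref{lemma: embedding of domains}, hence $\widetilde F(t)=(0,F(t))\in\mathsf{Dom}(\mathcal{P}_{g,V}^m)$ by \eqref{eq: domain of powers of mathcal}; differentiating in $t$ under the fixed compact support and bounding the norm \eqref{eq: inner product on domain of powers of mathcal} by Sobolev norms via \eqref{eq: cont emb of domains}, one obtains $\widetilde F\in C^{\infty}\big([0,\infty);\mathsf{Dom}(\mathcal{P}_{g,V}^m)\big)$ for every $m$, in particular $\widetilde F\in C^1\big([0,\infty);\mathsf{Dom}(\mathcal{P}_{g,V}^{k-1})\big)$. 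Since moreover $W_0\in\mathsf{Dom}(\mathcal{P}_{g,V}^k)=\mathsf{Dom}(\mathcal{P}_{g,V}^{(k)})$, applying \cite[Theorem~7.4]{Brezis} verbatim as in part \ref{well-posedness base case wave} (with $\mathcal{H},\mathcal{P}_{g,V}$ replaced by $\mathsf{Dom}(\mathcal{P}_{g,V}^{k-1}),\mathcal{P}_{g,V}^{(k)}$) produces a unique function
\[
\widehat W\in C\big([0,\infty);\mathsf{Dom}(\mathcal{P}_{g,V}^{k})\big)\cap C^1\big([0,\infty);\mathsf{Dom}(\mathcal{P}_{g,V}^{k-1})\big)
\]
solving \eqref{eq: PDE of inhom problem wave eq}; and because $\mathsf{Dom}(\mathcal{P}_{g,V}^{k-1})\hookrightarrow\mathcal{H}$ and \eqref{eq: PDE of inhom problem wave eq} has a unique solution in $\mathcal{H}$, necessarily $\widehat W=W$. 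In particular $W\in C\big([0,\infty);\mathsf{Dom}(\mathcal{P}_{g,V}^{k})\big)$ and $W\in C^1\big([0,\infty);\mathsf{Dom}(\mathcal{P}_{g,V}^{k-1})\big)$, i.e. the cases $j=k$ and $j=k-1$ of \eqref{eq: higher regularity of sol to inhom problem}.

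Finally I would bootstrap the remaining cases directly off the differential equation. Rewriting \eqref{eq: PDE of inhom problem wave eq} as $\partial_t W=\widetilde F-\mathcal{P}_{g,V}W$ and using that $\mathcal{P}_{g,V}$ maps $\mathsf{Dom}(\mathcal{P}_{g,V}^{m})$ continuously to $\mathsf{Dom}(\mathcal{P}_{g,V}^{m-1})$ while $\widetilde F\in C^{\infty}\big([0,\infty);\mathsf{Dom}(\mathcal{P}_{g,V}^m)\big)$ for all $m$, each differentiation in $t$ trades one time derivative for one power of $\mathcal{P}_{g,V}$; inductively this yields $\partial_t^{j}W\in C\big([0,\infty);\mathsf{Dom}(\mathcal{P}_{g,V}^{k-j})\big)$, i.e. $W\in C^{j}\big([0,\infty);\mathsf{Dom}(\mathcal{P}_{g,V}^{k-j})\big)$, for $j=0,1,\dots,k$, which is exactly \eqref{eq: higher regularity of sol to inhom problem} after relabelling $j\mapsto k-j$. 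The only point requiring genuine care is the first step, namely confirming that $\widetilde F$ is smooth in time with values in every member of the scale $\big(\mathsf{Dom}(\mathcal{P}_{g,V}^m)\big)_m$ and that the solution produced on the smaller space $\mathsf{Dom}(\mathcal{P}_{g,V}^{k-1})$ is genuinely the solution $W$ from part \ref{well-posedness base case wave}; once these identifications are secured, the bootstrap and the invocation of Claim~\ref{claim: powers of mathcal P} are routine.
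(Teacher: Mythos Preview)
Your proof is correct and rests on the same key input as the paper's, namely Claim~\ref{claim: powers of mathcal P}\,\ref{powers of mathcal P max monotone}, but the organization differs in a useful way. The paper proceeds by induction on $k$: it treats $k=2$ explicitly, then in the inductive step observes that $\partial_t W$ again solves a Cauchy problem of the same type with initial datum $-\mathcal{P}_{g,V}W_0\in\mathsf{Dom}(\mathcal{P}_{g,V}^{k-1})$, applies the induction hypothesis to $\partial_t W$, and finally recovers $W\in C([0,\infty);\mathsf{Dom}(\mathcal{P}_{g,V}^k))$ from the equation. You instead go straight to the top level $k$ with a single application of Hille--Yosida on $\mathsf{Dom}(\mathcal{P}_{g,V}^{k-1})$, and then bootstrap \emph{downward} using $\partial_t W=\widetilde F-\mathcal{P}_{g,V}W$ and the boundedness of $\mathcal{P}_{g,V}\colon\mathsf{Dom}(\mathcal{P}_{g,V}^m)\to\mathsf{Dom}(\mathcal{P}_{g,V}^{m-1})$. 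Your route avoids the outer induction on $k$ and is a bit more streamlined; the paper's route makes the role of the time-differentiated equation more explicit. One small remark: when you argue $\widetilde F\in C^\infty([0,\infty);\mathsf{Dom}(\mathcal{P}_{g,V}^m))$, the embedding \eqref{eq: cont emb of domains} points the wrong way for what you need; the correct justification is the direct computation of $\mathcal{P}_{g,V}^j(0,F(t))$ together with the fact that $\mathsf{P}_{g,V}$ is a second-order operator with smooth coefficients, so that $\|\cdot\|_{\mathsf{Dom}(\mathcal{P}_{g,V}^m)}$ is controlled by sufficiently high Sobolev norms of $F(t)$.
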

			\begin{proof}[Proof of Claim \ref{claim: smoothness of sols}]
				For $k=1$ there is nothing to prove and hence let us consider the case $k=2$. Arguing as in the case $k=1$, we can conclude from Claim~\ref{claim: powers of mathcal P} that $-\mathcal{P}_{g,V}^{(2)}$ generates a $C_0$-semigroup $(T_t^{(2)})_{t\geq 0}$ on $\mathsf{Dom}(\mathcal{P}_{g,V})$. As $F\in C_c^{\infty}(\Omega\times(0,\infty))$, we have $\widetilde{F}=(0,F)\in C^1([0,\infty);\mathsf{Dom}(\mathsf{P}_{g,V}))$. Relying on the same arguments as for $k=1$, we obtain a unique solution
				\[
				W\in C^1([0,\infty);\mathsf{Dom}(\mathsf{P}_{g,V}))\cap C([0,\infty);\mathsf{Dom}(\mathsf{P}_{g,V}^2))
				\]
				of 
				\[
				\begin{cases}
					\LC \partial_t +\mathsf{P}_{g,V}\RC W=\widetilde{F}\text{ for }t\geq 0,\\
					W(0)=W_0.
				\end{cases}
				\]
				In particular, we see that this solution coincides with the unique solution constructed for $k=1$. We next assert that $W\in C^2([0,\infty);\mathcal{H})$. By definition of the norm $\|\cdot\|_{\mathsf{Dom}(\mathsf{P}_{g,V})}$, we see that $\mathsf{P}_{g,V}\in L(\mathsf{Dom}(\mathsf{P}_{g,V}))$. Hence, $W\in C^1([0,\infty);\mathsf{Dom}(\mathsf{P}_{g,V}))$ implies
				\[
				\mathsf{P}_{g,V}W\in C^1([0,\infty);\mathcal{H})\text{ with }\partial_t \LC \mathsf{P}_{g,V}W \RC=\mathsf{P}_{g,V}\LC \partial_t W \RC \text{ for }t\geq 0.
				\]
				Therefore, we get $\partial_t W=\widetilde{F}-\mathcal{P}_{g,V}\in C^1([0,\infty);\mathcal{H})$ and thus $W\in C^2([0,\infty);\mathcal{H})$ as asserted. Moreover, this implies that $\partial_tW$ solves
				\begin{equation}
					\label{eq: PDE for time derivative}
					\begin{cases}
						\partial_t \widetilde{W}+\mathcal{P}_{g,V}\widetilde{W}=\partial_t \widetilde{F}\text{ for }t\geq 0,\\
						\widetilde{W}(0)=\widetilde{F}(0)-\mathcal{P}_{g,V}W_0=-\mathcal{P}_{g,V}W_0.
					\end{cases}
				\end{equation}
				Next, we prove the general case $k\geq 3$ by induction. Suppose that the result holds for $k-1$. By the case $k=2$, we know that the unique solution $W$ satisfies \eqref{eq: higher regularity of sol to inhom problem} for $k=2$ and 
				\[
				\partial_t W\in C([0,\infty);\mathsf{Dom}(\mathcal{P}_{g,V}))\cap C^1([0,\infty),\mathcal{H})
				\] 
				solves \eqref{eq: PDE for time derivative} with $\widetilde{W}_0:= -\mathcal{P}_{g,V}W_0 \in \mathsf{Dom}(\mathcal{P}_{g,V}^{k-1})$. By induction hypothesis this implies 
				\[
				\partial_t W\in C^{k-1-j}([0,\infty);\mathsf{Dom}(\mathcal{P}_{g,V}^j))\text{ for }j=0,1,\ldots,k-1
				\]
				and hence
				\[
				W\in C^{k-j}([0,\infty);\mathsf{Dom}(\mathcal{P}_{g,V}^j))\text{ for }j=0,1,\ldots,k-1.
				\]
				Therefore, it remains to prove that $W\in C([0,\infty);\mathsf{Dom}(\mathcal{P}_{g,V}^k))$. As 
				\[
				\partial_t W\in C([0,\infty);\mathsf{Dom}(\mathcal{P}_{g,V}^{k-1})),
				\] 
				the PDE for $W$ shows
				\[
				\mathcal{P}_{g,V}W=\widetilde{F}-\partial_t W\in C([0,\infty);\mathsf{Dom}(\mathcal{P}_{g,V}^{k-1})).
				\]
				Thus, we get $W\in C([0,\infty);\mathsf{Dom}(\mathcal{P}_{g,V}^k))$ as we want. This proves Claim \ref{claim: smoothness of sols}.
			\end{proof}
			
			Hence, if the assumptions of \ref{well-posedness smooth solutions} hold, then the Claims  \ref{claim: powers of mathcal P} and  \ref{claim: smoothness of sols} guarantee that for all $k\in\N$ we have
			\[
			W\in C^{k-j}([0,\infty);H^{j+1}(\Omega,dV_g)\times H^j(\Omega,dV_g))\text{ for }j=0,1,\ldots,k.
			\]
			But then the corresponding solution $w$ of the wave equation \eqref{wave equ with source} belongs to $C^{\infty}(\overline{\Omega}\times [0,\infty))$ and we can conclude the proof of \ref{well-posedness smooth solutions}.
			\\
			
			\noindent Finally, for \ref{alternative description of solutions}, suppose that the conditions of \ref{well-posedness base case wave} hold, let $w\in C^2([0,\infty);L^2(\Omega,dV_g))$ be the unique solution of \eqref{eq: regularity of sols wave with source} and set
			\[
			w_i^k=\left\langle w_i,\phi_k \right\rangle_{L^2(\Omega,dV_g)},\ w_k=\left\langle w,\phi_k \right\rangle_{L^2(\Omega,dV_g)},\text{ and }F_k=\left\langle F,\phi_k \right\rangle_{L^2(\Omega,dV_g)}
			\]
			for $i=0,1$ and $k\in\N$, where $w_0$ and $w_1$ are the initial data in the wave equation \eqref{wave equ with source}. By the $C^2$-regularity of $w$ in time, we see from \eqref{eq: regularity of sols wave with source} that there holds
			\begin{equation}
				\label{eq: ODE for coeff}
				\begin{cases}
					\partial_t^2w_k+\lambda_kw_k=F_k\text{ for }t\geq 0\\
					w_k(0)=w_0^k,\quad \partial_t w_k(0)=w_1^k
				\end{cases}
			\end{equation}
			for all $k\in\N$. Note that $\omega_k\in C^2([0,\infty))$ given by
			\begin{equation}
				\label{eq: explicit form of coeff}
				\omega_k(t)=\cos(t\lambda_k^{1/2})w_0^k+\frac{\sin(t\lambda_k^{1/2})}{\lambda_k^{1/2}}w_1^k+\int_0^t\frac{\sin ( (t-\tau)\lambda_k^{1/2})
				}{\lambda_k^{1/2}}F_k(\tau)\, d\tau
			\end{equation}
			solves \eqref{eq: ODE for coeff}.

			On the other hand, if $u_j\in C^2([0,\infty))$, $j=1,2$, solve \eqref{eq: ODE for coeff}, then $v=u_1-u_2\in C^2([0,\infty))$ satisfies 
			\begin{equation}
				\label{eq: diff sol ODE for coeff}
				\begin{cases}
					\partial_t^2v+\lambda_kv=0\text{ for }t\geq 0\\
					w_k(0)=\partial_t w_k(0)=0.
				\end{cases}
			\end{equation}
			Observe that
			\[
			\eta(t)=\left|\partial_t v(t)\right|^2+|v(t)|^2\in C^1([0,\infty))
			\]
			satisfies
			\[
			\partial_t \eta(t)\leq \LC 1+\lambda_k\RC\eta(t)
			\]
			and hence $\eta(0)=0$ as well as Gronwall's inequality guarantees that $\eta(t)=0$ for all $t\geq 0$. This in turn implies that $v(t)=0$ for all $t\geq 0$. Therefore, we may conclude that $w_k=\omega_k$ and we get the first formula in \eqref{wave solution}. The second formula in \eqref{wave solution} follows by Fubini's theorem. In fact, for any $h=\sum_{k\geq 1} h_k \phi_k\in L^2(\Omega,dV_g)$, we obtain by the first formula
			\[
			\begin{split}
				\langle w(t),h\rangle_{L^2(\Omega,dV_g)}&=\sum_{k\geq 1}\bigg(\cos\big(t\lambda_k^{1/2}\big)w_0^kh_k+\frac{\sin\big(t\lambda_k^{1/2}\big)}{\lambda_k^{1/2}}w_1^kh_k\bigg)\\
				&\quad \, +\sum_{k\geq 1}\int_0^t\frac{\sin\big((t-\tau)\lambda_k^{1/2}\big)}{\lambda_k^{1/2}}F_k(\tau)h_k\,d\tau\\
				&=\big\langle \cos\big(t\mathsf{P}_{g,V}^{1/2}\big)w_0,h\big\rangle_{L^2(\Omega,dV_g)}+\left\langle \frac{\sin\big(t\mathsf{P}_{g,V}^{1/2}\big)}{\mathsf{P}_{g,V}^{1/2}}w_1,h\right\rangle_{L^2(\Omega,dV_g)}\\
				&\quad \, +\sum_{k\geq 1}\int_0^t\frac{\sin\big((t-\tau)\lambda_k^{1/2}\big)}{\lambda_k^{1/2}}F_k(\tau)h_k\,d\tau.
			\end{split}
			\]
			As the quotient under the integral is uniformly bounded in $k$, we can invoke Fubini's theorem to get
			\[
			\begin{split}
				\langle w(t),h\rangle_{L^2(\Omega,dV_g)}&=\big\langle \cos\big(t\mathsf{P}_{g,V}^{1/2}\big)w_0,h \big\rangle_{L^2(\Omega,dV_g)}+\left\langle \frac{\sin\big(t\mathsf{P}_{g,V}^{1/2}\big)}{\mathsf{P}_{g,V}^{1/2}}w_1,h\right\rangle_{L^2(\Omega,dV_g)}\\
				&\quad\,+\int_0^t\sum_{k\geq 1}\frac{\sin\big((t-\tau)\lambda_k^{1/2}\big)}{\lambda_k^{1/2}}F_k(\tau)h_k\,d\tau\\
				&=\big\langle \cos\big(t\mathsf{P}_{g,V}^{1/2}\big)w_0,h\big\rangle_{L^2(\Omega,dV_g)}+\left\langle \frac{\sin\big(t\mathsf{P}_{g,V}^{1/2}\big)}{\mathsf{P}_{g,V}^{1/2}}w_1,h\right\rangle_{L^2(\Omega,dV_g)}\\
				&\quad \, +\int_0^t\left\langle\frac{\sin\big((t-\tau)\mathsf{P}_{g,V}^{1/2}\big)}{\mathsf{P}_{g,V}^{1/2}}F(\tau),h\right\rangle_{L^2(\Omega,dV_g)}\,d\tau
			\end{split}
			\]
			and hence the second formula in \eqref{wave solution} holds. This concludes the proof of Theorem \ref{theorem: well-posedness wave equation}.
		\end{proof}

		\bigskip 
		
		\noindent\textbf{Acknowledgment.} The authors would like to thank Lauri Oksanen for fruitful discussions about inverse problems for wave equations and pointing out the wonderful and useful reference \cite{KOP18}.
		
		\begin{itemize}
			\item Y.-H. Lin is partially supported by the National Science and Technology Council (NSTC) Taiwan, under the project 113-2628-M-A49-003. Y.-H. Lin is also a Humboldt research fellow. 
			
			\item G. Nakamura is partially supported by 
			grant-in-aid for Scientific Research (22K03366) of the Japan Society for the Promotion of Science.			
			\item P. Zimmermann is supported by the Swiss National Science Foundation (SNSF), under the grant number 214500.
		\end{itemize}

		\bibliography{refs} 
		
		\bibliographystyle{alpha}
		
	\end{document}